\newtheorem{thm}{Theorem}
\newtheorem{prop}[thm]{Proposition}
\newtheorem{lemma}[thm]{Lemma}
\newtheorem{cor}[thm]{Corollary}
\newtheorem{defn}{Definition}
\newtheorem{remark}{Remark}
\newtheorem{condition}{Condition}
\newcommand{\RN}[1]{%
  \textup{(\lowercase\expandafter{\romannumeral#1})}%
}
\DeclareMathOperator*{\argmin}{argmin}
\newcommand\nnfootnote[1]{%
  \begin{NoHyper}
  \renewcommand\thefootnote{}\footnote{#1}%
  \addtocounter{footnote}{-1}%
  \end{NoHyper}
}
\title{Robust mean change point testing in high-dimensional data with heavy tails}
\author{Mengchu Li$^{*, 1, 2}$, Yudong Chen$^{*, 2, 3}$, Tengyao Wang$^{3}$, Yi Yu$^{2}$ \\
$^1$School of Mathematics, University of Birmingham \\
$^2$Department of Statistics, University of Warwick \\
$^3$Department of Statistics, London School of Economics and Political Science
}
\date{(\today)}
\begin{document}
\maketitle

\begin{abstract}
      We study mean change point testing problems for high-dimensional data, with exponentially- or polynomially-decaying tails.  In each case, depending on the $\ell_0$-norm of the mean change vector, we separately consider dense and sparse regimes. We characterise the boundary between the dense and sparse regimes under the above two tail conditions for the first time in the change point literature and propose novel testing procedures that attain optimal rates in each of the four regimes up to a poly-iterated logarithmic factor. By comparing with previous results under Gaussian assumptions, our results quantify the costs of heavy-tailedness on the fundamental difficulty of change point testing problems for high-dimensional data. 
      
      To be specific, when the error distributions possess exponentially-decaying tails, a CUSUM-type statistic is shown to achieve a minimax testing rate up to $\sqrt{\log\log(8n)}$. As for polynomially-decaying tails, admitting bounded $\alpha$-th moments for some $\alpha \geq 4$, we introduce a median-of-means-type test statistic that achieves a near-optimal testing rate in both dense and sparse regimes. In the sparse regime, we further propose a computationally-efficient test to achieve optimality. Our investigation in the even more challenging case of $2 \leq \alpha < 4$, unveils a new phenomenon that the minimax testing rate has no sparse regime, i.e.\ testing sparse changes is information-theoretically as hard as testing dense changes. Finally, we consider various extensions where we also obtain near-optimal performances, including testing against multiple change points, allowing temporal dependence as well as fewer than two finite moments in the data generating mechanisms. We also show how sub-Gaussian rates can be achieved when an additional minimal spacing condition is imposed under the alternative hypothesis. 
\end{abstract}
\nnfootnote{*Equal contribution}

\addtocontents{toc}{\protect\setcounter{tocdepth}{0}}

\section{Introduction}
\label{Sec:intro}
In this paper, we study change point testing problems when the observations are corrupted by heavy-tailed errors. To be specific, consider the `signal plus noise' model 
\begin{equation*}
    X_t = \theta_t + E_t \in \mathbb{R}^p, \qquad t = 1, \ldots, n,
\end{equation*}
where $X_t$ represents the $p$-variate observation at time $t$, $\theta_t$ the signal and $E_t$ the error term. Writing $X := (X_1, \ldots, X_n) \in \mathbb{R}^{p \times n}$, and similarly for $\theta$ and $E$, we express the model in matrix form as:
\begin{equation}\label{eq:model}
    X = \theta + E,
\end{equation}
where $X$, $\theta$ and $E$ are all $p \times n$ matrices. We start by assuming the entries of $E$ are independent random variables with zero mean and unit variance and we denote the distribution of $E$ as $P_e \in \mathcal{Q}$. We are interested in understanding the fundamental difficulty of testing whether the columns of $\theta$ undergo a change at some unknown location when the class $\mathcal{Q}$ contains heavy-tailed distributions. Focusing on the single change point alternative hypothesis for now, our goal can be formalised as testing 
\begin{equation} \label{Eq:H_0_H_1}
\mathrm{H}_0: \theta \in \Theta_0(p,n) \quad \mathrm{vs.}\quad  \mathrm{H}_1: \theta \in \Theta(p,n,s,\rho) := \bigcup_{t_0 = 1}^{n-1}\Theta^{(t_0)}(p,n,s,\rho),
\end{equation}
with
\begin{align}
    \label{Eq:null_space}
    \Theta_0(p,n) := \{&\theta: \theta_t = \mu \ \text{for all}\ t = 1,\dotsc,n, \ \text{for some}\ \mu \in \mathbb{R}^p\}
\end{align}
    and
\begin{align}
    \Theta^{(t_0)}(p,n,s,\rho):=\biggl\{&\theta: \theta_t = \mu_1\ \text{for}\ t = 1, \ldots, t_0, \ \theta_t = \mu_2 \ \text{for}\ t = t_0+1, \ldots, n,  \nonumber \\
    &\text{for some}\ \mu_1, \mu_2 \in \mathbb{R}^p \ \text{s.t.}\ \|\mu_1-\mu_2\|_0 \leq s,\, \frac{t_0(n-t_0)}{n}\|\mu_1-\mu_2\|_2^2 \geq \rho^2\biggr\}. \label{Eq:alternative_space_original}
\end{align}
For any $v = \bigl(v(1), \dotsc, v(d)\bigr)^\top\in \mathbb{R}^d$, we denote $\|v\|_0 := \sum_{i=1}^d \mathbbm{1}_{\{v(i) \neq 0\}}$ and $\|v\|_2 := \bigl\{\sum_{i=1}^d v(i)^2\bigr\}^{1/2}$. To put it in words, we use $\Theta_0(p,n)$ to denote the space of signals without a change point, and $\Theta^{(t_0)}(p,n,s,\rho)$ to denote the space of signals with a change at location $t_0$ of entry-wise sparsity level~$s$ and (normalised) signal strength $\rho$. The multiplicative factor $t_0(n-t_0)n^{-1}$ of $\|\mu_1-\mu_2\|_2^2$ can be regarded as the effective sample size of the problem.  It reflects the fact that the difficulty of testing change point is related to where the change happens. 

Change point analysis as a broad topic has received increasing attention in recent years. Various models \citep[e.g.][]{WangSamworth2018,liu2021minimax,wang2021optimal,wang2022testingregress, xu2022regresstemporal,verzelen2020optimal1d} are considered in the literature focusing on different tasks, including testing the existence of change points, estimating their locations and quantifying the uncertainty of the proposed estimators. From a theoretical point of view, {many of the problems studied are shown to exhibit a phase transition phenomenon,}  i.e.\ a change point can only be reliably tested or accurately localised when its signal strength, measured in some problem-dependent way, exceeds some threshold. It is, therefore, crucial to understand the boundary of this phase transition behaviour. For the testing problem that we are concerned with here, the key quantity is the minimax testing rate, $v^*_{\mathcal{Q}}(p,n,s)$, defined below. For a given $\theta$ and $E\sim P_e$, we write $\mathbb{P}_{\theta,P_e}$ the probability measure of the data $X$ generated from~\eqref{eq:model} and $\mathbb{E}_{\theta,P_e}$ the corresponding expectation operator.

\begin{defn} [Minimax testing rate]\label{def:minimax_testing}
    Let $\Phi$ denote the set of all measurable test functions $\phi: \mathbb{R}^{p \times n} \rightarrow \{0,1\}$. Consider the minimax testing error 
    \[
\mathcal{R}_{\mathcal{Q}}(\rho) :=  \inf_{\phi \in \Phi} \mathcal{R}_{\mathcal{Q}}(\rho, \phi) := \inf_{\phi \in \Phi} \biggl\{ \sup_{P_e \in \mathcal{Q}}\sup_{\theta \in \Theta_0(p,n)}\mathbb{E}_{\theta, P_e} (\phi) + \sup_{P_e \in \mathcal{Q}}\sup_{\theta \in \Theta(p,n,s,\rho)}\mathbb{E}_{\theta, P_e} (1-\phi) \biggr\}.
\]
For a fixed $\varepsilon\in(0,1/2)$, we say that $v^*_{\mathcal{Q}}(p,n,s)$ is the minimax testing rate if $\mathcal{R}_{\mathcal{Q}}(\rho) \leq \varepsilon$ when $\rho^2 \geq Cv^*_{\mathcal{Q}}(p,n,s)$, and $\mathcal{R}_{\mathcal{Q}}(\rho) \geq 1/2$ when $\rho^2 \leq cv^*_{\mathcal{Q}}(p,n,s)$, where $c, C > 0$ are constants depending only on $\varepsilon$ and $\mathcal{Q}$.
\end{defn}

Note that in \Cref{def:minimax_testing}, $C$ is allowed to depend on $\varepsilon$. Since the primary goal of the paper is to characterise the minimal size of the signal, in terms of various model parameters, where the testing problem starts to become feasible, we will treat $\varepsilon$ as a constant throughout the rest of the paper.

A minimax testing rate is previously studied in \cite{liu2021minimax} under model \eqref{eq:model}, where the entries of noise matrix $E$ are assumed to be independent standard normal random variables.  It is shown that 
\begin{equation}
    v^*_{{N}^{\otimes}(0, 1)}(p, n, s) = \Bigl\{\sqrt{p\log \log(8n)} \wedge \bigl[s\log\big\{eps^{-2} \log\log(8n)\big\}\bigr] \Bigr\} \vee \log \log(8n),
    \label{Eq:GaussianRate}
\end{equation}
where ${N}^{\otimes}(0, 1)$ denotes the joint distribution of all $pn$ independent $N(0, 1)$ entries in $E$. Our main contribution, presented in \Cref{Sec:main result}, is to characterise the impact of heavy-tailed distributions on the minimax testing rate. More specifically, we consider two classes of error distributions.

\begin{defn}[$\mathcal{G}_{\alpha, K}$ class of distributions] \label{def-galphak}
For $K >0$ and $\alpha \in (0,2]$, let $\mathcal{G}_{\alpha, K}$ denote the class of distributions on $\mathbb{R}$ such that for any $P \in \mathcal{G}_{\alpha, K}$ and random variable $W \sim P$, it holds that 
\[
    \mathbb{E}(W) = 0, \quad \mathbb{E}({W}^2) = 1 \quad \text{and} \quad \mathbb{E}\big(\exp\bigl\{|W/K|^\alpha\bigr\}\big) \leq 2.
\]
\end{defn}

The $\mathcal{G}_{\alpha, K}$ class consists of sub-Weibull distributions of order $\alpha$ with mean 0, variance 1 and the Orlicz $\psi_\alpha$-norm upper bounded by $K$ (see Definitions~\ref{def:Orlicz} and~\ref{def:subWeibull}). By Proposition~\ref{prop:subweibull-basic-prop}(a), they possess exponentially-decaying tails, as $\mathbb{P}(|W| \geq x) \leq 2e^{-(x/K)^\alpha}$, for $x > 0$.
    
\begin{defn}[$\mathcal{P}_{\alpha, K}$ class of distributions]\label{def-palphak}
    For $K > 0$ and $\alpha \geq 2$, let $\mathcal{P}_{\alpha, K}$  denote the class of distributions on $\mathbb{R}$ such that for any $P \in \mathcal{P}_{\alpha, K}$ and random variable $W \sim P$, it holds that
\[
    \mathbb{E}(W) = 0, \quad \mathbb{E}({W}^2) = 1 \quad \text{and} \quad \mathbb{E}\big(|W/K|^\alpha\big) \leq 1.
\]
\end{defn}
In words, each distribution within this class has its $\alpha$-th moment bounded above by $K^\alpha < \infty$ and possesses a polynomially-decaying tail. This is typically much heavier than an exponentially-decaying tail and thus poses a much bigger statistical challenge.

We study the minimax rate of testing $v^*_{\mathcal{Q}}(p,n,s)$ defined in Definition~\ref{def:minimax_testing} for $\mathcal{Q} = \mathcal{G}_{\alpha, K}^{\otimes}$ and $\mathcal{Q} = \mathcal{P}_{\alpha, K}^{\otimes}$, respectively. Let $\mathcal{G}_{\alpha, K}^{\otimes}$ and $\mathcal{P}_{\alpha, K}^{\otimes}$ denote the class of joint distributions of all the entries in the error matrix $E \in \mathbb{R}^{p \times n}$, when each entry of $E$ independently follows a distribution on $\mathbb{R}$ that belongs to the class $\mathcal{G}_{\alpha, K}$ and $\mathcal{P}_{\alpha, K}$, respectively. Throughout the paper, we treat $K$ and $\alpha$ as constants.

\subsection{Main results and outline}
\label{Sec:main result}
Our main results, developed in Sections~\ref{Sec:hidimtest} and \ref{Sec:robusttest}, characterise the minimax testing rates under both exponentially- and polynomial-decaying tails. In particular, we establish that in the case of exponentially-decaying tails
\begin{equation}\label{eq:rate-result-exp}
    v^*_{\mathcal{G}_{\alpha, K}^{\otimes}}(p,n,s)\asymp L \min\{\sqrt{p}, \,s\log^{2/\alpha}(ep/s)\} + \log\log (8n)
\end{equation}
for some $L \in \bigl[1, \sqrt{\log\log(8n)}\bigr]$, and in the case of polynomial-decaying tails
\begin{equation}\label{eq:rate-result-poly}
    v^*_{\mathcal{P}_{\alpha, K}^{\otimes}}(p,n,s)\asymp L \min\{p^{\frac{2}{\alpha}\vee \frac{1}{2}}, \, s(p/s)^{\frac{2}{\alpha}}\} + \log\log (8n)
\end{equation}
for some $L \in [1, \log\log (8n)]$. Upper and lower bounds with explicit dependence on iterated logarithmic factors are detailed in \Cref{subsec:discuss_gap_subweibull} and \cref{subsec:discuss_gap_robust}.

Note that as the level of sparsity $s$ increases from $1$ to $p$, there is a changeover in the dominating term in both \eqref{eq:rate-result-exp} and \eqref{eq:rate-result-poly}.  Depending on whether directly involving $s$ in the final rates, we refer to the resulting regimes the sparse and dense regimes,  with their transition boundaries $s_\mathcal{G}^*$ and $s_{\mathcal{P}}^*$ determined by
$s_\mathcal{G}^*\log^{2/\alpha}(ep/s_\mathcal{G}^*) = \sqrt{p}$ for $\mathcal{G}_{\alpha, K}^{\otimes}$ and $s_{\mathcal{P}}^*(p/s_{\mathcal{P}}^*)^{\frac{2}{\alpha}} = p^{\frac{2}{\alpha}\vee \frac{1}{2}}$ for $\mathcal{P}_{\alpha, K}^{\otimes}$.

The transition boundaries are demonstrated in \Cref{fig:rate_plot}. When $P_e \in \mathcal{P}_{\alpha, K}^{\otimes}$, the minimax testing rate transition occurs at  $s^*_{\mathcal{P}} = p^{1/2-1/(\alpha-2)}$ when $\alpha \geq 4$.  When $\alpha \in [2,4)$, there is essentially no sparse regime, since in this range of $\alpha$, there does not exist any $s \in [1,p]$ such that $s(p/s)^{\alpha/2} < p^{2/\alpha}$. This observation implies that testing sparse change is information-theoretically as hard as testing dense changes when $\alpha \in [2,4]$, as the minimax testing rate is independent of $s$. When $P_e \in \mathcal{G}^{\otimes}_{\alpha, K}$, the transition boundary {takes a simpler form of} $s^*_{\mathcal{G}} \asymp \sqrt{p}\log^{-2/\alpha}(ep)$ for $\alpha \in (0, 2]$. 

\begin{figure}[!h]
    \centering
    \includegraphics[width = 0.8\linewidth]{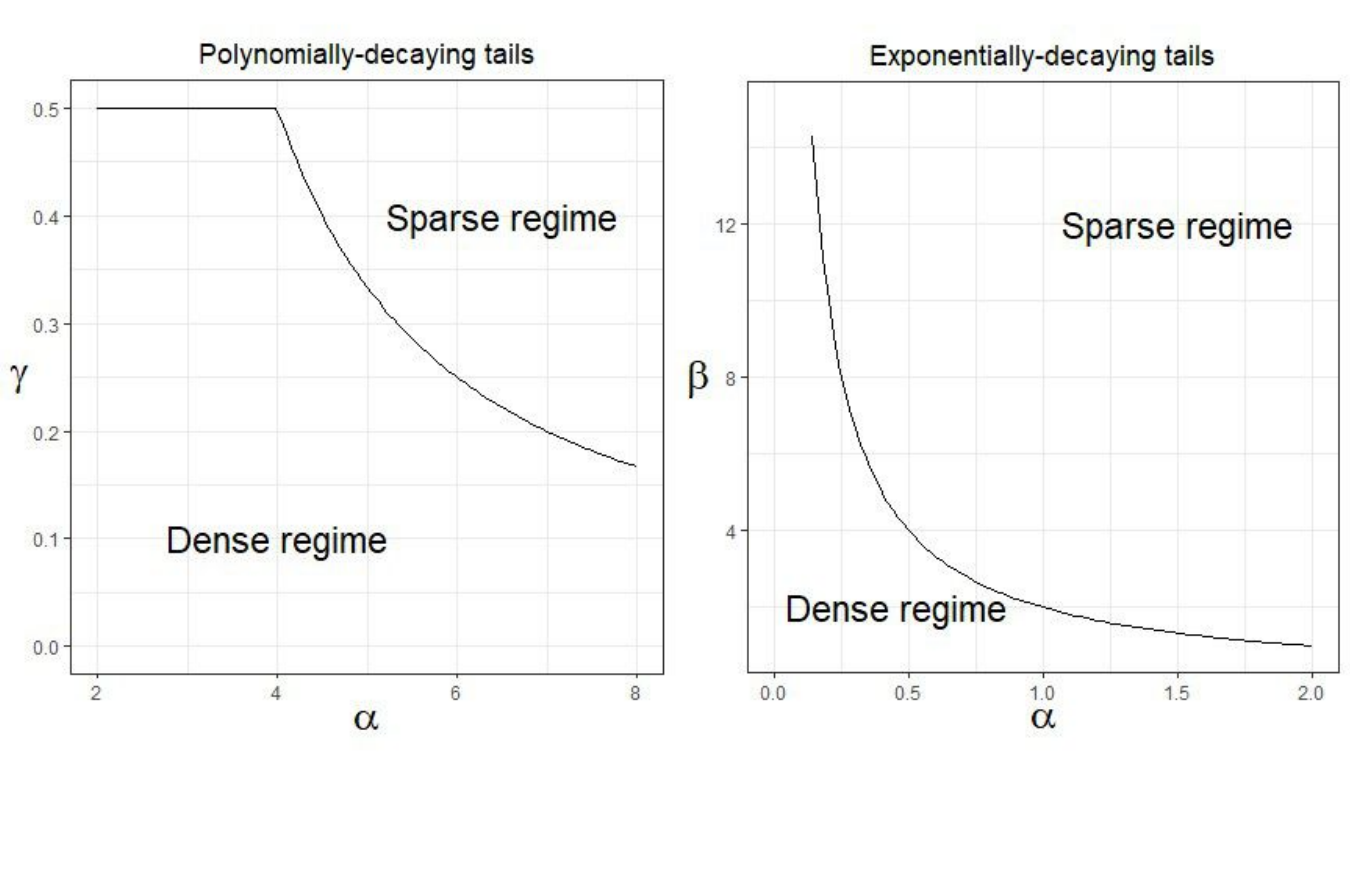}
    \vspace{-4em}
    \caption{Minimax testing rate transition boundaries between dense and sparse regimes when the distribution of the error matrix belongs to $\mathcal{P}_{\alpha, K}^{\otimes}$ (left panel) and $\mathcal{G}_{\alpha, K}^{\otimes}$ (right panel). The left panel plots the curve $\gamma(\alpha) = (\alpha-2)^{-1} \wedge 1/2$ for $\alpha \in [2, \infty)$, and the two regimes are separated by $s^*_{\mathcal{P}} = p^{1/2-\gamma}$.
    The right panel plots the curve $\beta(\alpha) = 2/\alpha$ for $\alpha \in (0,2]$, and the two regimes are separated by $s^*_{\mathcal{G}} \asymp \sqrt{p}\log^{-\beta}(ep)$.}
     \label{fig:rate_plot}
\end{figure}

The upper bounds on \eqref{eq:rate-result-exp} and \eqref{eq:rate-result-poly} are each obtained by analysing two different testing procedures separately, targeting at dense and sparse regimes. In practice, the level of sparsity is usually unknown and we address the issue of adaptation to sparsity in \Cref{sec:adaptsparsity}. We show that there is no additional cost of adaptation in achieving the optimal minimax testing rates. We then illustrate our main results via simulation studies in \Cref{Sec:simulations}. Finally, in \Cref{Sec:extensions},  we consider extensions in four interesting directions, including 
(1) testing against multiple change points, (2) accounting for temporal dependence among observations, (3) addressing the case where the noise matrix entries have fewer than two finite moments, and (4) examining the situation where {an additional minimal spacing condition is imposed between the potential change point and the boundary time points}.  Generally speaking, we develop near-optimal procedures under these more general settings and demonstrate an interesting phenomenon in (4)---tests can achieve sub-Gaussian performances under heavy-tailed noise assumptions if it is known that the potential change point is away from the endpoints by a small distance that only depends on the model parameters $p$, $n$ and~$s$ through logarithmic terms.

To highlight our contributions relative to the existing literature, we note that in previous works on robust mean change point testing problems \citep[e.g.][]{yu2022robust, jiang2023robust}, change point locations are required to be comparable to the length of time series in order to achieve near-optimal guarantees.  Our results, in comparison, cover a much more general parameter space, where the change point locations may be arbitrarily close to the boundary. Compared to recent works on optimal mean change point testing problems without robustness \citep[e.g.][]{liu2021minimax, verzelen2020optimal1d}, our results allow for general classes of distributions beyond Gaussian and sub-Gaussian cases and quantify the costs of heavy-tailedness.  Finally, compared to relevant recent works on robust estimation in sequence models \citep[e.g.][]{comminges2021adaptive}, we investigate the more challenging case where noise entries have fewer than four  finite moments, and unveil a new phenomenon on the effects of sparsity that was previously unknown even in sequence models. More in-depth discussions on these works can be found in \Cref{subSec:literature}.

\subsection{Relation to existing literature}\label{subSec:literature}

Many real-world data such as financial returns and macroeconomic variables exhibit heavy-tail phenomena, which often violate the convenient sub-Gaussian/exponential assumptions adopted by data analysts. Statistical procedures that mitigate the effects of heavy-tailed and/or contaminated data, therefore, have been sought after in practice, see \cite{resnick2007heavy} for more in-depth discussions. 
In the realm of change point analysis, one line of recent works \citep[e.g.][]{cho2022regressheavytail,wang2022testingregress,xu2022regresstemporal} consider change point models with exponentially-decaying heavy-tailed noise and study the performance of non-robust algorithms that perform well under sub-Gaussian noise assumptions. Theoretical results therein all require stronger assumptions on the strength of change points compared to the setting under sub-Gaussian assumptions.  One motivation for our work is thus to investigate to what extent ideas from robust statistics are useful in analysing change points within high-dimensional heavy-tailed data streams.

Another line of work develops algorithms with robust components for change point analysis. In particular, in the univariate mean change setting, \citet{Fearnhead2019outlier} propose to swap the commonly used $\ell_2$-loss with other loss functions, including the biweight and Huber loss functions to enhance robustness against heavy-tailed errors in localising change points. \cite{li2021adversarially} deploy a robust mean estimator with a scanning window idea to estimate multiple change point locations under a more general Huber contamination framework. Their results show that, in terms of the minimax detection boundary, there is essentially no cost of relaxing the sub-Gaussian assumption to more flexible finite moment assumptions. Robust change point analysis methodologies have also been proposed in other contexts including change point detection in stump models \citep{mukherjee2022robust}, high-dimensional linear models \citep{liu2022change} and functional time series \citep{wegner2022robust}, as well as detecting covariance changes \citep{ramsay2020robust} and distributional changes \citep{chenouri2020robust}. There is also work exploring rank-based methods and focusing on univariate time series data \citep{Gombay1998rank,dehling2013longrange,betken2016wilcoxon,gerstenberger2018robustwilcoxon,betken2022rank}. We remark that some works focus on robust online change point detection \citep[e.g.][]{unnikrishnan2011minimax,cao2017robust,molloy2017misspecified}, which is different from the offline version that we study here\footnote{In an online change point analysis problem, one monitors the change points while collecting data.  In the offline context, the change point analysis is conducted retrospectively. }.

Closer to our high-dimensional mean change point setting, \cite{yu2022robust} and \cite{jiang2023robust}  both consider the testing problem \eqref{Eq:H_0_H_1} and propose robust methodology targeting at sparse and dense changes, respectively.  \citet{yu2022robust} formulate the problem as testing location parameter change, which in contrast to our model, allows the noise distribution to have mean parameter being infinite. Their methodology involves a U-statistic with an anti-symmetric and bounded kernel, followed by an $\ell_\infty$ aggregation. The power analysis of their proposed test (cf.~Theorem 3.3 therein) along with subsequent remarks provide finite sample results showing that their test is able to detect the change point when it is sufficiently away from the boundary. In particular, their Remark 4 suggests that detection is only possible for local alternative when the change point location satisfies
\[
t_0 \wedge (n-t_0) \geq c\sqrt{n\log(np)},
\]
 for some absolute constant $c >0$. In comparison, our results hold for the parameter space $\Theta(p, n, s, \rho)$ that covers all possible locations of change points. Moreover, as discussed in Remark 5 therein, their procedure achieves the sparse regime rate in $v^*_{{N}^{\otimes}(0, 1)}(p, n, s)$ up to a poly-logarithmic factor in $n$ and $p$ only when $t_0 = c n$ for some fixed constant $c \in (0,1)$. \cite{jiang2023robust} consider the same mean change point testing problem as ours but without sparsity constraints, while allowing for a form of weak spatial dependence across coordinates. In terms of methodology, they also utilise a robustified U-statistic and combine it with the self-normalisation technique. They derive the limiting distributions of the proposed test under the sequential asymptotics. It is discussed in Remark 2 therein that, asymptotically, their test achieves the dense rate $v^*_{{N}^{\otimes}(0, 1)}(p, n, p)$ up to a logarithmic factor in $n$, when the change point location satisfies $t_0 = c n$ for some fixed constant $c \in (0,1)$.
 
  In comparison to the results in \cite{yu2022robust} and \cite{jiang2023robust}, our results are non-asymptotic and reveal that when considering the whole parameter space $\Theta(p, n, s, \rho)$, where the change point locations may be arbitrarily close to the boundary, the fundamental difficulty of the testing problem changes drastically. In particular, the heavy-tailed distributions manifest a strong impact on the minimax testing rates and one can no longer achieve the Gaussian-like minimax testing rates, especially in the sparse regime. Moreover, our results are generally sharper in the sense that we characterise the minimax testing rates up to a factor of at most $\log\log(8n)$.

Lastly, we mention two recent works---\cite{comminges2021adaptive} and \cite{liu2021minimax}---that are technically related to ours. \cite{comminges2021adaptive} study the sparse sequence models where
\[
Y_i = \theta_i + \sigma \xi_i, \quad i = 1,\dotsc,p.
\]
The noise random variables $\xi_i$'s are i.i.d.\ with some distribution belonging to either $\mathcal{G}_{\alpha,K}$ or $\mathcal{P}_{\alpha,K}$, and the signal $\theta$ is assumed to be $\ell_0$-sparse with sparsity $s$. They provide minimax rates for estimating~$\|\theta\|_2$ among other results (cf.\ Table 1 therein) under these two noise classes. Our results recover theirs when $n$ is of constant order and provide a link between these two problems, while significantly generalising to the arbitrary $n$ case. To achieve the minimax estimation rates, \cite{comminges2021adaptive} first estimate $\theta$ via a penalised least squares estimator $\hat{\theta}$ in the sparse regime, and use $\|\hat{\theta}\|_2$ as an estimator for $\|\theta\|_2$. We adopt a different yet more intuitive hard-thresholding methodology in extracting information from sparse changes. Moreover, their upper bound rate under $\mathcal{P}_{\alpha, K}$ requires the assumption of bounded fourth moments, i.e.\ $\alpha \geq 4$. We investigate the more challenging case when $\alpha \in (2, 4)$, as well as $\alpha \leq 2$ in \Cref{Sec:extensions}, and unveil a previously unknown phase transition behaviour even when $n$ is of constant order. 

\cite{liu2021minimax} study the same testing problem \eqref{Eq:H_0_H_1} as ours under the Gaussian noise assumption while also considering spatial and temporal dependence. Their proposed testing procedure computes CUSUM-type statistics \citep[e.g.][]{page1955test} at each location on a dyadic grid. This also serves as the starting point of various procedures in our work. By comparing the results in \Cref{table:testing_rate} with the rate $v^*_{{N}^{\otimes}(0, 1)}(p, n, s)$ derived by \cite{liu2021minimax} under the Gaussianity assumption, we show that the heavy-tailed errors mainly affect the difficulty of testing sparse changes, whereas in the $\mathcal{P}_{\alpha,K}^{\otimes}$ case with $\alpha \in [2,4)$, the dense rate also changes dramatically. In the special case of $p =s =1$, our results (both upper and lower bounds in all cases) reduce to $\log\log(8n)$, which is the same rate as $v^*_{{N}^{\otimes}(0, 1)}(1, n, 1)$. This shows that, in the univariate setting, there is no extra cost of allowing for heavy-tailed errors in testing change point compared to Gaussian errors. 

\subsection{Notation}
We introduce the notation used throughout the paper. Let $\mathbb{Z}^+$ denote the set of positive integers. For $d \in \mathbb{Z}^+$, write $[d]:=\{1, \dotsc, d\}$. Let $\lceil \cdot \rceil$, $\lfloor \cdot \rfloor$ and $\Gamma(\cdot)$ denote the ceiling, floor and Gamma functions, respectively. Given $a, b \in \mathbb{R}$, denote $a \wedge b := \min(a,b)$ and $a \vee b := \max(a,b)$. For a set~$S$, use $\mathbbm{1}_S$ and $|S|$ to denote its indicator function and cardinality respectively. For a vector $v = \bigl(v(1), \dotsc, v(d)\bigr)^\top\in \mathbb{R}^d$, define 
$\|v\|_1 := \sum_{i=1}^d |v(i)|$
and $\|v\|_\infty := \max_{i \in [d]} |v(i)|$. For two vectors $v,w \in \mathbb{R}^{d}$, we use $\langle v,w\rangle$ to denote their inner product. For a matrix $A = (A_{ij})_{i \in [d_1], j \in [d_2]} = \bigl(A_j(i)\bigr)_{i \in [d_1], j \in [d_2]} \in \mathbb{R}^{d_1 \times d_2}$, denote the Frobenius norm $\|A\|_{\mathrm{F}} := \bigl(\sum_{i=1}^{d_1} \sum_{j=1}^{d_2} A_{ij}^2\bigr)^{1/2}$, the operator norm $\|A\|_2 := \max_{v\in \mathbb{R}^{d_2}, v\neq 0} \|Av\|_2/\|v\|_2$, the two-to-infinity norm $\|A\|_{2 \rightarrow \infty} := \max_{v\in \mathbb{R}^{d_2}, v\neq 0} \|Av\|_\infty/\|v\|_2$ and the max norm $\|A\|_{\max} := \max_{i \in [d_1], j \in [d_2]} |A_{ij}|$. For two probability measures $P$ and $Q$ on a measurable space $(\mathcal{X}, \mathcal{A})$, denote the total variation distance between them as $\mathrm{TV}(P,Q) := \sup_{A \in \mathcal{A}} |P(A) - Q(A)|$. If, in addition, $P$ and $Q$ are absolute continuous with respect to some base measure $\lambda$, then define the squared Hellinger distance between them as $\mathrm{H}^2(P,Q) := \int_{\mathcal{X}} \bigl(\sqrt{p(x)} - \sqrt{q(x)}\bigr)^2 \, \lambda(\mathrm{d}x)$, where $p$ and $q$ are the Radon--Nikodym derivatives of $P$ and $Q$ with respect to $\lambda$ respectively. When the distribution is clear from the context, let $\mathbb{P}$, $\mathbb{E}$ and $\mathrm{Var}$ be probability, expectation and variance operators respectively. {Finally, we write $a \gtrsim b$ if $a \geq C_1 b$, write $a \lesssim b$ if $a \leq C_2 b$, and write $a \asymp b$ if $C_3 b \leq a \leq C_4 b$, for some constants $C_1, C_2, C_3, C_4 > 0$ that depend only on $\alpha$, $K$, and $\varepsilon$, which are treated as constants throughout this work.}

\section{Testing under sub-Weibull noise distributions}
\label{Sec:hidimtest}

In this section, we consider the entries of the noise matrix $E$ to be independent random variables and each follows a  distribution belonging to the class $\mathcal{G}_{\alpha, K}$; see \Cref{def-galphak}. Recall the definitions of minimax testing rates, $v^*_{\mathcal{G}_{\alpha, K}}(p,n,s)$, the worst case testing error of a given test $\phi$, $\mathcal{R}_{\mathcal{G}}(\rho, \phi)$, and the minimax testing error $\mathcal{R}_{\mathcal{G}}(\rho)$, from \Cref{def:minimax_testing}. For notational simplicity, we use ${\mathcal{G}}$ in place of~${\mathcal{G}_{\alpha, K}^\otimes}$. 

As mentioned in \Cref{Sec:main result}, we shall establish an upper bound on $\mathcal{R}_{\mathcal{G}}(\rho)$ by developing two testing procedures, targeting at  dense and sparse change signals. We provide the details of these two testing procedures with corresponding theoretical guarantees leading to the dense and sparse rates in Sections~\ref{subSec:subWeibull_dense} and \ref{subSec:subWeibull_sparse}. The minimum between the two rates serves as an upper bound on $\mathcal{R}_{\mathcal{G}}(\rho)$ and we prove and discuss its optimality in \Cref{subsec:discuss_gap_subweibull}.

\subsection{Testing for dense signals}
\label{subSec:subWeibull_dense}
To derive the dense rate, we consider the testing procedure that is used in \cite{liu2021minimax}. Consider $\mathcal{T} := \bigl\{1,2,4,\dotsc,2^{\lfloor\log_2(n/2)\rfloor}\bigr\}$ and a CUSUM-type statistic
\begin{equation*}
Y_t := \frac{\sum_{i=1}^t X_i - \sum_{i=1}^t X_{n+1-i}}{\sqrt{2t}}.
\end{equation*}
We define our test as
\begin{equation} \label{eq:test_sub_dense}
\phi_{\mathcal{G}, \mathrm{dense}}:= \mathbbm{1}_{\{\max_{t \in \mathcal{T}} A_t > r\}},
\end{equation}
where 
\begin{equation} \label{eq:A_dense}
    A_{t} := \sum_{j=1}^p \bigl\{Y_{t}^2(j)-1\bigr\}
\end{equation}
and $r > 0$ is the detection threshold specified in \Cref{thm:weibullupperbound_dense}. Note that it suffices to test for a change point over the dyadic grid $\mathcal{T}$ since for any true change point location $t_0\in[n-1]$ under the alternative, there exists some $t\in\mathcal{T}$ such that $t\leq t_0\leq 2t$, approximating the true change location up to a constant factor. The logarithmic size of $\mathcal{T}$ is the main reason behind the appearance of the $\log\log (8n)$ terms in our bounds below. The following theorem establishes the theoretical guarantee of the test $\phi_{\mathcal{G}, \mathrm{dense}}$. 

\begin{thm} \label{thm:weibullupperbound_dense}
Let $0 < \alpha \leq 2$ and $K > 0$. For any $\varepsilon \in (0,1)$, there exist constants $C_1, C_2> 0$ depending only on $\alpha$, $K$ and $\varepsilon$, such that the test $\phi_{\mathcal{G}, \mathrm{dense}}$ defined in~\eqref{eq:test_sub_dense} with
\[
r = C_1\bigl(\sqrt{p\log\log(8n)} + \log\log(8n)  \bigr)
\]
satisfies 
\[
\mathcal{R}_{\mathcal{G}}(\rho, \phi_{\mathcal{G}, \mathrm{dense}}) \leq \varepsilon,
\]
as long as $\rho^2 \geq C_2 v_{\mathcal{G}, \mathrm{dense}}^{\mathrm{U}},$ where 
\[
v_{\mathcal{G}, \mathrm{dense}}^{\mathrm{U}} := \sqrt{p \log \log(8n)} + \log \log(8n).
\]
\end{thm}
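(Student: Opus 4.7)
The plan is to bound the type-I and type-II errors separately so that each is at most $\varepsilon/2$.

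\emph{Type-I control.} Under $\mathrm{H}_0$, every column of $\theta$ equals some common $\mu\in\mathbb{R}^p$, so for each $t\in\mathcal{T}\subseteq\{1,\dots,\lfloor n/2\rfloor\}$ the vector $Y_t=\bigl(\sum_{i=1}^t E_i-\sum_{i=1}^t E_{n+1-i}\bigr)/\sqrt{2t}$ has coordinates that are averages of $2t$ independent $\mathcal{G}_{\alpha,K}$ noise entries drawn from disjoint indices; in particular $\mathbb{E}[Y_t(j)]=0$, $\mathrm{Var}(Y_t(j))=1$, and $Y_t(j)$ is sub-Weibull of order $\alpha$ with an Orlicz norm controlled by a constant depending only on $(\alpha,K)$ (by the standard sub-Weibull convolution bound). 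Consequently the centred squared entries $Z_{t,j}:=Y_t(j)^2-1$ are independent across $j$, mean zero, with Orlicz $\psi_{\alpha/2}$-norm bounded by a constant. A Bernstein-type inequality for sums of independent centred sub-Weibull random variables of order $\alpha/2\le 1$ (of the form $\mathbb{P}(|A_t|>x)\le 2\exp\bigl(-c\min\{x^2/p,\,x\}\bigr)$, valid because the variance of each $Z_{t,j}$ is bounded) then gives, for each fixed $t$,
\[
\mathbb{P}_{\theta,P_e}(A_t>r)\;\le\;2\exp\bigl(-c\min\{r^2/p,\,r\}\bigr).
\]
Since $|\mathcal{T}|\le\log_2(n)+1\lesssim \log\log(8n)\cdot\log(8n)/\log\log(8n)$, a simple union bound followed by calibrating $r=C_1\bigl(\sqrt{p\log\log(8n)}+\log\log(8n)\bigr)$ with $C_1$ large enough yields $\mathbb{P}_{\theta,P_e}(\max_{t\in\mathcal{T}}A_t>r)\le\varepsilon/2$, uniformly over $\theta\in\Theta_0(p,n)$ and $P_e\in\mathcal{G}$.

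\emph{Type-II control.} Fix $\theta\in\Theta^{(t_0)}(p,n,s,\rho)$. By the time-reversal symmetry built into $Y_t$, we may assume without loss of generality that $t_0\le n/2$, and choose $t^\ast\in\mathcal{T}$ with $t^\ast\le t_0<2t^\ast$ (such $t^\ast$ exists because $\mathcal{T}$ is the dyadic grid up to $\lfloor n/2\rfloor$). A direct calculation shows
\[
m:=\mathbb{E}[Y_{t^\ast}]=\sqrt{t^\ast/2}\,(\mu_1-\mu_2),\qquad \|m\|_2^2=\tfrac{t^\ast}{2}\|\mu_1-\mu_2\|_2^2\;\gtrsim\;\frac{t_0(n-t_0)}{n}\|\mu_1-\mu_2\|_2^2\;\ge\;\rho^2,
\]
where we used $t^\ast\ge t_0/2$ and $(n-t_0)\ge n/2$. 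Writing $N:=Y_{t^\ast}-m$, which has independent centred sub-Weibull coordinates of unit variance,
\[
A_{t^\ast}=\|m+N\|_2^2-p=\|m\|_2^2+2\langle m,N\rangle+\bigl(\|N\|_2^2-p\bigr).
\]
The quadratic noise term $\|N\|_2^2-p$ is controlled by exactly the same Bernstein bound used in the type-I analysis (with $t$ replaced by $t^\ast$), so that it is at most of order $\sqrt{p\log(1/\varepsilon)}+\log(1/\varepsilon)$ with probability at least $1-\varepsilon/4$. The cross term $2\langle m,N\rangle$ is a sum of independent centred sub-Weibull summands with variance $4\|m\|_2^2$ and Orlicz $\psi_\alpha$-norm controlled by $\|m\|_\infty$; another Bernstein-type inequality bounds $|2\langle m,N\rangle|\lesssim \|m\|_2\sqrt{\log(1/\varepsilon)}+\|m\|_\infty(\log(1/\varepsilon))^{1/\alpha}\le\|m\|_2^2/4+O(1)$ with probability at least $1-\varepsilon/4$. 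Combining these two bounds with the deterministic lower bound $\|m\|_2^2\gtrsim\rho^2$ shows that $A_{t^\ast}\ge c\rho^2-C'\bigl(\sqrt{p\log\log(8n)}+\log\log(8n)\bigr)>r$ with probability at least $1-\varepsilon/2$, provided $\rho^2\ge C_2 v_{\mathcal{G},\mathrm{dense}}^{\mathrm{U}}$ for a sufficiently large constant $C_2$.

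\emph{Main obstacle.} The only genuinely technical input is the Bernstein-type concentration for $A_t=\sum_j(Y_t(j)^2-1)$ that yields the clean $\min\{x^2/p,\,x\}$ tail despite the underlying noise being only sub-Weibull of order $\alpha\le 2$. The point is that $Y_t(j)$ has \emph{bounded variance} (exactly $1$) so the sub-exponential, rather than purely sub-Weibull, regime determines the small-deviation behaviour of the squared entries; the resulting $\log\log(8n)$ (rather than $(\log\log(8n))^{2/\alpha}$) correction after the union bound is precisely what is reflected in the rate $v_{\mathcal{G},\mathrm{dense}}^{\mathrm{U}}$. Once this concentration is in hand, both the type-I and type-II pieces are routine.
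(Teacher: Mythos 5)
Your Type~I analysis rests on the claimed inequality
\[
\mathbb{P}\bigl(|A_t| > x\bigr) \;\le\; 2\exp\bigl(-c\min\{x^2/p,\;x\}\bigr),
\]
uniformly over $t\in\mathcal{T}$, which you justify by saying bounded variance forces the sub-exponential regime. That reasoning is incorrect for $\alpha < 2$. Bounded variance controls only the small-deviation (Gaussian) piece $x^2/p$; the moderate-to-large deviation piece is governed by the Orlicz $\psi_{\alpha/2}$-norm of the summands $Y_t(j)^2-1$, and for $\alpha<2$ this is genuinely sub-Weibull of order $\alpha/2<1$. Concretely, take $\alpha=1$, $t=1$, $p=1$: then $Y_1(1)$ is a sub-exponential variable, so $Y_1(1)^2$ is sub-Weibull of order $1/2$, and $\mathbb{P}(A_1>x)$ decays like $\exp(-c\sqrt{x})$, not $\exp(-cx)$. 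With the correct worst-case tail $\exp(-c\min\{x^2/p,\,x^{\alpha/2}\})$, a union bound over the $\asymp\log n$ scales in $\mathcal{T}$ forces $r^{\alpha/2}\gtrsim\log\log(8n)$, i.e.\ $r\gtrsim(\log\log(8n))^{2/\alpha}$, which overshoots the claimed threshold whenever $\alpha<2$. So the lynchpin inequality you flagged as the ``only genuinely technical input'' is exactly where the proposal breaks.

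The paper avoids this by making the tail bound explicitly $t$-dependent. Because $Y_t(j)$ is a weighted sum with coefficients of magnitude $(2t)^{-1/2}$, the Hanson--Wright-type inequality (Proposition~\ref{prop:quadraticweibulltail}, applied to a block-diagonal matrix) yields four regimes, two of which, $(r\sqrt{t})^{2\alpha/(2+\alpha)\wedge 2/3}$ and $(rt)^{\alpha/2\wedge 1/2}$, improve rapidly with $t$. The paper then replaces the crude union bound on those two terms by the geometric-sum estimate in Lemma~\ref{lemma:exp_decay_sum}: the sum over $t\in\mathcal{T}$ of these exponentially decaying probabilities is dominated by the $t=1$ term and costs only a constant factor, so no iterated-log penalty is paid on the sub-Weibull regime. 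Only the two $t$-independent regimes are union-bounded, and those produce exactly $\sqrt{p\log\log(8n)}+\log\log(8n)$. Your Type~II analysis inherits the same flawed bound for $\|N\|_2^2-p$, but since you only need a constant-probability control there, this can be patched by Chebyshev (as the paper does in~\eqref{Eq:weibull_dense_alt}); the Type~I gap cannot be patched without incorporating the $t$-dependence.
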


Note that this simple test actually achieves the same rate in the dense regime  as $v_{N^{\otimes}(0,1)}^{*}$ defined in~\eqref{Eq:GaussianRate}, even though the noise distributions possess heavier tails than Gaussian and sub-Gaussian distributions. One key technical ingredient is a careful analysis of the probability of the Type I error using \Cref{lemma:exp_decay_sum} instead of a crude union bound.

\subsection{Testing for sparse signals} \label{subSec:subWeibull_sparse}

To derive the sparse rate, we employ a sample-splitting testing procedure similar to that proposed in \citet[Section 5.1]{kovacs2024optimistic}. Intuitively, we first use half of the data to identify coordinates that exhibit strong signals of change, and then use the other half to aggregate the selected `signal' coordinates. Such a methodology is applicable for testing potential change locations $t \in \mathcal{T}\setminus\{1\}$ and we deal with testing the special case of $t=1$ separately. 

To be specific, for $t \in \mathcal{T}\setminus\{1\}$, we define a sample-splitting version of \eqref{eq:A_dense} that
\begin{equation}\label{eq:test_sub_sparse}
    \phi_{\mathcal{G}, \mathrm{sparse}}:= \mathbbm{1}_{\{\max_{t \in \mathcal{T}\setminus\{1\}} A_{t,a} > r \}} \vee \mathbbm{1}_{\{A_{1,a} > r_1\}},
\end{equation}
with
\begin{equation}\label{eq:samplespliting}
    Y_{t,1} := \frac{\sum_{i=1}^{t/2} X_{2i-1}-\sum_{i=1}^{t/2} X_{n+2-2i}}{\sqrt{t}}, \qquad Y_{t,2} := \frac{\sum_{i=1}^{t/2} X_{2i}-\sum_{i=1}^{t/2} X_{n+1-2i}}{\sqrt{t}}
\end{equation}
and
\begin{align}\label{eq:A_t,a}
A_{t,a} := \begin{cases}
    \sum_{j=1}^p \bigl\{Y_{t,1}^2(j)-1\bigr\}\mathbbm{1}_{ \{ |Y_{t,2}(j)| \geq a \} }, &t \geq 2,  \\
    \sum_{j=1}^p \bigl\{Y_{t}^2(j)-1\bigr\}\mathbbm{1}_{ \{ |Y_{t}(j)| \geq a \} }, &t = 1,
\end{cases}
\end{align}
where $a, r, r_1$ are specified in \Cref{thm:weibullupperbound_sparse}. 

\begin{thm} \label{thm:weibullupperbound_sparse}
Let $0 < \alpha \leq 2$ and $K > 0$. For any $\varepsilon \in (0,1)$, there exist constants $C_1, C_2, C_3, C_4> 0$ depending only on $\alpha$, $K$ and $\varepsilon$, such that the test $\phi_{\mathcal{G}, \mathrm{sparse}}$ defined in~\eqref{eq:test_sub_sparse} with
\[
a = C_1\bigl[\log^{1/\alpha}(ep/s) + s^{-1/2} \log^{1/2}\{\log(8n)\}\bigr],
\]
\[
r = C_2\bigl(\sqrt{s\log\log(8n)} + \log\log(8n)  \bigr) \quad \mbox{and} \quad r_1 = C_3 s\log^{2/\alpha}(ep/s),
\]
satisfies that
\[
\mathcal{R}_{\mathcal{G}}(\rho, \phi_{\mathcal{G}, \mathrm{sparse}}) \leq \varepsilon,
\]
as long as $\rho^2 \geq C_4 v_{\mathcal{G}, \mathrm{sparse}}^{\mathrm{U}},$ where 
\[
v_{\mathcal{G}, \mathrm{sparse}}^{\mathrm{U}} := s\log^{2/\alpha}(ep/s) + \log\log(8n).
\]
\end{thm}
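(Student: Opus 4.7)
The plan is to control the Type~I and Type~II errors of the two components of $\phi_{\mathcal{G},\mathrm{sparse}}$ separately, exploiting the sample splitting between $Y_{t,1}$ and $Y_{t,2}$ when $t\geq 2$ (so that the hard-thresholded selection via $Y_{t,2}$ is independent of the quadratic aggregation via $Y_{t,1}$), and handling the boundary component $t=1$ by a more direct but coarser argument since no such splitting is available.

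For $t\geq 2$ under $\mathrm{H}_0$, I would condition on $Y_{t,2}$ and write $S:=\{j:|Y_{t,2}(j)|\geq a\}$. The sub-Weibull tail $\mathbb{P}(|Y_{t,2}(j)|\geq a)\leq 2\exp\{-(a/K')^{\alpha}\}$ together with the stated choice of $a$ gives $\mathbb{E}|S|\lesssim s$ and, via Bernstein for sums of Bernoullis, $|S|\lesssim s+\log\log(8n)$ on an event of probability $1-o(1/|\mathcal{T}|)$. Conditionally on $S$, the statistic $A_{t,a}=\sum_{j\in S}(Y_{t,1}^2(j)-1)$ is a centred sum of $|S|$ independent sub-Weibull$(\alpha/2)$ terms, to which \Cref{lemma:exp_decay_sum} applies and produces deviations of order $\sqrt{|S|\log\log(8n)}+\log\log(8n)\lesssim r$; a union bound over the $O(\log n)$ elements of $\mathcal{T}$ then closes the Type~I analysis. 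Under $\mathrm{H}_1$ with change at $t_0$, I pick $t\in\mathcal{T}\setminus\{1\}$ maximal with $t\leq t_0\wedge(n-t_0)$, so that $t\gtrsim t_0(n-t_0)/n$ and $\mathbb{E}[Y_{t,1}(j)]=\mathbb{E}[Y_{t,2}(j)]=(\sqrt{t}/2)\Delta_j$ with $\Delta:=\mu_1-\mu_2$. Split the support of $\Delta$ via $S_+:=\{j:|\Delta_j|\geq 4a/\sqrt{t}\}$: the case $\sum_{j\notin S_+}\Delta_j^2>\|\Delta\|_2^2/2$ forces $t\|\Delta\|_2^2<32\,sa^2$, which under $\rho^2\geq C_4 v^{\mathrm{U}}_{\mathcal{G},\mathrm{sparse}}$ contradicts $t\|\Delta\|_2^2\gtrsim \rho^2$ for $C_4$ large. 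In the remaining case $\sum_{j\in S_+}\Delta_j^2\geq \|\Delta\|_2^2/2$, for each $j\in S_+$ the centred sub-Weibull noise of $Y_{t,2}(j)$ exceeds $a$ with probability at most $1/4$, whence $\mathbb{P}(j\in S)\geq 3/4$, and independence yields
\[
\mathbb{E}[A_{t,a}]=\sum_j \tfrac{t}{4}\Delta_j^2\,\mathbb{P}(j\in S)\gtrsim t\|\Delta\|_2^2\gtrsim \rho^2\geq 2r,
\]
after which the same conditional concentration argument delivers $A_{t,a}\geq r$ with high probability.

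For the $t=1$ component, triggered when $t_0\in\{1,n-1\}$, I would analyse $A_{1,a}$ directly with the larger threshold $r_1$. Under $\mathrm{H}_0$, the bound $A_{1,a}\leq |S|\cdot\max_j Y_1^2(j)$, combined with $|S|\lesssim s$ (again by Bernstein on Bernoullis) and the sub-Weibull maximum bound $\max_j Y_1^2(j)\lesssim \log^{2/\alpha}(p)\asymp \log^{2/\alpha}(ep/s)$ in the relevant sparse regime, yields $A_{1,a}\lesssim s\log^{2/\alpha}(ep/s)\leq r_1$. Under $\mathrm{H}_1$ with $t_0\in\{1,n-1\}$, an analogous dichotomy on $S_+:=\{j:|\Delta_j|^2\geq C a^2\}$ with $t=1$ rules out the weak case, and in the strong case each $|Y_1(j)|$, $j\in S_+$, exceeds $a$ with constant probability, giving $A_{1,a}\gtrsim \|\Delta\|_2^2\gtrsim \rho^2\gtrsim r_1$.

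The main obstacle is the concentration step. Since $Y_{t,1}^2(j)-1$ has sub-Weibull tails of order $\alpha/2$, strictly heavier than the original $\alpha$, a generic exponential inequality would yield a loose polynomial-in-$\log$ tail rather than the sharp $\sqrt{|S|\log\log(8n)}+\log\log(8n)$ rate needed to match the Gaussian benchmark; obtaining this sharp rate uniformly over $t\in\mathcal{T}$ is precisely what \Cref{lemma:exp_decay_sum} provides. The second term $s^{-1/2}\log^{1/2}\{\log(8n)\}$ in the threshold $a$ is included exactly so that $sa^2\gtrsim \log\log(8n)$, which is what allows the weak-case contradiction above to close even when $s$ is as small as a constant.
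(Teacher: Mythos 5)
There is a genuine gap in two places, although the overall architecture of your proposal mirrors the paper's.

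\textbf{(1) Misidentification of the concentration tool.} You attribute the sharp Type~I deviation bound $\sqrt{|S|\log\log(8n)}+\log\log(8n)$ for $\sum_{j\in S}\bigl(Y_{t,1}^2(j)-1\bigr)$ to \Cref{lemma:exp_decay_sum}. That lemma is a purely arithmetical statement, $\sum_{i\geq 0}\exp\{-(x2^i)^\gamma\}\leq 2\exp(-x^\gamma)$, whose role is only to control the union bound over the dyadic grid $\mathcal{T}$ (since the subpolynomial tail terms scale with $t$ and hence decay geometrically). The actual concentration of the conditional quadratic form comes from \Cref{prop:quadraticweibulltail}, the Hanson--Wright type inequality for sub-Weibull random variables of order $\alpha$, whose mixed-tail structure (a Gaussian piece of order $\sqrt{|S|}$, an exponential piece, and two $t$-dependent sub-Weibull pieces) is exactly what yields the claimed rate \emph{after} summing with \Cref{lemma:exp_decay_sum}. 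Without invoking the quadratic-form inequality, treating $Y_{t,1}^2(j)-1$ as a generic sub-Weibull($\alpha/2$) variable and applying a sum bound gives a polylogarithmic, not iterated-logarithmic, threshold---precisely the failure mode you yourself flag as the ``main obstacle''.

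\textbf{(2) The Type~II concentration is not closed.} After establishing $\mathbb{E}[A_{t,a}]\geq 2r$ you assert ``the same conditional concentration argument delivers $A_{t,a}\geq r$,'' but this elides a step you cannot skip. Conditionally on $S$, the mean of $A_{t,a}$ is the \emph{random} quantity $\sum_{j\in S}\delta(j)^2$, and a lower bound on its expectation does not give a lower bound on the statistic: you still need $\sum_{j\in S}\delta(j)^2\gtrsim\|\delta\|_2^2$ with probability $1-O(\varepsilon)$. Your bound $\mathbb{P}(j\in S)\geq 3/4$ is far too weak for this. In the worst case the signal sits on a single coordinate, say $\delta(1)=\|\delta\|_2$, in which case $\sum_{j\in S}\delta(j)^2=\|\delta\|_2^2\mathbbm{1}_{\{1\in S\}}$ vanishes with probability up to $1/4$, and no concentration argument afterwards can rescue a Type~II error of $1/4$ when $\varepsilon<1/4$. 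The paper avoids this by (a) pushing the per-coordinate miss probability down to $O(1/\log(1/\varepsilon))$ via an $\varepsilon$-dependent constant inside the threshold $a$, and (b) handling the two regimes $\|\delta\|_2\gtrsim\sqrt{\log(1/\varepsilon)}\,\|\delta\|_\infty$ (Bernstein over the Bernoulli selections) and $\|\delta\|_\infty\asymp\|\delta\|_2$ (a direct single-coordinate argument) separately. Your proposal contains neither the sharper miss-probability bound nor the case split, and would fail on the concentrated-signal configuration.

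The $t=1$ boundary argument and the ``weak-case contradiction'' using $sa^2\gtrsim\log\log(8n)$ are essentially correct, provided you also note that the bound $\max_j Y_1^2(j)\lesssim\log^{2/\alpha}(p)\lesssim\log^{2/\alpha}(ep/s)$ relies on $s\leq\sqrt{p}$, the natural range in the sparse regime.
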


The idea of selecting coordinates via hard-thresholding has been widely used and in particular, in the change point context, considered in \citet{liu2021minimax} and \citet{kovacs2024optimistic} under the Gaussian noise assumption. Our use of sample-splitting prompts the independence between the coordinate selection step and the $\ell_2$-aggregation step.  It simplifies the analysis while achieving the optimal testing rate, as we will show in \Cref{subsec:discuss_gap_subweibull}. 

\subsection{Minimax optimality}\label{subsec:discuss_gap_subweibull}

We derive lower bounds on the minimax testing rate and discuss the optimality of our testing procedures in this section. First, note that the theoretical guarantees in
Theorems~\ref{thm:weibullupperbound_dense} and~\ref{thm:weibullupperbound_sparse} hold for any sparsity level $s$. In other words, for any given $s \in [1,p]$, we can simultaneously run the two testing procedures described in Sections~\ref{subSec:subWeibull_dense} and~\ref{subSec:subWeibull_sparse} and take $\phi_{\mathcal{G},\mathrm{dense}} \vee \phi_{\mathcal{G}, \mathrm{sparse}}$ as our test. This leads to an upper bound 
\begin{equation} \label{Eq:weibull_minimum_upperbound}
v_{\mathcal{G}, \mathrm{dense}}^{\mathrm{U}} \wedge v_{\mathcal{G}, \mathrm{sparse}}^{\mathrm{U}} = \Big\{s\log^{2/\alpha}(ep/s) \wedge \sqrt{p\log\log(8n)}\Big\} + \log\log(8n)
\end{equation}
on the minimax testing rate $v^*_{\mathcal{G}_{\alpha, K}^{\otimes}}(p,n,s)$. The following theorem presents a corresponding lower bound.
\begin{thm}\label{thm:subweibull-lowerbound-minimum}
    Let $0 < \alpha \leq 2$, $K \geq K_\alpha$ and $s \geq c$, for some absolute constant $c \geq 1$ and some constant $K_\alpha > 0$ depending only on $\alpha$. There exists some constant $c' > 0$ depending only on $\alpha$ and~$K$, such that $\mathcal{R}_{\mathcal{G}}(\rho) \geq 1/2$
whenever $\rho^2 \leq c' v_{\mathcal{G}}^{\mathrm{L}}$, where 
\[
v_{\mathcal{G}}^{\mathrm{L}} := \Big\{s\log^{2/\alpha}(ep/s) \wedge \sqrt{p \{\log \log(8n)\}^{\omega_1}}\Big\} + \log \log(8n)
\]
and $\omega_1 = \mathbbm{1}_{\bigl\{s > \sqrt{p \log \log(8n)}\bigr\}}$.
\end{thm}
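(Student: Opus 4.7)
The plan is to prove three separate lower bounds and combine them. Writing $A := s\log^{2/\alpha}(ep/s)$, $B := \sqrt{p\{\log\log(8n)\}^{\omega_1}}$ and $C := \log\log(8n)$, the target rate satisfies
\[
v^{\mathrm{L}}_\mathcal{G} = \min(A, B) + C \;\leq\; 2\max\bigl\{\min(A, \sqrt{p}),\; B\cdot\mathbbm{1}_{\{\omega_1=1\}},\; C\bigr\},
\]
so it suffices to establish (I) $\mathcal{R}_\mathcal{G}(\rho) \geq 1/2$ when $\rho^2 \lesssim C$; (II) $\mathcal{R}_\mathcal{G}(\rho) \geq 1/2$ when $\rho^2 \lesssim \min(A, \sqrt{p})$; and (III) $\mathcal{R}_\mathcal{G}(\rho) \geq 1/2$ when $\omega_1 = 1$ and $\rho^2 \lesssim \sqrt{p\log\log(8n)}$. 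All three proceed via mixture priors on the alternative and $\chi^2$-moment computations against the null. Bound (I) reduces to the univariate subproblem $p = s = 1$ under Gaussian noise (which lies in $\mathcal{G}_{\alpha, K}$ for $K \geq K_\alpha$, by a direct comparison of Orlicz $\psi_\alpha$-norms), using a uniform prior on $t_0 \in \mathcal{T} := \{1, 2, 4, \ldots, 2^{\lfloor\log_2(n/2)\rfloor}\}$ with signed magnitude $\mu_2 - \mu_1 = \rho\sqrt{n/(t_0(n-t_0))}\,\epsilon$, $\epsilon \sim \mathrm{Unif}(\{-1, 1\})$. The standard Ingster-type $\chi^2$ argument, as in \cite{liu2021minimax}, then yields (I).

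For (II), I specialise to $t_0 = 1$, setting $\mu_1 = v$ and $\mu_2 = 0$ under $H_1$, and $\mu_1 = \mu_2 = 0$ under $H_0$, so that $\|v\|_0 \leq s$ and $\|v\|_2^2 \gtrsim \rho^2$. Because $(X_2, \ldots, X_n)$ have identical laws under $H_0$ and $H_1$, they are uninformative for the test, and the problem collapses to single-observation sparse signal detection in $\mathbb{R}^p$ under sub-Weibull noise: test $Y = E_1$ against $Y = v + E_1$. I take the marginal density $p_0(x) \propto \exp(-|x/c_{K,\alpha}|^\alpha)$ (rescaled to unit variance and belonging to $\mathcal{G}_{\alpha, K}$ for $K \geq K_\alpha$), together with a Bernoulli--Rademacher prior $v(j) = \mu\epsilon_j$ with $\epsilon_j \in \{-1, 0, +1\}$ i.i.d.\ and $\mathbb{P}(\epsilon_j = \pm 1) = s/(2p)$. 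A direct computation shows the $\chi^2$-divergence between the mixture and the null factorises across coordinates as $(1 + (s/p)^2 T(\mu))^p - 1$, where $T(\mu)$ is a per-coordinate $\chi^2$-type quantity satisfying $T(\mu) \lesssim e^{c|\mu|^\alpha}$ in the large-$\mu$ regime of interest. Balancing yields $\mu \asymp \log^{1/\alpha}(ep/s)$ and hence $\mathbb{E}\|v\|_2^2 = s\mu^2 \asymp A$; using instead the locally polynomial small-$\mu$ behaviour of $T$ produces the complementary $\sqrt{p}$ cap through the same computation. This step is the change-point analogue of the sparse sequence-model bound in \cite{comminges2021adaptive}.

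For (III), I work under Gaussian noise and assume $\omega_1 = 1$, i.e.\ $s \geq \sqrt{p\log\log(8n)}$. Place a uniform prior on $t_0 \in \mathcal{T}$; conditional on $t_0$, set $\mu_2 - \mu_1 = \sqrt{n/(t_0(n-t_0))}\,v_{t_0}$ with $v_{t_0}$ drawn from the same random-support Bernoulli--Rademacher prior (independently across scales, rescaled so that $\mathbb{E}\|v_{t_0}\|_2^2 \asymp \rho^2$). The $\chi^2$-moment expansion then decomposes into within-scale Gaussian contributions and cross-scale overlap terms; the dyadic separation of $\mathcal{T}$ controls the latter while the support-overlap scale $\asymp s^2/p$ governs their magnitude. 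The condition $\omega_1 = 1$ is precisely what keeps these overlaps large enough for the argument to yield the full $\sqrt{p\log\log(8n)}$ rate, generalising the Gaussian dense multiscale analysis of \cite{liu2021minimax}.

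The main technical obstacle is the per-coordinate $\chi^2$-moment analysis in (II): one must exhibit an admissible density $p_0 \in \mathcal{G}_{\alpha, K}$ whose shifted-density ratio $p_0(\cdot - \mu)/p_0(\cdot)$ saturates the sub-Weibull tail bound, and verify both the large-$\mu$ exponential control $T(\mu) \lesssim e^{c|\mu|^\alpha}$ up to $\mu^\alpha \asymp \log(ep/s)$ and the small-$\mu$ polynomial control that yields the $\sqrt{p}$ cap. This is exactly where the factor $\log^{2/\alpha}$, rather than the Gaussian $\log$, arises; the hypothesis $K \geq K_\alpha$ is needed to place such a density in the class, while $s \geq c$ ensures the binomial variable $\|v\|_0$ satisfies $\|v\|_0 \leq s$ with probability bounded away from zero, so that one may condition on this event and restrict the prior to the parameter space $\Theta(p, n, s, \rho)$ without losing the target rate.
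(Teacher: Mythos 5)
Your three-way decomposition into (I), (II), (III) is sensible, and steps (I) and (III) can be carried out essentially as you describe (the paper itself cites existing Gaussian-noise lower bounds for both). The genuine gap is in step (II), and it is a fatal one for the rate you are claiming.

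Your plan for (II) is a $\chi^2$-divergence computation against a \emph{fixed} Weibull-tailed null density $p_0 \propto \exp(-|x/c|^\alpha)$, with a Bernoulli--Rademacher prior of intensity $\beta = s/p$ on the signal coordinates. When you factorise the $\chi^2$-divergence across coordinates you get a per-coordinate contribution of the form $1 + \tfrac{\beta^2}{2}\{T(\mu) + S(\mu)\}$: the prior intensity enters \emph{quadratically}, because $\chi^2$ integrates the square of the likelihood ratio and the off-support part of the prior contributes nothing. Requiring the total $\chi^2$ to be $O(1)$ then forces $(s/p)^2 T(\mu) \lesssim 1/p$, i.e.\ $T(\mu) \lesssim p/s^2$, and with $T(\mu) \asymp e^{c\mu^\alpha}$ this gives $\mu^\alpha \lesssim \log(p/s^2)$, \emph{not} $\log(ep/s)$ as you assert. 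These two quantities diverge badly when $s$ grows polynomially in $p$: near the transition boundary $s \asymp \sqrt{p}\log^{-2/\alpha}(ep)$ one has $\log(p/s^2) \asymp \log\log p$ while $\log(ep/s) \asymp \log p$, so your construction certifies only $\|v\|_2^2 \gtrsim s(\log\log p)^{2/\alpha}$, far short of the claimed $s\log^{2/\alpha}(ep/s) \asymp \sqrt{p}$. This is not a gap you can patch: a $\chi^2$-against-a-fixed-null argument with any Rademacher-style prior is fundamentally quadratic in $s/p$ and, applied to a fixed Gaussian noise law with $\alpha = 2$, would correctly reproduce the Gaussian minimax rate $s\log(ep/s^2)$; but the sub-Weibull class $\mathcal{G}_{\alpha,K}$ has a genuinely larger minimax rate $s\log^{2/\alpha}(ep/s)$, and the gap is exactly the gap between $\log(p/s^2)$ and $\log(ep/s)$.

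The paper avoids this by not using a divergence against a fixed null at all. The null noise at $t=1$ is a \emph{bounded} density $\xi$ (supported near $\pm 1$), and the alternative noise $\tilde{\xi}$ is a rescaled version. The mixture of alternatives (signal $\gamma\tilde{\omega}$ with a Bernoulli--Rademacher prior, plus noise $\tilde{\xi}$) is then observed to be \emph{identically equal} to a zero-signal model whose noise at $t=1$ is $\tilde{\xi} + \gamma\tilde{\omega}$ --- a heavy-tailed law that the paper verifies also lies in $\mathcal{G}_{\alpha, K}$. The binding constraint is that this convolution must satisfy $\mathbb{E}\exp\{(|\tilde{\xi}+\gamma\tilde{\omega}|/K)^\alpha\} \leq 2$, which gives a condition \emph{linear} in $s/p$: $\tfrac{s}{p}e^{c\gamma^\alpha} \lesssim 1$, i.e.\ $\gamma^\alpha \lesssim \log(p/s) \asymp \log(ep/s)$. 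The remaining $\mathrm{TV}$ between the two alternative mixtures is controlled purely by a Hellinger distance between the bounded base densities $\xi$ and $\tilde{\xi}$, which is small. This ability to move the signal prior into the noise law --- trading a quadratic $\chi^2$ penalty for a linear sub-Weibull membership penalty --- is exactly what produces the $\log(ep/s)$ rate, and it requires exploiting that the noise distribution itself is adversarial within the class. Your step (II) fixes the noise and therefore cannot access this mechanism.
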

By combining the lower bound $v_{\mathcal{G}}^{\mathrm{L}}$ in \Cref{thm:subweibull-lowerbound-minimum} and the upper bound in \eqref{Eq:weibull_minimum_upperbound}, we are able to quantify the minimax test rate up to a factor of $\sqrt{\log\log(8n)}$ and conclude that our testing procedures are minimax rate optimal up to an iterated logarithmic factor. Moreover, a closer look into the sparse and dense regimes, as defined in \Cref{Sec:main result}, reveals that our results exactly quantify $v^*_{\mathcal{G}_{\alpha, K}^{\otimes}}(p,n,s)$ in almost all regimes of sparsity. 

Both the upper and lower bounds consist of a minimum of two terms, directly involving the sparsity level $s$ and not.  As $s$ grows, the final rate undergoes a phase transition, namely from a sparse regime to a dense one.  To better understand the transition, we focus on the relationship between $p$ and $s$, setting the boundary between the sparse and the dense regime to be the solution to $s^*_{\mathcal{G}}\log^{2/\alpha}(ep/s^*_{\mathcal{G}}) = \sqrt{p}$, i.e.
\begin{equation} \label{Eq:boundary_weibull}
s^*_{\mathcal{G}} \asymp \frac{\sqrt{p}}{\log^{2/\alpha}(ep)}.
\end{equation}

We summarise this phenomenon in \Cref{table:testing_rate}.  From the table, it is clear that our upper and lower bounds match exactly in the entire sparse regime (i.e.\ $s < s^*_{\mathcal{G}}$) and the majority region of the dense regime (i.e. $s > \sqrt{p\log\log(8n)}$). The $\sqrt{\log\log(8n)}$ gap between the upper and lower bounds, only exists in the region $s^*_{\mathcal{G}} \leq s \leq \sqrt{p\log\log(8n)}$ within the dense regime.  Closing such gap is challenging and a similar gap exists even when each entry of the noise matrix follows a sub-Gaussian, yet Gaussian, distribution; see \citet[][Section~7.1]{pilliat2020optimalhighdim}, where it is suggested that a procedure exploring the exact distribution of the noise might be required to close this gap.

\begin{table}[ht!]
\centering
\def\arraystretch{1.8}
\begin{tabular}{|cc|c|c|}
\hline
\multicolumn{2}{|c|}{}               &  Upper bound &  Lower bound \\ \hline
\multicolumn{1}{|c|}{\multirow{2}{*}{$\mathcal{G}_{\alpha, K}^{\otimes}$}} & Dense  & $\sqrt{p \log \log(8n)} + \log \log(8n)$          &     $\sqrt{p \{\log \log(8n)\}^{\omega_1}} + \log \log(8n)$       \\ \cline{2-4} 
\multicolumn{1}{|c|}{}                   & Sparse &   $s\log^{2/\alpha}(ep/s) + \log\log(8n)$        &    $s\log^{2/\alpha}(ep/s) + \log\log(8n)$       \\ \hline
\end{tabular}
\caption{Bounds on the minimax testing rates in the sub-Weibull noise distribution class $\mathcal{G}_{\alpha, K}^\otimes$, where $\omega_1 = \mathbbm{1}_{\bigl\{s >\sqrt{p \log \log(8n)}\bigr\}}$. Upper bounds are obtained in Theorems~\ref{thm:weibullupperbound_dense} and \ref{thm:weibullupperbound_sparse}. Lower bounds are obtained in \Cref{thm:subweibull-lowerbound-minimum}.}
\label{table:testing_rate}
\end{table}

To highlight the effects of sub-Weibull distributions on the minimax test rate, we note that allowing heavier tails does not affect the minimax testing rate when $s > \sqrt{p\log\log(8n)}$, relative to the results under Gaussian noise assumptions; see \eqref{Eq:GaussianRate}. However, the tail behaviour, quantified by the parameter $\alpha$, does affect the minimax rate in the sparse regime and hence the transition boundary. Specifically, as $\alpha$ decreases (i.e.\ the tail becomes heavier), the sparse rate increases, meaning that it is fundamentally more difficult to detect sparse changes as the tail of the noise distribution becomes heavier. As a prelude to \Cref{Sec:awayfrombonudary}, we also show that a modification of the test $\phi_{\mathcal{G}, \text{sparse}}$ in \Cref{subSec:subWeibull_sparse} can achieve a sparse rate that is independent of $\alpha$, if a different alternative hypothesis is considered, where the change point is known to be at least $\log(ep/s)+s^{-1}\log\log(8n)$ away from the end points $1$ and $n$.

\section{Testing under finite moment noise distributions} \label{Sec:robusttest}

In this section, we consider the case when $P_e \in \mathcal{P}_{\alpha,K}^{\otimes}$, or equivalently, we assume that the distribution of each entry in the noise matrix $E$ has only finite $\alpha$-th moments, for some constant $\alpha \geq 2$, see \Cref{def-palphak}. Compared to the $\mathcal{G}_{\alpha, K}$ class of distributions considered in \Cref{Sec:hidimtest}, where standard CUSUM-type testing procedures already achieve near-optimal minimax testing rates, the $\mathcal{P}_{\alpha, K}$ class of distributions include a much wider range of noise distributions, e.g.~$t$ distributions and {centred} Pareto distributions. As a result, it poses a much larger statistical challenge.  New approaches to tackle the testing problem are thus required.

Similar to \Cref{Sec:hidimtest}, we write the worst case testing error as $\mathcal{R}_{\mathcal{P}}(\rho, \phi)$ and the minimax testing error as $\mathcal{R}_{\mathcal{P}}(\rho)$. We again  assume the sparsity level to be known and derive the dense and sparse testing rates separately in Sections~\ref{subSec:robust_dense} and~\ref{subSec:robust_sparse}.

\subsection{Testing for dense signals} \label{subSec:robust_dense}

We consider a testing procedure built on the median-of-means-type statistics. For $i \leq n/2$, we denote $Z_i := (X_i - X_{n-i+1})/\sqrt{2}$. For $t \in \mathcal{T}$, we split $\{Z_1, \dotsc, Z_t\}$ into $G_t$ groups of equal size (assuming that $t$ is always a multiple of chosen $G_t$ for simplicity) that \[
\mathcal{Z}_{t,1}, \mathcal{Z}_{t,2}, \dotsc, \mathcal{Z}_{t,G_t},
\]
where each group contains $t/G_t \geq 1$ elements and the number of groups $G_t$ is specified later in~\eqref{Eq:robust_theorem_parameters}. 
Set $V_{t,g} \in \mathbb{R}^p$ with
\begin{equation} \label{eq:V_tg(j)}
V_{t,g}(j) :=   \overline{Z}^2_{t,g}(j) - \frac{G_t}{t}, \quad g \in [G_t],
\end{equation}
where $\overline{Z}_{t,g} \in \mathbb{R}^p$ is the sample mean of the $g$-th group. This quantity $V_{t,g}$ can be thought as a scaled version of the statistic $A_t$ defined in \eqref{eq:A_dense}, but computed using only a subset of the data. To achieve robustness against heavy-tailed errors, we consider the following median-of-means-type statistic
\begin{equation}\label{eq:A_mom}
    A_{t}^{\mathrm{MoM}}:= t \cdot \mathrm{median} \Biggl(  \sum_{j=1}^p V_{t,1}(j),  \sum_{j=1}^p V_{t,2}(j), \dotsc, \sum_{j=1}^p V_{t,G_t}(j)\Biggl).
\end{equation}
Our test is denoted as 
\begin{equation}\label{eq:finitemoment_dense_test}
    \phi_{\mathcal{P},\mathrm{dense}} := \mathbbm{1}_{\left\{\max_{t \in \mathcal{T}} A_{t}^{\mathrm{MoM}}/r_t > 1 \right\}},
\end{equation}
with the detection threshold $r_t$ specified in \eqref{Eq:robust_theorem_parameters}. Before presenting the theoretical guarantee of the test $\phi_{\mathcal{P},\mathrm{dense}}$ in \Cref{thm:finitemoment_upperbound_dense}, we first briefly explain the significance of median-of-means-type statistics and the novelty of our procedure. 

Median-of-means-type statistics like \eqref{eq:A_mom} have been applied in a wide range of statistical problems \citep[e.g.][]{lugosi2019sub,lerasle2011robust,lecue2020robust,humbert2022robust,kwon2021mom}. The most well-known and simplest form is its univariate mean estimation version. Suppose that we have i.i.d.~data of sample size $n$ with mean $\mu$ and variance $\sigma^2$. The median-of-means estimator $\hat{\mu}^{\mathrm{MoM}}$ is obtained by first partitioning the data into $G$ groups of equal size, then calculating the sample mean within each group and finally computing the median of these $G$ sample means. It is shown in \citet[][Theorem 2]{lugosi2019survey} that, for $\delta \in (0, 1)$, when the number of groups $G$ is chosen to be at least $8\log(1/\delta)$, with probability at least $1-\delta$, the estimator $\hat{\mu}^{\mathrm{MoM}} = \hat{\mu}^{\mathrm{MoM}}(\delta)$ satisfies that
\[
|\hat{\mu}^{\mathrm{MoM}} - \mu| \leq \sigma\sqrt{\frac{32\log(1/\delta)}{n}}.
\]
Thus, the median-of-means estimator can achieve sub-Gaussian performance in mean estimation under only the assumption of finite second moment. 

However, in our context, the aforementioned methodology is not applicable for testing potential change point that is too close to the boundary, as we will not have enough data to split into the required number of groups to ensure good statistical guarantees. Therefore, for $t \in \mathcal{T}$ such that $t \leq \Delta$ with the threshold $\Delta$ specified in \eqref{Eq:robust_theorem_parameters}, we directly take the median of $t$ statistics in \eqref{eq:A_mom}, i.e.\ $G_t=t$. We now present the theoretical guarantee of the test $\phi_{\mathcal{P},\mathrm{dense}}$ in~\eqref{eq:finitemoment_dense_test}.

\begin{thm} \label{thm:finitemoment_upperbound_dense}
Assume $\alpha \geq 2$. For any $\varepsilon \in (0,1)$, there exist $C_1, C_2> 0$ depending only on $\alpha$, $K$ and~$\varepsilon$, such that the test $\phi_{\mathcal{P},\mathrm{dense}}$ defined in~\eqref{eq:finitemoment_dense_test} with
\begin{align} \label{Eq:robust_theorem_parameters}
 r_t =  C_1p^{(1/2)\vee(2/\alpha)}G_t, \quad G_t = t \wedge \Delta \quad  \mbox{and} \quad \Delta = 2^{3+ \lceil\log_2 \log \log(8n)\rceil},
\end{align}
satisfies that
\[
\mathcal{R}_{\mathcal{P}}(\rho, \phi_{\mathcal{P},\mathrm{dense}}) \leq \varepsilon,
\]
as long as $\rho^2 \geq C_2 v_{\mathcal{P}, \mathrm{dense}}^{\mathrm{U}}$, where 
\[
v_{\mathcal{P}, \mathrm{dense}}^{\mathrm{U}} := p^{(2/\alpha) \vee (1/2)} \log \log(8n)
\]
\end{thm}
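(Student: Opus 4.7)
My plan is to bound the Type I and Type II errors of $\phi_{\mathcal{P},\mathrm{dense}}$ separately by $\varepsilon/2$ each. Writing $\tau_t := r_t/t = C_1 p^{(1/2)\vee(2/\alpha)}G_t/t$, the rejection event at level $t$ reads $\{\mathrm{median}_{g \in [G_t]} S_{t,g} > \tau_t\}$ with $S_{t,g} := \sum_{j=1}^p V_{t,g}(j)$. Under $\mathrm{H}_0$ the paired differences $Z_i = (E_i - E_{n+1-i})/\sqrt{2}$ are i.i.d.\ with zero-mean, unit-variance entries, so $\{V_{t,g}\}_{g \in [G_t]}$ is independent across $g$ (different groups use disjoint $Z_i$'s) and $\mathbb{E}V_{t,g}(j) = 0$, $\mathrm{Var}(\overline{Z}_{t,g}(j)) = G_t/t$.

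For the Type I analysis, I would first produce a per-group deviation estimate $p_0 := \mathbb{P}(|S_{t,g}| > \tau_t)$. When $\alpha \geq 4$, Rosenthal's inequality at exponent $q = \alpha/2 \geq 2$ combined with the Marcinkiewicz--Zygmund bound $\mathbb{E}|\overline{Z}_{t,g}(j)|^\alpha \lesssim (G_t/t)^{\alpha/2}$ yields $\mathbb{E}|S_{t,g}|^{\alpha/2} \lesssim p^{\alpha/4}(G_t/t)^{\alpha/2}$, whence Markov gives $p_0 \lesssim C_1^{-\alpha/2}$. When $2 \leq \alpha < 4$ the exponent $q = \alpha/2$ falls in $[1,2)$ and Rosenthal is inadmissible; I would invoke the von Bahr--Esseen inequality to get $\mathbb{E}|S_{t,g}|^{\alpha/2} \lesssim p(G_t/t)^{\alpha/2}$, and it is precisely the absence of the $p^{q/2}$ term here that produces the $p^{2/\alpha}$ factor in the rate; Markov with the matching threshold again delivers $p_0 \lesssim C_1^{-\alpha/2}$, uniformly in $n, p, t, g$. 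The second step converts this into a median bound via the binomial tail, $\mathbb{P}(\mathrm{median}_g S_{t,g} > \tau_t) \leq \mathbb{P}(\mathrm{Bin}(G_t, p_0) \geq \lceil G_t/2\rceil) \leq (4p_0)^{G_t/2}$. Summing over $t \in \mathcal{T}$, the dyadic levels $t \leq \Delta$ form a geometric series dominated by the $t = 1$ term (of order $p_0$), while the levels $t > \Delta$ contribute $|\mathcal{T}|\cdot(4p_0)^{\Delta/2}$, which is negligible because $\Delta \asymp \log\log(8n)$ makes this at most $(\log n)^{-c}$ with $c$ proportional to $\log(1/p_0)$. Choosing $C_1$ large enough in terms of $\varepsilon, \alpha, K$ alone to render $p_0$ a sufficiently small universal constant caps the Type I error by $\varepsilon/2$.

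For the Type II analysis, fix $\theta \in \Theta^{(t_0)}(p,n,s,\rho)$ and pick $t^* \in \mathcal{T}$ to be the largest dyadic level no greater than $t_0 \wedge (n - t_0)$, so $t^* \geq (t_0 \wedge (n - t_0))/2$. A direct computation at this level gives $\mathbb{E}\overline{Z}_{t^*,g}(j) = (\mu_1(j) - \mu_2(j))/\sqrt{2}$ and hence $\mathbb{E}S_{t^*,g} = \|\mu_1 - \mu_2\|_2^2/2$. Combining the signal condition $\rho^2 \geq C_2 v_{\mathcal{P},\mathrm{dense}}^{\mathrm{U}}$ with $t^* \asymp t_0 \wedge (n-t_0)$ and $G_{t^*} \leq \Delta \asymp \log\log(8n)$ forces $\|\mu_1-\mu_2\|_2^2/2 \geq 4\tau_{t^*}$ once $C_2$ is chosen sufficiently large relative to $C_1$. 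I then decompose $S_{t^*,g} = \|\mu_1-\mu_2\|_2^2/2 + \sqrt{2}\langle \mu_1 - \mu_2, \overline{E}_{t^*,g}\rangle + \sum_{j=1}^p\bigl(\overline{E}_{t^*,g}(j)^2 - G_{t^*}/t^*\bigr)$, where $\overline{E}_{t^*,g}$ denotes the noise average over the $g$-th group. The pure-noise term is handled by recycling the Type I moment machinery; the cross term, of variance $\|\mu_1-\mu_2\|_2^2 G_{t^*}/t^*$, I would control by applying Rosenthal to the linear form $\langle u, \overline{E}_{t^*,g}\rangle$ with $u := (\mu_1-\mu_2)/\|\mu_1-\mu_2\|_2$, using $\|u\|_\alpha \leq \|u\|_2 = 1$ for $\alpha \geq 2$. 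Each fluctuation is smaller than $\|\mu_1-\mu_2\|_2^2/6$ with probability $\geq 1 - \varepsilon/4$, so $S_{t^*,g} > \tau_{t^*}$ with probability $\geq 1 - \varepsilon/2$ per group; a final binomial-tail bound (or direct evaluation when $G_{t^*} = 1$) lifts this to $\mathbb{P}(\mathrm{median}_g S_{t^*,g} > \tau_{t^*}) \geq 1 - \varepsilon/2$.

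The main obstacle, in my view, is the moment analysis in the regime $2 \leq \alpha < 4$: with $\overline{Z}^2$ lacking finite variance, Rosenthal's inequality is inadmissible and every bookkeeping step must be carried through $\alpha/2$-th moments (Marcinkiewicz--Zygmund on averages followed by von Bahr--Esseen across coordinates), which is where the characteristic $p^{2/\alpha}$ rate arises. A secondary delicacy is verifying that $C_1$ genuinely depends only on $(\alpha, K, \varepsilon)$: although the union bound over $|\mathcal{T}| = O(\log n)$ scales looks costly, the median-of-means tail $(4p_0)^{G_t/2}$ reduces the small-$G_t$ contribution to a geometric series of value $O(p_0)$, while the large-$G_t$ contribution is absorbed into the choice $\Delta \asymp \log\log(8n)$.
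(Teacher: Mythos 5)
Your proposal is correct and follows essentially the same approach as the paper: the same median-of-means decomposition $S_{t,g} = \|\mu_1-\mu_2\|_2^2/2 + \sqrt{2}\langle\mu_1-\mu_2,\overline{E}'_{t,g}\rangle + (\text{pure noise})$, the same per-group deviation bound lifted to the median via a binomial tail, and the same split of $\mathcal{T}$ into the geometric regime $t\leq\Delta$ and the flat regime $t>\Delta$ with $\Delta\asymp\log\log(8n)$. The only deviation is in the per-group moment inequality: where you invoke Rosenthal/Marcinkiewicz--Zygmund at the $(\alpha/2)$-th moment for $\alpha\geq 4$ and von~Bahr--Esseen for $2\leq\alpha<4$, the paper uses plain Chebyshev at the second moment when $\alpha\geq 4$ and a self-contained truncation lemma (Lemma~\ref{lemma:lowmoment-2}) that plays the role of von~Bahr--Esseen when $2\leq\alpha<4$; both routes yield the same uniform-in-$(n,p,t)$ bound $p_0\lesssim C_1^{-\alpha/2}$ and hence the same rate.
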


One challenge in our context is analysing the performance of the test $\phi_{\mathcal{P}, \mathrm{dense}}$ when $\alpha \in [2,4]$. Since we compute a second-order statistic $V_{t,g}$ within each group $g$, standard variance-based analysis would require a bounded fourth moment condition on the distribution. However, through a more refined analysis, we extend our results to this more demanding case of $\alpha \in [2,4]$. In this setting, the dense testing rate $v_{\mathcal{P}, \mathrm{dense}}^{\mathrm{U}}$ is affected by $\alpha$, and we reveal a phase transition in the rate at $\alpha = 4$.

An even more challenging scenario arises when the distribution of each entry of $E$ lacks a finite variance. In this case, an alternative test to $\phi_{\mathcal{P},\mathrm{dense}}$ is required, as the mean of the aforementioned second-order statistic $V_{t,g}$ is no longer guaranteed to be finite. We defer a detailed discussion of this setting to Section~\ref{sec:<2moment}.

\subsection{Testing for sparse signals} \label{subSec:robust_sparse}

To derive the sparse rate, we employ a mean estimator satisfying a general condition detailed in \Cref{Con:RSM} in \Cref{subsubsection:proof_rsm} to construct our test. There are potentially many choices of such a mean estimator, but one specific choice $\hat{\mu}^{\mathrm{RSM}}$ is given in \cite{prasad2019unified}: 
\begin{equation}\label{eq:prasad_robust_sparse_mean}
    \hat{\mu}^{\mathrm{RSM}}_{n,s}(\{W_i\}_{i=1}^n;\eta) := \inf_{\mu \in \mathcal{L}_s} \sup_{u \in \mathcal{N}^{1/2}_{2s}(\mathcal{S}^{p-1})} \big| u^{\top}\mu - \mathrm{1DRobust}(\{u^{\top}W_i\}_{i=1}^n,\eta/ (6ep/s)^s) \big|,
\end{equation}  
where $W_1, \dotsc, W_n \in \mathbb{R}^p$ are input data, $\mathcal{L}_s := \{v\in \mathbb{R}^p: \|v\|_0 \leq s\}$ is the set of $s$-sparse vectors in~$\mathbb{R}^p$, $\mathcal{N}^{1/2}_{2s}(\mathcal{S}^{p-1})$ is a $(1/2)$-cover of the set of $2s$-sparse unit vectors with cardinality $|\mathcal{N}^{1/2}_{2s}(\mathcal{S}^{p-1})| \leq (6ep/s)^s$ \citep{vershynin2009role}, and $\mathrm{1DRobust}$ is a univariate robust mean estimator defined in \citet[][Algorithm 2]{prasad2019unified}. Other univariate robust mean estimators can be considered in place of $\mathrm{1DRobust}$, including the median-of-means and trimmed mean variants \citep{lugosi2021robust}. Notably, the estimator in \eqref{eq:prasad_robust_sparse_mean} achieves a near-optimal statistical guarantee for sparse mean estimation \citep[][Corollary 11]{prasad2019unified}, despite its high computational complexity, which scales exponentially in $s$.

We describe our test $\phi^{\mathrm{RSM}}_{\mathcal{P},\mathrm{sparse}}$ using $\hat{\mu}^{\mathrm{RSM}}_{n,s}(\{W_i\}_{i=1}^n;\eta)$. For $\tilde{\Delta}_1$ specified in \eqref{Eq:robust_theorem_parameters_sparse_rsm} and for $t \leq \tilde{\Delta}_1$, we use the non-robust statistic $A_{t,a}$ as defined in~\eqref{eq:A_t,a}. For $t \in \mathcal{T}\cap\{t>\tilde{\Delta}_1\}$, we construct the statistic from the $\ell_2$-norm of this robust sparse mean estimator:
\[
    A^{\mathrm{RSM}}_{t} := t\bigl\|\hat{\mu}_{t,s,\eta_t}^{\mathrm{RSM}}\bigr\|_2^2 = t\bigl\|\hat{\mu}_{t,s}^{\mathrm{RSM}}(\{Z_i\}_{i=1}^t; \eta_t)\bigr\|_2^2.
\]
With all the parameters $a, \tilde{\Delta}_1, \tilde{r}_t, r^{\mathrm{RSM}}_t$ and $\eta_t$ specified later in \eqref{Eq:robust_theorem_parameters_sparse_rsm}, we define
\begin{equation}\label{eq:finitemoment_sparse_rsm}
    \phi^{\mathrm{RSM}}_{\mathcal{P},\mathrm{sparse}} := \mathbbm{1}_{\left\{\max_{t \in \mathcal{T}\cap \{t \leq \tilde{\Delta}_1\}} A_{t,a}/\tilde{r}_t > 1 \right\}} \vee \mathbbm{1}_{\left\{\max_{t \in \mathcal{T}\cap \{t > \tilde{\Delta}_1\}} A^{\mathrm{RSM}}_{t}/r^{\mathrm{RSM}}_t > 1 \right\}}.
\end{equation}
The theoretical guarantee of $\phi^{\mathrm{RSM}}_{\mathcal{P},\mathrm{sparse}}$ is established as follows.

\begin{thm} \label{thm:finitemoment_upperbound_sparse_improve}
 Assume $\alpha \geq 4$. For any $\varepsilon \in (0,1)$, there exist $C_1, C_2, C_3, C_4, C_5> 0$ depending only on $\alpha$, $K$ and $\varepsilon$, such that the test $\phi^{\mathrm{RSM}}_{\mathcal{P},\mathrm{sparse}}$ defined in~\eqref{eq:finitemoment_sparse_rsm} with
\begin{equation} \label{Eq:robust_theorem_parameters_sparse_rsm}
\begin{aligned} 
a = C_1\bigl((p/s)^{1/\alpha} + s^{-1/2} \log^{1/2}(\log\tilde{\Delta}_1)\bigr), &\quad \tilde{r}_t  = C_2 \Bigl( s(p/s)^{2/\alpha}\mathbbm{1}_{\{t=1\}}+ \sqrt{s\log \tilde{\Delta}_1}\mathbbm{1}_{\{t>1\}}\Bigr),  \\
\eta_t = \exp\biggl\{s\log(ep/s) - \frac{t\wedge \tilde{\Delta}_2}{C_3}\biggr\}, &\quad r^{\mathrm{RSM}}_t = C_4(t \wedge \tilde{\Delta}_2), \\
\tilde{\Delta}_1 =  C_3\bigl(s\log(ep/s) + \log(16/\varepsilon)\bigr) \quad &\mbox{and} \quad \tilde{\Delta}_2 = C_3\bigl(s\log(ep/s) + \log(16\log(2n)/\varepsilon)\bigr),
\end{aligned}
\end{equation}
satisfies that
\[
\mathcal{R}_{\mathcal{P}}(\rho,  \phi^{\mathrm{RSM}}_{\mathcal{P},\mathrm{sparse}} ) \leq \varepsilon,
\]
as long as $\rho^2 \geq C_5 v_{\mathcal{P}, \mathrm{sparse}}^{\mathrm{U}}$, where 
\begin{equation}\label{eq-sparse-upper-poly}
v_{\mathcal{P}, \mathrm{sparse}}^{\mathrm{U}} := s(p/s)^{2/\alpha} + \log\log(8n).
\end{equation}
\end{thm}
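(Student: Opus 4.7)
The plan is to bound Type I and Type II errors separately, decomposing the union over dyadic scales $t\in\mathcal{T}$ at the cutoff $\tilde{\Delta}_1$. By symmetry of the CUSUM construction, I assume without loss of generality that the change point $t_0$ under $H_1$ satisfies $t_0\leq\lfloor n/2\rfloor$, and fix $t^*\in\mathcal{T}$ with $t^*\leq t_0\leq 2t^*$. Writing $\delta:=\mu_1-\mu_2$, the alternative yields $\|\delta\|_0\leq s$ and $t^*\|\delta\|_2^2\geq\rho^2/2$. The antisymmetric samples $Z_i=(X_i-X_{n+1-i})/\sqrt{2}$ have mean zero under $H_0$ and mean $\delta/\sqrt{2}$ for $i\leq t^*$ under $H_1$.

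\textbf{Type I.} For $t>\tilde{\Delta}_1$, Condition~\ref{Con:RSM} applied to $\{Z_i\}_{i=1}^t$ with confidence $\eta_t$ gives, under $H_0$,
\[
t\|\hat{\mu}^{\mathrm{RSM}}_{t,s,\eta_t}\|_2^2\lesssim s\log(ep/s)+\log(1/\eta_t)=(t\wedge\tilde{\Delta}_2)/C_3
\]
with probability $1-\eta_t$; a large enough $C_4$ keeps $A^{\mathrm{RSM}}_t\leq r^{\mathrm{RSM}}_t$. Summing $\eta_t$ over $t\in\mathcal{T}\cap(\tilde{\Delta}_1,\infty)$ gives a geometric series for $t\leq\tilde{\Delta}_2$ plus $\lesssim\log n$ scales each contributing $\varepsilon/(16\log(2n))$. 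For $2\leq t\leq\tilde{\Delta}_1$, sample splitting~\eqref{eq:samplespliting} makes $Y_{t,1}$ independent of $Y_{t,2}$, hence of the mask $S_t=\{j:|Y_{t,2}(j)|\geq a\}$. Conditional on $Y_{t,2}$, $A_{t,a}=\sum_{j\in S_t}\{Y_{t,1}^2(j)-1\}$ is a sum of independent centred terms with $\mathrm{Var}(Y_{t,1}^2(j)-1)\lesssim 1$ (using $\alpha\geq 4$), while a Markov bound gives $\mathbb{P}(|Y_{t,2}(j)|\geq a)\lesssim a^{-\alpha}\lesssim s/p$, so $\mathbb{E}|S_t|\lesssim s$. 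Chebyshev then yields $\mathbb{P}(A_{t,a}>\tilde{r}_t)\lesssim 1/\log\tilde{\Delta}_1$, summable across the $\lesssim\log\tilde{\Delta}_1$ scales. For $t=1$ the splitting is unavailable, but the coordinates $Y_1(j)$ remain independent across $j$; a direct truncated-moment computation gives $\mathrm{Var}(A_{1,a})\lesssim s(p/s)^{4/\alpha}$ and $\mathbb{E}[A_{1,a}]\lesssim s(p/s)^{2/\alpha}$, so Chebyshev against $\tilde{r}_1=C_2 s(p/s)^{2/\alpha}$ yields a bound $\lesssim 1/s$, made smaller than a constant fraction of $\varepsilon$ by taking $C_2$ large.

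\textbf{Type II.} If $t^*>\tilde{\Delta}_1$, apply Condition~\ref{Con:RSM} to $\{Z_i\}_{i=1}^{t^*}$ whose target is $s$-sparse with $\ell_2$ norm $\|\delta\|_2/\sqrt{2}$; the reverse triangle inequality gives
\[
\sqrt{t^*}\|\hat{\mu}^{\mathrm{RSM}}\|_2\geq \rho/2-O\bigl(\sqrt{t^*\wedge\tilde{\Delta}_2}\bigr).
\]
Since $\tilde{\Delta}_2\lesssim s\log(ep/s)+\log\log(8n)\lesssim v^{\mathrm{U}}_{\mathcal{P},\mathrm{sparse}}$ for $\alpha\geq 4$ (using $\log(ep/s)\lesssim(p/s)^{2/\alpha}$ with constant depending only on $\alpha$), a sufficiently large $C_5$ ensures the right-hand side exceeds $\sqrt{C_4(t^*\wedge\tilde{\Delta}_2)}$, so $A^{\mathrm{RSM}}_{t^*}>r^{\mathrm{RSM}}_{t^*}$. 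If $t^*\leq\tilde{\Delta}_1$ and $t^*\geq 2$, split the signal support $S=\{j:\delta_j\neq 0\}$ into strong coordinates $S_+=\{j:\sqrt{t^*}|\delta_j|\gtrsim a\}$ and weak coordinates $S_-$. On $S_-$, $\delta_j^2\lesssim a^2/t^*$ gives $\sum_{j\in S_-}\delta_j^2\leq sa^2/t^*\lesssim s(p/s)^{2/\alpha}/t^*$; together with $t^*\|\delta\|_2^2\geq\rho^2/2$ and $\rho^2\gtrsim s(p/s)^{2/\alpha}$, this forces $t^*\sum_{j\in S_+}\delta_j^2\gtrsim\rho^2$. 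Each $j\in S_+$ has $|\mathbb{E}[Y_{t^*,2}(j)]|\gtrsim a$, and Chebyshev on the shifted distribution gives $\mathbb{P}(|Y_{t^*,2}(j)|\geq a)\geq c>0$. By independence of $Y_{t^*,1}$ and $Y_{t^*,2}$,
\[
\mathbb{E}[A_{t^*,a}]=\sum_j \tfrac{t^*\delta_j^2}{4}\mathbb{P}(|Y_{t^*,2}(j)|\geq a)\gtrsim \sum_{j\in S_+}t^*\delta_j^2\gtrsim\rho^2,
\]
comfortably exceeding $\tilde{r}_{t^*}=C_2\sqrt{s\log\tilde{\Delta}_1}$, and a conditional Chebyshev bound (as in the Type I argument) controls the fluctuation. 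The case $t^*=1$ is analogous without splitting, using $\tilde{r}_1=C_2 s(p/s)^{2/\alpha}$ and coordinate-wise independence.

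\textbf{Main obstacle.} The subtlest step is the Type II analysis at small $t^*$: the strong-versus-weak dichotomy must be argued tightly enough that the $s(p/s)^{2/\alpha}$ rate emerges without slack, which requires that the choice $a\asymp(p/s)^{1/\alpha}$ (plus a minor $s^{-1/2}\sqrt{\log\log\tilde{\Delta}_1}$ term) precisely balances the contribution of weak coordinates against the sparsity constraint. A secondary subtlety is calibrating $\eta_t=\exp\{s\log(ep/s)-(t\wedge\tilde{\Delta}_2)/C_3\}$ so that Condition~\ref{Con:RSM} remains applicable at every $t>\tilde{\Delta}_1$ (requiring $t\gtrsim s\log(ep/s)+\log(1/\eta_t)$) while keeping the summed Type I budget at $O(\varepsilon)$; the prescribed form of $\eta_t$ is exactly the tuning that makes both sides of this trade-off close.
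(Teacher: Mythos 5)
Your high-level architecture matches the paper's: split the dyadic grid at $\tilde{\Delta}_1$ and $\tilde{\Delta}_2$, use Condition~\ref{Con:RSM} with the tuned $\eta_t$ for large $t$, use the thresholded CUSUM $A_{t,a}$ for $2\leq t\leq\tilde{\Delta}_1$ with a sample-splitting/Chebyshev argument, and handle $t=1$ separately. Your Type I treatment for small $t$ (computing $\operatorname{Var}(A_{t,a})\lesssim pq\lesssim s$ directly, rather than the paper's two-step binomial-tail-then-conditional bound) is a mild but valid simplification; the Type I RSM part and the Type II RSM part are as in the paper.

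There is, however, a genuine gap in your Type II argument for $2\le t^*\le\tilde{\Delta}_1$. You reduce matters to $\mathbb{E}[A_{t^*,a}]\gtrsim\rho^2$ and then appeal to ``a conditional Chebyshev bound'' to control the fluctuation around this mean. But if you attempt a single Chebyshev bound on $A_{t^*,a}$, the variance contains the term
\begin{equation*}
\sum_{j}\delta(j)^4\,q_j(1-q_j)\;\lesssim\;\|\delta\|_\infty^2\,\|\delta\|_2^2,
\end{equation*}
coming from the randomness of which coordinates are selected. Dividing by $(\mathbb{E}[A_{t^*,a}]-\tilde{r}_{t^*})^2\asymp\|\delta\|_2^4$ gives a contribution of order $\|\delta\|_\infty^2/\|\delta\|_2^2$, which is $\Theta(1)$ when the change is concentrated in a single coordinate ($\|\delta\|_\infty\asymp\|\delta\|_2$), independently of how large $\rho$ is. So the one-shot Chebyshev cannot push this term below $\varepsilon$.

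The paper's proof (via the alternative-term argument of Proposition~\ref{thm:finitemoment_upperbound_sparse}, reused here) circumvents this by a three-way decomposition: (i) $|\mathcal{J}_{\tilde{t},a}|>2s$ (binomial tail), (ii) the event that the selected strong coordinates $\mathcal{J}_{\tilde{t},a}\cap\mathcal{H}_{\delta,a}$ retain less than a $1/(12\log(8/\varepsilon))$ fraction of $\|\delta\|_2^2$, and (iii) the centred CUSUM fluctuation on the retained set. Step (ii) is the one your sketch collapses: it is proved not by Chebyshev but by Bernstein's inequality, and, crucially, split into the case $\|\delta\|_2\gtrsim\sqrt{\log(1/\varepsilon)}\|\delta\|_\infty$ (Bernstein suffices) and the complementary concentrated-signal case, where one shows the single maximal coordinate $j^*$ is captured with probability $1-\varepsilon/8$ and alone accounts for the required signal fraction. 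Only after conditioning on the retained set does the Chebyshev bound (step (iii)) have variance $\lesssim s+\|\delta\|_2^2$, without the problematic $\|\delta\|_\infty^2\|\delta\|_2^2$ term. Your ``main obstacle'' paragraph anticipates that small-$t^*$ Type II is delicate, but you have not identified that the specific difficulty is the concentrated-signal case and that a case-split with Bernstein is what resolves it. A secondary (fixable) detail: you assert $\mathbb{P}(|Y_{t^*,2}(j)|\geq a)\geq c>0$ for strong coordinates, whereas the argument needs this probability to be $1-O(1/\log(1/\varepsilon))$, which the choice of $a$ is tuned to guarantee.
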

The main reason we separate $\mathcal{T}$ into different regions in our test \eqref{eq:finitemoment_sparse_rsm} is that \eqref{eq:prasad_robust_sparse_mean} cannot be applied when $t$ is too close to the boundary in order to achieve the required statistical performance.  We, therefore, resort to the non-robust testing statistics $A_{t,a}$ for $t$ close to boundaries. 

A significant drawback of using the robust sparse mean estimator to construct our test, $\phi^{\mathrm{RSM}}_{\mathcal{P},\mathrm{sparse}}$, is its high computational cost, which scales exponentially with $s$. We defer the discussion of this issue and a two-component remedy that achieves the same rate, which is in fact optimal, in polynomial time to Section~\ref{subsubsec:polynomial-time-alg}.

\subsection{Minimax optimal testing using a polynomial-time procedure}
\label{subsec:discuss_gap_robust}

Similar to \Cref{subsec:discuss_gap_subweibull}, we first derive lower bounds on the minimax testing rate under $\mathcal{P}_{\alpha, K}$ and examine the optimality of our testing procedures in both the dense and sparse regimes in \Cref{subsubsec:minimax-polynomial}. Then, in Sections~\ref{subsubsec:mom-sparse} and \ref{subsubsec:polynomial-time-alg}, we address the computational intractability issue of $\phi^{\mathrm{RSM}}_{\mathcal{P},\mathrm{sparse}}$ by combining it with a median-of-means-type test, yielding a procedure that is both minimax optimal in the sparse regime and computationally feasible, with complexity polynomial in $p$ and $n$.

\subsubsection{Minimax optimality}
\label{subsubsec:minimax-polynomial}
For any given $s \in [1,p]$, by simultaneously running $\phi_{\mathcal{P},\mathrm{dense}}$ and $\phi^{\mathrm{RSM}}_{\mathcal{P},\mathrm{sparse}}$ when $\alpha \geq 4$ and only running $\phi_{\mathcal{P},\mathrm{dense}}$ when $2 \leq \alpha < 4$, we obtain an upper bound
\begin{equation}
\label{Eq:finitemoment_minimum_upperbound}
    \begin{cases}
\bigl\{s(p/s)^{2/\alpha} \wedge \sqrt{p}\log\log (8n) \bigr\} + \log \log(8n), &\text{when } \alpha \geq 4, \\
p^{2/\alpha}\log \log (8n), &\text{when } 2 \leq \alpha < 4,
\end{cases}
\end{equation}
on the minimax testing rate $v^*_{\mathcal{G}_{\alpha, K}^{\otimes}}(p,n,s)$. This upper bound also implies the one presented in \eqref{eq:rate-result-poly}, i.e.\ 
$$\Big\{s(p/s)^{2/\alpha}\wedge p^{\frac{2}{\alpha}\vee \frac{1}{2}}\Big\}\log\log(8n).$$
The following result provides a corresponding lower bound.

\begin{thm}\label{thm:finitemoment-lowerbound-minimum}
    Let $\alpha \geq 2$, $K \geq K_\alpha$ and $s \geq c$, for some absolute constant $c \geq 1$ and some constant $K_\alpha > 0$ depending only on $\alpha$. There exists some constant $c' > 0$ depending only on $\alpha$ and $K$, such that $\mathcal{R}_{\mathcal{P}}(\rho) \geq 1/2$
whenever $\rho^2 \leq c' v_{\mathcal{P}}^{\mathrm{L}}$, where 
\[
v_{\mathcal{P}}^{\mathrm{L}} := \Big\{s(p/s)^{2/\alpha} \wedge p^{(2/\alpha) \vee (1/2)}  (\log \log(8n))^{\omega_2} \Bigr\} + \log \log(8n)
\]
and $\omega_2 =(1/2) \mathbbm{1}_{\bigl\{s > \sqrt{p \log \log(8n)}\bigr\} \cap \{\alpha \geq 4\} }$.
\end{thm}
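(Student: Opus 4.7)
The plan is to establish the four ingredients of $v_{\mathcal{P}}^{\mathrm{L}}$---the sparse term $s(p/s)^{2/\alpha}$, the baseline dense term $p^{(2/\alpha)\vee(1/2)}$, the multiplicative $\sqrt{\log\log(8n)}$ factor active when $\alpha \geq 4$ and $s > \sqrt{p\log\log(8n)}$, and the additive $\log\log(8n)$---via four separate prior constructions. By the Le Cam/Neyman--Pearson reduction, for each it suffices to exhibit $\theta_0 \in \Theta_0(p,n)$, a prior $\pi$ supported on $\Theta(p,n,s,\rho)$, and a noise law $P_e \in \mathcal{P}_{\alpha,K}^{\otimes}$ such that the Hellinger distance, or equivalently a bounded $\chi^2$-divergence, between $\mathbb{P}_{\theta_0, P_e}$ and the mixture $\int \mathbb{P}_{\theta, P_e}\, d\pi(\theta)$ is bounded away from $2$.

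For the sparse rate $s(p/s)^{2/\alpha}$, I would fix the change location at $t_0 = \lfloor n/2 \rfloor$, so the effective sample size is $\asymp n$ and the problem reduces, via paired differences, to a single-sample $s$-sparse mean test on $\mathbb{R}^p$. I would draw the support of $\mu_2 - \mu_1$ uniformly from size-$s$ subsets of $[p]$ and assign independent Rademacher values of magnitude $\tau \asymp (p/s)^{1/\alpha}$ on the support, yielding $\rho^2 \asymp s \tau^2 \asymp s(p/s)^{2/\alpha}$. For the noise, I would adapt the sequence-model construction of Comminges, Dalalyan, Minasyan and Tsybakov (2021): a carefully calibrated mixture of a standard Gaussian component with weight $1 - \epsilon$ and a symmetric heavy-tailed atomic component with weight $\epsilon \asymp s/p$, where the atom location and $\epsilon$ are tuned so that the marginal has zero mean, unit variance, and $\alpha$-th absolute moment at most $K^\alpha$. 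The heavy-tailed atoms are designed to absorb a shift of magnitude $\tau$ at negligible per-coordinate $\chi^2$-cost, and a second-moment computation over the random sparse support, in the spirit of Ingster--Butucea--Verzelen, then bounds the mixture $\chi^2$-divergence by a small constant.

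For the baseline dense term $p^{(2/\alpha) \vee (1/2)}$, when $2 \leq \alpha < 4$ I would use a non-sparse construction (again at $t_0 = \lfloor n/2 \rfloor$) in which all $p$ coordinates of $\mu_2 - \mu_1$ are set to a common magnitude $\tau \asymp p^{1/\alpha - 1/2}$, giving $\rho^2 \asymp p \tau^2 \asymp p^{2/\alpha}$; the noise is then a Pareto-type distribution in $\mathcal{P}_{\alpha,K}$ whose shift-induced $\chi^2$ per coordinate is $O(p^{-1})$. When $\alpha \geq 4$, Gaussian noise is in $\mathcal{P}_{\alpha, K}$ for $K$ large enough, so I would invoke the Liu, Gao and Samworth (2021) Gaussian dense lower bound of $\sqrt{p}$. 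The additional $\sqrt{\log\log(8n)}$ factor in the dense regime when $\alpha \geq 4$ and $s > \sqrt{p\log\log(8n)}$ is then obtained by lifting the Liu--Gao--Samworth dyadic-multiplicity construction: one places a uniform prior on change locations across the dyadic grid $\mathcal{T} = \{2^k : 0 \leq k \leq \lfloor \log_2(n/2)\rfloor\}$ and runs the second-moment method to produce the extra factor of $|\mathcal{T}|^{-1/2} \asymp (\log\log(8n))^{-1/2}$. The additive $\log\log(8n)$ term follows from the same dyadic argument specialised to the univariate $p = s = 1$ Gaussian case, since this rate already holds in that regime.

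The main obstacle is the noise construction for the sparse lower bound: the mixture distribution must simultaneously (i) lie in $\mathcal{P}_{\alpha,K}$, (ii) be within $O(s/p)$ squared-Hellinger distance, per coordinate, of its $\tau$-shifted counterpart for $\tau \asymp (p/s)^{1/\alpha}$, and (iii) cooperate with a clean mixture $\chi^2$-computation over uniformly random sparse supports. Balancing the contamination fraction $\epsilon \asymp s/p$, the atom locations, and the tail exponent $\alpha$ to meet all three requirements at once is delicate, and it is precisely this balance that is responsible both for the hypothesis $K \geq K_\alpha$ and for the mild requirement $s \geq c$ appearing in the theorem. A secondary subtlety is that the dyadic-multiplicity second-moment calculation requires near-orthogonality of the CUSUM-type likelihood-ratio statistics at exponentially-separated change locations, which must be verified by following the covariance analysis in Liu, Gao and Samworth (2021).
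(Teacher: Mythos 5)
The decomposition into four ingredients (sparse $s(p/s)^{2/\alpha}$, dense baseline $p^{(2/\alpha)\vee(1/2)}$, the multiplicative $\sqrt{\log\log(8n)}$ factor via dyadic multiplicity, and the additive $\log\log(8n)$) matches the paper's proof structure, and your treatment of the Gaussian-noise ingredients (Liu--Gao--Samworth dense bound and the Gao et al.\ univariate $\log\log(8n)$ bound, both of which apply here because $N(0,1) \in \mathcal{P}_{\alpha,K}$ for $K$ large enough) is correct. But the polynomial-tail constructions have a genuine gap that would make the proof fail.

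You place the change at $t_0 = \lfloor n/2 \rfloor$ and try to reduce, ``via paired differences,'' to a single-sample sparse mean test in the Comminges--Dalalyan--Minasyan--Tsybakov style. This does not work. If the change is at $t_0 \asymp n$, the CUSUM statistic at scale $t_0$ is an average of $\asymp n$ i.i.d.\ heavy-tailed vectors; after this averaging the noise becomes effectively light-tailed (Gaussian by the CLT/Berry--Esseen, and with the spike-and-slab mixture the rare-spike component is washed out at rate $\gamma/\sqrt{m}$), so the target shift of size $\tau \asymp (p/s)^{1/\alpha}$ per signal coordinate becomes easily distinguishable per coordinate. In other words, with a mid-sequence change the tester has $\asymp n$ samples of the same heavy-tailed law and can robustify; indeed the paper's Theorem~\ref{thm:awayfromboundaryP} shows that when $t_0 \wedge (n-t_0) \gtrsim \log(ep/s) + s^{-1}\log\log(8n)$ one can detect at the \emph{much smaller} rate $\rho^2 \gtrsim s\{\log(ep/s) + \log\log(8n)\}$, which would contradict a lower bound of $s(p/s)^{2/\alpha}$ established for $t_0 = n/2$. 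The paper instead places the change at $t_0 = 1$: the signal sits in a single column corrupted by heavy-tailed noise, and the other $n-1$ columns are Rademacher noise that carries no information. The heaviness of the tail then genuinely cannot be averaged away, and the spike-absorbing-the-signal trick (in the paper, the $H_0$ noise law is $\tilde{\xi} + \gamma\tilde{\omega}$ while the $H_1$ noise law is $\xi$, both constrained to lie in $\mathcal{P}_{\alpha,K}$) gives the correct rate.

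A secondary issue is your dense construction for $p^{2/\alpha}$ when $2\le\alpha<4$. You shift all $p$ coordinates by a common $\tau \asymp p^{1/\alpha - 1/2}$ and claim a Pareto-type noise has per-coordinate $\chi^2$-cost $O(p^{-1})$. For any fixed noise density with bounded Fisher information (including Pareto), a shift of size $\tau$ costs $\asymp\tau^2 = p^{2/\alpha - 1}$ in squared Hellinger per coordinate, which is $\gg p^{-1}$ for every $\alpha > 0$, so the total divergence blows up. The paper sidesteps this by not making the change dense at all: claim~(iii) uses a constant-sparsity signal ($s = 30$ coordinates) with per-coordinate magnitude $\gamma \asymp (p/30)^{1/\alpha}$, combined with the two-law spike construction, and the per-coordinate Hellinger cost is kept at $O(p^{-1})$ precisely because the spike has probability $\asymp s/p$.

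In short: the missing idea is that the information-theoretically hard instance for the polynomial-tail rates is the \emph{boundary} change point $t_0 = 1$, not the central one, and correspondingly the ``dense'' lower bound is really a sparse-with-constant-$s$ construction rather than an all-coordinates shift.
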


Similar to the exponentially-decaying tail case, by combining the lower bound $v_{\mathcal{P}}^{\mathrm{L}}$ from \Cref{thm:finitemoment-lowerbound-minimum} with the upper bound in \eqref{Eq:finitemoment_minimum_upperbound}, we quantify the minimax test rate up to a factor of $\log\log(8n)$ and conclude that our testing procedures are minimax rate near-optimal. Importantly, our results reveal a critical phenomenon that the minimax testing rate is independent of $s$ when $\alpha \in [2,4]$.  As in this case, both \eqref{Eq:finitemoment_minimum_upperbound} and $v_{\mathcal{P}}^{\mathrm{L}}$ reduce to $p^{2/\alpha}$, up to a factor of $\log\log(8n)$. In other words, knowing the change is sparse does not make the testing problem easier, and it is fundamentally impossible to exploit the sparse structure of the change in pursuit of better results.
We further discuss its consequence in the language of sparse and dense regimes below.

To understand the effect of sparsity, we again ignore the iterated logarithmic factor in $n$ and focus on the relationship between $s$ and $p$.  The boundary between the dense and sparse regimes is obtained by determining which term dominates the upper/lower bound rate in the minimum operator.  As defined in \Cref{Sec:main result}, the boundary is   
\begin{equation} \label{Eq:sparsity_boundary_robust}
    s^*_{\mathcal{P}} := p^{\frac{1}{2} - (\frac{1}{\alpha-2} \wedge \frac{1}{2})},
\end{equation}
which satisfies $s_{\mathcal{P}}^*(p/s_{\mathcal{P}}^*)^{\frac{2}{\alpha}} = p^{\frac{2}{\alpha} \vee \frac{1}{2}}$. The dense and sparse regimes are those where the sparsity level~$s$ is directly involved in the rate or not. 
The characterisation is summarised in \Cref{table:testing_rate_finite}.
Notably, when $2 \leq \alpha \leq 4$, we always have  
\[
    \frac{1}{\alpha-2} \wedge \frac{1}{2} = \frac{1}{2},
\]
which means that there is no sparse regime in this extremely heavy-tailed setting. 

\begin{table}[ht!]
\centering
\def\arraystretch{1.8}
\begin{tabular}{|cc|c|c|}
\hline
\multicolumn{2}{|c|}{}               &  Upper bound &  Lower bound \\  \hline
\multicolumn{1}{|c|}{\multirow{2}{*}{$\mathcal{P}_{\alpha, K}^{\otimes}$}} & Dense ($\alpha \geq 2$)  &     $p^{(2/\alpha) \vee (1/2)}\log\log(8n)$       &       $p^{(2/\alpha) \vee (1/2)}  (\log \log(8n))^{\omega_2} + \log \log(8n)$      \\ \cline{2-4} 
\multicolumn{1}{|c|}{}                   & Sparse ($\alpha \geq 4$) &    $s(p/s)^{2/\alpha} + \log\log(8n)$         &   $s(p/s)^{2/\alpha} + \log\log(8n)$         \\ \hline
\end{tabular}
\caption{Bounds on the minimax testing rates under finite moment noise distribution class $\mathcal{P}_{\alpha, K}^\otimes$ with $\alpha \geq 2$, where $\omega_2 =(1/2) \mathbbm{1}_{\bigl\{s > \sqrt{p \log \log(8n)}\bigr\} \cap \{\alpha \geq 4\} }$. 
}
\label{table:testing_rate_finite}
\end{table}

From the table, we observe that our upper and lower bounds match exactly across the entire sparse regime ($s < s^*_{\mathcal{P}}$) when it exists ($\alpha \geq 4$). In the dense regime, the upper and lower bounds are off by a factor of order at most $\log\log(8n)$. We briefly note that the upper bound in the special case of $\alpha = 2$ can be improved to $p+\log\log(8n)$, matching the lower bound in this case up to constants. This is achieved as a by-product when we consider noise distributions with no more than two finite moments in \Cref{sec:<2moment}.

We now discuss the effects of $\alpha$ on the minimax testing rates. When both the dense and sparse regimes exist ($\alpha \geq 4$), we observe from \Cref{table:testing_rate_finite} that the dense rate is not affected by $\alpha$ since $2/\alpha \leq 1/2$. Moreover, even compared to the dense rate $\sqrt{p\log\log(8n)}$ under Gaussian noise assumptions, the cost of heavy-tailedness is minimal. However, the sparse rates are completely different from their counterparts in \Cref{subsec:discuss_gap_subweibull}, implying a significant increase of difficulty in detecting sparse changes under heavy-tailed noises. We note that this difficulty can be largely mitigated if one assumes that the change point is away from the boundary, and we discuss this interesting extension in \Cref{Sec:awayfrombonudary}. Finally, as $\alpha$ further decreases to between $2$ and $4$, the sparse regime becomes empty, and the dense rates are also affected by $\alpha$.

\subsubsection{A median-of-means-type test}
\label{subsubsec:mom-sparse}
As mentioned in \Cref{subSec:robust_sparse}, one challenge of using a robust estimator such as~\eqref{eq:prasad_robust_sparse_mean} to construct our testing procedure $\phi^{\mathrm{RSM}}_{\mathcal{P},\mathrm{sparse}}$ is its computational intractability. This stems from the need to project data onto every $2s$-sparse unit vector or its covering set, causing the computational complexity to scale exponentially in $s$. This issue is, in fact, common in high-dimensional robust statistics.

From \Cref{table:testing_rate_finite}, we observe that $\phi^{\mathrm{RSM}}_{\mathcal{P},\mathrm{sparse}}$ is used to establish the upper bound rate only in the sparse regime $s < s_{\mathcal{P}}^* = p^{\frac{1}{2} - (\frac{1}{\alpha-2} \wedge \frac{1}{2})}$, where it achieves minimax optimality. A natural question, then, is whether a polynomial-time algorithm can also attain minimax optimality. To answer this, we first examine a computationally-efficient test that combines the median-of-means approach with a hard-thresholding step for coordinate selection.

Recall that $Z_i = (X_i - X_{n-i+1})/\sqrt{2}$, for $i \in [n/2]$. For $t \in \mathcal{T} \backslash \{1\}$, we split $\{Z_1, \dotsc, Z_t\}$ into two halves: $\{Z_1, Z_3 ,\dotsc, Z_{t-1}\}$ and $\{Z_2, Z_4, \dotsc, Z_t\}$. We further split the first set into $G_t$ groups of equal size, denoted as $\mathcal{Z}_{t,1,1}, \mathcal{Z}_{t,2,1}, \dotsc, \mathcal{Z}_{t,G_t,1}$, with the number of groups $G_t$ specified later in \eqref{Eq:robust_theorem_parameters_sparse}, and use $\overline{Z}_{t,g,1}$ to denote the sample mean of the $g$-th group. The set $\{Z_2, Z_4, \dotsc, Z_t\}$ is reserved for selecting the signal coordinates as we did in \Cref{subSec:subWeibull_sparse}. Consider the statistic $V_{t,g,a} \in \mathbb{R}^p$ with 
\begin{equation} \label{Eq:V_tga}
V_{t,g,a}(j) :=  \biggl( \overline{Z}^2_{t,g,1}(j) - \frac{2G_t}{t} \biggr) \mathbbm{1}_{ \{ |Y_{t,2}(j)| \geq a \} }, \quad j \in [p],
\end{equation}
where $Y_{t,2}(j)$ is defined in \eqref{eq:samplespliting} and $a$ is a selection threshold to be specified in \eqref{Eq:robust_theorem_parameters_sparse}. Our test statistic takes the same form as in the dense case that
\begin{equation} \label{Eq:A_ta_MoM}
A_{t,a}^{\mathrm{MoM}} := \frac{t}{2}\cdot \mathrm{median} \Biggl(  \sum_{j=1}^p V_{t,1,a}(j),  \sum_{j=1}^p V_{t,2,a}(j), \dotsc, \sum_{j=1}^p V_{t,G_t,a}(j)\Biggl).   
\end{equation}
For $t=1$, we cannot perform sample-splitting and therefore we deal with it separately by considering
\begin{equation}\label{eq:finitemoment_sparse_mom-1}
A_{1,a}^{\mathrm{MoM}} := A_{1,a} = \sum_{j=1}^p (Z_1^2(j)-1) \mathbbm{1}_{\{|Z_1(j)| \geq a\} }.
\end{equation}
Finally, the test is given by 
\begin{equation}\label{eq:finitemoment_sparse_mom}
    \phi_{\mathcal{P}, \mathrm{sparse}}^{\mathrm{MoM}}:= \mathbbm{1}_{\{\max_{t \in \mathcal{T}} A_{t,a}^{\mathrm{MoM}} / r_t > 1\}}.
\end{equation}

\begin{prop} \label{thm:finitemoment_upperbound_sparse}
Assume $\alpha \geq 4$. For any $\varepsilon \in (0,1)$, there exist $C_1, C_2, C_3> 0$ depending only on $\alpha$, $K$ and $\varepsilon$, such that $\phi_{\mathcal{P}, \mathrm{sparse}}^{\mathrm{MoM}}$ defined in~\eqref{eq:finitemoment_sparse_mom} with
\begin{equation} \label{Eq:robust_theorem_parameters_sparse}
\begin{aligned} 
 a = C_1\bigl((p/s)^{1/\alpha} + s^{-1/2} \log^{1/2}(\log(8n))\bigr), &\quad r_t =  C_2\bigl(s(p/s)^{2/\alpha}\mathbbm{1}_{\{t=1\}} +  \sqrt{s} G_t\mathbbm{1}_{\{t>1\}}\bigr),  \\
G_t = (t \wedge \Delta)/2 \quad &\mbox{and} \quad  \Delta = 2^{4 + \lceil\log_2 \log \log(8n)\rceil},
\end{aligned}
\end{equation}
satisfies that
\[
\mathcal{R}_{\mathcal{P}}(\rho,  \phi_{\mathcal{P}, \mathrm{sparse}}^{\mathrm{MoM}}) \leq \varepsilon,
\]
as long as $\rho^2 \geq C_3 v_{\mathcal{P}, \mathrm{sparse}}^{\mathrm{U, MoM}}$, where 
\[
v_{\mathcal{P}, \mathrm{sparse}}^{\mathrm{U, MoM}} := s\bigl((p/s)^{2/\alpha} + \log\log(8n)\bigr).
\]
\end{prop}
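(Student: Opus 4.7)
The plan is to separately bound the Type~I and Type~II errors of $\phi_{\mathcal{P}, \mathrm{sparse}}^{\mathrm{MoM}}$, exploiting throughout the key independence of the odd-indexed subsample (which feeds the per-group means $\overline{Z}_{t,g,1}$) from the even-indexed subsample (which forms $Y_{t,2}$). In particular, conditional on $Y_{t,2}$, the selection set $\hat{S}_t := \{j : |Y_{t,2}(j)| \geq a\}$ is deterministic and independent of $\{\overline{Z}_{t,g,1}\}_{g \in [G_t]}$. A union bound over the $O(\log n)$ elements of $\mathcal{T}$ handles both types of error.

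\textbf{Type~I.} For each $t > 1$ I condition on $Y_{t,2}$. Under $H_0$, each $Y_{t,2}(j)$ is a normalised sum of centered variables with bounded $\alpha$-th moment, so by Rosenthal's inequality $\mathbb{E}|Y_{t,2}(j)|^\alpha \lesssim 1$, giving $\mathbb{E}|\hat{S}_t| \lesssim p/a^\alpha \lesssim s$ by the choice of $a$; a Chebyshev bound pins down $|\hat{S}_t| \leq Cs$ on a good event. Conditional on $\hat{S}_t$, each per-group statistic $S_g := \sum_j V_{t,g,a}(j)$ is a sum of independent, centered terms $\overline{Z}_{t,g,1}^2(j) - 2G_t/t$ over $j \in \hat{S}_t$. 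Here the hypothesis $\alpha \geq 4$ enters essentially: four moments of $Z_i(j)$ are needed to control $\mathrm{Var}(\overline{Z}_{t,g,1}^2(j))$, and a Marcinkiewicz--Zygmund-type calculation yields $\mathrm{Var}(S_g \mid \hat{S}_t) \lesssim s (G_t/t)^2$. A standard median-of-means concentration (Chebyshev within each group, then a Chernoff bound on the count of deviating groups) delivers $A_{t,a}^{\mathrm{MoM}} = (t/2)|\mathrm{median}(S_1, \ldots, S_{G_t})| \leq C \sqrt{s}\, G_t$ with probability at least $1 - 2\exp(-G_t/C')$, matching $r_t$ up to constants. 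For $t \geq \Delta$ we have $G_t = \Delta/2 \asymp \log\log(8n)$, absorbing the union bound; for the $O(\log\log\log n)$ small scales $t \in \{2, 4, \ldots, \Delta/2\}$, the failure probability is still tolerable after tuning $C_2$. The $t = 1$ component $A_{1,a}$ is controlled by a direct Chebyshev argument using $\mathbb{E}[Z_1(j)^2 \mathbbm{1}_{\{|Z_1(j)| \geq a\}}] \leq \mathbb{E}|Z_1(j)|^\alpha / a^{\alpha-2}$, bounding $\mathbb{E}[A_{1,a}]$ by $\lesssim s(p/s)^{2/\alpha}$, which matches $r_1$.

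\textbf{Type~II.} By symmetry I assume $t_0 \leq n/2$. Let $t^*$ be the largest element of $\mathcal{T}$ with $t^* \leq t_0 \wedge (n-t_0)$, so that $t^* \asymp t_0(n-t_0)/n$ and $t^* \|\mu_1 - \mu_2\|_2^2 \gtrsim \rho^2 \geq C_3 v_{\mathcal{P}, \mathrm{sparse}}^{\mathrm{U,MoM}}$. For every $i \leq t^*$, both indices $2i$ and $n+1-2i$ straddle $t_0$, so $\mathbb{E}[\overline{Z}_{t^*, g, 1}(j)] = (\mu_1(j) - \mu_2(j))/\sqrt{2}$ and $\mathbb{E}[Y_{t^*, 2}(j)] = (\sqrt{t^*}/2)(\mu_1(j) - \mu_2(j))$. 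Writing $\delta_j := (\mu_1(j) - \mu_2(j))^2$ and $S_{\mathrm{L}} := \{j : \delta_j \geq c_0 a^2/t^*\}$ for a small $c_0$, I run the following dichotomy. If $\sum_{j \in S_{\mathrm{L}}} \delta_j < \|\mu_1 - \mu_2\|_2^2/2$, the remaining small-signal coordinates contribute at most $s c_0 a^2/t^*$ to $\|\mu_1-\mu_2\|_2^2$, so $\rho^2 \lesssim t^* \|\mu_1-\mu_2\|_2^2 \lesssim c_0 s a^2 \asymp c_0 v_{\mathcal{P}, \mathrm{sparse}}^{\mathrm{U,MoM}}$, contradicting $\rho^2 \geq C_3 v_{\mathcal{P}, \mathrm{sparse}}^{\mathrm{U,MoM}}$ for large $C_3$. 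On the complementary event, for each $j \in S_{\mathrm{L}}$ the deterministic part of $Y_{t^*,2}(j)$ exceeds $2a$ in magnitude, so Chebyshev yields $\mathbb{P}(j \in \hat{S}_{t^*}) \geq 3/4$, and independence across the at most $s$ support coordinates gives $\sum_{j \in \hat{S}_{t^*} \cap S_{\mathrm{L}}} \delta_j \geq \tfrac{1}{4}\|\mu_1-\mu_2\|_2^2$ with high probability. On this event $\mathbb{E}[S_g \mid \hat{S}_{t^*}] \geq \tfrac{1}{8}\|\mu_1-\mu_2\|_2^2$, and its conditional variance is controllable by the same four-moment argument (with an additional $\delta_j^2$ contribution, finite under $\alpha \geq 4$). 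MoM concentration then drives $A_{t^*, a}^{\mathrm{MoM}} \gtrsim t^* \|\mu_1 - \mu_2\|_2^2 \gtrsim \rho^2 \geq C_3 v_{\mathcal{P}, \mathrm{sparse}}^{\mathrm{U,MoM}} \geq r_{t^*}$, recalling that $r_{t^*} \leq C_2 \sqrt{s}\, \Delta \lesssim \sqrt{s} \log\log(8n) \leq v_{\mathcal{P}, \mathrm{sparse}}^{\mathrm{U,MoM}}$. The boundary case $t^* = 1$ falls back on $A_{1,a}^{\mathrm{MoM}} = A_{1,a}$, where a parallel argument on $\mathbb{E}[A_{1,a}]$ yields detection above $r_1$.

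\textbf{Main obstacle.} The sharpest step is the conditional variance bound on $S_g$ and its Type~II analogue, which rely essentially on $\alpha \geq 4$; this is precisely why the result cannot be extended to $\alpha \in [2,4)$ via this polynomial-time median-of-means test. A secondary subtlety is harmonising the weaker MoM concentration at small $t < \Delta$ with the union bound over $\mathcal{T}$, and isolating the $t = 1$ component, which aggregates sparse signal directly without a splitting step.
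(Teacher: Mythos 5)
Your high-level strategy matches the paper's: condition on $Y_{t,2}$ to make the selection set $\hat{S}_t$ independent of the aggregation subsample, bound the selection set size and per-group MoM fluctuations under the null, and, under the alternative, split the support into strong and weak coordinates and apply MoM concentration. However, two of your bounding steps are too weak to close the union bound over the $|\mathcal{T}| \asymp \log n$ dyadic scales. In the Type~I analysis, ``$\mathbb{E}|\hat{S}_t| \lesssim s$; a Chebyshev bound pins down $|\hat{S}_t| \leq Cs$'' yields at best a probability of order $1/C$ (Markov) or $1/(C^2 s)$ (Chebyshev) per $t$; summing over $\mathcal{T}$ then forces $C \gtrsim \log n$, which inflates the downstream threshold $\sqrt{s}G_t$ unacceptably. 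The paper instead applies a Hoeffding binomial tail bound, $\mathbb{P}(|\mathcal{J}_{t,a}| > s) \leq (epq_{t,a}/s)^s$ as in \eqref{Eq:chernoff-hoeffding-robust}, which is exponentially small in $s$. Moreover, that bound is only useful because the per-coordinate selection probability $q_{t,a}$ itself has a two-part Fuk--Nagaev bound (\Cref{prop:fuknagaev}) with a $t^{-(\alpha/2-1)}$-decaying heavy-tail part, making $\sum_{t\in\mathcal{T}} (epq_{t,a}/s)^s$ geometric, and a Gaussian-tail part $\exp\{-ca^2\}$. Rosenthal-plus-Markov gives only the polynomial tail $q_{t,a} \lesssim a^{-\alpha}$ without the Gaussian part; it is precisely the Gaussian part, fed the $s^{-1/2}\log^{1/2}\log(8n)$ piece of $a$, that produces $\exp\{-sa^2\} \lesssim 1/\log(8n)$ and kills the $\log_2 n$ factor in the union bound.

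In the Type~II analysis, ``independence across the at most $s$ support coordinates gives $\sum_{j\in\hat{S}_{t^*}\cap S_{\mathrm{L}}}\delta_j \geq \tfrac14\|\mu_1-\mu_2\|_2^2$ with high probability'' implicitly invokes a Bernstein-type bound, but this degenerates when a single coordinate dominates so that $\|\delta\|_\infty \asymp \|\delta\|_2$: the Bernstein denominator contains the term $\|\delta\|_\infty^2\|\delta\|_2^2$, which is then of the same order as the numerator $\|\delta\|_2^4$ and the exponent is $O(1)$, not small. The paper handles this with an explicit dichotomy (equations~\eqref{Eq:concentration_bernoulli} and~\eqref{Eq:concentration_bernoulli_single} in the proof of \Cref{thm:weibullupperbound_sparse}, which the proof of this proposition closely mirrors): when $\|\delta\|_2 \geq \sqrt{12\log(8/\varepsilon)}\|\delta\|_\infty$ apply Bernstein; otherwise show that the single dominant coordinate is selected with probability at least $1-\varepsilon/8$, exploiting the $\log$-factors built into the signal-strength hypothesis. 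Without this split your argument leaves a genuine gap for sparse, spiked signals.
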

We observe that the rate $v_{\mathcal{P}, \mathrm{sparse}}^{\mathrm{U, MoM}}$ is dominated by $v_{\mathcal{P}, \mathrm{sparse}}^{\mathrm{U}} = s(p/s)^{2/\alpha} + \log\log(8n)$ in \Cref{thm:finitemoment_upperbound_sparse_improve}, meaning that using solely this MoM-type test $\phi_{\mathcal{P}, \mathrm{sparse}}^{\mathrm{MoM}}$ is not statistically optimal. However, the computational complexity of each step in constructing this test (defined in \eqref{Eq:V_tga}, \eqref{Eq:A_ta_MoM}, \eqref{eq:finitemoment_sparse_mom-1} and \eqref{eq:finitemoment_sparse_mom}) is polynomial in $n$ and $p$, making it a feasible polynomial-time testing procedure.

It is also worth mentioning that in the hard-thresholding step, we simply use the non-robust quantity $Y_{t,2}$ to estimate the signal of each coordinate instead of its robust counterparts to avoid further complication of the procedure. If, however, we know that the change point is sufficiently far away from the endpoints, employing a robust procedure for coordinate selection can significantly improve the testing rate. This additional assumption requires modifying the alternative space in~\eqref{Eq:H_0_H_1}, leading to a different testing problem. A detailed discussion of this scenario is provided in Section~\ref{Sec:awayfrombonudary}.

\subsubsection{An optimal polynomial-time test in the sparse regime}
\label{subsubsec:polynomial-time-alg}

By comparing $v_{\mathcal{P}, \mathrm{sparse}}^{\mathrm{U}}$ and $v_{\mathcal{P}, \mathrm{sparse}}^{\mathrm{U}, \mathrm{MoM}}$, as established in \Cref{thm:finitemoment_upperbound_sparse_improve} and \Cref{thm:finitemoment_upperbound_sparse}, we observe that the improvement offered by $\phi^{\mathrm{RSM}}_{\mathcal{P},\mathrm{sparse}}$ over $\phi_{\mathcal{P}, \mathrm{sparse}}^{\mathrm{MoM}}$ occurs only when $(p/s)^{2/\alpha} < \log\log(8n)$, in which case $\phi_{\mathcal{P}, \mathrm{sparse}}^{\mathrm{MoM}}$ attains the suboptimal rate $s\log\log(8n)$. Given the range of $s$ in the sparse regime, we deduce that the computationally expensive $\phi^{\mathrm{RSM}}_{\mathcal{P},\mathrm{sparse}}$ is only necessary when $p < \log^{\alpha-2}(\log(8n))$, whereas $\phi_{\mathcal{P}, \mathrm{sparse}}^{\mathrm{MoM}}$ can be used otherwise. We define the following combined testing procedure:
\begin{equation} \label{eq:finite_sparse_combined_test}  
\phi_{\mathcal{P},\mathrm{sparse}} :=  
\begin{cases}
    \phi_{\mathcal{P}, \mathrm{sparse}}^{\mathrm{RSM}}, & \text{if } p < \log^{\alpha-2}(\log(8n)), \\  
    \phi_{\mathcal{P}, \mathrm{sparse}}^{\mathrm{MoM}}, & \text{otherwise}.  
\end{cases}  
\end{equation}  
The corollary below confirms that this combined test runs in polynomial time in both $n$ and $p$ while achieving the optimal rate $v_{\mathcal{P}, \mathrm{sparse}}^{\mathrm{U}}$ in the sparse regime.
\begin{cor} \label{thm:finitemoment_combinerate}
Assume $\alpha \geq 4$ and $s < s_{\mathcal{P}}^*$. Consider the test $\phi_{\mathcal{P},\mathrm{sparse}}$ defined in~\eqref{eq:finite_sparse_combined_test}, with its two components $\phi_{\mathcal{P}, \mathrm{sparse}}^{\mathrm{RSM}}$ and $\phi_{\mathcal{P}, \mathrm{sparse}}^{\mathrm{MoM}}$ described in Sections~\ref{subSec:robust_sparse} and~\ref{subsubsec:mom-sparse}, respectively. For any $\varepsilon \in (0,1)$, there exists a constant $C > 0$ depending only on $\alpha$, $K$, and $\varepsilon$, such that $\phi_{\mathcal{P},\mathrm{sparse}}$, with the parameters of its two components chosen according to  \eqref{Eq:robust_theorem_parameters_sparse_rsm} and \eqref{Eq:robust_theorem_parameters_sparse}, satisfies that
\[
\mathcal{R}_{\mathcal{P}}(\rho, \phi_{\mathcal{P},\mathrm{sparse}}) \leq \varepsilon,  
\]
as long as $\rho^2 \geq C_1 v_{\mathcal{P}, \mathrm{sparse}}^{\mathrm{U}}$, with $v_{\mathcal{P}, \mathrm{sparse}}^{\mathrm{U}}$ defined in \eqref{eq-sparse-upper-poly}.
Moreover, the computational complexity of $\phi_{\mathcal{P},\mathrm{sparse}}$ is polynomial in both $n$ and $p$.  
\end{cor}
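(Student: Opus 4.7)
The plan is to split into the two cases delineated by the threshold $p^* := \log^{\alpha-2}(\log(8n))$ used in the definition of $\phi_{\mathcal{P},\mathrm{sparse}}$ in \eqref{eq:finite_sparse_combined_test}, and to handle statistical rate and computational complexity separately in each case. In the regime $p < p^*$, the test reduces to $\phi^{\mathrm{RSM}}_{\mathcal{P},\mathrm{sparse}}$, which by Theorem~\ref{thm:finitemoment_upperbound_sparse_improve} satisfies $\mathcal{R}_{\mathcal{P}}(\rho, \phi^{\mathrm{RSM}}_{\mathcal{P},\mathrm{sparse}}) \leq \varepsilon$ as soon as $\rho^2 \gtrsim v^{\mathrm{U}}_{\mathcal{P},\mathrm{sparse}}$, so the statistical guarantee is immediate. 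In the regime $p \geq p^*$, the test reduces to $\phi^{\mathrm{MoM}}_{\mathcal{P},\mathrm{sparse}}$, whose guarantee in Proposition~\ref{thm:finitemoment_upperbound_sparse} is in terms of $v^{\mathrm{U,MoM}}_{\mathcal{P},\mathrm{sparse}} = s(p/s)^{2/\alpha} + s\log\log(8n)$, so what remains is to show $v^{\mathrm{U,MoM}}_{\mathcal{P},\mathrm{sparse}} \lesssim v^{\mathrm{U}}_{\mathcal{P},\mathrm{sparse}}$.

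For the rate comparison in the large-$p$ regime, I will use the sparse regime hypothesis $s < s^*_{\mathcal{P}} = p^{(\alpha-4)/(2(\alpha-2))}$ given in~\eqref{Eq:sparsity_boundary_robust}. A short calculation gives
\[
    \frac{p}{s} > p^{\,1 - \frac{\alpha-4}{2(\alpha-2)}} = p^{\,\frac{\alpha}{2(\alpha-2)}}, \qquad \text{so} \qquad \Bigl(\frac{p}{s}\Bigr)^{2/\alpha} > p^{\,1/(\alpha-2)} \geq \log\log(8n),
\]
where the last inequality uses $p \geq p^* = \log^{\alpha-2}(\log(8n))$. Hence $s(p/s)^{2/\alpha} \geq s\log\log(8n)$, which implies $v^{\mathrm{U,MoM}}_{\mathcal{P},\mathrm{sparse}} \leq 2 s(p/s)^{2/\alpha} \leq 2\, v^{\mathrm{U}}_{\mathcal{P},\mathrm{sparse}}$. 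The statistical claim then follows with $C_5$ chosen to absorb this constant factor together with the constants from Theorem~\ref{thm:finitemoment_upperbound_sparse_improve} and Proposition~\ref{thm:finitemoment_upperbound_sparse}.

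For the complexity claim, the MoM-based test used in the $p \geq p^*$ branch has polynomial complexity in $n$ and $p$ by direct inspection of~\eqref{Eq:V_tga}--\eqref{eq:finitemoment_sparse_mom}, since all constituent operations are sorting, averaging and thresholding over $O(np)$ scalars. In the small-$p$ branch, the bottleneck is evaluating $\hat{\mu}^{\mathrm{RSM}}_{t,s}$ in~\eqref{eq:prasad_robust_sparse_mean} for each $t \in \mathcal{T}$, which requires enumerating the covering set $\mathcal{N}^{1/2}_{2s}(\mathcal{S}^{p-1})$ of cardinality at most $(6ep/s)^{2s}$. Using $s \leq p \leq p^* = \log^{\alpha-2}(\log(8n))$ and writing $L := \log\log(8n)$, I will bound
\[
    \log\bigl((6ep/s)^{2s}\bigr) \leq 2s \log(6ep) \lesssim p \log p \lesssim L^{\alpha-2}\log L.
\]
Since $\log n \asymp e^L$ grows doubly exponentially in $L$, one has $L^{\alpha-2}\log L = o(\log n)$, so the total cost is $n^{o(1)}$, which is polynomial in $n$; it is trivially polynomial in $p$ as $p \leq p^*$ is tiny.

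The main obstacle, as I see it, is the rate-matching step in the large-$p$ branch, which relies on combining the cutoff $p \geq \log^{\alpha-2}(\log(8n))$ with the sparse-regime hypothesis $s < s^*_{\mathcal{P}}$ to kill the spurious $s\log\log(8n)$ term in $v^{\mathrm{U,MoM}}_{\mathcal{P},\mathrm{sparse}}$; the choice of threshold $p^*$ in~\eqref{eq:finite_sparse_combined_test} is precisely calibrated so that this cancellation goes through. The remaining complexity bookkeeping, especially the doubly-exponential comparison $p\log p = o(\log n)$ when $p \leq \log^{\alpha-2}(\log(8n))$, is then routine.
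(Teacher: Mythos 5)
Your statistical rate comparison matches the paper's argument in substance: both derivations combine $s < s^*_{\mathcal{P}} = p^{(\alpha-4)/(2\alpha-4)}$ with the threshold $p^* = \log^{\alpha-2}(\log(8n))$ to show $(p/s)^{2/\alpha} \geq \log\log(8n)$ in the large-$p$ branch, so that $v^{\mathrm{U,MoM}}_{\mathcal{P},\mathrm{sparse}} \asymp v^{\mathrm{U}}_{\mathcal{P},\mathrm{sparse}}$ there; you phrase it directly while the paper phrases it as the contrapositive, but these are the same calculation. That part is correct.

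However, there is a genuine gap in your complexity analysis for the small-$p$ branch. You identify ``enumerating the covering set $\mathcal{N}^{1/2}_{2s}(\mathcal{S}^{p-1})$'' as the bottleneck and bound its cardinality, but the estimator $\hat{\mu}^{\mathrm{RSM}}_{t,s}$ in \eqref{eq:prasad_robust_sparse_mean} is a min-max problem: the inner supremum is over the finite cover, but the outer infimum ranges over the continuous set $\mathcal{L}_s = \{v : \|v\|_0 \leq s\}$ of $s$-sparse vectors. Simply enumerating the cover does not yield the minimizer $\hat{\mu}^{\mathrm{RSM}}$. The paper handles this by (i) enumerating the $\binom{p}{s}$ support patterns, (ii) for each fixed support running subgradient descent on the 1-Lipschitz convex objective, and (iii) — crucially for the corollary's statistical conclusion — proving that the \emph{approximate} minimizer $\tilde{\mu}^{\mathrm{RSM}}$ obtained after $K \asymp 1/\upsilon^2$ subgradient steps, with $\upsilon = \sqrt{s\log(ep/s)/t}$, still satisfies Condition~\ref{Con:RSM} and hence preserves the guarantee of Theorem~\ref{thm:finitemoment_upperbound_sparse_improve}. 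Step (iii) is not cosmetic: Theorem~\ref{thm:finitemoment_upperbound_sparse_improve} is stated for an exact $\hat{\mu}^{\mathrm{RSM}}$, and one must verify the bound $\|\tilde{\mu}^{\mathrm{RSM}} - \mu_Z\|_2 \leq \upsilon + 4g(\mu_Z)$ via the triangle inequality and Lipschitz property to transfer the conclusion to the computationally feasible estimator. Your proposal says nothing about the outer minimization or about the approximation error, so as written it does not establish that a polynomial-time procedure achieves the stated rate; it only shows that one component of the objective function can be evaluated cheaply. To close the gap you would need to add the support-enumeration plus subgradient-descent argument, together with the verification that the approximate output still obeys Condition~\ref{Con:RSM}.
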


We conclude this section by summarising in \Cref{table:test_sumary} the main features of each test constructed to achieve the upper bounds of the minimax testing rates in Tables~\ref{table:testing_rate} and~\ref{table:testing_rate_finite} for $\mathcal{G}_{\alpha, K}$ and $\mathcal{P}_{\alpha, K}$ and under both dense and sparse signals.

\begin{table}[ht!]
\centering
\def\arraystretch{1.8}
\begin{tabular}{|c|c|c|}
\hline
& Dense  & Sparse \\ \hline
$\mathcal{G}_{\alpha, K}^{\otimes}$ & CUSUM-type, $\ell_2$ aggregation      &    CUSUM-type, thresholding, $\ell_2$ aggregation    \\   \hline
$\mathcal{P}_{\alpha, K}^{\otimes}$ &   MoM-type, $\ell_2$ aggregation   & \makecell{Big $p$: MoM-type, thresholding, $\ell_2$ aggregation \\ Small $p$: Robust sparse mean estimator based test} \\ \hline
\end{tabular}
\caption{A summary of main features of each test constructed to achieve the upper bounds of the minimax testing rates in Tables~\ref{table:testing_rate} and~\ref{table:testing_rate_finite}.}
\label{table:test_sumary}
\end{table}

When the noise has exponentially decaying tails, CUSUM-type statistics with $\ell_2$ aggregation are sufficient to achieve near-optimal testing. In contrast, for polynomially decaying tails, robust methods such as median-of-means or robust sparse mean estimation are required to construct effective tests. For both types of heavy-tailed noise, when testing sparse signals, a thresholding step is applied to identify the signal coordinates before aggregation. Finally, under finite-moment noise distributions, the two robust methods, chosen according to whether $p$ is large or small, combine to yield an optimal and computationally efficient testing procedure.

\section{Adaptation to sparsity} \label{sec:adaptsparsity}
In Sections~\ref{Sec:hidimtest} and \ref{Sec:robusttest}, we have studied the change point testing problem under two types of heavy-tail assumptions on the error distributions: (1) exponentially-decaying/sub-Weibull tails and (2) polynomially-decaying/finite~$\alpha$-th moment assumption with $\alpha \geq 2$. The corresponding upper bound rates,~e.g.~$v_{\mathcal{G}, \mathrm{sparse}}^{\mathrm{U}}$ and~$v_{\mathcal{P}, \mathrm{sparse}}^{\mathrm{U}}$, are currently achieved by testing procedures that take the sparsity level $s$ as an input. In this section, we study the adaptation of these procedures to unknown sparsity levels.

First off, in the very heavy-tailed setting, i.e.\ each entry of $E$ has only finite $\alpha$-th moments for $\alpha \in [2, 4]$, there is no sparse regime; see the discussion in \Cref{subsubsec:minimax-polynomial}. The test~$\phi_{\mathcal{P},\mathrm{dense}}$ defined in~\eqref{eq:finitemoment_dense_test} with its parameters specified in Theorem~\ref{thm:finitemoment_upperbound_dense} does not require the knowledge of the sparsity, and, therefore, the corresponding rate $v_{\mathcal{P}, \mathrm{dense}}^{\mathrm{U}} = p^{2/\alpha} \log \log(8n)$ can already be achieved by an adaptive procedure.

We focus on the case where $P_e \in \mathcal{P}_{\alpha, K}^{\otimes}$ for $\alpha > 4$. Recall from \Cref{thm:finitemoment_upperbound_dense} and \Cref{thm:finitemoment_combinerate} that the tests $\phi_{\mathcal{P},\mathrm{dense}}$ and $\phi_{\mathcal{P},\mathrm{sparse}}$ achieve the rates $v_{\mathcal{P}, \mathrm{dense}}^{\mathrm{U}}$ and $v_{\mathcal{P}, \mathrm{sparse}}^{\mathrm{U}}$, respectively, when the sparsity level is known. To address the scenario where sparsity is unknown, we propose the following adaptive testing procedure that combines these two tests:
\begin{equation} \label{Eq:adaptive_test_main}
\phi_{\mathcal{P}, \mathrm{adaptive}} := \phi_{\mathcal{P},\mathrm{dense}} \vee \max_{s \in \mathcal{K}} {\phi}_{\mathcal{P}, \mathrm{sparse}, s},
\end{equation}
where the dependence of $\phi_{\mathcal{P},\mathrm{sparse}}$ on $s$ is made explicit by writing it as ${\phi}_{\mathcal{P}, \mathrm{sparse}, s}$, and the set $\mathcal{K} := \{1, 2, 4, \dotsc, 2^{\lceil \log_2(p) \rceil - 1}\}$ is a dyadic grid. The details of this test along with its parameter choices are provided in \Cref{sec:adapt-test}.

\begin{thm} \label{thm:adaptive_upperbound}
Assume $\alpha \geq 4$. For any $\varepsilon \in (0,1)$, there exists a test $\phi_{\mathcal{P}, \mathrm{adaptive}}$ of the form \eqref{Eq:adaptive_test_main} that satisfies 
\[
\mathcal{R}_{\mathcal{P}}(\rho,  \phi_{\mathcal{P},\mathrm{adaptive}} ) \leq \varepsilon,
\]
as long as $\rho^2 \geq C\bigl(v_{\mathcal{P}, \mathrm{dense}}^{\mathrm{U}} \wedge v_{\mathcal{P}, \mathrm{sparse}}^{\mathrm{U}}\bigr)$, with $C>0$ being a constant depending only on $\alpha$, $K$ and $\varepsilon$.
\end{thm}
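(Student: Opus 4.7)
The plan is to combine $\phi_{\mathcal{P},\mathrm{dense}}$ from \Cref{thm:finitemoment_upperbound_dense} with the family $\{\phi_{\mathcal{P},\mathrm{sparse},s}\}_{s\in\mathcal{K}}$ from \Cref{thm:finitemoment_combinerate} over the dyadic sparsity grid $\mathcal{K}=\{1,2,4,\dotsc,2^{\lceil\log_2 p\rceil-1}\}$, with every component test recalibrated at the reduced Type~I level $\varepsilon':=\varepsilon/(2|\mathcal{K}|+2)$, and let $\phi_{\mathcal{P},\mathrm{adaptive}}$ reject $\mathrm{H}_0$ whenever any component test rejects. Note that $\phi_{\mathcal{P},\mathrm{dense}}$ itself requires no recalibration with respect to~$s$, since its thresholds in \eqref{Eq:robust_theorem_parameters} depend only on $(p,n,\varepsilon)$; only the sparse family is enumerated over $\mathcal{K}$.

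For Type~I control, the pointwise inequality $\phi_{\mathcal{P},\mathrm{adaptive}}\leq\phi_{\mathcal{P},\mathrm{dense}}+\sum_{s\in\mathcal{K}}\phi_{\mathcal{P},\mathrm{sparse},s}$, combined with the Type~I guarantees of \Cref{thm:finitemoment_upperbound_dense,thm:finitemoment_combinerate} at level~$\varepsilon'$, yields uniformly in $\theta\in\Theta_0(p,n)$ and $P_e\in\mathcal{P}_{\alpha,K}^{\otimes}$ that $\mathbb{E}_{\theta,P_e}[\phi_{\mathcal{P},\mathrm{adaptive}}]\leq(|\mathcal{K}|+1)\varepsilon'\leq\varepsilon/2$.

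For Type~II control, fix any alternative with true sparsity $s_0\in[p]$. If $v_{\mathcal{P},\mathrm{dense}}^{\mathrm{U}}\leq v_{\mathcal{P},\mathrm{sparse}}^{\mathrm{U}}(s_0)$ then \Cref{thm:finitemoment_upperbound_dense} at level $\varepsilon'$ already bounds the Type~II error of $\phi_{\mathcal{P},\mathrm{dense}}$ by $\varepsilon'$ once $\rho^2\geq Cv_{\mathcal{P},\mathrm{dense}}^{\mathrm{U}}$. Otherwise, I would pigeonhole by setting $\tilde s:=\min\{s\in\mathcal{K}:s\geq s_0\}\in[s_0,2s_0]$; since $\Theta(p,n,s_0,\rho)\subseteq\Theta(p,n,\tilde s,\rho)$, \Cref{thm:finitemoment_combinerate} at level $\varepsilon'$ applied with sparsity $\tilde s$ guarantees that $\phi_{\mathcal{P},\mathrm{sparse},\tilde s}$ detects the change once $\rho^2\geq C v_{\mathcal{P},\mathrm{sparse}}^{\mathrm{U}}(\tilde s)$. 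Using that $s\mapsto s(p/s)^{2/\alpha}=s^{1-2/\alpha}p^{2/\alpha}$ is nondecreasing for $\alpha\geq 4$, one has $v_{\mathcal{P},\mathrm{sparse}}^{\mathrm{U}}(\tilde s)\leq 2^{1-2/\alpha}\,s_0(p/s_0)^{2/\alpha}+\log\log(8n)\lesssim v_{\mathcal{P},\mathrm{sparse}}^{\mathrm{U}}(s_0)$, so the Type~II error of $\phi_{\mathcal{P},\mathrm{sparse},\tilde s}$ is again at most $\varepsilon'$. Combining the Type~I and Type~II bounds gives $\mathcal{R}_{\mathcal{P}}(\rho,\phi_{\mathcal{P},\mathrm{adaptive}})\leq\varepsilon$ whenever $\rho^2\geq C\bigl(v_{\mathcal{P},\mathrm{dense}}^{\mathrm{U}}\wedge v_{\mathcal{P},\mathrm{sparse}}^{\mathrm{U}}\bigr)$.

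The main technical obstacle is to verify that replacing $\varepsilon$ by $\varepsilon'\asymp\varepsilon/\log p$ does not inflate the signal-strength requirement beyond constants. The threshold parameters $\tilde\Delta_1,\tilde\Delta_2$ in \eqref{Eq:robust_theorem_parameters_sparse_rsm} and $\Delta$ in \eqref{Eq:robust_theorem_parameters_sparse} pick up an additive $\log(1/\varepsilon')=\log(1/\varepsilon)+\log\log p$ contribution, and one has to track through the proofs of \Cref{thm:finitemoment_upperbound_sparse_improve,thm:finitemoment_upperbound_sparse} that this extra $\log\log p$ term always appears additively rather than multiplicatively alongside the leading $s(p/s)^{2/\alpha}$ and $\log\log(8n)$ contributions, so that it is absorbed into either the $\log\log(8n)$ summand of $v_{\mathcal{P},\mathrm{sparse}}^{\mathrm{U}}$ or the constant $C=C(\alpha,K,\varepsilon)$. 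Granting this bookkeeping, the sparsity adaptation is \emph{free}, paralleling the Gaussian adaptation result in \cite{liu2021minimax}.
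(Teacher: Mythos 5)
Your high-level decomposition $\phi_{\mathcal{P},\mathrm{adaptive}}=\phi_{\mathcal{P},\mathrm{dense}}\vee\max_{s\in\mathcal{K}}\phi_{\mathcal{P},\mathrm{sparse},s}$ and the pigeonhole argument for Type~II are fine, but the step you flag as ``bookkeeping'' --- recalibrating every component at level $\varepsilon'\asymp\varepsilon/\log p$ and claiming the damage is only an additive $\log\log p$ --- is exactly where the argument breaks. The level $\varepsilon$ does \emph{not} enter the sparse tests only through logarithms. In the proof of Proposition~\ref{thm:finitemoment_upperbound_sparse}, the hard-thresholding parameter satisfies (cf.\ \eqref{Eq:chernoff-hoeffding-robust}--\eqref{Eq:choice_a_finitemoment}) the requirement $a \gtrsim (1/\varepsilon)^{1/\alpha}(p/s)^{1/\alpha}$, and the $t=1$ threshold from Proposition~\ref{prop:2sample}(b) satisfies $r_1 \gtrsim (1/\varepsilon)^{2/\alpha}s(p/s)^{2/\alpha}$; i.e.\ $\varepsilon$ appears \emph{polynomially}, and these quantities feed multiplicatively into the signal-strength requirement via $\rho^2\geq 64a^2s$ and $\rho^2\gtrsim r_1$. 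Substituting $\varepsilon'=\varepsilon/\log p$ therefore inflates the requirement to $\rho^2\gtrsim(\log p)^{2/\alpha}\, s(p/s)^{2/\alpha}$, a genuine $\mathrm{polylog}(p)$ factor over $v_{\mathcal{P},\mathrm{sparse}}^{\mathrm{U}}$ that cannot be absorbed into a constant $C=C(\alpha,K,\varepsilon)$.

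The paper avoids this through two devices that your proposal would need to import. First, for the $t=1$ statistic it does not take a union bound over $s$ at all: Proposition~\ref{prop:2sample}(c) bounds $\mathbb{P}\bigl(\max_{s\in[p]}Z_s/r_s>1\bigr)\leq 2\varepsilon$ using the same $(a_s,r_s)$ as in part (b), so the simultaneous control over all sparsity levels is free. Second, for $t>1$ the threshold $r_{t,s}$ is inflated from $\sqrt{s}\,G_t$ to $s^{3/4}G_t$ (see \eqref{Eq:adaptive_param_2}); this makes the per-$s$ Type~I probability decay like $\varepsilon/\sqrt{s}$, which sums geometrically over the dyadic grid $\mathcal{K}$ with no $\log p$ penalty, while on the alternative side $s^{3/4}G_t\lesssim s\log\log(8n)$ is still dominated by $v_{\mathcal{P},\mathrm{sparse}}^{\mathrm{U,MoM}}=s\bigl((p/s)^{2/\alpha}+\log\log(8n)\bigr)$. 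Without these two ingredients --- the log-free simultaneous bound at $t=1$ and the geometric $1/\sqrt{s}$ decay from the $s^{3/4}$-threshold --- your naive $\varepsilon/|\mathcal{K}|$ recalibration yields a strictly weaker rate.
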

\Cref{thm:adaptive_upperbound} establishes that the adaptive test $\phi_{\mathcal{P},\mathrm{adaptive}}$ achieves the same rate as \eqref{Eq:finitemoment_minimum_upperbound} without requiring the sparsity parameter as an input. Therefore, the discussion on the optimality in \Cref{subsubsec:minimax-polynomial} also holds. When the errors have exponentially-decaying tails instead, a similar adaptive testing procedure $\phi_{\mathcal{G},\mathrm{adaptive}}$ can be constructed using ~\eqref{Eq:adaptive_test_main} with $\phi_{\mathcal{G},\mathrm{dense}}$ and $\phi_{\mathcal{G},\mathrm{sparse}}$ instead and achieve the same rate as in~\eqref{Eq:weibull_minimum_upperbound}. For brevity, we omit further details here.

\section{Illustrative simulations}
\label{Sec:simulations}
In this section, we illustrate our main results, as presented in \Cref{Sec:main result} via simulation studies. Specifically, we numerically verify the shrinkage of sparse regimes as the tail becomes heavier in both exponentially- and polynomially-decaying tail cases.

In our simulations, we set $p=100$ and $n=300$. The data matrices $X \in \mathbb{R}^{100\times 300}$ are generated according to~\eqref{eq:model}, where the entries of the noise matrix $E$ are i.i.d.~following some distribution $F$. Let $t_k$ denotes the $t$-distribution with $k$ degrees of freedom. We consider the following four choices for the distribution $F$: 
\begin{itemize}
    \item GG(2): standard normal distribution, which belongs to the class $\mathcal{G}_{2,K}$ for some $K$;
    \item GG(0.5): Generalised Gaussian distribution with mean $0$, shape parameter $0.5$, and scale parameter chosen so that the variance is $1$; this belongs to the class $\mathcal{G}_{0.5,K}$ for some $K$; 
    \item Nt(8): $\sqrt{3/4}\cdot t_8$, which has variance $1$ and belongs to the class $\mathcal{P}_{8-\epsilon, K}$ for any $\epsilon > 0 $\footnote{A $t$ distribution with $\nu$ degrees of freedom has $(\nu-\epsilon)$-th moment finite for any $\epsilon >0$.};
    \item Nt(3): $\sqrt{1/3} \cdot t_3$, which has variance $1$ and belongs to the class $\mathcal{P}_{3-\epsilon, K}$ for any $\epsilon > 0 $.
\end{itemize}

We adopt the adaptive tests $\phi_{\mathcal{P}, \mathrm{adaptive}}$ defined in \eqref{Eq:adaptive_test_main}, and $\phi_{\mathcal{G}, \mathrm{adaptive}}$, the corresponding version for the class $\mathcal{G}_{\alpha,K}$. To calibrate the constants in our tests, we set $\theta$ to be the matrix with all entries being zero. In both the exponentially- and polynomially-decaying tail cases, the adaptive test consists of a dense test and a collection of sparse tests. For the dense test $\phi_{\mathcal{G}, \mathrm{dense}}$  (or $\phi_{\mathcal{P},\mathrm{dense}}$), we calibrate the constant in the detection threshold $r$ (or $r_t$) so that the empirical Type I error is below $0.025$. For the sparse tests, we specify two constants that appear in $a$ and $r$ (or $r_t$), used in the thresholding and detection steps, respectively. We select these constants so that the empirical Type I error is below $0.025$. Taken together, this ensures that the overall Type I error of the adaptive test is controlled at $0.05$.

The power of our adaptive test is evaluated empirically on a grid of $(s, \mathtt{Signal})$ values, 
where $s \in \{1, \ldots, 50\}$ and $\mathtt{Signal}$ varies over a fine grid within a suitable interval, 
which may differ across the tail cases. For each fixed $(s, \mathtt{Signal})$ pair, 
the mean matrix $\theta = (\theta_1, \ldots, \theta_{300}) \in \mathbb{R}^{100 \times 300}$ 
is generated as follows. The change point location $t_0$ is chosen uniformly at random from 
$\{1, \ldots, 150\}$, and we set 
\[
\theta_t = 0 \in \mathbb{R}^{100}, \quad 1 \leq t \leq t_0,
\] 
and 
\[
\theta_t = \sqrt{\frac{n}{t_0(n-t_0)}}\, \frac{\mathtt{Signal}}{\sqrt{s}} \, 
(\underbrace{1,\ldots, 1}_{s}, \underbrace{0,\ldots, 0}_{p-s})^\top, \quad t_0+1 \leq t \leq n.
\] 
The power for each pair $(s, \mathtt{Signal})$ is calculated by repeating the process 500 times of generating~$\theta$,~$E$, and thus $X$, and computing $\phi_{\mathcal{G}, \mathrm{adaptive}}$ 
(or $\phi_{\mathcal{P}, \mathrm{adaptive}}$) with the constants calibrated as described above. We plot two heatmaps showing the power of $\phi_{\mathcal{G}, \mathrm{adaptive}}$, 
with \texttt{Signal} on the horizontal axis and sparsity $s$ on the vertical axis, 
for GG(2) and GG(0.5), respectively, in Figure~\ref{fig:subweibull}. 
Similarly, heatmaps of the power of $\phi_{\mathcal{P}, \mathrm{adaptive}}$ 
for Nt(8) and Nt(3), respectively, are shown in Figure~\ref{fig:t-dist}.

\begin{figure}[!ht]
    \centering
    \includegraphics[width=0.45\linewidth]{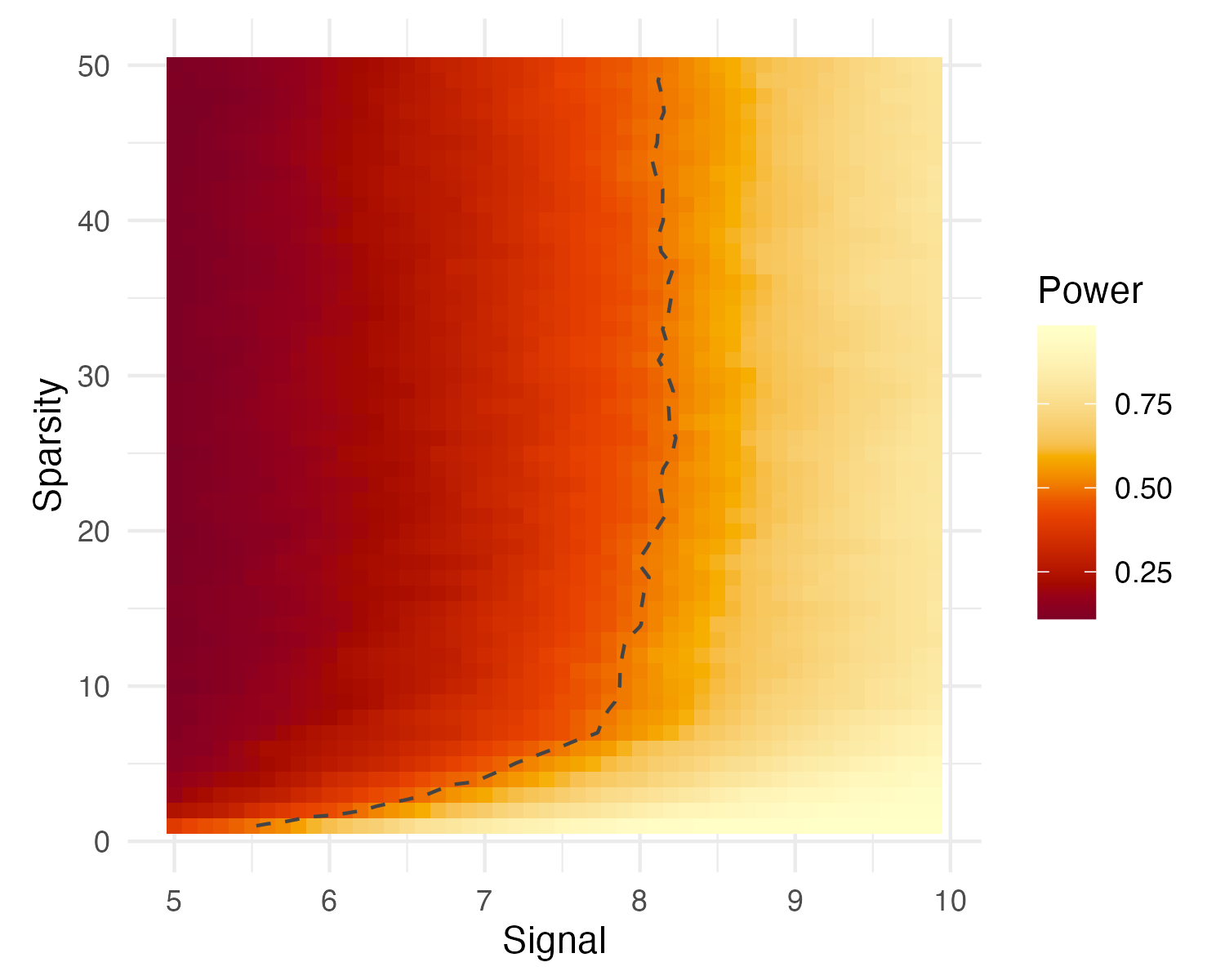}
    \includegraphics[width=0.45\linewidth]{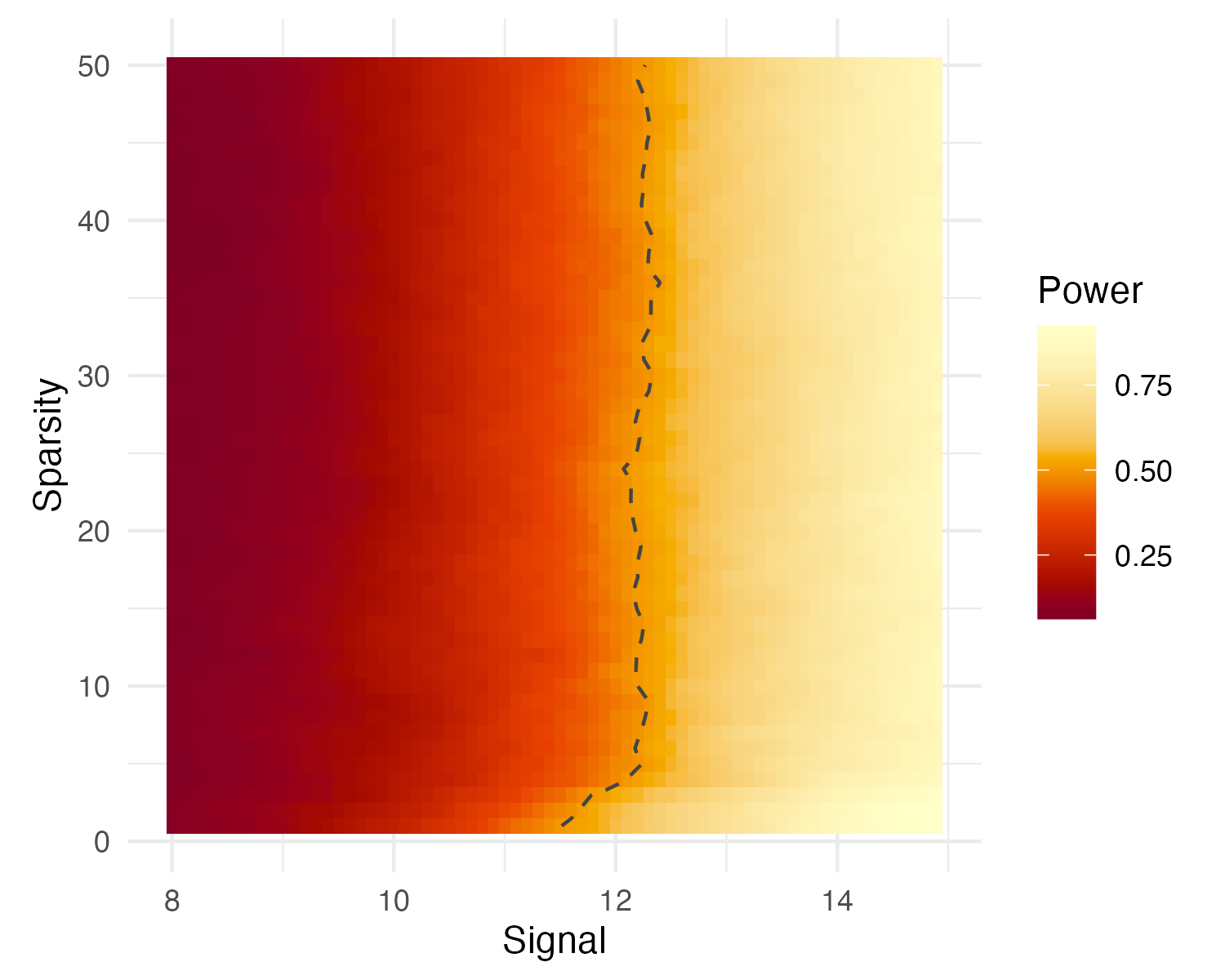}
    \caption{Simulation results for noise GG(2) (left panel) and GG(0.5) (right panel). The dashed line represents the 0.5 contour of the power.}
    \label{fig:subweibull}
\end{figure}

\begin{figure}[!ht]
    \centering
    \includegraphics[width=0.45\linewidth]{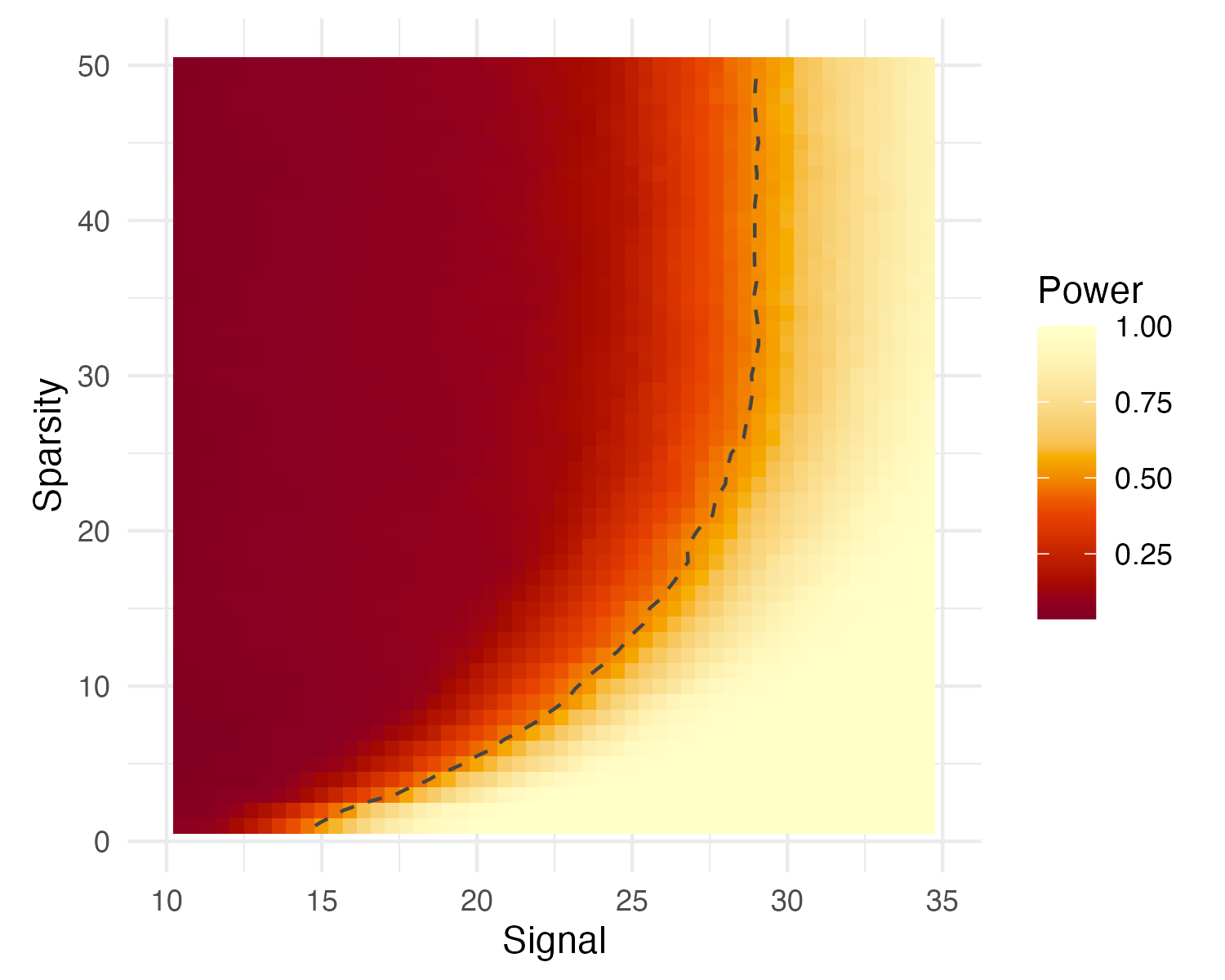}
    \includegraphics[width=0.45\linewidth]{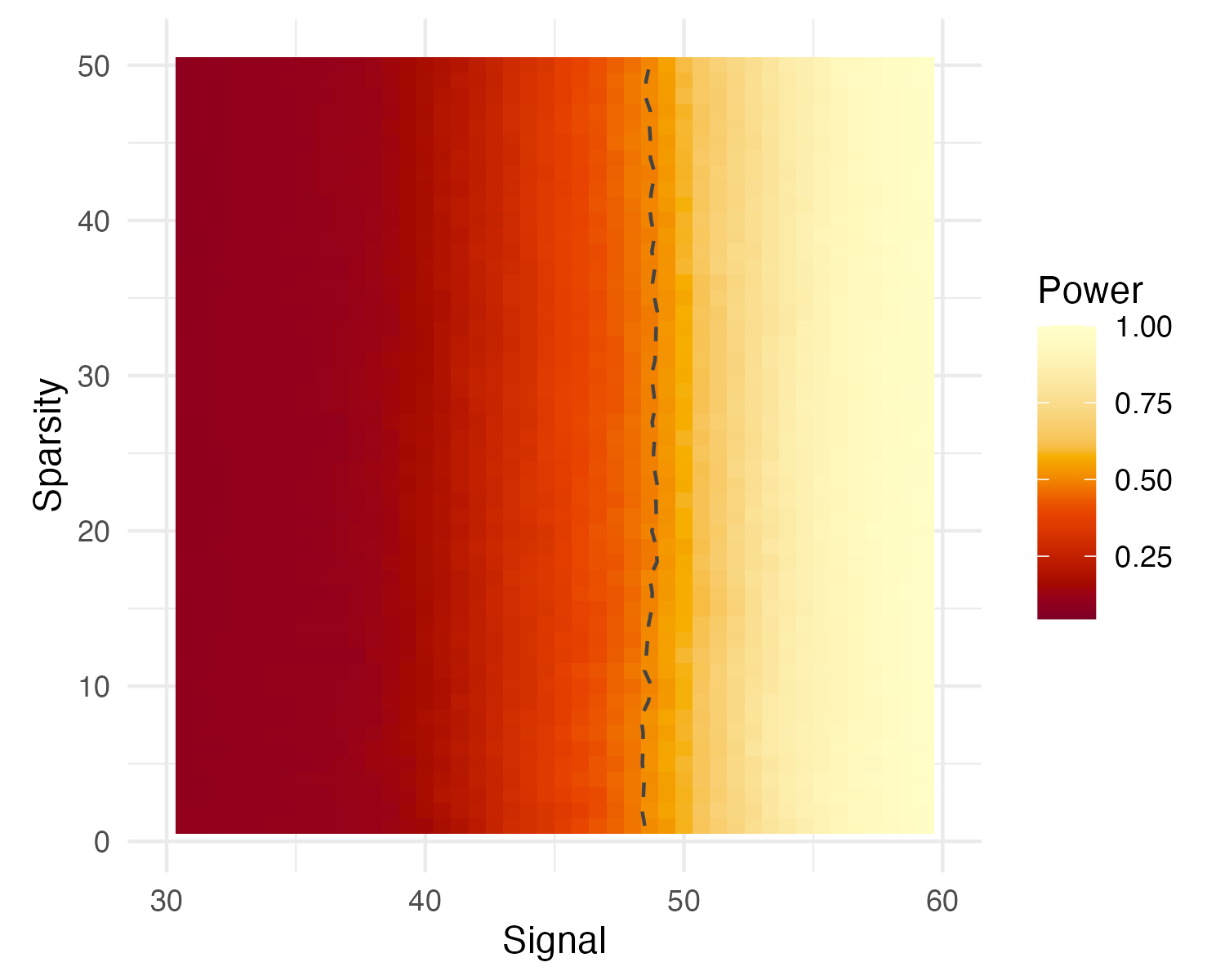}
    \caption{Simulation results for noise Nt(8) (left panel) and Nt(3) (right panel). The dashed line represents the 0.5 contour of the power.}
    \label{fig:t-dist}
\end{figure}

In both Figures~\ref{fig:subweibull} and~\ref{fig:t-dist}, we observe that as the tails become heavier (moving from the left panel to the right panel), the power generally decreases, as indicated by the need for higher signal levels to achieve the same power. Moreover, the power for detecting sparse changes is much more affected than for dense changes as the tails become heavier. In \Cref{fig:subweibull}, when the noise is $\mathrm{GG}(2)$, there exists a range of sparsity levels where higher power can be achieved compared to dense changes at the same signal level. However, when the noise is $\mathrm{GG}(0.5)$, this advantage for sparse changes diminishes significantly. A similar pattern is seen in \Cref{fig:t-dist}. Moreover, as suggested by our theoretical results and now verified here, when the noise distribution has fewer than four finite moments, there is no sparsity level for which testing sparse changes is easier than testing dense changes.

\section{Extensions} \label{Sec:extensions}
In the previous sections, we characterised the minimax rates for testing a single mean change within independent high-dimensional data streams under two broad classes of heavy-tailed noise distributions. In this section, we consider several extensions, including (1) testing against multiple change points in \Cref{sec:multiple}, (2) accounting for temporal dependence among observations in \Cref{sec:temporal}, (3) addressing the case where noise matrix entries have fewer than two finite moments in \Cref{sec:<2moment}, and (4) {examining a modified testing problem where the potential change point is known to be some distance away from the endpoints in \Cref{Sec:awayfrombonudary}.} 

A summary of the problem setups and notation is provided in \Cref{Table:extension-summary}. Specifically, within each section, we propose testing procedures $\phi$ and control their worst case testing error, which takes the form
\[
\mathcal{R}_{\mathcal{Q}, \#}(\rho, \phi) :=  \sup_{P_e \in \mathcal{Q}}\sup_{\theta \in \mathrm{H}_0}\mathbb{E}_{\theta, P_e} (\phi) + \sup_{P_e \in \mathcal{Q}}\sup_{\theta \in \mathrm{H}_1}\mathbb{E}_{\theta, P_e} (1-\phi),
\]
where $\#$ denotes the problem-dependent quantities and $\rho$ is some measure of change strength defined clearly in each alternative parameter space. Lower bounds on the minimax testing rates will be provided where available by considering $\inf_{\phi}\mathcal{R}_{\mathcal{Q}, \#}(\rho, \phi)$ and, for brevity, we do not separately define the minimax testing rates for each problem. 

\begin{table}[ht!]
\centering
\def\arraystretch{1.8}
\begin{tabular}{|l|l|l|l|}
\hline
 & Alternative hypothesis $\mathrm{H}_1$ & Noise distribution $\mathcal{Q}$ & Notation  \\ \hline
 \Cref{sec:multiple} & $\theta \in \Theta_{\mathrm{multi}}(p,n,\rho)$ \eqref{Eq:H_0_H_2} & $\mathcal{P}_{\alpha, K}^{\otimes}$ & $\mathcal{R}_{\mathcal{P}, \mathrm{multi}}$ \\ \hline
 \Cref{sec:temporal}& $\theta \in \Theta(p,n,p,\rho)$ \eqref{Eq:H_0_H_1} & $\mathcal{P}_{\mathrm{Temp}}$ (\Cref{def-palphak_temp}) & $\mathcal{R}_{\mathcal{P}_{\mathrm{Temp}}}$ \\ \hline
 \Cref{sec:<2moment}& $\theta \in  \bigcup_{t_0 = 1}^{n-1}\Theta(p,n, \rho_{t_0})$ \eqref{eq:H1space_lessthan2moment} & $\mathcal{W}_{\alpha}^\otimes$ (\Cref{def:w-alpha-<2}) & $\mathcal{R}_{\mathcal{W}_{\alpha}^{\otimes}}$ \\ \hline
 \Cref{Sec:awayfrombonudary}& $\theta \in \Theta_{\mathrm{res}}(p,n,s,\rho)$ \eqref{eq:H1-restricted} & $\mathcal{P}_{\alpha, K}^{\otimes} \;\& \;\mathcal{G}_{\alpha, K}^{\otimes}$ & $\mathcal{R}_{\mathcal{Q}, {\mathrm{res}}}$ \\ \hline
\end{tabular}
\caption{Summary of the different settings in \Cref{Sec:extensions}.}
\label{Table:extension-summary}
\end{table}

\subsection{Testing against multiple change points}\label{sec:multiple}
We consider the following testing problem between no change point and at least one change point:
\begin{equation} \label{Eq:H_0_H_2}
\mathrm{H}_0: \theta \in \Theta_0(p,n) \quad \mathrm{vs.}\quad  \mathrm{H}_1: \theta \in \Theta_{\mathrm{multi}}(p,n,\rho) := \bigcup_{k \in \mathbb{Z}^+}\bigcup_{1\leq\tau_1<\ldots <\tau_k \leq n-1} \Theta^{(\tau_1,\ldots,\tau_k)}(p,n,\rho),
\end{equation}
where $\Theta_0(p,n)$ is the same as in~\eqref{Eq:null_space} and
    \begin{align*}
    \Theta^{(\tau_1, \ldots, \tau_k)}(p, n, \rho) := \biggl\{ &\theta : \theta_{\tau_{i-1} + 1} = \ldots = \theta_{\tau_i} = \mu_i \ \text{for all} \ i \in [k+1], \ \text{for some} \ \mu_1, \ldots, \mu_{k+1} \in \mathbb{R}^p, \\
    &\text{where there exists $i \in [k]$ s.t.~$\Delta_i \geq 4 \log(n)$ and $\kappa_i^2 \Delta_i \geq \rho^2$, with} \\ 
    &\Delta_i := \min\{\tau_i - \tau_{i-1}, \tau_{i+1} - \tau_i\}, \ \kappa_i := \|\mu_{i+1} - \mu_i\|_2 \ \text{for} \ i \in [k],\\
    &\tau_0 = 0 \ \text{and} \ \tau_{k+1} = n \mbox{ by convention} \biggr\}.
\end{align*}
The worst case testing error for a measurable test function $\phi$ is 
\[
\mathcal{R}_{\mathcal{Q}, \mathrm{multi}}(\rho, \phi) :=  \sup_{P_e \in \mathcal{Q}}\sup_{\theta \in \Theta_0(p,n)}\mathbb{E}_{\theta, P_e} (\phi) + \sup_{P_e \in \mathcal{Q}}\sup_{\theta \in \Theta_{\mathrm{multi}}(p,n,\rho)}\mathbb{E}_{\theta, P_e} (1-\phi).
\]
Throughout this subsection, we consider the class of distributions $\mathcal{Q} = \mathcal{P}_{\alpha, K}^\otimes$ with $\alpha \geq 4$ and $K < \infty$.
\begin{thm}\label{thm:h0_vs_h2} 
Assume $n \geq 50$ and $\alpha \geq 4$. For any $\varepsilon \in (0,1)$, there exists a test $\phi_{\mathcal{P},\mathrm{multi}}^{\mathrm{MoM}}$ that satisfies
\[
\mathcal{R}_{\mathcal{P}, \mathrm{multi}}(\rho, \phi_{\mathcal{P},\mathrm{multi}}^{\mathrm{MoM}}) \leq \varepsilon,
\]
as long as 
\[
\rho^2 \geq C \sqrt{p} \log(n),
\]
where $C>0$ is a constant depending only on $\alpha$, $K$ and $\varepsilon$.
\end{thm}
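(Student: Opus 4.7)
The overall strategy is to adapt the median-of-means CUSUM test $\phi_{\mathcal{P},\mathrm{dense}}$ from Section~\ref{subSec:robust_dense} to a local scanning version. Unlike the single change point case, where it suffices to use all data before and after a candidate split point, in the multiple change point setting the presence of other change points elsewhere in the sample forbids using the full left/right halves. We therefore build localised CUSUM statistics over a two-dimensional dyadic grid of candidate locations \emph{and} window sizes, and aggregate them with the median-of-means device to handle the polynomially-decaying noise. The key calibration is that the required confidence level per local test rises from $\log\log(8n)$ in the single change point case to $\log(n)$ here, which accounts for the final detection threshold $\sqrt{p}\log(n)$.

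More concretely, I would define, for each $t \in [n-1]$ and each $h$ from a dyadic grid $\mathcal{H} \subset \{1,2,4,\dotsc,\lfloor n/2\rfloor\}$ with $t-h+1 \geq 1$ and $t+h \leq n$, the localised differences $Z_{t,h,i} := \bigl(X_{t-h+i} - X_{t+i}\bigr)/\sqrt{2}$ for $i \in [h]$. I then partition $\{Z_{t,h,i}\}_{i=1}^h$ into $G_{t,h} = h \wedge \Delta'$ groups of equal size, with $\Delta' \asymp \log(n)$ (chosen so that $e^{-G/8}$ survives the union bound over $|\mathcal{H}| \cdot n \lesssim n\log(n)$ tests), form $V_{t,h,g}(j) := \overline{Z}_{t,h,g}^2(j) - G_{t,h}/h$ as in~\eqref{eq:V_tg(j)}, and let
\begin{equation*}
A^{\mathrm{MoM}}_{t,h} := h \cdot \mathrm{median}\Bigl(\textstyle\sum_{j=1}^p V_{t,h,1}(j),\dotsc,\sum_{j=1}^p V_{t,h,G_{t,h}}(j)\Bigr).
\end{equation*}
The test $\phi_{\mathcal{P},\mathrm{multi}}^{\mathrm{MoM}}$ rejects $\mathrm{H}_0$ when $\max_{t,h} A^{\mathrm{MoM}}_{t,h}/r_{t,h} > 1$ for a threshold $r_{t,h} \asymp \sqrt{p}\,G_{t,h}$.

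For the Type~I analysis, under $\mathrm{H}_0$ each $\sum_j V_{t,h,g}(j)$ has mean zero and its variance is $O(p G_{t,h}^2/h^2)$ (using $\alpha \geq 4$ to bound the fourth moments of entries of $Z_{t,h,i}$, analogous to the argument behind Theorem~\ref{thm:finitemoment_upperbound_dense}). The standard median-of-means concentration (as in Lugosi--Mendelson) then gives $\mathbb{P}(A^{\mathrm{MoM}}_{t,h} > r_{t,h}) \leq e^{-G_{t,h}/8}$, and taking $G_{t,h} \asymp \log(n)$ makes the union bound over the at most $n\log(n)$ pairs $(t,h)$ smaller than $\varepsilon/2$.

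For the Type~II analysis, suppose $\theta \in \Theta^{(\tau_1,\dotsc,\tau_k)}(p,n,\rho)$ and let $i^\ast \in [k]$ be the index witnessing $\kappa_{i^\ast}^2 \Delta_{i^\ast} \geq \rho^2$ and $\Delta_{i^\ast} \geq 4\log(n)$. Pick $h^\ast \in \mathcal{H}$ as the largest dyadic integer $\leq \Delta_{i^\ast}/2$ and set $t^\ast = \tau_{i^\ast}$; then the left and right windows defining $Z_{t^\ast,h^\ast,i}$ lie entirely within two consecutive constant-mean segments, so $\mathbb{E}[\overline{Z}_{t^\ast,h^\ast,g}(j)] = (\mu_{i^\ast+1}(j) - \mu_{i^\ast}(j))/\sqrt{2}$ and consequently $\mathbb{E}\bigl[\sum_j V_{t^\ast,h^\ast,g}(j)\bigr] \asymp \kappa_{i^\ast}^2/2$. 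A lower-tail MoM bound (again using bounded fourth moments) then shows the median exceeds $r_{t^\ast,h^\ast}/h^\ast$ with probability at least $1-\varepsilon/2$ whenever $h^\ast \kappa_{i^\ast}^2 \gtrsim \sqrt{p}\,\log(n)$; since $h^\ast \asymp \Delta_{i^\ast}$, this reduces to $\kappa_{i^\ast}^2 \Delta_{i^\ast} \gtrsim \sqrt{p}\log(n)$, which is the stated rate. The condition $\Delta_{i^\ast} \geq 4\log(n)$ is precisely what ensures $G_{t^\ast,h^\ast} \asymp \log(n)$ is feasible within the window.

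The main obstacle is the Type~I control across the two-dimensional grid of overlapping localised statistics: even though the MoM trick restores sub-Gaussian-type tails per test under only fourth moments, one must verify that the per-window variance is genuinely $O(pG_{t,h}^2/h^2)$ uniformly in $(t,h)$ despite reusing data across different windows, and check that the union bound gives the right $\log(n)$ factor rather than $\log(np)$. A secondary obstacle is the alignment step in the Type~II analysis: one has to argue that within the dyadic $\mathcal{H}$ and the integer-valued location grid, there is a $(t,h)$ whose window is a constant-factor approximation of $[\tau_{i^\ast}-\Delta_{i^\ast}/2, \tau_{i^\ast}+\Delta_{i^\ast}/2]$ while avoiding the neighbouring change points, so that the full signal $\kappa_{i^\ast}^2 h/2$ is retained up to a constant.
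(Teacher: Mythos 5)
Your proposal follows essentially the same strategy as the paper: a two-parameter grid of local MoM-CUSUM statistics indexed by (split location, window size), with the number of groups tuned so that each local test has deviation probability of order $n^{-c}$, giving a $\log(n)$ rather than $\log\log(n)$ threshold via a union bound over the roughly $n\log(n)$ grid pairs.

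There is, however, a genuine gap in your Type~I analysis as written. You take $G_{t,h}=h\wedge\Delta'$ with $\Delta'\asymp\log(n)$ and $\mathcal{H}$ a dyadic grid starting (implicitly) from $h=1$. For small $h<\Delta'$ you then have $G_{t,h}=h$, and the MoM tail bound degrades to $e^{-h/8}$; since for each such $h$ there are on the order of $n$ choices of the split location $t$, the union bound over those pairs gives a contribution of order $n e^{-h/8}$, which is not small for $h\lesssim\log n$. A direct Chebyshev bound does not rescue this either, since it only gives a constant per-test probability and there are $\sim n$ locations at each small scale. The paper's fix is structural: the window grid $J$ is taken to start at $t=2^{1+\lceil\log_2\log n\rceil}\geq 2\log(n)$, so every local test has a full $G\asymp\log(n)$ groups, and no small-window tests are performed at all. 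This exclusion loses nothing under $\mathrm{H}_1$, precisely because of the minimum-spacing condition $\Delta_{i^*}\geq 4\log(n)$, which guarantees that the witnessing change point can be captured with a dyadic window $t^*\in[\Delta_{i^*}/2,\Delta_{i^*}]\geq 2\log(n)$ already on the restricted grid. You observe this last point yourself for Type~II, but you need to also remove the small windows from the grid to make Type~I work. A secondary, fixable point: the per-test bound $e^{-G/8}$ comes from calibrating each group's failure probability to $1/4$; to beat the $n\log(n)$-sized union bound cleanly you need to shrink that per-group probability (e.g.\ to $\varepsilon/36$, as the paper does via a larger constant in $r_{t,h}$) so the multiplicative Chernoff bound yields $\exp\{-(G/2)\log(6/\varepsilon)\}$, which with $G\geq 2\log n$ is $O(\varepsilon n^{-4/3})$ and suffices for $n\geq 50$.
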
 
The details of the test $\phi_{\mathcal{P},\mathrm{multi}}^{\mathrm{MoM}}$ is described in \Cref{sec:multiple-app}. At a high level, 
it exhibits a multi-scale nature by performing a collection of `local tests' at all possible change locations with a dyadic grid of scales. Such multi-scale statistics are commonly used for detecting multiple change points \citep[e.g.][]{frick2014smuce,pilliat2020optimalhighdim}. The robustness property is achieved by integrating the median-of-means methodology into the local tests.  Another ideal feature of our test is that it is adaptive to the unknown number of change points~$k$ under the alternative hypothesis, i.e.\ it does not take $k$ as an input.

In contrast to the single change point setting where we allow the potential change point to be arbitrarily close to the endpoints, here, we impose a mild minimum spacing condition $\Delta_i \geq 4 \log(n)$ for some change point. However, this condition is not required for testing under Gaussian or sub-Gaussian assumptions \citep{pilliat2020optimalhighdim}. In \Cref{sec:app_multiple}, we provide a heuristic example illustrating why the heavy-tailed nature of the data necessitates a minimum spacing condition to achieve a rate that has logarithmic dependence on~$n$. 

The following proposition shows that the test $\phi_{\mathcal{P},\mathrm{multi}}^{\mathrm{MoM}}$ is minimax rate-optimal up to a factor of $\sqrt{\log(n)}$ when $\alpha \geq 4$.
\begin{prop}\label{prop:h0_h2}
Let $n \geq 72$, $\alpha \geq 4$ and $K \geq \sqrt{\alpha+1}$. Then it holds that $\inf_{\phi}\mathcal{R}_{\mathcal{P}, \mathrm{multi}}(\rho, \phi) \geq 1/2$ whenever 
    \[
    \rho^2 \leq c\Big(\sqrt{p\log(n)}+\log(n)\Big),
    \]
    for some absolute constant $c > 0$.
\end{prop}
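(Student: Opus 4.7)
The plan is to run the standard $\chi^2$-based two-point argument twice with two different priors on $\Theta_{\mathrm{multi}}(p,n,\rho)$: a coordinate-aligned bump prior that will contribute the $\log n$ term and a Rademacher-randomised bump prior that will contribute the $\sqrt{p\log n}$ term.

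\emph{Reduction to Gaussian noise.} A Stirling bound gives $\mathbb{E}_{Z\sim N(0,1)}|Z|^\alpha\leq (\alpha+1)^{\alpha/2}$, so the hypothesis $K\geq\sqrt{\alpha+1}$ places the standard Gaussian law inside $\mathcal{P}_{\alpha,K}$; hence the $pn$-fold product Gaussian measure lies in $\mathcal{P}_{\alpha,K}^\otimes$. Under this sub-model and for any prior $\pi$ on the alternative, the Gaussian MGF computation yields $\chi^2(\mathbb{P}_\pi,\mathbb{P}_0)+1=\mathbb{E}_{\theta,\theta'\sim\pi}\exp\bigl(\sum_t\theta_t^\top\theta'_t\bigr)$, together with $\inf_\phi\mathcal{R}_{\mathcal{P},\mathrm{multi}}(\rho,\phi)\geq 1-\tfrac{1}{2}\sqrt{\chi^2(\mathbb{P}_\pi,\mathbb{P}_0)}$. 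It therefore suffices to exhibit priors with $\chi^2\leq 1$ for $\rho$ at the claimed scale.

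\emph{Construction for the $\log n$ term.} I would set $L:=4\lceil\log n\rceil$ and pick $N$ pairwise disjoint intervals $I_1,\dots,I_N\subset[L+1,n-L]$ of length $L$ (a counting argument checks that $n\geq 72$ is enough to guarantee $N\geq 2$ with $\log N\gtrsim \log n$), then draw $K\sim\mathrm{Uniform}([N])$ and set $\theta_t:=\kappa e_1\mathbbm{1}\{t\in I_K\}$. The two boundaries of the chosen bump produce change points with $\Delta_i=L\geq 4\log n$ and $\kappa_i=\kappa$, so the signal lies in $\Theta_{\mathrm{multi}}(p,n,\rho)$ with $\rho^2=\kappa^2 L$. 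Disjointness of the $I_k$ gives $\sum_t\theta_t^\top\theta'_t=\kappa^2 L\mathbbm{1}\{K=K'\}$, so
\[
\chi^2(\mathbb{P}_\pi,\mathbb{P}_0)+1=\tfrac{1}{N}e^{\kappa^2 L}+\tfrac{N-1}{N},
\]
which is at most $2$ once $\kappa^2 L\leq \log N$, delivering the lower bound $\rho^2\lesssim \log n$.

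\emph{Construction for the $\sqrt{p\log n}$ term.} I would keep the same intervals but additionally draw an independent $\varepsilon\sim\mathrm{Uniform}(\{-1,+1\}^p)$ and set $\theta_t:=\kappa\varepsilon\mathbbm{1}\{t\in I_K\}$, so $\rho^2=p\kappa^2 L$. Averaging over $(\varepsilon,\varepsilon')$ introduces a factor $[\cosh(\kappa^2 L)]^p$, and combining with $\cosh(x)\leq e^{x^2/2}$ yields
\[
\chi^2(\mathbb{P}_\pi,\mathbb{P}_0)+1\leq \tfrac{1}{N}\exp\bigl(p\kappa^4 L^2/2\bigr)+\tfrac{N-1}{N},
\]
which is at most $2$ as long as $p\kappa^4 L^2\leq 2\log N$; rearranging gives $\rho^2=p\kappa^2 L\leq \sqrt{2p\log N}\asymp\sqrt{p\log n}$. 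Running the two priors separately and choosing the absolute constant $c$ small enough combines to yield the stated claim. The main bookkeeping difficulty will be the boundary value $n=72$: the partition has to be arranged so that $N\geq 2$ and $\log N\asymp \log n$ hold with explicit absolute constants, and the Gaussian containment step needs to be verified uniformly in $\alpha\geq 2$ so that the final $c$ is truly independent of $n$, $p$, $\alpha$ and $K$.
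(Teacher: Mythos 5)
Your high-level plan matches the paper's: reduce to Gaussian noise via the moment computation $\mathbb{E}|Z|^\alpha \leq (\alpha+1)^{\alpha/2}$ (which is exactly the paper's claim (i) argument), then lower-bound the minimax error by $1-\mathrm{TV}(\mathbb{P}_0, \mathbb{P}_\pi)$ for a bump-type prior $\pi$ supported on the two-change-point part of $\Theta_{\mathrm{multi}}$. Where the paper simply cites the constructions $\tilde{\Theta}_{(2)}$ and $\tilde{\Theta}_{(3)}$ from Theorem 2 of Pilliat et al.\ (2020) with $r=4\log n$ and imports the TV bound as a black box, you carry out the two constructions explicitly and run the standard second-moment ($\chi^2$) calculation; the disjointness of the bump locations gives the $\mathbbm{1}\{K=K'\}$ simplification and the $\cosh$ bound gives the $\sqrt{p\log n}$ term. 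That is genuinely the same route the cited theorem takes, so your plan is a self-contained expansion of the paper's citation rather than a new argument.

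There is, however, a concrete error in your counting. With your choice $L := 4\lceil\log n\rceil$ and intervals forced inside $[L+1,\,n-L]$, the available window has length $n-2L$, so the number of disjoint length-$L$ intervals is $N = \lfloor (n-2L)/L\rfloor = \lfloor n/L\rfloor - 2$. At $n=72$ you have $\lceil\log 72\rceil = 5$, $L=20$, and $N = \lfloor 72/20\rfloor - 2 = 1$, so $\log N = 0$ and both $\chi^2$ bounds become vacuous. Your assertion that ``$n\geq 72$ is enough to guarantee $N\geq 2$ with $\log N\gtrsim\log n$'' is therefore false for the construction as written. The condition $n\geq 72$ in the statement corresponds to the paper's $n/4 \geq \lceil 4\log n\rceil$, which is tuned to bump length $\lceil 4\log n\rceil$ (at $n=72$ this is $18$, not $20$), and Pilliat et al.'s construction is calibrated to that. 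Two possible repairs: either take $L := \lceil 4\log n\rceil$, which is still $\geq 4\log n$ and recovers $N=2$ at $n=72$; or relax the interior-interval requirement, noting that a bump abutting the sequence boundary still yields a single admissible change point with spacing $L\wedge(n-L)=L$, so only the existential spacing condition on one change point need hold. Either fix keeps $\log N \geq c_0\log n$ with an explicit absolute $c_0>0$ over all $n\geq 72$, which is what the final constant $c$ requires. You flag the boundary bookkeeping as a potential difficulty, but the specific claim you make about it does not hold, so this step needs to be redone rather than merely acknowledged.
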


By comparing \Cref{thm:h0_vs_h2} and \Cref{prop:h0_h2} for testing against multiple change points, with Theorems~\ref{thm:finitemoment_upperbound_dense} and~\ref{thm:finitemoment-lowerbound-minimum} for the single change point problem in Section~\ref{Sec:robusttest}, we observe that the main difference is the presence of $\log(n)$ instead of $\log\log(n)$ in the rates. This distinction has been observed in the change point literature for univariate data under sub-Gaussian noise distributions \citep{verzelen2020optimal1d}. Our results reaffirm this phenomenon even in the context of high dimensional data with heavy-tailed distributions.

\subsection{Temporal dependence}\label{sec:temporal}
We now consider temporal dependence in heavy-tailed observations. To isolate its impact, we focus on the case where $s = p$ (eliminating the effect of sparsity) and assume that the error matrix~$E$ has independent rows. 

For simplicity, assume $n \geq 4$ to be an even number. The temporal dependence assumption to be introduced involves the interlaced $\alpha$-mixing coefficient of our noise sequence $\{E_i\}_{i \in [n]}$:
\begin{equation} \label{Eq:alpha*_def}
\alpha^*(i) := \sup_{\substack{S, T \subseteq [n]: \\ \min_{s \in S, t \in T} |s-t| \geq i}} \,\,  \sup_{\substack{A \in \sigma(E_j: j \in S), \\ B \in \sigma(E_j: j \in T)}} \bigl|\mathbb{P}(A \cap B) - \mathbb{P}(A)\mathbb{P}(B)\bigr|.
\end{equation}

Throughout this subsection, we consider the class of distributions $\mathcal{Q} = \mathcal{P}_{\mathrm{Temp}}$, defined as follows:
\begin{defn}[$\mathcal{P}_{\mathrm{Temp}}$ class of distributions]\label{def-palphak_temp}
    For $K > 0$, $\alpha \geq 2$ and $c_1, c_2 > 0$, let $\mathcal{P}_{\mathrm{Temp}}$  denote the class of all distributions of the error matrix $E$ satisfying the following conditions:
    \begin{enumerate}
    \item the matrix $E$ has independent rows;
     \item the marginal distribution of each entry in $E$ belongs to $\mathcal{P}_{\alpha, K}$; and
     \item the interlaced $\alpha$-mixing coefficient satisfies that
     \begin{equation} \label{Eq:alpha*_mixing}
         \alpha^*(i) \leq c_1 e^{-c_2 i},  \quad \text{for all } i \in [n-1].
     \end{equation}
\end{enumerate}
\end{defn}

\begin{thm}\label{thm:temporal}
Assume $\alpha > 4$. For any $\varepsilon \in (0,1)$, there exists a test $\phi_{\mathcal{P}_\mathrm{Temp}}$ that satisfies
\[
\mathcal{R}_{\mathcal{P}_{\mathrm{Temp}}}(\rho, \phi_{\mathcal{P}_\mathrm{Temp}}) \leq \varepsilon,
\]
as long as 
\[
\rho^2 \geq C\, p^{1/2} \{\log \log (8n)\} \{\log \log \log (64n)\}^2,
\]
where $C$ is a constant depending only on~$\alpha$,~$K$,~$c_1$,~$c_2$ and $\varepsilon$.
\end{thm}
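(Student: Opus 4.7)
The plan is to adapt the median-of-means dense test $\phi_{\mathcal{P},\mathrm{dense}}$ from \Cref{subSec:robust_dense} so that the groups entering the median are well-separated in time and therefore approximately independent under the interlaced $\alpha^*$-mixing condition \eqref{Eq:alpha*_mixing}, and then to reduce to the independent analysis of \Cref{thm:finitemoment_upperbound_dense} via Bradley's coupling lemma. The two $\log\log\log(64n)$ factors in the stated rate will then arise, respectively, from the $b$-fold reduction in per-group effective sample size and from a sharper fourth-order mixing bound needed to control the quadratic nature of the MoM statistic.

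First, I would choose a block gap $b := \lceil C_b \log\log\log(64n) \rceil$ with $C_b$ large enough (depending on $c_1$ and $c_2$) so that $\alpha^*(b) \leq c_1 e^{-c_2 b}$ is smaller than any fixed negative power of the union-bound size $|\mathcal{T}|\cdot \log\log(8n)$. For each dyadic scale $t \in \mathcal{T}$, I would modify the grouping in \eqref{eq:V_tg(j)}--\eqref{eq:A_mom} by using an interlaced block structure: partition $\{1,\ldots,t\}$ into $G_t \asymp (t/b)\wedge \log\log(8n)$ groups such that indices belonging to distinct groups are pairwise separated by at least $b$ time steps. Iteratively applying Bradley's coupling to the group-level summaries then produces a jointly independent copy $\{\tilde Z_{t,g,i}\}$ of the samples used in each $V_{t,g}$, with total coupling error across all scales and groups bounded by $|\mathcal{T}|\cdot \max_t G_t \cdot \alpha^*(b) \ll \varepsilon$ by the choice of $b$.

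On the high-probability event that all couplings succeed, the MoM analysis of \Cref{thm:finitemoment_upperbound_dense} applies to the coupled independent sequences after two modifications. First, each $V_{t,g}(j)$ is recentred by $b G_t/t$ rather than $G_t/t$ to account for the $b$-fold reduction in per-group effective sample size. Second, the variance of $V_{t,g}(j)$ must be bounded via a Rosenthal-type moment inequality under mixing, which crucially uses the assumption $\alpha > 4$. This yields a detection threshold $r_t \asymp \sqrt{p}\cdot b \cdot G_t$; comparing it against the signal contribution $t\|\mu_1-\mu_2\|_2^2 \gtrsim \rho^2$ at an appropriate $t \asymp t_0(n-t_0)/n$ requires $\rho^2 \gtrsim \sqrt p \cdot b \cdot \log\log(8n)$, and a further factor of $b$ emerges from bounding the fourth-order mixing cross-terms in the variance of the quadratic statistic $V_{t,g}(j)$, producing the stated rate $\sqrt p \cdot \log\log(8n)\cdot\{\log\log\log(64n)\}^2$. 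A final union bound over the $O(\log n)$ scales in $\mathcal{T}$ completes Type I error control.

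The main obstacle is the variance analysis of $V_{t,g}(j)$ under $\alpha^*$-mixing: although blocking plus Bradley coupling converts first-order sums to approximately independent sums cleanly, the centred second moment of $V_{t,g}(j) = \overline Z_{t,g}^2(j) - bG_t/t$ involves fourth-order joint moments of $\{Z_i\}_i$, and the mixing decomposition of these moments must aggregate to a negligible remainder uniformly across all $O(\log n \cdot \log\log(8n))$ scale–group combinations. Obtaining the correct exponent $2$ on $\log\log\log(64n)$ in the final rate hinges on a tight mixing Rosenthal-type bound that balances the fourth-moment contribution against the sample-reduction factor $b$, and is where the $\alpha > 4$ assumption enters in an essential way.
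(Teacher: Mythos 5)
Your approach—blocking plus Bradley coupling, then reducing to the independent-noise analysis—is genuinely different from the paper's proof, but as set up it has a quantitative gap that prevents it from recovering the stated rate, and the explanation of where the $(\log\log\log)^2$ factor comes from is not quite right.

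First, the coupling error does not close with your choice of block gap. You take $b \asymp \log\log\log(64n)$ and then need the total coupling error, which is of order $|\mathcal{T}| \cdot \max_t G_t \cdot \alpha^*(b) \asymp \log(n) \cdot \log\log(8n) \cdot e^{-c_2 b}$, to be at most a small fraction of $\varepsilon$. That forces $b \gtrsim \log\bigl(\log(n)\log\log(8n)\bigr) \asymp \log\log n$, not $\log\log\log(64n)$. With the corrected $b \asymp \log\log n$, each group needs a gap of that size, so for the critical scales $t \lesssim b \cdot \Delta$ (precisely the regime $t \leq \Delta$ where the paper sets $G_t = t$ and each group has a single observation) you cannot afford to discard $b-1$ out of every $b$ observations. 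And where blocking does work, the per-group sample-size loss by a factor of $b \asymp \log\log n$ pushes the threshold to $\rho^2 \gtrsim \sqrt p \cdot b \cdot \Delta \asymp \sqrt p \,(\log\log n)^2$, which is strictly worse than the claimed $\sqrt p \,\log\log(8n)\,(\log\log\log(64n))^2$.

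Second, the two iterated-log factors in the rate have a different origin than you propose. The paper does not drop data or create gaps at all: it reorders the sequence via $\tilde E_i$ in \eqref{Eq:noise_reorder}, checks in \eqref{Eq:alpha_mix_reorder} that the reordered process is geometrically $\alpha$-mixing, and then applies concentration results directly to the dependent data. The fourth-moment control uses Shao--Yu's Rosenthal-type inequality, which gives $\mathbb{E}\overline Z^4_{t,g}(j) \lesssim (G_t/t)^2$ with no extra $b$-type factor at all (this is where $\alpha > 4$ enters, as you correctly noted). The $(\log\log\log)^2$ factor instead comes from the Bernstein inequality for geometrically $\alpha$-mixing sequences \citep[][Theorem~1]{merlevede2009} applied to the Bernoulli indicators in the median step, which yields the exponent $-C_5 G_t/(\log G_t \log\log G_t)$ rather than $-C_5 G_t$; making this dominate the $O(\log\log n)$ required for the union bound over $\mathcal{T}$ forces $\Delta \gtrsim \log\log(8n)\,(\log\log\log(64n))^2$. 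So the slack is in the mixing Bernstein exponent, not in a blocking gap, and your "fourth-order mixing cross-terms'' explanation for the second $\log\log\log$ factor does not correspond to anything in the argument.
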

 The test is essentially constructed in the same way as in \Cref{subSec:robust_dense} and we leave the detailed description to \Cref{sec:temporal-app}. We also note that a similar result can be obtained for the case of $2 < \alpha \leq 4$; see \Cref{rmk:temporal} in \Cref{sec:proof-temporal-app}.
By comparing \Cref{thm:temporal} with \Cref{thm:finitemoment_upperbound_dense}, we conclude that under certain weak dependence settings, described by the class $\mathcal{P}_{\mathrm{Temp}}$, we can still achieve nearly the same rate as in the independence setting, up to a factor of squared triple logarithm. 

To further interpret the main temporal dependence condition \eqref{Eq:alpha*_mixing} required for $\mathcal{P}_{\mathrm{Temp}}$, we note that \eqref{Eq:alpha*_mixing} on the interlaced $\alpha$-mixing coefficient \eqref{Eq:alpha*_def} of the noise sequence is stricter than imposing the same condition on the (usual) $\alpha$-mixing coefficient. The reason we choose this interlaced version of the coefficient is that our technique involves pairing observations (first with last, second with second last, etc.) to form $Z_i$'s before grouping and averaging. 

A sufficient condition for \eqref{Eq:alpha*_mixing} is shown in \citet[][Theorem~5.1(b) and Equation~(1.12)]{Bradley2005}. If each component series $\{E_i(j)\}_{i \in [n]}$ of the noise sequence satisfies
\begin{equation}
\label{Eq:rho*_mixing}
\sup_{\substack{S, T \subseteq [n]: \\ \min_{s\in S, t\in T}|s-t| \geq i}} \,\, \sup_{\substack{f \in L^2(\sigma(E_k(j): k \in S)) \\ g \in L^2(\sigma(E_k(j): k \in T))}} \mathrm{Corr}(f, g)
\leq c_1e^{-c_2i},
\end{equation}
then the $\alpha^*$-mixing rate condition \eqref{Eq:alpha*_mixing} is satisfied with constants $c_1$ and $c_2$. This fact implies that moving average processes essentially belong to the class $\mathcal{P}_{\mathrm{Temp}}$. Specifically, consider a data-generating mechanism in which $\{E_i(j)\}_{i \in [n]}$, the $j$-th component series of the noise, is a moving average process of order $q_j$, $j \in [p]$. Assuming that there exists a constant $q_{\max}$ such that $\max_{j \in [p]} q_j \leq  q_{\max}$, each univariate component series satisfies \eqref{Eq:rho*_mixing} with $c_1 = e^{q_{\max}}$ and $c_2 = 1$. We present a more detailed analysis of an MA(1) example in \Cref{sec:ma1}. For further discussion and examples satisfying \eqref{Eq:alpha*_mixing} or \eqref{Eq:rho*_mixing}, see \citet{Bradley1997}.

\subsection{Fewer than two finite moments}
\label{sec:<2moment}

In Section~\ref{Sec:robusttest}, we studied the minimax testing rate when $P_e \in \mathcal{P}_{\alpha, K}^\otimes$, for some constant $\alpha \geq 2$ and $K < \infty$. Now, we tackle the even more challenging case when each entry of the noise matrix does not necessarily have a finite variance. Given our main findings that when $\alpha \in [2,4)$, there is no sparse regime (c.f.\ Sections~\ref{Sec:main result} and \ref{subsec:discuss_gap_robust}), we do not need to consider sparse changes in this even more challenging setting. We first introduce the class of distributions $\mathcal{W}_{\alpha}$ that we work with.

\begin{defn}\label{def:w-alpha-<2} For $1 \leq \alpha \leq 2$, let $\mathcal{W}_{\alpha}$ denote the class of distributions on $\mathbb{R}^p$ such that for any $P \in \mathcal{W}_{\alpha}$ and random variable $W \sim P$, it holds that 
    \begin{equation}\label{eq:weakmoment}
            \mathbb{E}W = 0 \quad \mbox{and} \quad \mathbb{E}|\langle W, v\rangle|^{\alpha} \leq 1, \quad \forall v \in \mathbb{R}^p, \,\|v\|_2 = 1. 
    \end{equation}
\end{defn}
Let $\mathcal{W}_{\alpha}^\otimes$ denote the class of joint distributions of all entries in the error matrix $E$, where each column of $E$ independently follows a distribution on $\mathbb{R}^p$ that belongs to the class $\mathcal{W}_\alpha$. It follows by Jensen's inequality that $\mathcal{P}_{2, 1}^\otimes \subseteq \mathcal{W}_{\alpha}^\otimes \subseteq \mathcal{W}_{\alpha'}^\otimes$ for all $1 \leq \alpha' \leq \alpha \leq 2$.

We now specify the alternative parameter space.
For a given sequence of $\{\rho_{t_0}\}_{t_0 \in[n-1]}$, let
\begin{equation*}
 \mathrm{H}_1: \theta \in  \bigcup_{t_0 = 1}^{n-1}\Theta(p,n, \rho_{t_0}),
\end{equation*}
where 
\begin{align}
    \Theta(p,n, \rho_{t_0}):=\biggl\{&\theta: \theta_t = \mu_1\ \text{for}\ t = 1, \ldots, t_0, \ \theta_t = \mu_2 \ \text{for}\ t = t_0+1, \ldots, n, \nonumber \\
    &\text{for some}\ \mu_1, \mu_2 \in \mathbb{R}^p \ \text{s.t.}\ \|\mu_1-\mu_2\|_2^2 \geq \rho_{t_0}^2\biggr\}\label{eq:H1space_lessthan2moment}.
\end{align}
Letting $\rho_{t_0} = \rho\sqrt{t_0^{-1}(n-t_0)^{-1}n}$, we see that the alternative parameter space \eqref{eq:H1space_lessthan2moment} is a superset of $\Theta^{(t_0)}(p,n,p,\rho)$ defined in \eqref{Eq:alternative_space_original}.
When the noise variances are finite, the factor $t_0^{-1}(n-t_0)^{-1}n$ is the variance of the natural test statistic $t_0^{-1}\sum_{i=1}^{t_0}X_i - (n-t_0)^{-1}\sum_{i=t_0+1}^n X_i$, when $p=1$. Therefore, the normalising factor $t_0(n-t_0)n^{-1}$ in \eqref{Eq:alternative_space_original} ensures a variance-stable signal strength parameter $\rho$ across different change locations. In the absence of finite variance assumptions, as considered in this section, we opt out for such a specific normalising factor and use the general form $\rho_{t_0}$.

 For a given sequence $\{\rho_{t_0}\}_{t_0 \in[n-1]}$, 
 the worst case testing error for a given test $\phi$ is
\[
\mathcal{R}_{\mathcal{W}_{\alpha}^{\otimes}}(\{\rho_{t_0}\}_{t_0 \in[n-1]}, \phi) := \sup_{P_e \in \mathcal{W}_{\alpha}^{\otimes}}\sup_{\theta \in \Theta_0(p,n)}\mathbb{E}_{\theta, P_e} (\phi) + \sup_{P_e \in \mathcal{W}_{\alpha}^{\otimes}}\sup_{\theta \in  \bigcup_{t_0 = 1}^{n-1}\Theta(p,n, \rho_{t_0})}\mathbb{E}_{\theta, P_e} (1-\phi).
\]
To establish an upper bound on the minimax testing rates, we develop a test $\phi_{\mathcal{W}_{\alpha}}^{\mathrm{RM}}$, which has two components that separately target at whether the potential change point is sufficiently away from the boundary or not. When the potential change is away from the boundary, our test utilises on a robust mean estimator $\hat{\mu}^{\mathrm{RM}}$ from \citet[][c.f. Algorithm 1-7]{cherapanamjeri2022optimal}. Similar to the previous subsections in \Cref{Sec:extensions}, we directly present the result, with the details of the test $\phi_{\mathcal{W}_{\alpha}}^{\mathrm{RM}}$ deferred to \Cref{sec:test-2moment}.
\begin{thm}\label{thm:lessthan2}
Assume $1 \leq \alpha \leq 2$. For any $\varepsilon \in (0, 1)$, if
\begin{equation}\label{eq:rate-less-than2}
    \rho_{t_0} \geq C \biggl\{\sqrt{\frac{p}{t_0 \wedge (n-t_0)}}+\Big(\frac{p}{t_0 \wedge (n-t_0)}\Big)^{\frac{\alpha-1}{\alpha}} + \Big(\frac{\log\log(n)}{t_0 \wedge (n-t_0)}\Big)^{\frac{\alpha-1}{\alpha}}\biggr\},
\end{equation}
for all $t_0 \in [n-1]$, where $C>0$ is some constant that depends only on $\alpha$ and $\varepsilon$, then there exists a test $\phi_{\mathcal{W}_{\alpha}}^{\mathrm{RM}}$ such that 
\[
\mathcal{R}_{\mathcal{W}_{\alpha}^{\otimes}}(\{\rho_{t_0}\}_{t_0 \in[n-1]}, \phi_{\mathcal{W}_{\alpha}}^{\mathrm{RM}}) \leq \varepsilon.
\]
\end{thm}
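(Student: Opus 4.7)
The plan is to build $\phi_{\mathcal{W}_\alpha}^{\mathrm{RM}}$ as a dyadic scan over candidate change-point scales, plugging in the robust mean estimator of \citet{cherapanamjeri2022optimal} at each scale, with a separate fallback treatment for scales too close to the boundary to admit that estimator's guarantee. For $i \in [\lfloor n/2\rfloor]$, set $Z_i := X_i - X_{n+1-i}$. Under $\mathrm{H}_0$, the $Z_i$ are iid with mean zero; under $\mathrm{H}_1$ with change at $t_0$, the first $t_0 \wedge (n-t_0)$ of them are iid with mean $\mu_1-\mu_2$. Because each column of $E$ lies in $\mathcal{W}_\alpha$, the difference $E_i-E_{n+1-i}$ also satisfies the weak $\alpha$-moment bound up to the multiplicative constant $2^\alpha$ (by the triangle inequality for $L^\alpha$ norms), so $\{Z_i-\mathbb{E}[Z_i]\}$ still belongs to $\mathcal{W}_\alpha$ after an absolute rescaling.

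Equip $\mathcal{T} := \{1,2,4,\ldots,2^{\lfloor \log_2(n/2)\rfloor}\}$, so $|\mathcal{T}| \asymp \log(8n)$. For each $t \in \mathcal{T}$ exceeding a calibration threshold $t^{\dagger}$ (above which the Cherapanamjeri--Lee--Jayanti estimator delivers a non-trivial bound), compute $\hat{\mu}_t^{\mathrm{RM}} := \hat{\mu}^{\mathrm{RM}}(Z_1,\ldots,Z_t;\delta_t)$ with failure probability $\delta_t := \varepsilon/(4|\mathcal{T}|)$, so that $\log(1/\delta_t) \asymp \log\log(8n)$. Their guarantee yields
\[
\bigl\|\hat{\mu}_t^{\mathrm{RM}} - \mathbb{E}[Z_1]\bigr\|_2 \leq r_t := C_1\biggl\{\sqrt{p/t} + (p/t)^{(\alpha-1)/\alpha} + (\log\log(8n)/t)^{(\alpha-1)/\alpha}\biggr\}
\]
with probability at least $1-\delta_t$. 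For the boundary scales $t \leq t^{\dagger}$ I would append a complementary component (e.g.\ a direct robust thresholding test on individual $Z_i$'s or a dimension-reduced sketch), using the fact that the admissibility condition~\eqref{eq:rate-less-than2} forces $\rho_{t_0}$ to be of order $\sqrt{p}+p^{(\alpha-1)/\alpha}+(\log\log n)^{(\alpha-1)/\alpha}$ when $t_0 \wedge (n-t_0)$ is small, so only a crude test is required. The full test rejects if either the main statistic $\|\hat{\mu}_t^{\mathrm{RM}}\|_2 > r_t$ fires at some $t \in \mathcal{T}$, or the boundary component flags.

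Type I control follows by a union bound over the $O(\log n)$ scales: under $\mathrm{H}_0$, $\mathbb{E}[Z_1]=0$, each $\|\hat{\mu}_t^{\mathrm{RM}}\|_2 \leq r_t$ on an event of probability at least $1-\delta_t$, and the total failure probability is at most $\varepsilon/4$, plus another $\varepsilon/4$ from the boundary component. For Type II control under $\mathrm{H}_1$ at $t_0$, let $t^* := \max\{t \in \mathcal{T}: t \leq t_0 \wedge (n-t_0)\}$, so that $t^*/2 \leq t_0 \wedge (n-t_0) \leq 2t^*$. The estimator guarantee together with the reverse triangle inequality gives $\|\hat{\mu}_{t^*}^{\mathrm{RM}}\|_2 \geq \|\mu_1-\mu_2\|_2 - r_{t^*} \geq \rho_{t_0}-r_{t^*}$, which strictly exceeds $r_{t^*}$ once the constant $C$ in the hypothesis is chosen to dominate $2\cdot 2^{(\alpha-1)/\alpha}C_1$ (the factor absorbing the loss between $t^*$ and $t_0 \wedge (n-t_0)$). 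Combining both directions yields $\mathcal{R}_{\mathcal{W}_\alpha^\otimes}(\{\rho_{t_0}\};\phi_{\mathcal{W}_\alpha}^{\mathrm{RM}}) \leq \varepsilon$.

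The main obstacle is the boundary regime $t \leq t^{\dagger}$: the robust mean estimator of \citet{cherapanamjeri2022optimal} is not meaningful for very small sample sizes, and under weak moments alone one cannot simply bound $\|Z_1\|_2$ in high dimension with any non-trivial probability (a naive $\varepsilon$-net argument costs $5^p$). Designing a complementary test that controls Type I error in this regime while still being powerful enough to match the relevant coordinates of the rate~\eqref{eq:rate-less-than2} constitutes the most delicate part of the argument. Secondary technical care is needed to verify the $\mathcal{W}_\alpha$ closure under pairwise differencing and to track the constants through the union bound so that the three terms in~\eqref{eq:rate-less-than2} land with the correct exponents, particularly at the transition point $\alpha = 2$ where the first two terms coincide.
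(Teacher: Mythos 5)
Your overall architecture is the same as the paper's: form pairwise differences $Z_i$, scan a dyadic grid $\mathcal{T}$, plug in the robust mean estimator of Cherapanamjeri et al.\ at scales where it applies, and supply a separate component for scales too small for that estimator. The Type II argument via the reverse triangle inequality and the accounting of the dyadic loss are also as in the paper. However, there is a genuine gap precisely where you flag one: you never construct the small--$t$ component, and the two candidates you float (per--coordinate thresholding, dimension-reduced sketches) are not the right tool. The paper's resolution is the direct sample mean $\tilde A_t = \|\sum_{i=1}^t Z_i / t\|_2$ combined with \Cref{lemma:weak_mean}, which gives
\[
\mathbb{P}\bigl(\tilde A_t - \|\mathbb{E} Z_1\|_2 \geq u\bigr) \leq \pi p^{\alpha/2} t^{1-\alpha} u^{-\alpha}.
\]
The key point, which your ``$5^p$ $\varepsilon$-net'' remark suggests you missed, is that no net is needed: the weak moment bound over \emph{all} directions $v$ can be converted into a bound on $\mathbb{E}\|\bar Z_t - \mathbb{E} Z_1\|_2^\alpha$ by integrating against a uniform direction on the sphere (Lemma~4.1 of the cited reference), yielding a clean Markov-type tail. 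With this Markov tail, the boundary scales need only constant-order thresholds of the form $\tilde r_t \asymp_{\alpha, \varepsilon} \sqrt{p/t}$, which the rate~\eqref{eq:rate-less-than2} dominates up to a constant.

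There is a secondary issue worth noting: your fixed failure level $\delta_t = \varepsilon/(4|\mathcal{T}|)$ forces $\log(1/\delta_t) \asymp \log\log n$, so the robust estimator can only be invoked at scales $t \gtrsim \log\log n$, and your boundary regime contains $\Theta(\log\log\log n)$ dyadic scales. A union bound over these would then insert an extra iterated-log factor into the boundary thresholds, which the theorem does not allow. The paper avoids this by letting $\eta_t = \exp\{-(t \wedge \tilde\Delta_2)/C_2\}$ grow geometrically in $t$ until it reaches $\varepsilon/(16\log 2n)$; this keeps $t \geq C_{01}\log(1/\eta_t)$ for all $t > \tilde\Delta_1$ while making the cutoff $\tilde\Delta_1 = C_2\log(16/\varepsilon)$ a \emph{constant}, so the boundary regime has only $O_\varepsilon(1)$ scales and the Markov-based fallback costs nothing in the rate. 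Your normalization $Z_i = X_i - X_{n+1-i}$ (rather than dividing by $2$) and the $c_r$/$L^\alpha$-triangle argument for closure of $\mathcal{W}_\alpha$ under differencing are correct and only cosmetically different from the paper's.
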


To interpret the requirement on $\rho_{t_0}$ and compare it with the cases where at least two finite moments exist, we first note that for any $t_0 \in [n-1]$, 
\[
\frac{(n-t_0) \wedge t_0}{2}\leq \frac{t_0(n-t_0)}{n} \leq (n-t_0) \wedge t_0,
\]
i.e.\ the normalising factor in \eqref{Eq:alternative_space_original} is of the same order as $(n-t_0) \wedge t_0$. Since $\|\mu_1-\mu_2\|_2^2 \geq \rho^2_{t_0}$ under~\eqref{eq:H1space_lessthan2moment}, we can rewrite \eqref{eq:rate-less-than2} as
\begin{equation}\label{eq:convert-less2}
        \frac{t_0(n-t_0)}{n} \|\mu_1-\mu_2\|^2_2 \gtrsim {p} + p^{\frac{2\alpha-2}{\alpha}}\big\{t_0 \wedge (n-t_0)\big\}^{\frac{2-\alpha}{\alpha}} + \log^{\frac{2\alpha-2}{\alpha}}\log(n)\big\{t_0 \wedge (n-t_0)\big\}^{\frac{2-\alpha}{\alpha}}.
\end{equation}
We observe that in the special case when $\alpha = 2$, the right hand side of \eqref{eq:convert-less2} reduces to $p + \log\log(n)$. \Cref{thm:lessthan2} shows that when the noise distribution belongs to $\mathcal{W}_2^\otimes$, the worst case testing error is controlled at $\varepsilon$, under this condition on the normalised signal strength. Since $\mathcal{P}_{2,1}^{\otimes} \subseteq \mathcal{W}_{2}^{\otimes}$, we conclude that the minimax testing rate under $\mathcal{P}_{2,1}^\otimes$ satisfies $v^*_{\mathcal{P}_{2,1}^{\otimes}}(p,n,s) \lesssim p + \log\log(8n)$ for all $s \in [p]$, which improves upon the result in \Cref{thm:finitemoment_upperbound_dense} in the special case $\alpha =2$. Combining this with \Cref{thm:finitemoment-lowerbound-minimum} leads to an exact characterisation of the minimax testing rate under $\mathcal{P}_{2,1}^{\otimes}$, i.e.\ $v^*_{\mathcal{P}_{2,1}^{\otimes}} (p,n,s) \asymp p + \log\log(8n)$.

Investigating optimality under the more general alternative hypothesis \eqref{eq:H1space_lessthan2moment} is conceptually more challenging, as it actually necessitates lower bound constructions that are valid at every potential change location $t_0$ in order to derive the optimal form of $\{\rho_{t_0}\}_{t_0 \in [n-1]}$. When $p = 1$, we show in \Cref{prop:lowerbound-lessthan2} in \Cref{sec:<2proof} that $\inf_{\phi}\mathcal{R}_{\mathcal{W}_{\alpha}^{\otimes}}(\{\rho_{t_0}\}_{t_0 \in[n-1]},\phi) \geq 1/2$, if 
\[
\rho_{t_0} \leq \bigl\{t_0 \wedge (n-t_0) \bigr\}^{-(\frac{\alpha-1}{\alpha})},
\]
 for some $t_0 \in [n-1],$ which confirms the optimality of our upper bound \eqref{eq:rate-less-than2} up to logarithmic factors. Establishing optimality in general dimension $p$ is significantly more challenging and we  leave that for future investigation.

\subsection{Change away from boundary} \label{Sec:awayfrombonudary}
In this final extension, we aim to address whether it is possible to achieve the sub-Gaussian rate in the sparse regime, $s\log(ep/s)+\log\log(8n)$ (c.f.\ \Cref{table:testing_rate} with $\alpha = 2$), when it is known \emph{a priori} that the change location is at least some distance away from the endpoints, $1$ and $n$. 
We denote this minimum requirement on the distance from boundary by 
$t^{\mathrm{res}} \geq 1$, and modify the alternative space in \eqref{Eq:alternative_space_original} to include additional restrictions (we use sub/superscripts `res' to denote this):
\begin{equation}\label{eq:H1-restricted}
        \mathrm{H}_1: \theta \in \Theta_{\mathrm{res}}(p,n,s,\rho) := \bigcup_{t_0 = t^{\mathrm{res}}+1}^{n-t^{\mathrm{res}}}\Theta^{(t_0)}(p,n,s,\rho).
\end{equation}
We separately consider the cases $\mathcal{Q} = \mathcal{P}_{\alpha, K}^\otimes$ with $\alpha \geq 4$ and $\mathcal{Q} = \mathcal{G}_{\alpha, K}^\otimes$ for $0 < \alpha \leq 2$. 
Notice that in both cases, the rates in the dense regime are not affected by the level of heavy-tailedness $\alpha$, and they differ from the sub-Gaussian dense rate by at most a factor of $\log\log(n)$; see \Cref{table:testing_rate,table:testing_rate_finite}. Therefore, we are interested in whether the same phenomenon can happen in the sparse regime by considering a `simpler' problem with a restricted alternative parameter space specified in \eqref{eq:H1-restricted}. For a given test $\phi$, we denote the worst case testing error, with \eqref{eq:H1-restricted} as the alternative hypothesis, by~$\mathcal{R}_{\mathcal{Q}, {\mathrm{res}}}(\rho,  \phi)$.

For $\mathcal{Q} = \mathcal{P}_{\alpha, K}^\otimes$ with $\alpha \geq 4$, we modify the median-of-means-type test proposed in Section~\ref{subsubsec:mom-sparse} and provide the details of the new test $\phi_{\mathcal{P}, \mathrm{sparse}}^{\mathrm{MoM+res}}$ in \Cref{sec:test-awayfromboundary}. Our new test relies on a more robust coordinate selection step compared to \eqref{Eq:V_tga}, which is made possible due to the additional assumption that the change location is away from boundary.

\begin{thm} \label{thm:awayfromboundaryP}
    Let $t^{\mathrm{res}} = 32\bigl\{\log(e^2p/s) + s^{-1} \log \log (8n)\bigr\}$ and $\mathcal{Q} = \mathcal{P}_{\alpha, K}^\otimes$ with $\alpha \geq 4$. For any $\varepsilon \in (0, 1)$, there exists a test $\phi_{\mathcal{P}, \mathrm{sparse}}^{\mathrm{MoM+res}}$ such that
\[
\mathcal{R}_{\mathcal{Q}, {\mathrm{res}}}(\rho,  \phi_{\mathcal{P}, \mathrm{sparse}}^{\mathrm{MoM+res}}) \leq \varepsilon,
\]
as long as $\rho^2 \geq C v_{\mathcal{P}, \mathrm{sparse}}^{\mathrm{MoM+res}}$, where 
\[
v_{\mathcal{P}, \mathrm{sparse}}^{\mathrm{MoM+res}} := s\bigl\{\log(ep/s) + \log\log(8n)\bigr\},
\]
and $C$ is a constant that only depends on $\alpha,K$ and $\varepsilon$.
\end{thm}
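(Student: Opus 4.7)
The plan is to construct $\phi_{\mathcal{P},\mathrm{sparse}}^{\mathrm{MoM+res}}$ as a modification of $\phi_{\mathcal{P},\mathrm{sparse}}^{\mathrm{MoM}}$ from \Cref{subsubsec:mom-sparse}: the naive coordinate selector $\mathbbm{1}_{\{|Y_{t,2}(j)|\geq a\}}$ appearing in \eqref{Eq:V_tga} is replaced by a coordinate-wise median-of-means (MoM) selector enjoying a sub-Gaussian tail per coordinate. The restricted alternative \eqref{eq:H1-restricted} with $t_0\wedge(n-t_0)\geq t^{\mathrm{res}}\asymp \log(e^2p/s)+s^{-1}\log\log(8n)$ ensures that there is always a dyadic scale $t\in\mathcal{T}$ with $t^{\mathrm{res}}\lesssim t\lesssim t_0\wedge(n-t_0)$ at which one can afford to use enough sub-groups in the MoM so that the per-coordinate selection tail is driven below the level needed to pay the union bound over all $p$ coordinates and all $|\mathcal{T}|\lesssim\log(n)$ dyadic scales.

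Concretely, I reuse the paired observations $Z_i=(X_i-X_{n-i+1})/\sqrt{2}$ and the sample-splitting of \Cref{subsubsec:mom-sparse}, with the odd-indexed half $\{Z_1,Z_3,\dotsc,Z_{t-1}\}$ reserved for aggregation and the even-indexed half $\{Z_2,Z_4,\dotsc,Z_t\}$ for selection. At each scale $t\in\mathcal{T}$ with $t\gtrsim t^{\mathrm{res}}$ and each coordinate $j$, I define $\tilde Y_{t,2}(j)$ as the MoM of the corresponding coordinate-$j$ values with $G^{\mathrm{sel}}_t$ sub-groups, tuned according to the quantitative form of $t^{\mathrm{res}}$. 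By the univariate MoM bound, each null coordinate passes the threshold $a_t\asymp\sqrt{(\log(ep/s)+\log\log(8n))/t}$ with sufficiently small probability that a union bound yields $|\hat S_t|\leq Cs$ simultaneously for every relevant $t$, with probability at least $1-\varepsilon/4$, where $\hat S_t:=\{j:|\tilde Y_{t,2}(j)|\geq a_t\}$. Under $H_1$, at the scale $t\asymp t_0\wedge(n-t_0)$, any signal coordinate with $\kappa_j^2\gtrsim a_t^2$ is captured in $\hat S_t$ with high probability.

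The aggregation step then mirrors that of \Cref{subsubsec:mom-sparse}: on the aggregation half, we form $A_{t,a}^{\mathrm{MoM}}$ as in \eqref{Eq:A_ta_MoM} but with $\mathbbm{1}_{\{j\in\hat S_t\}}$ replacing the naive selector and with $G_t$ aggregation groups as in \Cref{thm:finitemoment_upperbound_sparse}. Since $|\hat S_t|\leq Cs$ with high probability, the same fourth-moment Rosenthal-type analysis as in \Cref{thm:finitemoment_upperbound_sparse} yields Type~I fluctuations of order $\sqrt{s}\,G_t$. Under $H_1$, splitting the signal support $S$ into captured and missed coordinates, each missed $j$ must satisfy $\kappa_j^2\lesssim a_t^2$, so the total missed signal is at most $s a_t^2$, and the expectation of $A_{t,a}^{\mathrm{MoM}}$ is at least $(t/4)\bigl(\|\mu_1-\mu_2\|_2^2-s a_t^2\bigr)$. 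Setting $r_t\asymp s\bigl\{\log(ep/s)+\log\log(8n)\bigr\}$, so that $r_t$ dominates both the Type~I fluctuation and the missed-signal penalty $(t/4)\cdot s a_t^2\asymp sL$ with $L:=\log(ep/s)+\log\log(8n)$, then gives detection as soon as $t\|\mu_1-\mu_2\|_2^2\gtrsim sL$. Since $t\asymp t_0(n-t_0)/n$ at the chosen scale, this is exactly $\rho^2\gtrsim v_{\mathcal{P},\mathrm{sparse}}^{\mathrm{MoM+res}}$.

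I expect the main obstacle to be the simultaneous calibration of $G^{\mathrm{sel}}_t$ and the threshold $a_t$ so that (i) $|\hat S_t|=O(s)$ holds uniformly over all dyadic scales with the required probability, (ii) the feasibility constraint $G^{\mathrm{sel}}_t\leq t/2$ is met down to $t\asymp t^{\mathrm{res}}$, and (iii) the missed-signal budget $s a_t^2$ matches (up to constants) the target rate $v_{\mathcal{P},\mathrm{sparse}}^{\mathrm{MoM+res}}/t$; these three constraints together pin down the exact quantitative form of $t^{\mathrm{res}}$ stated in the theorem. A secondary technical point is that the bound $\sum_{j\in S\setminus\hat S_t}\kappa_j^2\lesssim s a_t^2$ uses both the sparsity $\|\mu_1-\mu_2\|_0\leq s$ and the per-coordinate sub-Gaussian concentration of the MoM selector in an essential way; it is precisely this sub-Gaussian tail, unavailable to the naive selector, that allows the sparse rate to depend on $\log(ep/s)$ rather than the polynomial-tail quantity $(p/s)^{2/\alpha}$ appearing in \Cref{thm:finitemoment_upperbound_sparse}.
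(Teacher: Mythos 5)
Your proposal is essentially the same as the paper's proof: the paper also replaces the naive indicator selector of \eqref{Eq:V_tga} by one based on the median of group means of $\{Z_2, Z_4, \dotsc, Z_t\}$, chooses the number of selection groups $G^{\mathrm{res}}\asymp t^{\mathrm{res}}\asymp \log(ep/s)+s^{-1}\log\log(8n)$ so as to drive the per-coordinate Chernoff tail of the selector below a level for which the binomial tail bound $\bigl(epq/s\bigr)^s$ beats the union over dyadic scales, restricts the scan to $t\gtrsim t^{\mathrm{res}}$ (which is what the restricted alternative affords), and then shows the missed-signal budget is $\lesssim s(a^{\mathrm{res}})^2 G^{\mathrm{res}}\asymp s\log(ep/s)+\log\log(8n)$. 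The only cosmetic differences are a reparametrization — the paper fixes $G^{\mathrm{res}}$ for all $t$, rescales the median statistic by $\sqrt{t/(2G^{\mathrm{res}})}$, and thresholds at a constant $a^{\mathrm{res}}$, whereas you let the threshold $a_t$ and group count vary with $t$ — and the paper keeps the aggregation threshold at the tighter $r_t\asymp \sqrt{s}G_t$ rather than inflating it to $sL$ (both suffice, since the missed-signal penalty is absorbed directly into the $\rho^2$ condition rather than into $r_t$).
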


The rate in \Cref{thm:awayfromboundaryP} offers a significant improvement over the original sparse rate, $s\big\{(p/s)^{2/\alpha} + \log \log(8n)\big\}$, achieved in \Cref{thm:finitemoment_upperbound_sparse}. In particular, the rate~$v_{\mathcal{P}, \mathrm{sparse}}^{\mathrm{MoM+res}} $ depends on the dimension~$p$ only through a logarithmic factor and is independent of $\alpha$---the number of finite moments assumed for the noise variables. We emphasise that this improvement is achieved by restricting the original alternative parameter space in \eqref{Eq:alternative_space_original} to \eqref{eq:H1-restricted} with $t^{\mathrm{res}} = 32\bigl\{\log(e^2p/s) + s^{-1} \log \log (8n)\bigr\}$, i.e.\ we only consider testing a potential change point that is not too close to the boundary. Finally, we observe that 
when $\log(ep/s) \geq \log\log(8n)$, with a very mild condition imposed by $t^{\mathrm{res}}$ that the change is away from the endpoints by at least the order of $\log(ep/s)$, the rate $v_{\mathcal{P}, \mathrm{sparse}}^{\mathrm{MoM+res}}$ matches the sparse sub-Gaussian rate for all $\alpha \geq 4$.

For $\mathcal{Q} = \mathcal{G}_{\alpha, K}^\otimes$ with $0 < \alpha < 2$, a similar result can be shown with the test construction deferred to \Cref{sec:test-awayfromboundary}. 
\begin{thm} \label{thm:awayfromboundaryG}
    Let $t^{\mathrm{res}} = 32\bigl\{\log(e^2p/s) + s^{-1} \log \log (8n)\bigr\}$ and $\mathcal{Q} = \mathcal{G}_{\alpha, K}^\otimes$ with $0 < \alpha < 2$. For any $\varepsilon \in (0, 1)$, there exists a test  $\phi_{\mathcal{G}, \mathrm{sparse}}^{\mathrm{res}}$ such that 
\[
\mathcal{R}_{\mathcal{Q}, {\mathrm{res}}}(\rho,  \phi_{\mathcal{G}, \mathrm{sparse}}^{\mathrm{res}}) \leq \varepsilon,
\]
as long as $\rho^2 \geq C v_{\mathcal{G}, \mathrm{sparse}}^{\mathrm{res}}$, where 
\[
v_{\mathcal{G}, \mathrm{sparse}}^{\mathrm{res}} := s\log(ep/s) + \log\log(8n),
\]
and $C$ is a constant that only depends on $\alpha,K$ and $\varepsilon$.
\end{thm}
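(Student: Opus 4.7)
My plan is to construct $\phi_{\mathcal{G}, \mathrm{sparse}}^{\mathrm{res}}$ as a sub-Weibull analogue of $\phi_{\mathcal{P}, \mathrm{sparse}}^{\mathrm{MoM+res}}$ from Theorem~\ref{thm:awayfromboundaryP}, by adapting the sample-splitting design of $\phi_{\mathcal{G}, \mathrm{sparse}}$ in Section~\ref{subSec:subWeibull_sparse}. Two changes exploit the boundary restriction: first, restrict the dyadic scan to $\mathcal{T}^{\mathrm{res}} := \mathcal{T} \cap [t^{\mathrm{res}}/2, n/2]$, which suffices since any $t_0 \in [t^{\mathrm{res}}+1, n-t^{\mathrm{res}}]$ admits some $t \in \mathcal{T}^{\mathrm{res}}$ with $t \leq t_0 \wedge (n-t_0) \leq 2t$; second, at each scale $t$, replace the naive selection statistic $Y_{t,2}(j)$ with a coordinate-wise median-of-means estimator $\hat{\mu}_{j,t}^{\mathrm{MoM}}$ computed from the selection-half observations with $G_t \asymp \log(ep/s) + \log\log(8n)$ groups, and robustify the first-half aggregation analogously. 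The calibration $t^{\mathrm{res}} = 32\bigl\{\log(e^2p/s) + s^{-1}\log\log(8n)\bigr\}$ is precisely chosen so that each MoM group contains at least one observation. The resulting selection threshold $a \asymp \sqrt{\log(ep/s)} + s^{-1/2}\sqrt{\log\log(8n)}$ then scales sub-Gaussianly in $\log(ep/s)$ rather than the sub-Weibull $\log^{1/\alpha}(ep/s)$ of Theorem~\ref{thm:weibullupperbound_sparse}, which is exactly what yields the improved rate.

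For the Type I analysis, the Lugosi--Mendelson guarantee for MoM delivers
\[
\mathbb{P}_{\theta, P_e}\bigl(\sqrt{t}\,|\hat{\mu}_{j,t}^{\mathrm{MoM}}| \geq C\sqrt{G_t}\bigr) \leq e^{-G_t/8}
\]
under $H_0$ for every $j \in [p]$ and $t \in \mathcal{T}^{\mathrm{res}}$, independently of the sub-Weibull order $\alpha$. This is exactly the gain over the naive sum: a direct sub-Weibull concentration bound for $Y_{t,2}(j)$ only delivers sub-Gaussian tails at the desired threshold scale $\sqrt{\log(ep/s)}$ when $t \gtrsim \log^{(2-\alpha)/\alpha}(p/s)$, which $t^{\mathrm{res}}$ fails to ensure for small $\alpha$. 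Tuning the constant in $G_t$ and a union bound over the $p$ coordinates and $|\mathcal{T}^{\mathrm{res}}| \lesssim \log n$ scales show that with probability at least $1 - \varepsilon/2$, no scale selects more than $o(s)$ noise coordinates, and the robustified aggregation statistic stays below the chosen threshold.

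For Type II control, under the alternative with change at $t_0 \geq t^{\mathrm{res}}$, choose $t^* \in \mathcal{T}^{\mathrm{res}}$ with $t^* \leq t_0 \wedge (n-t_0) \leq 2t^*$. The mean of $\sqrt{t^*}\,\hat{\mu}_{j,t^*}^{\mathrm{MoM}}$ is proportional to $\sqrt{t^*}\bigl(\mu_1(j) - \mu_2(j)\bigr)$, and MoM concentration at scale $\sqrt{G_{t^*}}$ gives the same coordinate-wise signal-to-noise ratio as in the Gaussian case. A standard bucketing argument (as in Theorem~\ref{thm:weibullupperbound_sparse})---splitting signal coordinates according to whether $\sqrt{t^*}\,|\mu_1(j) - \mu_2(j)| \geq 2a$---then shows that the signal-strength condition $\rho^2 \geq C\bigl\{s\log(ep/s) + \log\log(8n)\bigr\}$ drives the robustified aggregation above the threshold with probability at least $1 - \varepsilon/2$, whether the signal lives on a few strong coordinates (selected with high probability) or is spread over many weak ones.

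The principal technical obstacle is the aggregation step: because $Y_{t,1}(j)^2 - 1$ is sub-Weibull of order $\alpha/2$---heavier than the original noise when $\alpha < 2$---a plain sum over $|S_t|$ coordinates does not concentrate sub-Gaussianly at the required scale $\sqrt{|S_t|}$. This can be resolved either by a second application of MoM to $Y_{t,1}(j)^2$ or by truncating $Y_{t,1}(j)$ at $\tau \asymp \sqrt{\log(ep/s) + \log\log(8n)}$ before squaring and then using Hoeffding; both options exploit $\sqrt{t^{\mathrm{res}}} \gtrsim \tau$ to ensure that the truncation bias under $H_0$ is negligible and the signal on selected coordinates under $H_1$ is preserved. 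Combining the sub-Gaussian selection with the robust aggregation yields the claimed rate $v_{\mathcal{G}, \mathrm{sparse}}^{\mathrm{res}} = s\log(ep/s) + \log\log(8n)$, which parallels the Gaussian sparse rate in~\eqref{Eq:GaussianRate} and extends Theorem~\ref{thm:awayfromboundaryP} to the sub-Weibull noise class.
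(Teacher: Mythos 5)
Your overall plan — restrict the scan to $\mathcal{T}^{\mathrm{res}}$, replace the selection statistic $Y_{t,2}(j)$ with a coordinate-wise median-of-means based on $G^{\mathrm{res}} \asymp t^{\mathrm{res}}$ groups (so that the selection probability under $\mathrm{H}_0$ is exponentially small in $G^{\mathrm{res}}$ independently of $\alpha$), choose $t^{\mathrm{res}}$ so that every group has at least one observation, and control the alternative via the usual bucketing of signal coordinates against the effective selection threshold $\asymp \sqrt{\log(ep/s) + s^{-1}\log\log(8n)}$ — matches the paper's construction (Theorems~\ref{thm:awayfromboundaryP-app} and~\ref{thm:awayfromboundaryG-app}, Section~\ref{sec:test-awayfromboundary}). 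Where you diverge is the aggregation step, and here you misdiagnose the difficulty. You claim the plain sum $\sum_{j}\{Y_{t,1}^2(j)-1\}\mathbbm{1}_{\{\cdots\}}$ cannot be used because $Y_{t,1}^2(j)-1$ is sub-Weibull of order $\alpha/2$, and you propose either a second median-of-means on $Y_{t,1}^2(j)$ or a truncation-plus-Hoeffding fix. Neither is needed. The paper keeps the plain aggregation exactly as in $\phi_{\mathcal{G},\mathrm{sparse}}$. The reason it works is the mixed-tail Hanson--Wright bound for sub-Weibull quadratic forms (Proposition~\ref{prop:quadraticweibulltail}): conditioning on the selected set $J$ with $|J|\leq s$, the bound on $\mathbb{P}\bigl(\sum_{j\in J}(Y_{t,1}^2(j)-1)>r\bigr)$ has a sub-Gaussian term at scale $\sqrt{s}$, a sub-exponential term at scale $1$, and two heavier-tailed terms at scales $r\sqrt{t}$ and $rt$. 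When summed over the dyadic grid (this is precisely what Lemma~\ref{lemma:exp_decay_sum} is for), the two heavy-tail terms contribute only a fixed constant depending on $\alpha$, $K$ and $\varepsilon$ — the factor $\log^{2/\alpha\vee 2}(e\varepsilon^{-1})$ in the threshold choice of the unrestricted proof — and thus vanish into the overall constant $C$. This mechanism is already present in the proof of Theorem~\ref{thm:weibullupperbound_sparse} and carries over unchanged; the bottleneck in the unrestricted problem is not the aggregation but (a) the selection threshold $a\asymp\log^{1/\alpha}(ep/s)$ needed by the naive selection and (b) the separate $t=1$ threshold $r_1\asymp s\log^{2/\alpha}(ep/s)$. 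The restriction to $\mathcal{T}^{\mathrm{res}}$ plus MoM selection removes (a) and the boundary restriction removes (b), and that is the entire gain.

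Your truncation alternative also introduces a bias issue you wave away too quickly: under $\mathrm{H}_1$ the aggregated coordinates are exactly those where $|\delta(j)|$ is large, and $Y_{t,1}(j)$ has mean $\delta(j)$ on those coordinates, so truncating $Y_{t,1}(j)$ at $\tau\asymp\sqrt{\log(ep/s)+\log\log(8n)}$ would clip precisely the signal you need. A MoM on $Y_{t,1}^2(j)$ avoids this but adds a layer (group count, binomial concentration of the median, signal preservation per group) that the paper's argument shows to be superfluous. So the route you sketch is not incorrect in the sense of leading nowhere, but it addresses a difficulty that does not exist and would cost you extra work to get right; the cleaner path is to leave the aggregation alone and let the $t$-dependent tails of Proposition~\ref{prop:quadraticweibulltail} be summed away over $\mathcal{T}^{\mathrm{res}}$.
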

\Cref{thm:awayfromboundaryG} parallels \Cref{thm:awayfromboundaryP} and demonstrates that $\phi_{\mathcal{G}, \mathrm{sparse}}^{\mathrm{res}}$ achieves the sparse sub-Gaussian rate when testing against change that is at least the order of $\log(ep/s)+s^{-1}\log\log(8n)$ away from the boundary. We note that this idea of obtaining sub-Gaussian rates by assuming change is away from the boundary is also briefly explored in \cite{yu2022robust}. Compared to their result, our result offers a significant improvement on the requirement of how far a change needs to be away from the endpoints, in order to achieve sub-Gaussian rates; see \Cref{sec:app-away} for detailed discussions. More generally, it would be interesting, from a lower bound perspective, to understand the smallest order of $t^{\mathrm{res}}$ such that it is possible to achieve sub-Gaussian rates. We leave this as an interesting future research direction. 

\section{Discussion}
In this paper, we have studied the problem of testing against a single mean change point for high-dimensional heavy-tailed data. We have characterised the minimax testing rates of this problem up to iterated logarithm factors in both the exponentially-decaying and polynomially-decaying tail cases. The transition boundary between the sparse and dense regimes occurs at $s^*_{\mathcal{G}} \asymp \sqrt{p}\log^{-2/\alpha}(ep)$ for $\mathcal{G}_{\alpha, K}$ with $\alpha \in (0, 2]$. For $\mathcal{P}_{\alpha, K}$, the transition happens at $s_{\mathcal{P}}^* = p^{1/2 - 1/(\alpha-2)}$ when $\alpha \geq 4$ and there is no sparse regime when $\alpha \in [2,4)$. Our results also quantify the costs of heavy-tailed distributions in this problem by comparing to the previous results under Gaussian error assumption \citep{liu2021minimax} and unveil a new phenomenon that the minimax testing rates of mean change point problem undergo a phase transition when the error distribution has finite fourth moment. There are several avenues for future research and we briefly discuss them below.

\bigskip
\noindent \textbf{Spatial dependence.} Throughout this paper, we have assumed independence across coordinates. To relax this independence assumption, one could allow for weak or strong coordinate-wise dependence via $\rho$-mixing, and employ alternative finite-sample analysis tools, as in, for example, \citet{jiang2023robust}. Alternatively, for allowing a general covariance matrix $\Sigma$, if we assume that $\Sigma^{-1/2} E$ has independent components with all eigenvalues of $\Sigma$ being of constant order, then at least in the dense case, all our theoretical results remain valid.  We leave a thorough investigation into these two generalisations for future endeavours.

\bigskip
\noindent \textbf{Adaptation to \texorpdfstring{$\alpha$}{alpha}.} All of our proposed testing procedures require the knowledge of $\alpha$, the tail decay index in the case of $\mathcal{G}_{\alpha,K}$ and the number of finite moments in the case of $\mathcal{P}_{\alpha, K}$, through the choices of parameters. Note that if we under-specify $\alpha$, all of our theoretical guarantees still hold, albeit non-optimal rates achieved by the procedures. On the other hand, an over-specification of $\alpha$ would invalidate our results. In practice, practitioners, based on domain knowledge, usually have a conservative idea on how heavy the tails may be. There have been some recent works on distinguishing between exponentially-decaying and polynomially-decaying tails \citep[e.g.][]{castillo2014methods,bhati2020test} and on estimating the tail index parameter for sub-Weibull distributions \citep{vladimirova2020sub}, which may be combined with our tests to obtain adaptivity. We leave this ambitious task for the future.

\section*{Acknowledgements}
The authors would like to thank Richard Samworth and Yining Chen for helpful discussions, an anonymous Associate Editor and two Referees for constructive comments. The research of YY and ML is (partially) supported by Engineering and Physical Sciences Research Council (EPSRC) grants EP/V013432/1 and EP/Z531327/1,  and that of TW and YC is supported by EPSRC grant EP/T02772X/2.

\bigskip

\bibliographystyle{custom2author}
\bibliography{robust_change}
\newpage
\appendix 

\section*{Appendices}
The proofs of all theoretical results are presented in the Appendices. \Cref{proof:upperbounds} contains proofs of upper bound results, including \Cref{thm:weibullupperbound_dense}, \Cref{thm:weibullupperbound_sparse}, \Cref{thm:finitemoment_upperbound_dense},
\Cref{thm:finitemoment_upperbound_sparse_improve}, \Cref{thm:finitemoment_upperbound_sparse} and \Cref{thm:finitemoment_combinerate}. \Cref{thm:adaptive_upperbound} regarding the adaptive test is proved in \Cref{proof:adaptation}. The lower bound results, \Cref{thm:subweibull-lowerbound-minimum} and \Cref{thm:finitemoment-lowerbound-minimum}, are proved in \Cref{proof:lowerbounds}. Technical details of \Cref{Sec:extensions} are collected in \Cref{sec:app-extensions}. \Cref{Sec:aux} contains auxiliary results. 

\addtocontents{toc}{\protect\setcounter{tocdepth}{4}}
\renewcommand{\contentsname}{Content of Appendices}
\tableofcontents

\section{Proofs of results in Sections~\ref{Sec:hidimtest},~\ref{Sec:robusttest} and~\ref{sec:adaptsparsity}} \label{Sec:proof_main}
\subsection{Proofs of upper bound results in Sections~\ref{Sec:hidimtest} and~\ref{Sec:robusttest} }\label{proof:upperbounds}
Throughout the proofs in this subsection, we fix $P_e \in \mathcal{G}_{\alpha, K}^{\otimes}$ (resp. $\mathcal{P}_{\alpha, K}^{\otimes}$) and write $\mathbb{E}_{\theta}$ in place of $\mathbb{E}_{\theta, P_e}$ for the ease of notation. In every proof, we desire to control the two terms $\sup_{\theta \in \Theta_0(p, n)} \mathbb{E}_\theta \phi$ (`\textbf{null term}') and  $\sup_{\theta \in \Theta_1(p, n, s, \rho)} \mathbb{E}_\theta (1-\phi)$ (`\textbf{alternative term}') respectively. The values of the constants $C_1, C_2, \dotsc$ vary from proof to proof. Note also that the order of the constants in each proof do not necessarily match that in the statement of the result, e.g.~$C_2$ in the proof of Theorem~\ref{thm:weibullupperbound_dense} below corresponds to $C_1$ in the statement of Theorem~\ref{thm:weibullupperbound_dense}.
\subsubsection{Proof of Theorem~\ref{thm:weibullupperbound_dense}}
\noindent \textbf{Null term.} For any $\theta \in \Theta_0(p,n)$, we can write
    \begin{align*}
        Y_{t} &= \frac{\sum_{i=1}^t (X_i - \theta_1) - \sum_{i=1}^t (X_{n+1-i}-\theta_1)}{\sqrt{2t}}.
    \end{align*}
    Observe that $Y_{t} = (Y_t(1), \dotsc, Y_t(p))^\top$ has independent components, each having mean $0$ and variance $1$. Moreover, each $X_i(j) - \theta_1(j)$ is a (centered) sub-Weibull random variable of order $\alpha$ belonging to the class $\mathcal{G}_{\alpha, K}$. Now, we consider the following block diagonal matrix $B \in \mathbb{R}^{2tp \times 2tp}$:
    \[
    B := \begin{pmatrix}
    B^{\mathrm{block}} & 0 & \cdots & 0 \\
    0 & B^{\mathrm{block}} & \cdots & 0 \\
    \vdots & \vdots & \ddots &  \vdots \\
    0 & 0 & \cdots & B^{\mathrm{block}}
    \end{pmatrix},
    \]
    where $B^{\mathrm{block}} = (b_{ij})_{i, j \in[2t]} \in \mathbb{R}^{2t \times 2t}$ is defined as follows:
    \[
    b_{ij} = \begin{cases}
    \frac{1}{2t} \quad &\text{if } i=j, \\
    \frac{1}{t} \quad &\text{if } 1 \leq i \neq j \leq t \text{ or } t < i \neq j \leq 2t, \\
    -\frac{1}{t} \quad &\text{if } 1 \leq i \leq t < j \leq 2t \text{ or } 1 \leq j \leq t < i \leq 2t.
    \end{cases}
    \]
    Let $U_i(j) := X_{i}(j) - \theta_1(j)$ for $i \in [n]$ and $j \in [p]$. Now, we can write 
    \[
    \sum_{j \in [p]} Y_{t}^2(j) = \sum_{j \in [p]} \frac{1}{2t}\bigg(\sum_{i=1}^t U_i(j) - \sum_{i=1}^t U_{n+1-i}(j) \bigg)^2 = \tilde{U}^{\top} B \tilde{U},
    \]
    where $\tilde{U} \in \mathbb{R}^{2tp}$ has its first $2t$ coordinates as $$ (U_1(1), U_2(1), \dotsc, U_t(1),U_{n+1-t}(1),\dotsc,U_n(1))^{\top},$$
    and the remaining entries take the same form but with the coordinate index changing from $1$ to $p$. 
   
    We calculate four different norms of matrix $B$:
    \begin{align*}
    \|B\|_{\mathrm{F}} &= \sqrt{2tp\biggl(\frac{1}{4t^2} + \frac{2t-1}{t^2}\biggr)} \leq 2\sqrt{p}, \\
    \|B\|_{2} &= \frac{1}{2t} + \frac{2t-1}{t} \leq 2, \\
    \|B\|_{2 \rightarrow \infty} &= \max_{i\in [2t]} \sqrt{\sum_{j\in [2t]} b^2_{ij}} \leq \sqrt{\frac{2}{t}}, \\
    \|B\|_{\max} &= 1/t. 
    \end{align*}
    For $\alpha \in [1,2]$, we observe by \Cref{lem:subweibull_ordering} that $\mathcal{G}_{\alpha, K} \subseteq \mathcal{G}_{1, K'}$ for some constant $K'>0$, depending only on $K$. Recall that $A_t = \sum_{j \in [p]}Y^2_t(j) - p$. Thus, for any $\alpha \in (0,2]$, by applying \Cref{prop:quadraticweibulltail}, we have
    \begin{align} \label{Eq:At>rnullbound}
        \mathbb{P}_\theta (A_t > r) &\leq \exp\Biggl\{1-\biggl(\frac{r}{C_1\sqrt{p}}\biggr)^2\Biggr\} + \exp\biggl\{1-\frac{r}{C_1}\biggr\}+\exp\Biggl\{1-\biggl(\frac{r\sqrt{t}}{C_1}\biggr)^{\frac{2\alpha}{2+\alpha} \wedge \frac{2}{3}} \Biggr\} \nonumber \\
        &\quad + \exp\Biggl\{1-\biggl(\frac{rt}{C_1}\biggr)^{\frac{\alpha}{2} \wedge \frac{1}{2} }  \Biggr\},
    \end{align}
    where $C_1 > 0$ is some constant depending only on $\alpha$ and $K$ from \Cref{prop:quadraticweibulltail}. Then, by union bounds (for all four terms) and \Cref{lemma:exp_decay_sum} (for the last two terms), we obtain that for any $\theta \in \Theta_0(p, n)$ and $r \geq C_1 \Bigl\{ \bigl(2^{\frac{\alpha}{2+\alpha} \wedge \frac{1}{3}}-1\bigr)^{-\bigl(\frac{2+\alpha}{\alpha} \vee 3\bigr)} \vee \bigl(2^{\frac{\alpha}{2}\wedge \frac{1}{2}}-1\bigr)^{-\bigl(\frac{2}{\alpha} \vee 2\bigr)}\Bigr\}$,
    \begin{align} \label{eq:dense_max_At_control}
        \mathbb{E}_\theta \phi_{\mathcal{G},\mathrm{dense}} = \mathbb{P}_\theta \bigl(\max_{t \in \mathcal{T} } A_{t,0} > r \bigr)  &\leq e\log_2(n) \exp\Biggl\{-\biggl(\frac{r}{C_1\sqrt{p}}\biggr)^2\Biggr\} + e\log_2(n) \exp\biggl\{-\frac{r}{C_1}\biggr\} \nonumber\\
        &\quad + e\sum_{t \in \mathcal{T}}\exp\Biggl\{-\biggl(\frac{r\sqrt{t}}{C_1}\biggr)^{\frac{2\alpha}{2+\alpha} \wedge \frac{2}{3}} \Biggr\} + e\sum_{t \in \mathcal{T}}\exp\Biggl\{-\biggl(\frac{rt}{C_1}\biggr)^{\frac{\alpha}{2}\wedge \frac{1}{2}} \Biggr\} \nonumber \\
        &\leq e\log_2(n) \exp\Biggl\{-\biggl(\frac{r}{C_1\sqrt{p}}\biggr)^2\Biggr\} + e\log_2(n) \exp\biggl\{-\frac{r}{C_1}\biggr\} \nonumber \\
        &\quad + 2e \exp\Biggl\{-\biggl(\frac{r}{C_1}\biggr)^{\frac{2\alpha}{2+\alpha} \wedge \frac{2}{3}} \Biggr\} + 2e \exp\Biggl\{-\biggl(\frac{r}{C_1}\biggr)^{\frac{\alpha}{2}\wedge \frac{1}{2}} \Biggr\},
    \end{align}
    Thus, when
    \begin{align*}
    r \geq \biggl\{ C_1 \sqrt{p\log(8e\varepsilon^{-1} \log_2(n))} \biggr\} &\vee \biggl\{ C_1 \log(8e\varepsilon^{-1} \log_2(n)) \biggr\} \\
    &\vee \biggl\{ C_1 \log^{\frac{2+\alpha}{2\alpha}\vee \frac{3}{2}}(16e\varepsilon^{-1}) \biggr\} \vee \biggl\{ C_1 \log^{\frac{2}{\alpha}\vee 2}(16e\varepsilon^{-1}) \biggr\},
    \end{align*}
    each of the four terms in~\eqref{eq:dense_max_At_control} can be upper bounded by $\varepsilon/8$. Equivalently, when
    \[
    r \geq C_2\bigl(\sqrt{p\log \log(8n)} + \log \log(8n)\bigr),
    \]
    for some constant $C_2 > 0$, depending only on $\alpha$, $K$ and $\varepsilon$, we have $\mathbb{E}_\theta \phi_{\mathcal{G},\mathrm{dense}} \leq \varepsilon/2$ for any $\theta \in \Theta_0(p,n)$.
    
    \medskip
    \noindent \textbf{Alternative term.} For any $\theta \in \Theta(p, n, s, \rho)$, there exists some $t_0 \in [n]$, such that the mean change happens at time $t_0$, with $\frac{t_0(n-t_0)}{n}\|\mu_1-\mu_2\|^2 \geq \rho^2$. We may assume without loss of generality that $t_0 \leq n/2$. By the definition of $\mathcal{T}$, there exists a unique $\tilde{t} \in \mathcal{T}$ such that $t_0/2 < \tilde{t} \leq t_0$. Note that then we can write
    \begin{align} \label{Eq:alternative_Yt_multivariate}
    Y_{\tilde{t}} &= \sqrt{\frac{\tilde{t}}{2}} (\mu_1 - \mu_2) + \frac{\sum_{i=1}^{\tilde{t}} (X_{i} - \mu_1) - \sum_{i=1}^{\tilde{t}} (X_{n+1-i} - \mu_2)}{\sqrt{2\tilde{t}}} =: \delta + Y'_{\tilde{t}},
    \end{align}
    where $\|\delta\|_2^2 \geq t_0\|\mu_1-\mu_2\|_2^2/4 \geq \rho^2/4$.
     Note also that for all $j \in [p]$, we have $\mathbb{E}_\theta [Y'_{\tilde{t}}(j)] = 0$ and $\mathbb{E}[(Y'_{\tilde{t}}(j))^2] = 1$. By \Cref{prop:subweibull-basic-prop}(b) and \Cref{Lemma:bounded2kmoments}(a), we have $\mathbb{E}[(Y'_{\tilde{t}}(j))^4]  \leq C_3$ for some constant $C_3 > 0$, depending only on $\alpha$ and $K$. When $\rho^2 \geq 8r\geq 8C_2\bigl(\sqrt{p\log \log(8n)} + \log \log(8n)\bigr)$, we have by Chebyshev's inequality that
    \begin{align} \label{Eq:weibull_dense_alt}
        \mathbb{E}_\theta(1-\phi_{\mathcal{G},\mathrm{dense}}) &\leq \mathbb{P}_\theta\biggl(\max_{t \in \mathcal{T}} \sum_{j = 1}^p Y_{t}(j)^2 - p \leq \rho^2/8 \biggr) \leq \mathbb{P}_\theta \biggl( \sum_{j=1}^p \Bigl( Y_{\tilde{t}}(j)^2 - \delta(j)^2 - 1 \Bigr) \leq -\|\delta\|_2^2/2 \biggr) \nonumber \\
        &\leq \frac{4\sum_{j = 1}^p \mathrm{Var}_\theta (Y_{\tilde{t}}(j)^2)}{\|\delta\|_2^4} = \frac{4\sum_{j = 1}^p \mathrm{Var}_\theta \bigl( Y'_{\tilde{t}}(j)^2 + 2\delta(j)Y'_{\tilde{t}}(j) \bigr)}{\|\delta\|_2^4} \nonumber \\
        &\leq \frac{4\sum_{j=1}^p \bigl\{2\mathrm{Var}_\theta (Y'_{\tilde{t}}(j)^2) + 8\delta(j)^2 \mathrm{Var}_\theta (Y'_{\tilde{t}}(j)) \bigr\} }{\|\delta\|_2^4}
        \leq  \frac{\sum_{j=1}^p \bigl\{ 8\mathbb{E}_\theta [Y'_{\tilde{t}}(j)^4] + 32\delta(j)^2 \bigr\} }{\|\delta\|_2^4} \nonumber \\
        &\leq  \frac{8C_3p + 32\|\delta\|_2^2   }{\|\delta\|_2^4} \leq 128\biggl(\frac{C_3p}{\rho^4} + \frac{1}{\rho^2}\biggr) \leq \frac{2C_3}{C_2^2 \log\log(8n)} + \frac{16}{C_2\sqrt{p\log\log(8n)}},
    \end{align}
    where we have used the fact that $\mathrm{Var}(X+Y) \leq 2(\mathrm{Var}(X) + \mathrm{Var}(Y))$ in the fourth inequality.
    Therefore, by having $C_2 > \max \bigl\{64/\varepsilon,\sqrt{8C_3/\varepsilon}\bigr\}$, we are guaranteed that $\mathbb{E}_\theta(1-\phi_{\mathcal{G},\mathrm{dense}}) \leq \varepsilon/2$ and the desired result follows.

    \subsubsection{Proof of Theorem~\ref{thm:weibullupperbound_sparse}}
    \noindent \textbf{Null term.} For any $\theta \in \Theta_0(p,n)$, we have by a union bound that
    \begin{align} \label{Eq:sparse_weibullnull_master}
    \mathbb{E}_\theta \phi_{\mathcal{G},\mathrm{sparse}} \leq \mathbb{P}_\theta(A_{1,a} > r_1) + \sum_{t\in \mathcal{T}\setminus\{1\}} \mathbb{P}_\theta(A_{t,a} > r).
    \end{align}
    We first control the second term in~\eqref{Eq:sparse_weibullnull_master}. Recall the definition of $Y_{t,1}$ and $Y_{t,2}$ from~\eqref{eq:samplespliting} and denote $\mathcal{J}_{t,a} := \{j \in [p]: |Y_{t,2}(j)| \geq a\}$ for $t \in \mathcal{T}$ and $a\geq 0$. Note that $\mathcal{J}_{t,a}$ is a random set. Then,
    \begin{align} \label{eq:sparse_weibullnull}
    \sum_{t\in \mathcal{T}\setminus\{1\}} \mathbb{P}_\theta(A_{t,a} > r) &\leq \sum_{t \in \mathcal{T}\setminus\{1\}}\mathbb{P}_\theta\Biggl(\sum_{j\in\mathcal{J}_{t,a}} \bigl(Y_{t,1}^2(j)-1\bigr) > r\Biggr) \nonumber \\
    &= \sum_{t \in \mathcal{T}\setminus\{1\}}\mathbb{E}_\theta \Biggl[ \mathbb{P}_\theta\Biggl(\sum_{j\in\mathcal{J}_{t,a}} \bigl(Y_{t,1}^2(j)-1\bigr) > r \Biggm| \mathcal{J}_{t,a}\Biggr)  \Biggr] \nonumber \\
    &= \sum_{t \in \mathcal{T}\setminus\{1\}} \sum_{J \subseteq [p]} \Biggl\{\mathbb{P}_\theta\Biggl(\sum_{j\in J} \bigl(Y_{t,1}^2(j)-1\bigr) > r \Biggr) \mathbb{P}_\theta (\mathcal{J}_{t,a}=J )\Biggr\} \nonumber \\
    &\leq \sum_{t \in \mathcal{T}\setminus\{1\}}\mathbb{P}_\theta (|\mathcal{J}_{t,a}| > s) + \sum_{t \in \mathcal{T}\setminus\{1\}} \sup_{J \subseteq [p]: |J| \leq s} \mathbb{P}_\theta\Biggl(\sum_{j\in J} \bigl(Y_{t,1}^2(j)-1\bigr) > r \Biggr),
    \end{align}
    where the third line follows from the independence of $Y_{t,1}$ and $Y_{t,2}$. We now control the two terms in~\eqref{eq:sparse_weibullnull} respectively.
    Using \Cref{prop:sumofweibulltail} with $u_i = t^{-1/2}$ for $i = 1,\dotsc, t/2$ and $u_i = -t^{-1/2}$ for $i=t/2+1,\dotsc, t$, we obtain that for any $t \in \mathcal{T}$, $j\in [p]$ and $x \geq 0$
    \begin{equation*}
        \mathbb{P}_\theta(|Y_{t,2}(j)| \geq x) \leq \exp\biggl\{1-\min\biggl\{ \biggl(\frac{x}{C_1}\biggr)^2, \biggl(\frac{x}{C_1\|u\|_{\beta(\alpha)}}\biggr)^\alpha  \biggr\} \biggr\},
    \end{equation*}
    for some constant $C_1 \geq 1$ depending only on $\alpha$ and $K$. For $\alpha \leq 1$, we have $\|u\|_{\beta(\alpha)} = \|u\|_\infty = t^{-1/2}$ and for $1<\alpha\leq 2$, we have $\|u\|_{\beta(\alpha)} = \|u\|_{\alpha/(\alpha-1)} = t^{1/2-1/\alpha}$. Thus
    \begin{equation} \label{Eq:prob_coordinate_null}
    q_{t,a} := \mathbb{P}_\theta(|Y_{t,2}(j)| \geq a) \leq \exp\biggl\{1-\min\biggl\{ \biggl(\frac{a}{C_1}\biggr)^2, \biggl(\frac{a}{C_1 t^{(-\frac{1}{2})\vee(\frac{1}{2}-\frac{1}{\alpha}) }}\biggr)^\alpha   \biggr\} \biggr\}.
    \end{equation}
    For $0 < \alpha < 2$, by a binomial tail bound \citep[][eq (2.1) in Theorem~1]{hoeffding1963}, we have
    \begin{align} \label{Eq:heoffding_bound_result}
    \mathbb{P}_\theta (|\mathcal{J}_{t,a}| > s) &\leq \biggl( \frac{q_{t,a}}{s/p} \biggr)^s \biggl( \frac{1 - q_{t,a}}{1-s/p} \biggr)^{p-s} = \biggl( \frac{pq_{t,a}}{s} \biggr)^s \biggl( 1 + \frac{s - pq_{t,a}}{p-s} \biggr)^{p-s} \nonumber \\
    &\leq \biggl( \frac{pq_{t,a}}{s} \biggr)^s e^{s-pq_{t,a}} \leq \biggl( \frac{epq_{t,a}}{s} \biggr)^s.
    \end{align}
    Combining this with~\eqref{Eq:prob_coordinate_null}, we have
    \begin{align} \label{Eq:use_Chernoff_Hoeffding_weibull}
    &\sum_{t \in\mathcal{T}\setminus\{1\}}\mathbb{P}_\theta (|\mathcal{J}_{t,a}| > s)  \leq \sum_{t \in\mathcal{T}\setminus\{1\}}\biggl( \frac{epq_{t,a}}{s} \biggr)^s \nonumber \\
    &\leq \log_2(n)\biggl(\frac{2e^2p}{s}\biggr)^s \exp\biggl\{-\frac{sa^2}{C_1^2}\biggr\} + \biggl(\frac{2e^2p}{s}\biggr)^s \sum_{t \in\mathcal{T}\setminus\{1\}} \exp\Biggl\{-s\Biggl(\frac{a^{\frac{2\alpha}{\alpha \wedge (2-\alpha)}}t}{C_1^{\frac{2\alpha}{\alpha \wedge (2-\alpha)}}}\Biggr)^{\frac{\alpha \wedge (2-\alpha)}{2}}\Biggr\} \nonumber \\
    &\leq \log_2(n)\biggl(\frac{2e^2p}{s}\biggr)^s \exp\biggl\{-\frac{sa^2}{C_1^2}\biggr\} + 2\biggl(\frac{2e^2p}{s}\biggr)^s \exp\biggl\{-\frac{sa^\alpha}{C_1^\alpha}\biggr\},
    \end{align}
    provided that $a \geq C_1 \bigl(2^{\frac{\alpha \wedge (2-\alpha)}{2}}-1\bigr)^{-1/\alpha}$, where we have used \Cref{lemma:exp_decay_sum} in the last inequality. In fact, for $\alpha=2$, by~\eqref{Eq:prob_coordinate_null}, the final bound in~\eqref{Eq:use_Chernoff_Hoeffding_weibull} remains valid for all $a \geq 0$. Now, the first term in the final bound above can be bounded by $\varepsilon/16$ when
    \[
    a \geq C_1 s^{-1/2} \log^{1/2}(16\varepsilon^{-1}\log_2(n)) + C_1 \log^{1/2}(2e^2p/s),
    \]
    and the second term can be bounded by $\varepsilon/16$ when
    \[
    a \geq C_1 s^{-1/\alpha} \log^{1/\alpha}(32\varepsilon^{-1}) + C_1 \log^{1/\alpha}(2e^2p/s).
    \]
    Thus, as long as we choose $a$ to satisfy
    \begin{equation*}
    a \geq C_2\bigl( \log^{1/\alpha} (ep/s) + s^{-1/2} \log^{1/2}(\varepsilon^{-1}\log(8n)) + s^{-1/\alpha} \log^{1/\alpha}(e\varepsilon^{-1})\bigr)
    \end{equation*} 
    for some large enough $C_2 > 0$, depending only on $\alpha$ and $K$,
    we are guaranteed that
    \begin{equation} \label{Eq:sparse_weibullnull_term1}
    \sum_{t \in\mathcal{T}\setminus\{1\}}\mathbb{P}_\theta (|\mathcal{J}_{t,a}| > s) \leq \varepsilon/8.
    \end{equation}
    We now begin to bound the second term in~\eqref{eq:sparse_weibullnull}. By replacing $p$ with $|J|$ in~\eqref{Eq:At>rnullbound}, we have
    \begin{align*}
    \mathbb{P}_\theta\Biggl(\sum_{j\in J} \bigl(Y_{t,1}^2(j)-1\bigr) > r \Biggr) \leq  \exp\Biggl\{1-\biggl(\frac{r}{C_3\sqrt{|J|}}\biggr)^2\Biggr\} &+ \exp\biggl\{1-\frac{r}{C_3}\biggr\} + \exp\Biggl\{1-\biggl(\frac{r\sqrt{t}}{C_3}\biggr)^{\frac{2\alpha}{2+\alpha}\wedge \frac{2}{3}} \Biggr\} \\
    &+ \exp\Biggl\{1-\biggl(\frac{rt}{C_3}\biggr)^{\frac{\alpha}{2}\wedge\frac{1}{2}} \Biggr\},
    \end{align*}
    where $C_3 > 0$ is some constant depending only on $\alpha$ and $K$ from \Cref{prop:quadraticweibulltail}. Then, by the same technique as used in~\eqref{eq:dense_max_At_control} in the proof of Theorem~\ref{thm:weibullupperbound_dense} (applying union bounds to the first two terms, and using Lemma~\ref{lemma:exp_decay_sum} for the last two terms), we obtain that for all $r \geq C_3 \Bigl\{ \bigl(2^{\frac{\alpha}{2+\alpha} \wedge \frac{1}{3}}-1\bigr)^{-\bigl(\frac{2+\alpha}{\alpha} \vee 3\bigr)} \vee \bigl(2^{\frac{\alpha}{2}\wedge \frac{1}{2}}-1\bigr)^{-\bigl(\frac{2}{\alpha} \vee 2\bigr)}\Bigr\}$
    \begin{align} \label{Eq:sparse_weibullnull_term2}
    &\sum_{t \in \mathcal{T}\setminus\{1\}} \sup_{J \subseteq [p]: |J| \leq s} \mathbb{P}_\theta\Biggl(\sum_{j\in J} \bigl(Y_{t,1}^2(j)-1\bigr) > r \Biggr) \nonumber \\
    &\leq \sum_{t \in \mathcal{T}\setminus\{1\}} \Biggl\{ \exp\Biggl\{1-\biggl(\frac{r}{C_3\sqrt{s}}\biggr)^2\Biggr\} + \exp\biggl\{1-\frac{r}{C_3}\biggr\} + \exp\Biggl\{1-\biggl(\frac{r\sqrt{t}}{C_3}\biggr)^{\frac{2\alpha}{2+\alpha}\wedge \frac{2}{3}} \Biggr\} \nonumber \\
    &\qquad \qquad \qquad  + \exp\Biggl\{1-\biggl(\frac{rt}{C_3}\biggr)^{\frac{\alpha}{2}\wedge\frac{1}{2}} \Biggr\} \Biggr\} \nonumber \\
    &\leq e\log_2(n) \exp\Biggl\{-\biggl(\frac{r}{C_3\sqrt{s}}\biggr)^2\Biggr\} + e\log_2(n) \exp\biggl\{-\frac{r}{C_3}\biggr\} + \sum_{t \in \mathcal{T}\setminus\{1\}}  \exp\Biggl\{1-\biggl(\frac{r\sqrt{t}}{C_3}\biggr)^{\frac{2\alpha}{2+\alpha}\wedge \frac{2}{3}} \Biggr\}  \nonumber \\
    &\quad  + \sum_{t \in \mathcal{T}\setminus\{1\}} \exp\Biggl\{1-\biggl(\frac{rt}{C_3}\biggr)^{\frac{\alpha}{2}\wedge\frac{1}{2}} \Biggr\} \nonumber \\
    &\leq e\log_2(n) \exp\Biggl\{-\biggl(\frac{r}{C_3\sqrt{s}}\biggr)^2\Biggr\} + e\log_2(n) \exp\biggl\{-\frac{r}{C_3}\biggr\} + 2e\exp\Biggl\{-\biggl(\frac{r}{C_3}\biggr)^{\frac{2\alpha}{2+\alpha}\wedge \frac{2}{3}}\Biggr\} \nonumber \\
    &\quad + 2e\exp\Biggl\{-\biggl(\frac{r}{C_3}\biggr)^{\frac{\alpha}{2}\wedge\frac{1}{2}}\Biggr\}.
    \end{align}
    Thus, when
    \begin{align*}
    r \geq \biggl\{ C_3 \sqrt{s\log(32e\varepsilon^{-1} \log_2(n))} \biggr\} &\vee \biggl\{ C_3 \log(32e\varepsilon^{-1} \log_2(n)) \biggr\} \\
    &\vee \biggl\{ C_3 \log^{\frac{2+\alpha}{2\alpha}\vee \frac{3}{2}}(64e\varepsilon^{-1}) \biggr\} \vee \biggl\{ C_1 \log^{\frac{2}{\alpha}\vee 2}(64e\varepsilon^{-1}) \biggr\},
    \end{align*}
    each of the four terms in~\eqref{Eq:sparse_weibullnull_term2} can be upper bounded by $\varepsilon/32$. Equivalently, when
    \[
    r \geq C_4\Bigl(\sqrt{s\log(\varepsilon^{-1}\log(8n))} + \log(\varepsilon^{-1}\log(8n)) + \log^{\frac{2}{\alpha}\vee 2}(e\varepsilon^{-1})\Bigr),
    \] 
   for some constant $C_4 > 0$, depending only on $\alpha$ and $K$, we are guaranteed
   \begin{equation} \label{Eq:sparse_weibullnull_term2_mod}
       \sum_{t \in \mathcal{T}\setminus\{1\}} \sup_{J \subseteq [p]: |J| \leq s} \mathbb{P}_\theta\Biggl(\sum_{j\in J} \bigl(Y_{t,1}^2(j)-1\bigr) > r \Biggr) \leq \varepsilon/8.
   \end{equation}
   Finally, for the first term in~\eqref{Eq:sparse_weibullnull_master}, by Proposition~\ref{prop:2sample}(a), whenever $r_1 \geq C'_4s\log^{2/\alpha}(ep/s)$ for some sufficiently large $C'_4 > 0$, depending on $\alpha, K$ and $\varepsilon$, we have
    \begin{equation} \label{Eq:subweibull_null_t=1}
    \mathbb{P}_\theta(A_{1,a} > r_1) \leq \varepsilon/4.
    \end{equation}
    By combining~\eqref{Eq:sparse_weibullnull_master},~\eqref{eq:sparse_weibullnull},~\eqref{Eq:sparse_weibullnull_term1},~\eqref{Eq:sparse_weibullnull_term2_mod} and~\eqref{Eq:subweibull_null_t=1}, we conclude that $\mathbb{E}_\theta \phi_{\mathcal{G},\mathrm{sparse}} \leq \varepsilon/2$ for all $\theta \in \Theta_0(p,n)$.
    
    \medskip
    \noindent \textbf{Alternative term.} We use the same argument as at the beginning of the alternative part of the proof of Theorem~\ref{thm:weibullupperbound_dense}. Recall that there exists a unique $\tilde{t} \in \mathcal{T}$ such that $t_0/2 < \tilde{t} \leq t_0$. We first consider the case $t_0 \geq 2$. This implies $\tilde{t} \geq 2$. Now, similar to~\eqref{Eq:alternative_Yt_multivariate}, we can write
    \begin{align*}
    Y_{\tilde{t}, 1} &= \frac{\sqrt{\tilde{t}}}{2} (\mu_1 - \mu_2) + \frac{\sum_{i=1}^{\tilde{t}/2} (X_{2i-1} - \mu_1) - \sum_{i=1}^{\tilde{t}/2} (X_{n-2i+1} - \mu_2)}{\sqrt{\tilde{t}}} =: \delta + Y'_{\tilde{t}, 1}, \nonumber \\
    Y_{\tilde{t}, 2} &= \frac{\sqrt{\tilde{t}}}{2} (\mu_1 - \mu_2) + \frac{\sum_{i=1}^{\tilde{t}/2} (X_{2i} - \mu_1) - \sum_{i=1}^{\tilde{t}/2} (X_{n-2i+2} - \mu_2)}{\sqrt{\tilde{t}}} =: \delta + Y'_{\tilde{t}, 2}.
    \end{align*}
    The quantity $\delta := \sqrt{\tilde{t}}(\mu_1-\mu_2)/2$ satisfies $\|\delta\|_2^2 \geq \rho^2/8$. Denote $\mathcal{S}_{\delta} := \{j \in [p]: \delta(j) \neq 0\}$ and $\mathcal{H}_{\delta,a} := \{j \in [p]: |\delta(j)| \geq 2a\}$. Note that these two sets are deterministic, while $\mathcal{J}_{\tilde{t},a} = \{j \in [p]: |Y_{\tilde{t},2}(j)| \geq a\}$ is random. Then, when $\rho^2 \geq 192(r+2s)\log(8/\varepsilon)$, we have
    \begin{align} \label{Eq:weibull_sparse_alter_ctrl}
    \mathbb{E}_\theta(1-\phi_{\mathcal{G},\mathrm{sparse}}) &\leq \mathbb{P}_\theta\biggl( \sum_{j \in \mathcal{J}_{\tilde{t}, a}} \bigl(Y_{\tilde{t},1}^2(j)-1\bigr) \leq r\biggr) \nonumber \\
    &= \mathbb{P}_\theta\biggl( \sum_{j \in \mathcal{J}_{\tilde{t}, a}\cap \mathcal{H}^c_{\delta, a}} \bigl(Y_{\tilde{t},1}^2(j)-1\bigr) + \sum_{j \in \mathcal{J}_{\tilde{t}, a}\cap \mathcal{H}_{\delta, a}} \bigl(Y_{\tilde{t},1}^2(j)-1\bigr) \leq r \biggr) \nonumber \\
    &\leq \mathbb{P}_\theta \bigl(|\mathcal{J}_{\tilde{t},a}| > 2s\bigr) + \mathbb{P}_\theta\biggl( \sum_{j \in \mathcal{J}_{\tilde{t}, a}\cap \mathcal{H}_{\delta, a}} \bigl(Y_{\tilde{t},1}^2(j)-1\bigr) \leq r+2s \biggr)  \nonumber \\
    &\leq \mathbb{P}_\theta \bigl(|\mathcal{J}_{\tilde{t},a}| > 2s\bigr) + \mathbb{P}_\theta\biggl(\sum_{j\in \mathcal{J}_{\tilde{t}, a} \cap \mathcal{H}_{\delta, a}} \delta(j)^2  < \frac{\|\delta\|_2^2}{12\log(8/\varepsilon)}  \biggr)  \nonumber \\
    &\quad + \mathbb{P}_\theta \biggl( \sum_{j\in \mathcal{J}_{\tilde{t}, a} \cap \mathcal{H}_{\delta, a}} \bigl(Y_{\tilde{t},1}^2(j)-\delta(j)^2-1\bigr) \leq - \frac{\|\delta\|_2^2}{24\log(8/\varepsilon)} \biggr).
    \end{align}
    We now control the three terms in~\eqref{Eq:weibull_sparse_alter_ctrl} respectively. By~\eqref{Eq:sparse_weibullnull_term1}, we have
    \begin{align} \label{Eq:concentration_binomial_alternative}
    \mathbb{P}_\theta \bigl(|\mathcal{J}_{\tilde{t},a}| > 2s\bigr) \leq \mathbb{P}_\theta \bigl(|\mathcal{J}_{\tilde{t},a} \cap \mathcal{S}^c_\delta| > s\bigr) \leq \varepsilon/8.
    \end{align}
    For the second term, we observe that for all $j \in \mathcal{H}_{\delta,a}$
    \begin{align} \label{Eq:prob_coordinate_alternative}
    \mathbb{P}_\theta(j \notin \mathcal{J}_{\tilde{t},a})=\mathbb{P}_\theta(|Y_{\tilde{t}, 2}(j)| < a) &= \mathbb{P}_\theta(|\delta(j) + Y'_{\tilde{t}, 2}(j)| < a) \leq \mathbb{P}_\theta(|Y'_{\tilde{t}, 2}(j)| > |\delta(j)| - a) \nonumber \\
    &\leq \exp\bigl\{1 - \bigl((|\delta(j)|-a)/C_1\bigr)^\alpha\bigr\} \leq \frac{1}{256\log(8/\varepsilon)},
    \end{align}
    where the penultimate inequality follows from~\eqref{Eq:prob_coordinate_null} and the last two inequalities follow from the choice $a \geq C_1\log^{1/\alpha}(700\log(8/\varepsilon))$. Consequently,
    \begin{align} \label{Eq:variance_bernoulli}
    \sum_{j\in\mathcal{H}_{\delta, a}} \mathrm{Var}_\theta\bigl( \delta(j)^2\mathbbm{1}_{ \{ j \in \mathcal{J}_{\tilde{t},a} \} }  \bigr) &\leq \sum_{j\in\mathcal{H}_{\delta, a}} \delta(j)^4 \mathbb{P}_\theta(j \notin \mathcal{J}_{\tilde{t},a}) \leq \frac{\sum_{j=1}^p \delta(j)^4}{256\log(8/\varepsilon)} \leq \frac{\|\delta\|_2^4}{256\log(8/\varepsilon)}.
    \end{align}
    Moreover, when $\rho^2 \geq 64a^2s$, we obtain
    \begin{equation} \label{Eq:signal_captured}
    \sum_{j\in\mathcal{H}_{\delta, a}} \delta(j)^2 \geq \|\delta\|_2^2 - s(2a)^2 \geq \|\delta\|_2^2/2.
    \end{equation}
    We first consider the case $\|\delta\|_2 \geq \sqrt{12\log(8/\varepsilon)}\|\delta\|_\infty$. Then, by combining~\eqref{Eq:prob_coordinate_alternative},~\eqref{Eq:variance_bernoulli},~\eqref{Eq:signal_captured} and Bernstein's inequality, we have
    \begin{align} \label{Eq:concentration_bernoulli}
    &\mathbb{P}_\theta\biggl(\sum_{j\in \mathcal{J}_{\tilde{t}, a} \cap \mathcal{H}_{\delta, a}} \delta(j)^2  < \|\delta\|_2^2/8 \biggr) = \mathbb{P}_\theta\biggl(\sum_{j\in\mathcal{H}_{\delta, a}} \delta(j)^2 \mathbbm{1}_{ \{ j \in \mathcal{J}_{\tilde{t},a} \} } < \|\delta\|_2^2/8 \biggr) \nonumber \\
    &\leq  \mathbb{P}_\theta\biggl(\sum_{j\in\mathcal{H}_{\delta, a}} \delta(j)^2 \bigl(\mathbbm{1}_{ \{ j \in \mathcal{J}_{\tilde{t},a} \} } - \mathbb{P}_\theta(j \in \mathcal{J}_{\tilde{t},a}) \bigr) < -\|\delta\|_2^2/8 \biggr) \nonumber\\
    &\leq \exp\biggl\{ -\frac{\|\delta\|_2^4/64}{2\sum_{j \in \mathcal{H}_{\delta, a}} \mathrm{Var}_\theta\bigl( \delta(j)^2\mathbbm{1}_{ \{ j \in \mathcal{J}_{\tilde{t},a} \} }  \bigr) + \|\delta\|_\infty^2\|\delta\|_2^2/12   } \biggr\} \nonumber \\
    &\leq \max\Biggl\{ \exp\{-\log(8/\varepsilon)\}, \exp\biggl\{ -\frac{\|\delta\|_2^2}{12\|\delta\|_\infty^2   } \biggr\}  \Biggr\} \leq \varepsilon/8.
    \end{align}
    If instead $\|\delta\|_\infty \leq \|\delta\|_2 < \sqrt{12\log(8/\varepsilon)}\|\delta\|_\infty$, we assume that $|\delta(j^*)| = \|\delta\|_\infty$ for some $j^* \in \mathcal{H}_{\delta, a}$. Note that when $\rho^2 \geq 384C_1^2\log^{\frac{\alpha+2}{\alpha}}(8e/\varepsilon)$, we have $|\delta(j^*)| \geq 2C_1\log^{1/\alpha}(8e/\varepsilon)$ and thus
    \begin{align} \label{Eq:concentration_bernoulli_single}
    &\mathbb{P}_\theta\biggl(\sum_{j\in \mathcal{J}_{\tilde{t}, a} \cap \mathcal{H}_{\delta, a}} \delta(j)^2 <  \frac{\|\delta\|_2^2}{12\log(8/\varepsilon)} \biggr)  \leq \mathbb{P}_\theta\biggl(\delta(j^*)^2 \mathbbm{1}_{ \{ j^* \in \mathcal{J}_{\tilde{t},a}\} } < \frac{\|\delta\|_2^2}{12\log(8/\varepsilon)} \biggr) \nonumber \\
    &\leq \mathbb{P}_\theta(|Y_{\tilde{t},2}(j^*)| < a) \leq \exp\{1-(|\delta(j^*)|/(2C_1))^\alpha\}  \leq \varepsilon/8.
    \end{align}
    For the third and final term in~\eqref{Eq:weibull_sparse_alter_ctrl}, we have by Chebyshev's inequality that
    \begin{align} \label{Eq:subweibull_sparse_chebyshev}
        &\mathbb{P}_\theta \biggl( \sum_{j\in \mathcal{J}_{\tilde{t}, a} \cap \mathcal{H}_{\delta, a}} \bigl(Y_{\tilde{t},1}^2(j)-\delta(j)^2-1\bigr) \leq - \frac{\|\delta\|_2^2}{24\log(8/\varepsilon)} \biggr) \nonumber \\
        &\leq \frac{\sum_{j\in\mathcal{H}_{\delta, a}} \mathrm{Var}_\theta \Bigl(\bigl(Y_{\tilde{t},1}^2(j)-\delta(j)^2-1\bigr)\mathbbm{1}_{ \{ |Y_{\tilde{t},2}(j)| \geq a \} } \Bigr)}{\|\delta\|_2^4/(576\log^2(8/\varepsilon))} \leq \frac{\sum_{j\in\mathcal{H}_{\delta, a}} \mathrm{Var}_\theta \bigl(Y_{\tilde{t},1}^2(j)\bigr) }{\|\delta\|_2^4/(576\log^2(8/\varepsilon))} \nonumber \\
        &\leq C_5\log^2(8/\varepsilon)\biggl(\frac{s}{\rho^4} + \frac{1}{\rho^2}\biggr),
    \end{align}
    where $C_5\geq 1$ is a constant depending on $\alpha$ and $K$ and the penultimate inequality follows from a similar argument to~\eqref{Eq:weibull_dense_alt}. Hence, when
    \[
    \rho^2 \geq C_5 \varepsilon^{-1}\max\Bigl\{192(r+2s)\log(8/\varepsilon), 64a^2s, 384C_1^2\log^{\frac{\alpha+2}{\alpha}}(8e/\varepsilon) \Bigr\},
    \]
    we have by combining~\eqref{Eq:weibull_sparse_alter_ctrl},~\eqref{Eq:concentration_binomial_alternative},~\eqref{Eq:concentration_bernoulli},~\eqref{Eq:concentration_bernoulli_single} and~\eqref{Eq:subweibull_sparse_chebyshev} that
    \begin{align*}
    &\mathbb{E}_\theta(1-\phi_{\mathcal{G},\mathrm{sparse}}) \leq \varepsilon/4 + C_5\log^2(8/\varepsilon)\biggl(\frac{s}{\rho^4} + \frac{1}{\rho^2}\biggr) \leq \varepsilon/2.
    \end{align*}
    Finally, We consider the case that the mean change happens at $t_0=1$ instead. Recall that in this case we have $\tilde{t}=1$. \eqref{Eq:weibull_sparse_alter_ctrl} remains true when $\rho^2 \geq 192(r_1+2s)\log(8/\varepsilon)$ if we redefine $\mathcal{J}_{\tilde{t}=1, a} := \{j \in [p]: |Y_1(j)|\geq a\}$. All three terms in~\eqref{Eq:weibull_sparse_alter_ctrl} can be controlled in the same way as when $t_0 \geq 2$ and this completes the proof.

\subsubsection{Proof of Theorem~\ref{thm:finitemoment_upperbound_dense}}
We first prove the result for $\alpha \geq 4$.\\
\noindent \textbf{Null term.} For any $\theta \in \Theta_0(p,n)$, we have $\mathbb{E}_\theta \overline{Z}_{t,g}(j) = 0$ and $\mathrm{Var}_\theta \overline{Z}_{t,g}(j) = G_t/t$ for every $t \in \mathcal{T}$, $g \in [G_t]$ and $j\in [p]$. Furthermore, from the class assumption $\mathbb{E}|E_i(j)|^\alpha \leq K^\alpha$, for all $i \in [n]$ and $j \in [p]$ and Jensen's inequality, we deduce $\mathbb{E} E_i(j)^4 \leq K^4$. We thus obtain, for all $i \leq n/2$ and $j \in [p]$
\begin{equation} \label{eq:4thmoment_C1}
\mathbb{E}_\theta Z_i^4(j) = \mathbb{E}_\theta \biggl[ \frac{X_i(j) - X_{n-i}(j)}{\sqrt{2}}\biggr]^4 = \frac{\mathbb{E}_\theta \bigl[E_i(j) - E_{n-i}(j)\bigr]^4}{4} \leq \frac{K^4+3}{2} =: C_1.
\end{equation}
Then, by Chebyshev's inequality (or, alternatively, Lemma~\ref{lemma:lowmoment-2}) and \Cref{Lemma:bounded2kmoments}(a), with $r_t = C_2 \sqrt{p} G_t$, we have for all $t \in \mathcal{T}$ and $g \in [G_t]$ that
\begin{align} \label{Eq:MoM_robust_null}
\mathbb{P}_\theta\biggl(t\sum_{j=1}^p V_{t,g}(j) > r_t\biggr) &=  \mathbb{P}_\theta\biggl(\sum_{j=1}^p \biggl(\overline{Z}^2_{t,g}(j) - \frac{G_t}{t}\biggr) > \frac{C_2\sqrt{p}G_t}{t}\biggr) \leq \frac{t^2\sum_{j=1}^p\mathbb{E}_\theta \overline{Z}^4_{t,g}(j)}{C^2_2pG_t^2} \nonumber \\
&\leq \frac{3pt^2 C_1 (G_t/t)^2}{C^2_2pG_t^2}  \leq \frac{3C_1}{C_2^2} \leq \varepsilon/36,
\end{align}
where $C_2$ is chosen to satisfies $C_2 \geq \sqrt{108C_1\varepsilon^{-1}}$. We denote
\[
\mathcal{B}_t := \biggl\{g \in [G_t]: t\sum_{j=1}^p V_{t,g}(j) > r_t\biggr\}.
\]
By~\eqref{Eq:MoM_robust_null} and the multiplicative Chernoff bound \cite[e.g.][Corollary 4.9]{mitzenmacher2017probability}, we have for $t \in \mathcal{T}$
\begin{align} \label{Eq:binomial_bound_robust}
&\mathbb{P}_\theta (A_t^{\mathrm{MoM}} > r_t) \leq  \mathbb{P}_\theta(|\mathcal{B}_t| \geq G_t/2) =\mathbb{P}_\theta\Biggl(|\mathcal{B}_t| \geq  \frac{\varepsilon G_t}{36}\biggl(1 + \Bigl( \frac{18}{\varepsilon}-1 \Bigr) \biggr)  \Biggr)  \nonumber \\
&\leq \exp\biggl\{-\frac{\varepsilon G_t}{36} \biggl( \frac{18}{\varepsilon }\log\Bigl(\frac{18}{\varepsilon}\Bigr) - \frac{18}{\varepsilon} + 1\biggr)  \biggr\} \leq \exp\biggl\{ -\frac{G_t}{2} \log\bigl(6/\varepsilon\bigr)\biggr\}.
\end{align}
Thus, by~\eqref{Eq:MoM_robust_null},~\eqref{Eq:binomial_bound_robust}, the choices of $G_t$ and $\Delta$ in~\eqref{Eq:robust_theorem_parameters} and a union bound, we conclude that
\begin{align} \label{Eq:MoM_robust_null_together}
\mathbb{E}_\theta \phi_{\mathcal{P},\mathrm{dense}} &\leq \mathbb{P}_\theta\biggl(\sum_{j=1}^p V_{t=1,1}(j) > r_{t=1}\biggr) + \sum_{t \in \mathcal{T}: \, 2 \leq t \leq \Delta} \mathbb{P}_\theta\bigl(A_t^{\mathrm{MoM}} > r_t \bigr) + \sum_{t \in \mathcal{T}: \,  t > \Delta} \mathbb{P}_\theta\bigl(A_t^{\mathrm{MoM}} > r_t \bigr)  \nonumber \\
&\leq \varepsilon/36 + \sum_{t \in \mathcal{T}: \, 2 \leq t \leq \Delta}  (6/\varepsilon)^{-t/2} + \sum_{t \in \mathcal{T}: \,  t > \Delta} (6/\varepsilon)^{-\Delta/2} \nonumber \\
&\leq \varepsilon/36 + \frac{(6/\varepsilon)^{-1}}{1-(6/\varepsilon)^{-1}} + \log_2(n/2)(6/\varepsilon)^{-\Delta/2} \leq \varepsilon/36 + \varepsilon/5 + \varepsilon/5  < \varepsilon/2,
\end{align}
for all $\theta \in \Theta_0(p,n)$.

\medskip
\noindent \textbf{Alternative term.} We again follow the argument in the first paragraph of the alternative term part of the proof of Theorem~\ref{thm:weibullupperbound_dense}. In particular, recall that there exists a unique $\tilde{t} \in \mathcal{T}$ such that $t_0/2 < \tilde{t} \leq t_0$, where $t_0$ (without loss of generality $t_0 \leq n/2$) is the true mean change location. For all $i \leq n/2$, we denote
\[
Z'_i := Z_i - \frac{\mu_1 - \mu_2}{\sqrt{2}} = \frac{(X_i - \mu_1) - (X_{n+1-i}-\mu_2)}{\sqrt{2}},
\]
and correspondingly $\overline{Z}'_{\tilde{t},g} := \overline{Z}_{\tilde{t},g} - (\mu_1 - \mu_2)/\sqrt{2}$, for $g \in [G_{\tilde{t}}]$. It follows from the null term part of the proof that $\mathbb{E}_\theta \overline{Z}'_{\tilde{t},g}(j) = 0$, $\mathrm{Var}_\theta \overline{Z}'_{\tilde{t},g}(j) = G_{\tilde{t}}/\tilde{t}$ and $\mathbb{E}_\theta (Z'_i(j))^4 \leq C_1$, where $C_1$ is as in~\eqref{eq:4thmoment_C1}. When $\rho^2 \geq 16C_2\sqrt{p}\Delta$, we have
\[
2\tilde{t}\|\mu_1-\mu_2\|^2 \geq \frac{t_0(n-t_0)}{n}\|\mu_1-\mu_2\|^2 \geq \rho^2 \geq 16C_2\sqrt{p}G_{\tilde{t}} = 16r_{\tilde{t}},
\]
since $G_{\tilde{t}} \leq \Delta$. Thus, for all $g \in [G_{\tilde{t}}]$, we have
\begin{align} \label{Eq:MoM_main_alternative}
    &\mathbb{P}_\theta\biggl(\tilde{t}\sum_{j=1}^p V_{\tilde{t},g}(j) \leq r_{\tilde{t}}\biggr) = \mathbb{P}_\theta\Biggl(\sum_{j=1}^p \biggl( \biggl(\overline{Z}'_{\tilde{t},g}(j) + \frac{\mu_1(j) - \mu_2(j)}{\sqrt{2}}\biggr)^2 - \frac{G_{\tilde{t}}}{\tilde{t}}\biggr) \leq \frac{r_{\tilde{t}}}{\tilde{t}} \Biggr)  \nonumber \\
    &= \mathbb{P}_\theta\Biggl(\sum_{j=1}^p \biggl( \bigl(\overline{Z}'_{\tilde{t},g}(j)\bigr)^2 - \frac{G_{\tilde{t}}}{\tilde{t}} + \sqrt{2}\bigl(\mu_1(j) - \mu_2(j)\bigr)\overline{Z}'_{\tilde{t},g}(j) \biggr) \leq \frac{r_{\tilde{t}}}{\tilde{t}}-\frac{\|\mu_1-\mu_2\|_2^2}{2} \Biggr) \nonumber \\
    &\leq \mathbb{P}_\theta\Biggl(\sum_{j=1}^p \biggl( \bigl(\overline{Z}'_{\tilde{t},g}(j)\bigr)^2 - \frac{G_{\tilde{t}}}{\tilde{t}} + \sqrt{2}\bigl(\mu_1(j) - \mu_2(j)\bigr)\overline{Z}'_{\tilde{t},g}(j) \biggr) \leq -\frac{\rho^2}{16\tilde{t}}-\frac{\|\mu_1-\mu_2\|_2^2}{4} \Biggr).
\end{align}
By Chebyshev's inequality and \Cref{Lemma:bounded2kmoments}(a), we obtain
\begin{equation} \label{Eq:robust_fourthmoment}
    \mathbb{P}_\theta\biggl(\sum_{j=1}^p \Bigl( \bigl(\overline{Z}'_{\tilde{t},g}(j)\bigr)^2 - G_{\tilde{t}}/\tilde{t} \Bigr) \leq -\frac{\rho^2}{16\tilde{t}} \biggr) \leq \frac{256(\tilde{t})^2 \sum_{j=1}^p\mathbb{E}_\theta \bigl(\overline{Z}'_{\tilde{t},g}(j)\bigr)^4 }{\rho^4} \leq \frac{768C_1pG^2_{\tilde{t}}}{\rho^4},
\end{equation}
and
\[
    \mathbb{P}_\theta\biggl(\sum_{j=1}^p \sqrt{2}\bigl(\mu_1(j) - \mu_2(j)\bigr)\overline{Z}'_{\tilde{t},g}(j) \leq -\frac{\|\mu_1-\mu_2\|_2^2}{4} \biggr) \leq \frac{32G_{\tilde{t}}\|\mu_1 - \mu_2\|_2^2/\tilde{t}}{\|\mu_1-\mu_2\|_2^4} \leq \frac{64G_{\tilde{t}}}{\rho^2}.
\]
Combining these with~\eqref{Eq:MoM_main_alternative}, as long as 
\[
\rho^2 \geq \max\biggl\{16C_2\sqrt{p}\Delta, 96\sqrt{\frac{2C_4}{\varepsilon}}\sqrt{p}\Delta, \frac{1536\Delta}{\varepsilon}\biggr\},
\]
we are guaranteed
\[
\mathbb{P}_\theta\biggl(\tilde{t}\sum_{j=1}^p V_{\tilde{t},g}(j) \leq r_{\tilde{t}}\biggr) \leq \varepsilon/12.
\]
If $\tilde{t}=1$, then $G_{\tilde{t}} =1$ and we immediately have
\[
\mathbb{E}_\theta(1-\phi_{\mathcal{P},\mathrm{dense}}) \leq \mathbb{P}_\theta \bigl(A_{\tilde{t}}^{\mathrm{MoM}} \leq r_{\tilde{t}}\bigr) = \mathbb{P}_\theta\biggl(\tilde{t}\sum_{j=1}^p V_{\tilde{t},1}(j) \leq r_{\tilde{t}}\biggr) \leq \varepsilon/12.
\]
If $\tilde{t} \geq 2$, then $G_{\tilde{t}} \geq 2$  and we use the same binomial tail bound argument as in~\eqref{Eq:binomial_bound_robust} to conclude that
\[
\mathbb{E}_\theta(1-\phi_{\mathcal{P},\mathrm{dense}}) \leq \mathbb{P}_\theta\bigl(A_{\tilde{t}}^{\mathrm{MoM}} \leq r_{\tilde{t}}\bigr) \leq \exp\biggl\{-\frac{\varepsilon G_{\tilde{t}}}{12} \biggl( \frac{6}{\varepsilon }\log\Bigl(\frac{6}{\varepsilon}\Bigr) - \frac{6}{\varepsilon} + 1\biggr)  \biggr\} 
\leq \Bigl(\frac{2}{\varepsilon}\Bigr)^{-1}.
\]
This completes the proof for $\alpha \geq 4$. We now consider the case $\alpha < 4$. The proof is similar to above and we essentially replace Chebyshev's inequality wherever used by \Cref{lemma:lowmoment-2}. We only highlight the difference for brevity.\\
\noindent \textbf{Null term.} Note that for all $t \in \mathcal{T}$ and $g \in [G_t]$, using \Cref{lemma:lowmoment-2} with $k = \alpha/2 < 2$ and $L = t/G_t$, we have with $r_t = C_2p^{2/\alpha} G_t$ that
\begin{equation}\label{eq:extreme_robust_null}
\mathbb{P}_\theta\biggl(t\sum_{j=1}^p V_{t,g}(j) > r_t\biggr) =  \mathbb{P}_\theta\biggl(\frac{t}{G_t}\sum_{j=1}^p \biggl(\overline{Z}^2_{t,g}(j) - \frac{G_t}{t}\biggr) > C_2p^{2/\alpha}\biggr) \leq \frac{\varepsilon}{36},
\end{equation} 
for $C_2 \geq C_{\alpha/2}(36/\varepsilon)^{2/\alpha}$, where $C_{\alpha/2}>0$ is the constant depending on $\alpha$ and $K$ from \Cref{lemma:lowmoment-2}. By substituting~\eqref{Eq:MoM_robust_null} with~\eqref{eq:extreme_robust_null} and following the rest of the argument in the above proof, we prove that $\mathbb{E}_\theta\phi_{\mathcal{P},\mathrm{dense}} \leq \varepsilon/2$ for all $\theta \in \Theta_0(p,n)$.

\medskip
\noindent \textbf{Alternative term.} For all $g \in [G_{\tilde{t}}]$, again using \Cref{lemma:lowmoment-2} with $k = \alpha/2 < 2$ and $L = \tilde{t}/G_{\tilde{t}}$, we have
\begin{equation} \label{Eq:extreme_robust_alternative}
    \mathbb{P}_\theta\biggl(\sum_{j=1}^p \Bigl( \bigl(\overline{Z}'_{\tilde{t},g}(j)\bigr)^2 - G_{\tilde{t}}/\tilde{t} \Bigr) \leq -\frac{\rho^2}{16\tilde{t}} \biggr)  = \mathbb{P}_\theta\biggl( \frac{\tilde{t}}{G_{\tilde{t}}}\sum_{j=1}^p \Bigl( \bigl(\overline{Z}'_{\tilde{t},g}(j)\bigr)^2 - G_{\tilde{t}}/\tilde{t} \Bigr) \leq -\frac{\rho^2}{16G_{\tilde{t}}} \biggr)  \leq \frac{\varepsilon}{24},
\end{equation}
for $\rho^2 \geq 24^{(2+\alpha)/\alpha}C_{\alpha/2} \varepsilon^{-2/\alpha}p^{2/\alpha}\Delta$, where $C_{\alpha/2}$ is, as above, a constant depending only on $\alpha$ and $K$. By substituting~\eqref{Eq:robust_fourthmoment} with~\eqref{Eq:extreme_robust_alternative} and following the rest of argument in the above proof, we prove that as long as
\[
\rho^2 \geq \max\biggl\{16C_2p^{2/\alpha}\Delta, 24^{(2+\alpha)/\alpha}C_{\alpha/2}\varepsilon^{-2/\alpha}p^{2/\alpha}\Delta, \frac{1536\Delta}{\varepsilon}\biggr\},
\]
we can control $\mathbb{E}_{\theta}(1-\phi_{\mathcal{P},\mathrm{dense}}) \leq \varepsilon/2$.

\subsubsection{Proof of Proposition~\ref{thm:finitemoment_upperbound_sparse}}
\noindent \textbf{Null term.} For any $\theta \in \Theta_0(p,n)$, we have by a union bound that
\begin{align} \label{Eq:sparse_MoMnull_master}
    \mathbb{E}_\theta \phi_{\mathcal{P},\mathrm{sparse}}^{\mathrm{MoM}} \leq \mathbb{P}_\theta(A_{1,a} > r_1) + \sum_{t\in \mathcal{T}\setminus\{1\}} \mathbb{P}_\theta\bigl(A_{t,a}^{\mathrm{MoM}} > r_t\bigr).
\end{align}
We first control the second term. Recall that $\mathcal{J}_{t,a} = \{j \in [p]: |Y_{t,2}(j)| \geq a\}$ for $t \in \mathcal{T}\setminus\{1\}$. For $J \subseteq [p]$, we denote
\[
A_{t,*,J}^{\mathrm{MoM}} := \frac{t}{2}\cdot \mathrm{median} \Biggl\{ \sum_{j\in J} \biggl( \overline{Z}^2_{t,g,1}(j) - \frac{2G_t}{t} \biggr): g \in [G_t]\Biggl\}.
\]
Note that $A_{t,a}^{\mathrm{MoM}} = A_{t,*, \mathcal{J}_{t,a}}^{\mathrm{MoM}}$. Using the same technique as~\eqref{eq:sparse_weibullnull} in the proof of Theorem~\ref{thm:weibullupperbound_sparse}, we have
\begin{align} \label{Eq:sparse_robustnull}
    \sum_{t \in \mathcal{T}\setminus\{1\}} \mathbb{P}_\theta(A_{t,a}^{\mathrm{MoM}} > r_t)  \leq \sum_{t \in \mathcal{T}\setminus\{1\}}\mathbb{P}_\theta (|\mathcal{J}_{t,a}| > s) + \sum_{t \in \mathcal{T}\setminus\{1\}} \sup_{J \subseteq [p]: |J| \leq s} \mathbb{P}_\theta(A_{t,*,J}^{\mathrm{MoM}} > r_t),
\end{align}
where $s$ is the sparsity. From the assumption that $\mathbb{E}|E_i(j)|^\alpha \leq K^\alpha$, for all $i \in [n]$ and $j \in [p]$ and Jensen's inequality, we deduce that
\[
\mathbb{E}_\theta |Z_i(j)|^\alpha  = \frac{\mathbb{E}_\theta \bigl|E_i(j) - E_{n-i}(j)\bigr|^\alpha}{2^{\alpha/2}} \leq \frac{\mathbb{E}_\theta \bigl(|E_i(j)| + |E_{n-i}(j)|\bigr)^\alpha}{2^{\alpha/2}} \leq 2^{\alpha/2}K^\alpha.
\]
Then, by Fuk--Nagaev inequality (\Cref{prop:fuknagaev}), we have
\begin{align} \label{Eq:robust_prob_coordinate_null}
q_{t,a} &= \mathbb{P}_\theta(|Y_{t,2}(j)| \geq a) \leq 2\Biggl( \frac{(\alpha+2)(K^\alpha 2^{\alpha/2}t/2)^{1/\alpha}}{\alpha a\sqrt{t/2}} \Biggr)^\alpha + 2\exp \biggl\{ -\frac{2a^2}{(\alpha+2)^2e^\alpha} \biggr\} \nonumber \\
&\leq \frac{K^\alpha}{(a/3)^\alpha t^{\alpha/2-1}} + \exp \biggl\{1 -\frac{a^2}{2\alpha^2e^\alpha} \biggr\},
\end{align}
where we have used $\alpha \geq 4$ in the last inequality. Similar to~\eqref{Eq:heoffding_bound_result} and~\eqref{Eq:use_Chernoff_Hoeffding_weibull}, by a binomial tail bound, we have
\begin{equation} \label{Eq:chernoff-hoeffding-robust}
    \sum_{t \in \mathcal{T}\setminus\{1\}}\mathbb{P}_\theta (|\mathcal{J}_{t,a}| > s) \leq \sum_{t \in \mathcal{T}\setminus\{1\}} \biggl( \frac{epq_{t,a}}{s} \biggr)^s \leq \biggl(\frac{2epK^\alpha}{s(a/3)^\alpha}\biggr)^s + \log_2(n)\biggl(\frac{2e^2p}{s}\biggr)^s \exp\biggl\{-\frac{sa^2}{2\alpha^2e^\alpha}\biggr\}.
\end{equation}
Thus, as long as we choose $a$ to satisfy
\begin{equation} \label{Eq:choice_a_finitemoment}
a \geq C_1\bigl( \varepsilon^{-1}(p/s)^{1/\alpha} + s^{-1/2} \log^{1/2}(\varepsilon^{-1}\log(8n))\bigr)
\end{equation} 
for some large enough $C_1 > 0$, depending only on $\alpha$ and $K$, we are guaranteed that
\[
\sum_{t \in\mathcal{T}\backslash \{1\}}\mathbb{P}_\theta (|\mathcal{J}_{t,a}| > s) \leq \frac{\varepsilon}{8}.
\]
 Furthermore, By setting $r_t = C_2\sqrt{s}G_t$ with a sufficently large $C_2 > 0$ and $\Delta = 2^{4 + \lceil\log_2 \log \log(8n)\rceil}$ and by following the argument from~\eqref{Eq:MoM_robust_null} to~\eqref{Eq:MoM_robust_null_together}, we can upper bound the second term in~\eqref{Eq:sparse_robustnull} at $\varepsilon/8$ as well. Finally, to control the first term in~\eqref{Eq:sparse_MoMnull_master}, by Proposition~\ref{prop:2sample}(b), whenever $r_1 \geq C'_1s(p/s)^{2/\alpha}$ for sufficiently large $C'_1 > 0$, depending on $\alpha, K$ and $\varepsilon$, we have $\mathbb{P}_\theta(A_{1,a} > r_1) \leq \varepsilon/4$. Hence, we conclude that $\mathbb{E}_\theta \phi_{\mathcal{P},\mathrm{sparse}}^{\mathrm{MoM}} \leq \varepsilon/2$ for all $\theta \in \Theta_0(p,n)$.

\medskip
\noindent \textbf{Alternative term.} Recall the definitions of $\delta$, $\mathcal{S}_{\delta}$ and $\mathcal{H}_{\delta,a}$ from the alternative term part of the proof of \Cref{thm:weibullupperbound_sparse}:
\[
\delta = \frac{\sqrt{\tilde{t}}}{2}(\mu_1-\mu_2), \quad \mathcal{S}_{\delta} = \{j \in [p]: \delta(j) \neq 0\}, \quad \mathcal{H}_{\delta,a} = \{j \in [p]: |\delta(j)| \geq 2a\},
\]
and the notation $\overline{Z}'_{\tilde{t},g} := \overline{Z}_{\tilde{t},g} - (\mu_1 - \mu_2)/\sqrt{2}$, for $g \in [G_{\tilde{t}}]$ introduced at the start of the alternative term part of the proof of Theorem~\ref{thm:finitemoment_upperbound_dense}. We first consider the case $t_0 \geq 2$, which implies $\tilde{t} \geq 2$. For $J \subseteq [p]$, we further denote
\[
A_{t,*,J}^{\mathrm{MoM}'} = \frac{t}{2} \cdot \mathrm{median}\Biggl\{\sum_{j\in J} \biggl(\overline{Z}^2_{t,g,1}(j) - \frac{(\mu_1(j)-\mu_2(j))^2}{2}-\frac{2G_{t}}{t}\biggr): g \in [G_t] \Biggr\}.
\]
Observe that for $g \in [G_{\tilde{t}}]$
\begin{align*}
&\sum_{j\in\mathcal{J}_{\tilde{t}, a} \cap \mathcal{H}_{\delta,a}} \biggl(\overline{Z}^2_{\tilde{t},g,1}(j) - \frac{(\mu_1(j)-\mu_2(j))^2}{2}-\frac{2G_{\tilde{t}}}{\tilde{t}}\biggr) - \sum_{j\in\mathcal{J}_{\tilde{t}, a}} V_{\tilde{t},g,a} (j) \\
&= -\frac{\sum_{j\in\mathcal{J}_{\tilde{t}, a} \cap \mathcal{H}_{\delta,a}} (\mu_1(j)-\mu_2(j))^2}{2} + \sum_{j\in\mathcal{J}_{\tilde{t}, a} \cap \mathcal{H}_{\delta,a}} V_{\tilde{t},g,a}(j)- \sum_{j\in\mathcal{J}_{\tilde{t}, a}} V_{\tilde{t},g,a}(j) \\
&\leq -\frac{\sum_{j\in\mathcal{J}_{\tilde{t}, a} \cap \mathcal{H}_{\delta,a}} (\mu_1(j)-\mu_2(j))^2}{2} + \frac{2G_{\tilde{t}}|\mathcal{J}_{\tilde{t}, a}|}{\tilde{t}}.
\end{align*}
Then, on the event $\{|\mathcal{J}_{\tilde{t},a}| \leq 2s\} \cap \Bigl\{\sum_{j\in\mathcal{J}_{\tilde{t}, a} \cap \mathcal{H}_{\delta,a}} \delta(j)^2 \geq \frac{\|\delta\|_2^2}{12\log(8/\varepsilon)}\Bigr\}$, by \Cref{Lemma:mediandifference}, we deduce 
\[
A_{\tilde{t},*, \mathcal{J}_{\tilde{t},a} \cap \mathcal{H}_{\delta,a}}^{\mathrm{MoM}'} \leq A_{\tilde{t},*, \mathcal{J}_{\tilde{t},a}}^{\mathrm{MoM}} - \frac{\|\delta\|_2^2}{12\log(8/\varepsilon)} + 2sG_{\tilde{t}},
\]
and consequently, when $\rho^2 \geq 192C_2s\Delta\log(8/\varepsilon)$, we have, with $C_2 \geq 2$, that
\[
\frac{\|\delta\|_2^2}{24\log(8/\varepsilon)} \geq C_2s\Delta \geq \max_{t \in \mathcal{T}\setminus\{1\}} \{r_t + 2sG_t\},
\]
where the first inequality is due to $\|\delta\|_2^2 \geq \rho^2/8$ and the second inequality is due to the choice of $G_t = (t \wedge \Delta)/2$.
Hence
\begin{align} \label{Eq:robust_sparse_alternative_all}
&\mathbb{E}_\theta(1-\phi_{\mathcal{P},\mathrm{sparse}}^{\mathrm{MoM}}) \leq \mathbb{P}_\theta\bigl(A_{\tilde{t},a}^{\mathrm{MoM}} \leq r_{\tilde{t}}\bigr) = \mathbb{P}_\theta\bigl(A_{\tilde{t},*, \mathcal{J}_{\tilde{t},a}}^{\mathrm{MoM}} \leq r_{\tilde{t}}\bigr) \nonumber \\
&\leq \mathbb{P}_\theta \bigl(|\mathcal{J}_{\tilde{t},a}| > 2s\bigr) + \mathbb{P}_\theta \biggl( \sum_{j\in\mathcal{J}_{\tilde{t}, a} \cap \mathcal{H}_{\delta,a}} \delta(j)^2 <\frac{\|\delta\|_2^2}{12\log(8/\varepsilon)} \biggr) + \mathbb{P}_\theta \biggl( A^{\mathrm{MoM}'}_{\tilde{t},*, \mathcal{J}_{\tilde{t},a} \cap \mathcal{H}_{\delta,a}} \leq  - \frac{\|\delta\|_2^2}{24\log(8/\varepsilon)} \biggr).
\end{align}
We control the three terms respectively. The arguments below mirror those made in the proof of \Cref{thm:weibullupperbound_sparse} between~\eqref{Eq:concentration_binomial_alternative} and~\eqref{Eq:subweibull_sparse_chebyshev} and we will omit details in places where the same reasoning is used in the last proof. First, it remains true that
\begin{equation*} \label{Eq:robust_sparse_alternative_term1}
    \mathbb{P}_\theta \bigl(|\mathcal{J}_{\tilde{t},a}| > 2s\bigr) \leq \varepsilon/8.
\end{equation*}
By~\eqref{Eq:robust_prob_coordinate_null} and the choice $a \geq \{3K(512\log(8/\varepsilon))^{1/\alpha}\} \vee \{2\alpha e^{\alpha/2}\log^{1/2}(700\log(8/\varepsilon))\}$, we have for all $j \in \mathcal{H}_{\delta, a}$ that
\begin{align*}
    \mathbb{P}_\theta(j \notin \mathcal{J}_{\tilde{t},a}) = \mathbb{P}_\theta(|Y_{\tilde{t}, 2}(j)| < a) &\leq \frac{K^\alpha}{\bigl((|\delta(j)|-a)/3\bigr)^\alpha} + \exp \biggl\{1 -\frac{(|\delta(j)|-a)^2}{2\alpha^2e^\alpha} \biggr\}\leq \frac{1}{256\log(8/\varepsilon)},
\end{align*}
and thus again 
\begin{equation*}
   \sum_{j\in\mathcal{H}_{\delta, a}} \mathrm{Var}_\theta\bigl( \delta(j)^2\mathbbm{1}_{ \{ j \in \mathcal{J}_{\tilde{t},a} \} }  \bigr) \leq \frac{\|\delta\|_2^4}{256\log(8/\varepsilon)}.
\end{equation*}
At this point, we consider
\begin{align*}
\rho^2 \geq C_3\max\Bigl\{192C_2s\Delta\log^2(8/\varepsilon), 64a^2s, 3456K^2(16/\varepsilon)^{2/\alpha}\log(8/\varepsilon),768\alpha^2e^\alpha\log^2(16e/\varepsilon)\Bigr\},
\end{align*}
with some $C_3 \geq 1$. Then, by repeating the argument in~\eqref{Eq:signal_captured},~\eqref{Eq:concentration_bernoulli} and~\eqref{Eq:concentration_bernoulli_single}, we obtain
\[
\mathbb{P}_\theta \biggl( \sum_{j\in\mathcal{J}_{\tilde{t}, a} \cap \mathcal{H}_{\delta,a}} \delta(j)^2 <\frac{\|\delta\|_2^2}{12\log(8/\varepsilon)} \biggr) \leq \varepsilon/8.
\]
We now bound the third and final term in~\eqref{Eq:robust_sparse_alternative_all}. By Chebyshev's inequality, we deduce that for $g \in [G_t]$
\begin{align*}
    &\mathbb{P}\Biggl( \frac{\tilde{t}}{2} \sum_{j\in\mathcal{J}_{\tilde{t}, a} \cap \mathcal{H}_{\delta,a}} \biggl(\overline{Z}^2_{\tilde{t},g,1}(j) - \frac{(\mu_1(j)-\mu_2(j))^2}{2}-\frac{2G_{\tilde{t}}}{\tilde{t}} \biggr) \leq - \frac{\|\delta\|_2^2}{24\log(8/\varepsilon)}  \Biggr)  \\
    &\leq \frac{\sum_{j\in\mathcal{H}_{\delta, a}} \mathrm{Var}_\theta \Bigl(\overline{Z}_{\tilde{t},g,1}^2(j)\Bigr) }{\|\delta\|_2^4/(144\tilde{t}^2\log^2(8/\varepsilon))} \leq \frac{\sum_{j\in\mathcal{H}_{\delta, a}} 2\tilde{t}^2\mathrm{Var}_\theta \Bigl(\bigl(\overline{Z}'_{\tilde{t},g,1}(j)\bigr)^2\Bigr) + \sum_{j\in\mathcal{H}_{\delta, a}} 16\tilde{t}\delta(j)^2 \mathrm{Var}_\theta \Bigl(\overline{Z}'_{\tilde{t},g,1}(j)\Bigr)}{\|\delta\|_2^4/(144\log^2(8/\varepsilon))} \\
    &\leq C_4\log^2(8/\varepsilon) \biggl(\frac{s\Delta^2}{\rho^4} + \frac{\Delta}{\rho^2} \biggr) \leq \varepsilon/48,
\end{align*}
when $C_3 \geq 1$ is sufficiently large. The third inequality above follows from~\eqref{Eq:robust_fourthmoment} and $C_4$ is a constant depending only on $\alpha$ and $K$. If $\tilde{t} = 2$, then $G_{\tilde{t}} =1$ and we immediately have
\[
\mathbb{P}_\theta \biggl( A^{\mathrm{MoM}'}_{\tilde{t},*, \mathcal{J}_{\tilde{t},a} \cap \mathcal{H}_{\delta,a}} \leq  - \frac{\|\delta\|_2^2}{24\log(8/\varepsilon)} \biggr) \leq \varepsilon/48.
\]
If $\tilde{t} > 2$, then $G_{\tilde{t}} \geq 2$ and we again use the binomial tail bound argument as in~\eqref{Eq:binomial_bound_robust} to obtain
\[
\mathbb{P}_\theta \biggl( A^{\mathrm{MoM}'}_{\tilde{t},*, \mathcal{J}_{\tilde{t},a} \cap \mathcal{H}_{\delta,a}} \leq  - \frac{\|\delta\|_2^2}{24\log(8/\varepsilon)} \biggr) \leq \exp\biggl\{-\frac{\varepsilon G_{\tilde{t}}}{48} \biggl( \frac{24}{\varepsilon }\log\Bigl(\frac{24}{\varepsilon}\Bigr) - \frac{24}{\varepsilon} + 1\biggr)  \biggr\} \leq \frac{\varepsilon}{8}.
\]
By~\eqref{Eq:robust_sparse_alternative_all}, we conclude $\mathbb{E}_\theta(1-\phi_{\mathcal{P},\mathrm{sparse}}^{\mathrm{MoM}}) \leq \varepsilon/2$. Finally, for the case that the mean change happens at $t_0=1$ instead, similar to the last paragraph of the proof of Theorem~\ref{thm:weibullupperbound_sparse}, we can still control the three terms in~\eqref{Eq:robust_sparse_alternative_all} in the same way respectively when we redefine $\mathcal{J}_{\tilde{t}=1, a} := \{j \in [p]: |Y_1(j)|\geq a\}$ instead.

\subsubsection{Proof of Theorem~\ref{thm:finitemoment_upperbound_sparse_improve}}\label{subsubsection:proof_rsm}
We actually prove a more general result. Any mean estimator that satisfies the following condition can be used in place of $\hat{\mu}_{n,s,\eta}^{\mathrm{RSM}}(\cdot) = \hat{\mu}_{n,s}^{\mathrm{RSM}}(\cdot; \eta)$ introduced in Section~\ref{subSec:robust_sparse} while \Cref{thm:finitemoment_upperbound_sparse_improve} still holds. 

\begin{condition} \label{Con:RSM}
Assume $\alpha \geq 4$. Let $W_1, \dotsc, W_n$ be independent random vectors in $\mathbb{R}^p$, each with mean $\mu_W$ and covariance matrix $I_p$. Assume $\|\mu_W\|_0 \leq s$ and $\mathbb{E}|W_i(j)-\mu_W(j)|^\alpha \leq (\sqrt{2}K)^\alpha$ for $i \in [n]$ and $j \in [p]$. Then there exist constants $C_1, C_2 \geq 1$, depending only on $\alpha$ and $K$ such that for any given $0 < \eta < 1$, when $n \geq C_1\bigl(s\log(ep/s) + \log(1/\eta)\bigr)$, then with probability at least $1-\eta$, we have
\[
\bigl\|\hat{\mu}_{n,s}^{\mathrm{RSM}}(W_1, \dotsc, W_n; \eta) - \mu_W\bigr\|_2 \leq \sqrt{C_2}\biggl(\sqrt{\frac{s\log(ep/s)}{n}} + \sqrt{\frac{\log(1/\eta)}{n}}\biggr).
\]
\end{condition}

\noindent In particular, the robust sparse mean estimator that we use from \citet{prasad2019unified} satisfies the condition above as shown in Corollary 11\footnote{Note that their result is under the assumption that for each vector $v$ with $\|v\|_2 = 1$, $\mathbb{E}(v^\top(W-\mu_W))^\alpha \leq C [\mathbb{E}(v^\top(W-\mu_W))^2]^{\alpha/2}$ for some absolute constant $C$, which is certainly satisfied by our assumption $\mathbb{E}|W(j) - \mu_W(j)|^\alpha \leq (\sqrt{2}K)^\alpha$ for $j \in [p]$ in \Cref{Con:RSM} with $C = (\sqrt{2}K)^\alpha$.(Rio09-(1.2))} therein.

In the rest of the proof, we denote $\tilde{\mathcal{T}}_1 := \{t \in \mathcal{T}: t \leq \tilde{\Delta}_1\}$, $\tilde{\mathcal{T}}_2 := \{t \in \mathcal{T}: \tilde{\Delta}_1 < t \leq \tilde{\Delta}_2\}$ and $\tilde{\mathcal{T}}_3 := \{t \in \mathcal{T}: t > \tilde{\Delta}_2\}$ and recall that $\mathcal{J}_{t,a} = \{j \in [p]: |Y_{t,2}(j)| \geq a\}$ for $t \in \mathcal{T}\setminus\{1\}$.

\medskip

\noindent \textbf{Null term.} For $\theta \in \Theta_0(p,n)$, we have
\begin{align} \label{Eq:robust_improve_null_main}
\mathbb{E}_\theta\phi^{\mathrm{RSM}}_{\mathcal{P},\mathrm{sparse}} &= \mathbb{P}_\theta(A_{1,a} > \tilde{r}_1) 
 + \sum_{t \in \tilde{\mathcal{T}}_1\backslash\{1\}} \mathbb{P}_\theta(A_{t,a} > \tilde{r}_t) \nonumber\\
 &\quad + \sum_{t \in \tilde{\mathcal{T}}_2} \mathbb{P}_\theta(A^{\mathrm{RSM}}_{t} > r^{\mathrm{RSM}}_t) + \sum_{t \in \tilde{\mathcal{T}}_3} \mathbb{P}_\theta(A^{\mathrm{RSM}}_{t} > r^{\mathrm{RSM}}_t).
\end{align}
For the first term, similar to the proof of Theorem~\ref{thm:weibullupperbound_sparse}, by Proposition~\ref{prop:2sample}(b), when $\tilde{r}_1 \geq C'_4 s(p/s)^{2/\alpha}$, for some large enough $C'_4 > 0$, depending only on $\alpha$, $K$ and $\varepsilon$, we have $\mathbb{P}_\theta(A_{1,a} > \tilde{r}_1) \leq \varepsilon/8$.
To control the second term in~\eqref{Eq:robust_improve_null_main}, we closely follow the arguments in the null term part of the proof of \Cref{thm:weibullupperbound_sparse} and \Cref{thm:finitemoment_upperbound_sparse}. By~\eqref{eq:sparse_weibullnull}, we have
\begin{equation} \label{Eq:robust_improve_null_term1}
\sum_{t \in \tilde{\mathcal{T}}_1\backslash\{1\}} \mathbb{P}_\theta(A_{t,a} > \tilde{r}_t)  \leq \sum_{t \in \tilde{\mathcal{T}}_1\backslash\{1\}}\mathbb{P}_\theta (|\mathcal{J}_{t,a}| > s) + \sum_{t \in \tilde{\mathcal{T}}_1\backslash\{1\}} \sup_{J \subseteq [p]: |J| \leq s} \mathbb{P}_\theta\Biggl(\sum_{j\in J} \bigl(Y_{t,1}^2(j)-1\bigr) > \tilde{r}_t\Biggr).
\end{equation}
For the first term on the right hand side, by~\eqref{Eq:chernoff-hoeffding-robust}, we obtain
\[
    \sum_{t \in \tilde{\mathcal{T}}_1\backslash\{1\}}\mathbb{P}_\theta (|\mathcal{J}_{t,a}| > s)  \leq \biggl(\frac{2epK}{s(a/3)^\alpha}\biggr)^s + \log_2(\tilde{\Delta}_1)\biggl(\frac{2e^2p}{s}\biggr)^s \exp\biggl\{-\frac{sa^2}{2\alpha^2e^\alpha}\biggr\}.
\]
The choice of $a$ in~\eqref{Eq:robust_theorem_parameters_sparse_rsm} with a large enough constant $C_3 > 0$ guarantees that
$\sum_{t \in \tilde{\mathcal{T}}_1\backslash\{1\}}\mathbb{P}_\theta (|\mathcal{J}_{t,a}| > s) \leq \varepsilon/16$. For the second term, we fix $J \subseteq [p]$ with $|J| \leq s$. By the same technique as in~\eqref{Eq:MoM_robust_null}, we obtain
\begin{align*}
\mathbb{P}_\theta\Biggl(\sum_{j\in J} \bigl(Y_{t,1}^2(j)-1\bigr) > \tilde{r}_t\Biggr) \leq 
\frac{\varepsilon}{16\log_2(\tilde{\Delta}_1)},
\end{align*}
when $\tilde{r}_t = C_4\sqrt{s\log \tilde{\Delta}_1 }$, for some large enough $C_4 > 0$, depending only on $\alpha$, $K$ and $\varepsilon$. We thus deduce that $\sum_{t \in \tilde{\mathcal{T}}_1\backslash\{1\}} \mathbb{P}_\theta(A_{t,a} > \tilde{r}_t) \leq \varepsilon/8$.

Now, we control the third and fourth terms in~\eqref{Eq:robust_improve_null_main}. For $t \in \tilde{\mathcal{T}}_2 \cup \tilde{\mathcal{T}}_3$, we observe that
\[
C_1\bigl(s\log(ep/s) + \log(1/\eta_t)\bigr) = \min(t, \tilde{\Delta}_2) \leq t.
\]
Since $Z_1, \dotsc, Z_t$ are independent and identically distributed random vectors with mean $0$ and covariance matrix $I_p$ and satisfy $\mathbb{E}|Z_i(j)|^\alpha \leq 2^{\alpha/2}K^\alpha$ for $i \in [t], j \in [p]$ under the null, by \Cref{Con:RSM}, we obtain
\[
\mathbb{P}_\theta(A^{\mathrm{RSM}}_{t} > r^{\mathrm{RSM}}_t) = \mathbb{P}_\theta\Bigl(t\bigl\|\hat{\mu}_{t,s,\eta_t}^{\mathrm{RSM}}\bigr\|^2_2 > 2C_2\bigl(s\log(ep/s) + \log(1/\eta_t)\bigr)\Bigr) \leq \eta_t,
\]
and therefore,
\begin{align} \label{Eq:robust_improve_null_term23}
\sum_{t \in \tilde{\mathcal{T}}_2} \mathbb{P}_\theta(A^{\mathrm{RSM}}_{t} > r^{\mathrm{RSM}}_t) + \sum_{t \in \tilde{\mathcal{T}}_3} \mathbb{P}_\theta(A^{\mathrm{RSM}}_{t} > r^{\mathrm{RSM}}_t) &\leq \sum_{t \in \tilde{\mathcal{T}}_2} \exp\biggl\{s\log(ep/s) - \frac{t}{C_1}\biggr\} + \sum_{t \in \tilde{\mathcal{T}}_3} \frac{\varepsilon}{16\log 2n} \nonumber \\
&\leq 2\exp\biggl\{s\log(ep/s) - \frac{\tilde{\Delta}_1}{C_1}\biggr\} + \frac{\varepsilon\log_2(n/2)}{16\log 2n} < \varepsilon/4,
\end{align}
where we use \Cref{lemma:exp_decay_sum} in the second inequality. 
Hence, we conclude that $\mathbb{E}_\theta\phi^{\mathrm{RSM}}_{\mathcal{P},\mathrm{sparse}} \leq \varepsilon/2$ for all $\theta \in \Theta_0(p,n)$.

\medskip
\noindent \textbf{Alternative term.} As in all previous proofs of alternative term, we consider the unique $\tilde{t} \in \mathcal{T}$, such that $t_0/2 \leq \tilde{t} \leq t_0$, where $t_0 (\leq n/2)$ is the true change point location. When $t_0 = 1$, we simply use the final paragraph of the proof of \Cref{thm:finitemoment_upperbound_sparse}. When $t_0 \geq 2$, we consider separately the two cases $\tilde{t} \in \tilde{\mathcal{T}}_1\backslash\{1\}$ and $\tilde{t} \in \tilde{\mathcal{T}}_2 \cup \tilde{\mathcal{T}}_3$. When $\tilde{t} \in \tilde{\mathcal{T}}_1\backslash\{1\}$, the arguments are again almost the same as those used in the alternative term part of the proof of \Cref{thm:finitemoment_upperbound_sparse}. We thus omit the details and directly state the conclusion: as long as
\[
\rho^2 \geq C_6 \max \bigl\{(\tilde{r} \mathbbm{1}_{\{t \neq 1\}}+2s)\log^2(8/\varepsilon), a^2s, (1/\varepsilon)^{2/\alpha}\log(8/\varepsilon) \bigr\},
\]
for some large enough $C_6 > 0$, depending only on $\alpha$ and $K$, we have $\mathbb{E}_\theta(1-\phi^{\mathrm{RSM}}_{\mathcal{P},\mathrm{sparse}}) \leq \varepsilon/2$. Note that if $\rho^2 \geq C_5 v_{\mathcal{P},\mathrm{sparse}}^{\mathrm{U}}$, for some large enough $C_5 > 0$, depending only on $\alpha$, $K$ and $\varepsilon$, then the above condition is satisfied.

If $\tilde{t} \in \tilde{\mathcal{T}}_2 \cup \tilde{\mathcal{T}}_3$ instead, then $Z_1, \dotsc, Z_t$ are independent and identically distributed random vectors with mean $(\mu_1-\mu_2)/\sqrt{2}$ and covariance matrix $I_p$ and satisfy $\mathbb{E}\bigl|Z_i(j) - \frac{\mu_1(j)-\mu_2(j)}{\sqrt{2}}\bigr|^\alpha \leq 2^{\alpha/2}K$ for $i \in [t], j \in [p]$. Recall that $\tilde{t}\|\mu_1-\mu_2\|^2_2 \geq \rho^2/2$. Hence, when $\rho^2 \geq 24C_2\bigl(s\log(ep/s)+\log(16\log(2n)/\varepsilon)\bigr)$, we have by \Cref{Con:RSM} that
\begin{align*}
&\mathbb{E}_\theta(1-\phi_{\mathcal{P},\mathrm{sparse}}^{\mathrm{RSM}}) = \mathbb{P}_\theta(A^{\mathrm{RSM}}_{\tilde{t},a} \leq r^{\mathrm{RSM}}_{\tilde{t}}) = \mathbb{P}_\theta\Bigl(\tilde{t}\bigl\|\hat{\mu}_{\tilde{t},s,\eta_{\tilde{t}}}^{\mathrm{RSM}} \bigr\|^2_2 \leq 2C_2\bigl(s\log(ep/s) + \log(1/\eta_{\tilde{t}})\bigr)\Bigr) \\
&\leq \mathbb{P}_\theta \biggl(  \sqrt{\tilde{t}} \biggl|\Bigl\| \frac{\mu_1-\mu_2}{\sqrt{2}} \Bigr\|_2 -  \Bigl\|\hat{\mu}_{\tilde{t},s,\eta_{\tilde{t}}}^{\mathrm{RSM}} - \frac{\mu_1-\mu_2}{\sqrt{2}} \Bigr\|_2  \biggr| \leq \sqrt{2C_2}\bigl(\sqrt{s\log(ep/s)} + \sqrt{\log(1/\eta_{\tilde{t}})}\bigr)\biggr) \\
&\leq \mathbb{P}_\theta \biggl( \sqrt{\tilde{t}} \Bigl\|\hat{\mu}_{\tilde{t},s,\eta_{\tilde{t}}}^{\mathrm{RSM}} - \frac{\mu_1-\mu_2}{\sqrt{2}} \Bigr\|_2 > \sqrt{C_2}\bigl(\sqrt{s\log(ep/s)} + \sqrt{\log(1/\eta_{\tilde{t}})}\bigr) \biggr) \leq \eta_{\tilde{t}} \\
&\leq \exp\biggl\{s\log(ep/s) - \frac{\min(t, \tilde{\Delta}_2)}{C_1}\biggr\} \leq \frac{\varepsilon}{16},
\end{align*}
as desired.

\subsubsection{Proof of Corollary~\ref{thm:finitemoment_combinerate}}
We first consider the statistical property of $\phi_{\mathcal{P},\mathrm{sparse}}$. By comparing the two rates
$v^{\mathrm{U, MoM}}_{\mathcal{P},\mathrm{sparse}}$ and $v^{\mathrm{U}}_{\mathcal{P},\mathrm{sparse}}$, we note that the improvement offered by $\phi_{\mathcal{P},\mathrm{sparse}}^{\mathrm{RSM}}$ over $\phi_{\mathcal{P},\mathrm{sparse}}^{\mathrm{MoM}}$ only exists when
\begin{equation*}
    (p/s)^{2/\alpha} < \log\log(8n),  
\end{equation*}
since otherwise $v^{\mathrm{U, MoM}}_{\mathcal{P},\mathrm{sparse}} = v^{\mathrm{U}}_{\mathcal{P},\mathrm{sparse}}$. Combining this with the fact that we are in the sparse regime $s < p^{(\alpha-4)/(2\alpha-4)}$, we deduce that $p < \log^{\alpha-2}(\log(8n))$. The desired result is then an immediate consequence of \Cref{thm:finitemoment_upperbound_sparse_improve} and \Cref{thm:finitemoment_upperbound_sparse}.

Now onto the computational complexity claim. For each $t \in \mathcal{T}$, computing the statistics $A^{\mathrm{MoM}}_{t,a}$ and $A_{t,a}$ in $\phi_{\mathcal{P},\mathrm{sparse}}^{\mathrm{MoM}}$ and $\phi_{\mathcal{P},\mathrm{sparse}}^{\mathrm{RSM}}$ take time polynomial in $n$ and $p$ since they only involve performing basic operations and finding the median of $G_t \leq 8\log\log(8n) $ quantities. The computationally demanding part lies in computing $A^{\mathrm{RSM}}_{t}$, or equivalently the robust sparse mean estimator $\hat{\mu}^{\mathrm{RSM}}_{t,s,\eta_t}$. Note that we are using this only when $p < \log^{\alpha-2}(\log(8n))$. For each fixed $t$, we claim that the computation/approximation of $\hat{\mu}^{\mathrm{RSM}}_{t,s,\eta_t}$ can be performed in time that is polynomial in $n$. We now show this by arguing that each component below has time complexity that is polynomial in $n$. In the rest of the proof, we omit the subscripts and adopt the notation $\hat{\mu}^{\mathrm{RSM}}$ for clarity.
\begin{enumerate}
    \item Each evaluation of the function $\mathrm{1DRobust}(\cdot)$ \citep[cf.][Algorithm 2]{prasad2019unified} of $t$ data point requires time of order $t\log t \leq n\log n$ (in order to find the shortest interval).
    \item The total number of projection $|\mathcal{N}^{1/2}_{2s}(\mathcal{S}^{p-1})|$ can be bounded by $|\mathcal{N}^{1/2}_{2s}(\mathcal{S}^{p-1})| \leq (6ep/s)^s \leq (6ep)^p \leq \exp(6ep^2) \leq \exp(C_\alpha\log(n)) = n^{C_{\alpha}}$ for some constant $C_{\alpha} >0$, depending only on $\alpha$. Denote
    \[
    g(\mu) := \max_{u \in \mathcal{N}^{1/2}_{2s}(\mathcal{S}^{p-1})} |u^{\top}\mu - \mathrm{1DRobust}(\{u^{\top}Z_i\}_{i=1}^t,\eta_t/ (6ep/s)^s)|.
    \]
    Thus for a fixed $\mu \in \mathbb{R}^p$, the computational complexity of evaluating $g(\mu)$ is polynomial in $n$.
    \item The optimisation problem defining $\hat{\mu}^{\mathrm{RSM}}$ can be written as
    \[
    \min_{\mu \in \mathcal{L}_s} g(\mu).
    \]
    We solve this by first considering each possible $s$-sparsity coordinate pattern individually before working out the minimum among these $\binom{p}{s} \leq n^{C_{\alpha}}$ minima.
    \item Fix $\mathcal{U} \subseteq \mathbb{R}^p$ with $|\mathcal{U}|=s$. We solve the optimisation problem 
    \[
    \min_{\mu \in \mathbb{R}^p: \mu(j)=0 \, \forall j\in \mathcal{U}^c} g(\mu)
    \]
    by subgradient descent. Denote the optimal value to be $g_{*,\mathcal{U}}$ and the $k$-th iterate to be $\mu_{\mathcal{U}}^{(k)}$. Note that $g(\mu)$ is $1$-Lipschitz and $\partial g(\mu) \subseteq \bigl\{\pm u: u \in \mathcal{N}^{1/2}_{2s}(\mathcal{S}^{p-1})\bigr\}$. Standard result on the convergence of subgradient descent \citep[e.g.][Theorem 3.2.2]{nesterov2003introductory} shows that $\bigl(\min_{k \in [K]} g\bigl(\mu_{\mathcal{U}}^{(k)}\bigr)\bigr) - g_{*,\mathcal{U}} \leq \upsilon$ in $K \asymp 1/\upsilon^2$ steps, where we choose $\upsilon = \sqrt{s\log(ep/s)t^{-1}}$. The computational complexity is again at most polynomial in $n$. Denote $\tilde{\mu}^{\mathrm{RSM}}_{\mathcal{U}}$ to be the update that attains the best objective value in $K$ iterations.
\end{enumerate} 
Write
\[
\tilde{\mu}^{\mathrm{RSM}} := \argmin_{\mu \in \bigl\{\tilde{\mu}^{\mathrm{RSM}}_{\mathcal{U}}: \,|\mathcal{U}|=s\bigr\}} g(\mu),
\]
as our final estimator (an approximation of $\hat{\mu}^{\mathrm{RSM}}$). We have now shown that $\tilde{\mu}^{\mathrm{RSM}}$ can be obtained in time that is polynomial in $n$. Finally, we prove that $\tilde{\mu}^{\mathrm{RSM}}$ still satisfies Condition~\ref{Con:RSM}. Indeed, following the proof of Lemma~4 and Corollary~12 in \citet{prasad2019unified}, we have
\begin{align*}
    \|\tilde{\mu}^{\mathrm{RSM}} - \mu_Z\|_2 &\leq \|\tilde{\mu}^{\mathrm{RSM}} - \hat{\mu}^{\mathrm{RSM}}\|_2 + \|\hat{\mu}^{\mathrm{RSM}} - \mu_Z\|_2 \leq g\bigl(\tilde{\mu}^{\mathrm{RSM}}\bigr) + g\bigl(\hat{\mu}^{\mathrm{RSM}} \bigr) + g\bigl(\hat{\mu}^{\mathrm{RSM}} \bigr) + g(\mu_Z) \\
    &\leq g\bigl(\hat{\mu}^{\mathrm{RSM}} \bigr) + \upsilon + 2g\bigl(\hat{\mu}^{\mathrm{RSM}} \bigr) + g(\mu_Z) \leq \upsilon + 4g(\mu_Z) \\
    &\leq \sqrt{C}\biggl(\sqrt{\frac{s\log(ep/s)}{t}} + \sqrt{\frac{\log(1/\eta_t)}{t}}\biggr),
\end{align*}
for some $C \geq 1$, where $\mu_Z = \mathbb{E}Z_1$.

\subsection{Proof of the adaptation result in Section~\ref{sec:adaptsparsity}}
\subsubsection{Adaptive testing procedure} \label{sec:adapt-test}
We focus on the case when $P_e \in \mathcal{P}_{\alpha, K}^{\otimes}$ for $\alpha > 4$. We introduce an adaptive testing procedure based on these two tests:
\begin{equation} \label{Eq:adaptive_test}
\begin{aligned} 
\phi_{\mathcal{P}, \mathrm{adaptive}} &:= \phi_{\mathcal{P},\mathrm{dense}} \vee \max_{s \in \mathcal{K}} {\phi}_{\mathcal{P}, \mathrm{sparse}, s}  \\
&\,= \begin{cases}
    \phi_{\mathcal{P},\mathrm{dense}} \vee \max_{s \in \mathcal{K}} {\phi}^{\mathrm{MoM}}_{\mathcal{P}, \mathrm{sparse}, s}, &\text{if} \; p > \log^{\alpha-2}(\log(8n)), \\
    \phi_{\mathcal{P},\mathrm{dense}} \vee \max_{s \in \mathcal{K}} {\phi}^{\mathrm{RSM}}_{\mathcal{P}, \mathrm{sparse}, s}, &\text{if} \; p \leq \log^{\alpha-2}(\log(8n)),
\end{cases}
\end{aligned}
\end{equation}
\sloppy where the dependence of $\phi_{\mathcal{P},\mathrm{sparse}}$ on $s$ is made explicit by writing it as ${\phi}_{\mathcal{P}, \mathrm{sparse}, s}$, and the set $\mathcal{K} := \bigl\{1, 2, 4, \dotsc, 2^{\lceil \log_2(p) \rceil - 1}\bigr\}$ is a dyadic grid. Recall that $\phi_{\mathcal{P},\mathrm{dense}}$ does not require the knowledge of~$s$, and we keep its original parameter choices as in~\eqref{Eq:robust_theorem_parameters}, with a potentially larger constant $C_1$ in~$r_t$: 
\begin{equation} \label{Eq:adaptive_param_1}
    r_t =  C_1p^{(1/2)\vee(2/\alpha)}G_t, \quad G_t = t \wedge \Delta \quad \mbox{and} \quad \Delta = 2^{ 3 + \lceil\log_2 \log \log(8n)\rceil}.
\end{equation}
For $\phi_{\mathcal{P},\mathrm{sparse}, s}^{\mathrm{MoM}}$, we modify the original parameter choices~\eqref{Eq:robust_theorem_parameters_sparse} as follows:
\begin{equation}\label{Eq:adaptive_param_2}
\begin{aligned} 
    a_s = C_2\bigl((p/s)^{1/\alpha} + s^{-1/2} \log^{1/2}(\log(8n))\bigr), &\quad r_{t,s} =  C_3\bigl(s(p/s)^{2/\alpha}\mathbbm{1}_{\{t=1\}} + s^{3/4}G_t\mathbbm{1}_{\{t>1\}}\bigr),  \\
 \quad  G_t = (t \wedge \Delta)/2 \quad &\mbox{and} \quad  \Delta = 2^{ 4 + \lceil\log_2 \log \log(8n)\rceil}.
\end{aligned}
\end{equation}
Comparing with~\eqref{Eq:robust_theorem_parameters_sparse}, we use the same $a_s$ (but potentially larger constants) and modify $r_{t,s}$. Finally, for $\phi_{\mathcal{P},\mathrm{sparse}, s}^{\mathrm{RSM}}$, we modify its original parameter choices \eqref{Eq:robust_theorem_parameters_sparse_rsm} to be:
\begin{equation}\label{Eq:adaptive_param_3}
\begin{aligned}
a_s = C_4\bigl((p/s)^{1/\alpha} + s^{-1/2} \log^{1/2}(\log \tilde{\Delta}_{1,s})\bigr), &\quad \tilde{r}_{t,s}  = C_5 \Bigl( s(p/s)^{2/\alpha}\mathbbm{1}_{\{t=1\}}+ s^{3/4}\sqrt{\log \tilde{\Delta}_1}\mathbbm{1}_{\{t>1\}}\Bigr), \\
\eta_{t,s} = \exp\biggl\{s\log(ep/s) - \frac{t\wedge \tilde{\Delta}_2}{C_6}\biggr\}, &\quad r^{\mathrm{RSM}}_{t,s} = C_7(t \wedge \tilde{\Delta}_{2,s}), \\
\tilde{\Delta}_{1,s} =  C_6\bigl(s\log(ep/s) + \log(80s/\varepsilon)\bigr)\quad &\mbox{and} \quad \tilde{\Delta}_{2,s} = C_6\bigl(s\log(ep/s) + \log(80s\log(2n)/\varepsilon)\bigr).
\end{aligned}
\end{equation}

\subsubsection{Proof of Theorem~\ref{thm:adaptive_upperbound}}\label{proof:adaptation}
We prove the following theorem on the theoretical guarantee of the test $\phi_{\mathcal{P}, \mathrm{adaptive}}$, fully constructed and specified above in \Cref{sec:adapt-test}. Theorem~\ref{thm:adaptive_upperbound} then follows as an immediate consequence.

\begin{thm} \label{thm:adaptive_upperbound_appendix}
Assume $\alpha \geq 4$. For any $\varepsilon \in (0,1)$, there exist $C_1, \dotsc, C_8 > 0$ depending only on~$\alpha$,~$K$ and $\varepsilon$, such that the test $\phi_{\mathcal{P}, \mathrm{adaptive}}$ defined in~\eqref{Eq:adaptive_test} with its parameters specified in~\eqref{Eq:adaptive_param_1},~\eqref{Eq:adaptive_param_2} and~\eqref{Eq:adaptive_param_3} satisfies 
\[
\mathcal{R}_{\mathcal{P}}(\rho,  \phi_{\mathcal{P},\mathrm{adaptive}} ) \leq \varepsilon,
\]
as long as $\rho^2 \geq C_8\bigl(v_{\mathcal{P}, \mathrm{dense}}^{\mathrm{U}} \wedge v_{\mathcal{P}, \mathrm{sparse}}^{\mathrm{U}}\bigr)$.
\end{thm}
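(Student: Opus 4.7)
The plan is to bound the null (Type I) and alternative (Type II) contributions to $\mathcal{R}_{\mathcal{P}}(\rho, \phi_{\mathcal{P},\mathrm{adaptive}})$ separately, using a union bound over the dyadic sparsity grid $\mathcal{K}$ for the null and a ``nearest overestimate'' argument for the alternative. Observe first that $|\mathcal{K}| \leq \log_2(p) + 1$, so it suffices to show that each component in the right-hand side of \eqref{Eq:adaptive_test} can be controlled at error level $\varepsilon/(2|\mathcal{K}|+2)$, and that the required signal-strength condition is of the same order as in the non-adaptive Theorems~\ref{thm:finitemoment_upperbound_dense} and~\ref{thm:finitemoment_combinerate}.

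For the null term, I would fix $\theta \in \Theta_0(p,n)$ and bound
\[
\mathbb{E}_\theta \phi_{\mathcal{P},\mathrm{adaptive}} \leq \mathbb{E}_\theta \phi_{\mathcal{P},\mathrm{dense}} + \sum_{s \in \mathcal{K}} \mathbb{E}_\theta \phi_{\mathcal{P},\mathrm{sparse}, s}.
\]
The first term is at most $\varepsilon/4$ by Theorem~\ref{thm:finitemoment_upperbound_dense} (with a mildly enlarged constant $C_1$ in $r_t$). For each term in the sum, I would repeat the null analyses of Proposition~\ref{thm:finitemoment_upperbound_sparse} (when $p > \log^{\alpha-2}(\log(8n))$) or Theorem~\ref{thm:finitemoment_upperbound_sparse_improve} (when $p \leq \log^{\alpha-2}(\log(8n))$) with the inflated parameters in \eqref{Eq:adaptive_param_2} and \eqref{Eq:adaptive_param_3}. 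The role of the modifications is exactly to afford the $|\mathcal{K}|$-fold union bound: in \eqref{Eq:adaptive_param_2}, the threshold $\sqrt{s} G_t$ is replaced by $s^{3/4} G_t$, which sharpens the per-group Chebyshev bound from constant to $O(s^{-1/2})$ and hence boosts the median-of-means exponential decay to $\exp(-c G_t s^{1/2})$, absorbing $\log |\mathcal{K}| = O(\log\log p)$; similarly, in \eqref{Eq:adaptive_param_3}, replacing $\log(16/\varepsilon)$ by $\log(80s/\varepsilon)$ in $\tilde{\Delta}_{1,s}$ and $\tilde{\Delta}_{2,s}$ tightens the Condition~\ref{Con:RSM} failure probability $\eta_{t,s}$ by the needed factor. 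Each sparse test then contributes at most $\varepsilon/(4|\mathcal{K}|)$, and summing yields the desired $\varepsilon/2$ bound on the Type I error.

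For the alternative term, fix $\theta \in \Theta(p,n,s^*,\rho)$ with true sparsity $s^* \in [1,p]$. I would distinguish the dense and sparse regimes by the boundary $s^*_{\mathcal{P}}$ from \eqref{Eq:sparsity_boundary_robust}. If $s^* \geq s^*_{\mathcal{P}}$, the dense rate $v^{\mathrm{U}}_{\mathcal{P},\mathrm{dense}}$ dominates $v^{\mathrm{U}}_{\mathcal{P},\mathrm{sparse}}$, and $\phi_{\mathcal{P},\mathrm{dense}}$ alone achieves $\mathbb{E}_\theta(1-\phi_{\mathcal{P},\mathrm{dense}}) \leq \varepsilon/2$ by Theorem~\ref{thm:finitemoment_upperbound_dense}. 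Otherwise, there exists a unique $\tilde{s} \in \mathcal{K}$ with $s^* \leq \tilde{s} \leq 2s^*$; since any vector that is $s^*$-sparse is also $\tilde{s}$-sparse, the true $\theta$ lies in $\Theta(p,n,\tilde{s},\rho)$, so the guarantee of $\phi_{\mathcal{P},\mathrm{sparse},\tilde{s}}$ applies at signal strength $\rho$. The key observation is that, by monotonicity of $s \mapsto v^{\mathrm{U}}_{\mathcal{P},\mathrm{sparse}, s}$ and the dyadic spacing, $v^{\mathrm{U}}_{\mathcal{P},\mathrm{sparse}, \tilde{s}} \asymp v^{\mathrm{U}}_{\mathcal{P},\mathrm{sparse}, s^*}$ up to constants depending only on $\alpha$, hence $\rho^2 \geq C_8 v^{\mathrm{U}}_{\mathcal{P},\mathrm{sparse}}$ implies $\rho^2 \geq C v^{\mathrm{U}}_{\mathcal{P},\mathrm{sparse}, \tilde{s}}$, and Theorem~\ref{thm:finitemoment_combinerate} gives $\mathbb{E}_\theta(1-\phi_{\mathcal{P},\mathrm{sparse},\tilde{s}}) \leq \varepsilon/2$.

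The main obstacle is verifying that the inflated thresholds in \eqref{Eq:adaptive_param_2} and \eqref{Eq:adaptive_param_3} do not worsen the rate by more than a constant factor; that is, repeating the alternative analyses with $s^{3/4} G_t$ in place of $\sqrt{s} G_t$ (respectively with $s^{3/4} \sqrt{\log \tilde{\Delta}_1}$ in place of $\sqrt{s \log \tilde{\Delta}_1}$), one still obtains the condition $\rho^2 \geq C s\bigl((p/s)^{2/\alpha} + \log\log(8n)\bigr)$. This holds because the alternative-side threshold enters only through the inequality $\|\delta\|_2^2/C \gtrsim r_{t,s} + 2 s G_t$, and since $s^{3/4} \leq s$ for $s \geq 1$, the term $r_{t,s} + 2 s G_t$ remains of order $s G_t \leq s \Delta \asymp s \log\log(8n)$, exactly as in the non-adaptive bound. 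The logarithmic enlargement of $\tilde{\Delta}_{1,s}, \tilde{\Delta}_{2,s}$ by an additional $\log s$ is likewise absorbed since $\log s \leq \log p$ is dominated by $s\log(ep/s)$ or $\log\log(8n)$ already present in the rate. Combining the two bounds gives $\mathcal{R}_{\mathcal{P}}(\rho, \phi_{\mathcal{P},\mathrm{adaptive}}) \leq \varepsilon$ under the stated condition, completing the proof.
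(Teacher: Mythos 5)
Your overall architecture (union bound over $\mathcal{K}$ for the null, nearest-overestimate $\tilde{s}$ for the alternative) matches the paper, but both halves of your argument have a concrete gap.

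On the null side, your plan is to control each sparse component at level $\varepsilon/(2|\mathcal{K}|+2)\asymp\varepsilon/\log p$ and then union-bound. This uniform allocation cannot be achieved: take $s=1$ and $t=2$, so $G_t=1$ and the median-of-means collapses to a single group. With the modified threshold $r_{t,s}=C_3 s^{3/4}G_t$, the per-group Chebyshev bound is of order $s^{-1/2}$, which for $s=1$ is a constant; there is no Chernoff amplification from $G_t=1$, so the best you can get for that component is $O(\varepsilon)$, not $O(\varepsilon/\log p)$. Relatedly, your claimed decay $\exp(-cG_ts^{1/2})$ is not what the multiplicative Chernoff bound yields: starting from a per-group failure probability of order $s^{-1/2}$, the decay is $\exp\bigl(-cG_t\log(\sqrt{s}/\varepsilon)\bigr)\asymp(\sqrt{s}/\varepsilon)^{-cG_t}$, i.e.\ polynomial in $\sqrt{s}$ with exponent $G_t$, not exponential in $s^{1/2}$. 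The paper's argument instead establishes a bound of order $\varepsilon/\sqrt{s}$ on the $s$-th sparse component and sums this over the dyadic grid $\mathcal{K}=\{1,2,4,\dotsc\}$; the sum $\sum_{k\geq 0}2^{-k/2}$ converges, so one gets $O(\varepsilon)$ without needing each term to be $\varepsilon/\log p$. Your intermediate goal should therefore be replaced by a per-$s$ bound of order $\varepsilon/\sqrt s$, after which a geometric summation finishes the argument.

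On the alternative side, splitting at $s^*_{\mathcal{P}}$ is the wrong threshold. By definition, $s^*_{\mathcal{P}}$ solves $s(p/s)^{2/\alpha}=\sqrt p$, so at $s^*=s^*_{\mathcal{P}}$ one has $v^{\mathrm{U}}_{\mathcal{P},\mathrm{sparse}}\asymp\sqrt{p}+\log\log(8n)$, which is strictly smaller than $v^{\mathrm{U}}_{\mathcal{P},\mathrm{dense}}\asymp\sqrt p\log\log(8n)$. Hence for $s^*$ slightly above $s^*_{\mathcal{P}}$, the minimum in the assumption $\rho^2\geq C_8(v^{\mathrm{U}}_{\mathcal{P},\mathrm{dense}}\wedge v^{\mathrm{U}}_{\mathcal{P},\mathrm{sparse}})$ is attained by the sparse rate; your prescription of falling back to the dense test requires $\rho^2\gtrsim v^{\mathrm{U}}_{\mathcal{P},\mathrm{dense}}$, which does not follow from the assumption, so the Type II bound would break. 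The case split must instead be made at the point where the dense and sparse rates actually cross (the paper's $s_1$ and $s_2$, defined by equating the dense rate $\sqrt p\log\log(8n)$ to the respective sparse rate expressions), which sits above $s^*_{\mathcal{P}}$ by a $\log\log(8n)$ factor; below that threshold one must use the sparse test, above it the dense test suffices. Once both repairs are made, the remaining computation (checking the inflated thresholds $s^{3/4}G_t$ and the $\log s$-enlarged $\tilde{\Delta}_{1,s},\tilde{\Delta}_{2,s}$ do not worsen the alternative-side constants) is carried out essentially as you sketch, and is correct.
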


\begin{proof}
This proof is based on the proofs of \Cref{thm:finitemoment_upperbound_dense},
\Cref{thm:finitemoment_upperbound_sparse_improve} and
\Cref{thm:finitemoment_upperbound_sparse}. For brevity, we only highlight the main steps and differences.

\noindent \textbf{Null term.} By a union bound,~\eqref{Eq:sparse_MoMnull_master} and~\eqref{Eq:robust_improve_null_main}, we have
\begin{align} \label{Eq:adaptive_null_master}
&\mathbb{E}_\theta \phi_{\mathcal{P}, \mathrm{adaptive}} \nonumber \\
&\leq \mathbb{E}_\theta \phi_{\mathcal{P}, \mathrm{dense}} + \mathbb{E}_\theta \biggl[\max_{s \in \mathcal{K}} \phi_{\mathcal{P},\mathrm{sparse},s}^{\mathrm{MoM}} \biggr] \mathbbm{1}_{\{ p > \log^{\alpha-2}(\log(8n)) \}} + \mathbb{E}_\theta \biggl[\max_{s \in \mathcal{K}} \phi_{\mathcal{P},\mathrm{sparse},s}^{\mathrm{RSM}} \biggr]\mathbbm{1}_{\{ p \leq \log^{\alpha-2}(\log(8n)) \}} \nonumber\\
&\leq \mathbb{E}_\theta \phi_{\mathcal{P}, \mathrm{dense}} + \Biggl( \mathbb{P}_\theta\biggl(\max_{s \in \mathcal{K}} 
\frac{A_{1,a_s,s}}{r_{1,s}} > 1\biggr) + \sum_{s \in \mathcal{K}} \sum_{t\in \mathcal{T}\setminus\{1\}} \mathbb{P}_\theta\bigl(A_{t,a_s,s}^{\mathrm{MoM}} > r_{t,s}\bigr) \Biggr) \mathbbm{1}_{\{ p > \log^{\alpha-2}(\log(8n)) \}} \nonumber\\
&\quad + \Biggl( \mathbb{P}_\theta\biggl(\max_{s \in \mathcal{K}} \frac{A_{1,a_s,s}}{\tilde{r}_{1,s}} > 1\biggr) + \sum_{s \in \mathcal{K}} \sum_{t \in \tilde{\mathcal{T}}_{1,s}\backslash\{1\}} \mathbb{P}_\theta(A_{t,a_s,s} > \tilde{r}_{t,s}) \nonumber\\
&\qquad \qquad + \sum_{s \in \mathcal{K}} \sum_{t \in \tilde{\mathcal{T}}_{2,s}} \mathbb{P}_\theta(A^{\mathrm{RSM}}_{t,s} > r^{\mathrm{RSM}}_{t,s}) + \sum_{s \in \mathcal{K}} \sum_{t \in \tilde{\mathcal{T}}_{3,s}} \mathbb{P}_\theta(A^{\mathrm{RSM}}_{t,s} > r^{\mathrm{RSM}}_{t,s})\Biggr) \mathbbm{1}_{\{ p \leq \log^{\alpha-2}(\log(8n)) \}} \nonumber\\
&\leq \mathbb{E}_\theta \phi_{\mathcal{P}, \mathrm{dense}} +  \mathbb{P}_\theta\biggl(\max_{s \in \mathcal{K}} \frac{A_{1,a_s,s}}{r_{1,s} \wedge \tilde{r}_{1,s}} > 1\biggr) + \sum_{s \in \mathcal{K}} \sum_{t\in \mathcal{T}\setminus\{1\}} \mathbb{P}_\theta\bigl(A_{t,a_s,s}^{\mathrm{MoM}} > r_{t,s}\bigr) \nonumber \\
&\quad + \sum_{s \in \mathcal{K}}\sum_{t \in \tilde{\mathcal{T}}_{1,s}\backslash\{1\}} \mathbb{P}_\theta(A_{t,a_s,s} > \tilde{r}_{t,s}) + \sum_{s \in \mathcal{K}}\Biggl(\sum_{t \in \tilde{\mathcal{T}}_{2,s}} \mathbb{P}_\theta(A^{\mathrm{RSM}}_{t,s} > r^{\mathrm{RSM}}_{t,s}) + \sum_{t \in \tilde{\mathcal{T}}_{3,s}} \mathbb{P}_\theta(A^{\mathrm{RSM}}_{t,s} > r^{\mathrm{RSM}}_{t,s})\Biggr),
\end{align}
where we denote $\tilde{\mathcal{T}}_{1,s} := \{t \in \mathcal{T}: t \leq \tilde{\Delta}_{1,s}\}$, $\tilde{\mathcal{T}}_{2,s} := \{t \in \mathcal{T}: \tilde{\Delta}_{1,s} < t \leq \tilde{\Delta}_{2,s}\}$ and $\tilde{\mathcal{T}}_{3,s} := \{t \in \mathcal{T}: t > \tilde{\Delta}_{2,s}\}$. In the following, we bound each of the five terms in~\eqref{Eq:adaptive_null_master} by $\varepsilon/10$.\\
\noindent \textbf{Term 1.} By closely following the null term part of the proof of Theorem~\ref{thm:finitemoment_upperbound_dense}, with a sufficiently large constant $C_1$, we deduce, similar to~\eqref{Eq:MoM_robust_null_together}, that
\begin{align*}
   \mathbb{E}_\theta\phi_{\mathcal{P},\mathrm{dense}} &\leq \varepsilon/180 + \frac{(32/\varepsilon)^{-1}}{1-(32/\varepsilon)^{-1}} + \log_2(n/2)(32/\varepsilon)^{-\Delta/2} \leq \varepsilon/180 + \varepsilon/31 + \varepsilon/31  < \varepsilon/10.
\end{align*}
\textbf{Term 2.} By having $C_3$ and $C_5$ sufficiently large, by Proposition~\ref{prop:2sample}(c), we can control this term at level $\varepsilon/10$. \\
\textbf{Term 3.} For this, we follow the null term part of the proof of \Cref{thm:finitemoment_upperbound_sparse}. The key step in that proof was to bound both terms in~\eqref{Eq:sparse_robustnull}. The first term can be controlled via~\eqref{Eq:chernoff-hoeffding-robust}. A careful inspection reveals that the condition on $a$ (same as $a_s$ here) given in~\eqref{Eq:choice_a_finitemoment} with a possibly larger value of the leading constant can guarantee the control of both terms in~\eqref{Eq:chernoff-hoeffding-robust} at $\varepsilon/(160s^{1/2})$. Bounding the second term in~\eqref{Eq:sparse_robustnull} required the argument from~\eqref{Eq:MoM_robust_null} to~\eqref{Eq:MoM_robust_null_together}, within which the dimension $p$ was replaced by $s$. Our new choice of $r_{t,s} = C_3s^{3/4}G_t$ for $t > 1$ with a sufficiently large $C_3$ allows us to have $\varepsilon/(1100s^{1/2})$
as the RHS bound in~\eqref{Eq:MoM_robust_null} (dimension being $s$). Correspondingly, the RHS of~\eqref{Eq:binomial_bound_robust} now becomes
\begin{align*}
&\exp\biggl\{-\frac{\varepsilon G_t}{1100\sqrt{s}} \biggl( \frac{550\sqrt{s}}{\varepsilon }\log\Bigl(\frac{550\sqrt{s}}{\varepsilon}\Bigr) - \frac{550\sqrt{s}}{\varepsilon} + 1\biggr)  \biggr\} \leq \exp\biggl\{ -\frac{G_t}{2} \log\bigl(200\sqrt{s}/\varepsilon\bigr)\biggr\}.
\end{align*}
Thus, the second term in~\eqref{Eq:sparse_robustnull} can now be bounded instead by
\begin{align*}
&\frac{\varepsilon}{1100\sqrt{s}} + \frac{(200\sqrt{s}/\varepsilon)^{-1}}{1-(200\sqrt{s}/\varepsilon)^{-1}} + \log_2(n/2)(200\sqrt{s}/\varepsilon)^{-\Delta/4} \leq  \frac{\varepsilon}{1100\sqrt{s}} + \frac{\varepsilon}{199\sqrt{s}} + \frac{\varepsilon}{199\sqrt{s}} < \frac{\varepsilon}{80\sqrt{s}}.
\end{align*}
Putting everything together, we conclude that
\[
\sum_{s \in \mathcal{K}} \sum_{t\in \mathcal{T}\setminus\{1\}} \mathbb{P}_\theta\bigl(A_{t,a_s,s}^{\mathrm{MoM}} > r_{t,s}\bigr) \leq \sum_{s \in \mathcal{K}} \frac{\varepsilon}{40\sqrt{s}} < \frac{\varepsilon}{10}.
\]
\noindent \textbf{Term 4.} We follow the null term part of the proof of \Cref{thm:finitemoment_upperbound_sparse_improve}. More specifically, this term can be split into two terms according to~\eqref{Eq:robust_improve_null_term1}. Similar to the argument made for the second term above, with $C_4$ sufficiently large, the first term in~\eqref{Eq:robust_improve_null_term1} can be guaranteed to be at most $\varepsilon/(80s^{1/2})$. The second term, with the new choice of $\tilde{r}_{t,s}$ and its leading constant $C_5$ being sufficiently large, can also be bounded above by
\[
\frac{\varepsilon |\tilde{\mathcal{T}}_{1,s}\backslash\{1\}|}{80\sqrt{s}\log_2(\tilde{\Delta}_{1,s})} \leq \varepsilon/(80s^{1/2}).
\]
Therefore, we can again control the fourth term at level $\varepsilon/10$.\\
\noindent \textbf{Term 5.}
We again follow the null term part of the proof of \Cref{thm:finitemoment_upperbound_sparse_improve}. By Condition~\ref{Con:RSM} and similar to~\eqref{Eq:robust_improve_null_term23}, we can now bound
\begin{align*} 
&\sum_{s \in \mathcal{K}}\Biggl(\sum_{t \in \tilde{\mathcal{T}}_{2,s}} \mathbb{P}_\theta(A^{\mathrm{RSM}}_{t,s} > r^{\mathrm{RSM}}_{t,s}) + \sum_{t \in \tilde{\mathcal{T}}_{3,s}} \mathbb{P}_\theta(A^{\mathrm{RSM}}_{t,s} > r^{\mathrm{RSM}}_{t,s})\Biggr) \\
&\leq \sum_{s \in \mathcal{K}} \Biggl( \sum_{t \in \tilde{\mathcal{T}}_{2,s}} \exp\biggl\{s\log(ep/s)  - \frac{t}{C_6}\biggr\} + \sum_{t \in \tilde{\mathcal{T}}_{3,s}} \exp\biggl\{s\log(ep/s)  - \frac{\tilde{\Delta}_{2,s}}{C_6}\biggr\} \Biggr) \\
&\leq \sum_{s \in \mathcal{K}} \Biggl( 2\exp\biggl\{s\log(ep/s) - \frac{\tilde{\Delta}_{1,s}}{C_6}\biggr\} + \log_2(n/2) \frac{\varepsilon}{80s\log n} \Biggr) < \sum_{s \in \mathcal{K}}\frac{\varepsilon}{20s} \leq \frac{\varepsilon}{10},
\end{align*}
as desired.

\medskip
\noindent \textbf{Alternative term.} First, let $s_1$ satisfy $s_1(p/s_1)^{2/\alpha} + \log\log(8n) = \sqrt{p}\log\log(8n)$ and $s_2$ satisfy $s_2\bigl((p/s_2)^{2/\alpha} + \log\log(8n)\bigr) = \sqrt{p}\log \log(8n)$. Note that $s_1 \geq s_2$. For $\theta \in \Theta(p,n,s,\rho)$, we consider all four possible $(p,s)$ regimes below. \\
(1) $p \geq \log^{\alpha-2}(\log(8n))$ and $s \geq s_2/2$. We have $\mathbb{E}_{\theta}(1-\phi_{\mathcal{P}, \mathrm{adaptive}}) \leq \mathbb{E}_{\theta}(1-{\phi}_{\mathcal{P}, \mathrm{dense}})$. By the alternative term part of the proof of Theorem~\ref{thm:finitemoment_upperbound_dense}, we can bound the above quantity by $\varepsilon/2$ as long as $\rho^2 \geq C' \sqrt{p} \log\log(8n)$ with a sufficiently large $C'$. We also note that when $s_2/2 \leq s < s_2$, we have
\[
\frac{1}{2}\sqrt{p} \log \log(8n)\leq s\bigl((p/s)^{2/\alpha} + \log\log(8n)\bigr) \leq \sqrt{p} \log \log(8n).
\]
(2) $p \geq \log^{\alpha-2}(\log(8n))$ and $s < s_2/2$. By the definition of $\mathcal{K}$, there exists an $\tilde{s} \in \mathcal{K}$ such that $s \leq \tilde{s} < 2s$. We have $\mathbb{E}_{\theta}(1-\phi_{\mathcal{P}, \mathrm{adaptive}}) \leq \mathbb{E}_{\theta}(1-{\phi}_{\mathcal{P}, \mathrm{sparse}, \tilde{s}}^{\mathrm{MoM}})$. Now, by carefully inspecting the alternative term part of the proof of \Cref{thm:finitemoment_upperbound_sparse}, we can still deduce $\mathbb{E}_{\theta}(1-{\phi}_{\mathcal{P}, \mathrm{sparse}, \tilde{s}}^{\mathrm{MoM}}) \leq \varepsilon/2$
as long as $\rho$ satisfies
\begin{align} \label{Eq:adaptive_1}
\rho^2 &\geq C'\Bigl(s\bigl((p/s)^{2/\alpha} + \log\log(8n)\bigr)\Bigr) \geq \frac{C'}{2}\Bigl(\tilde{s}\bigl((p/\tilde{s})^{2/\alpha} + \log\log(8n)\bigr)\Bigr) \nonumber \\ 
&\geq C'' \max\Bigl\{\log^2(8/\varepsilon) \max_{t\in\mathcal{T}\setminus\{1\}} (r_{t,\tilde{s}} + 2\tilde{s} G_t) , (r_{1,\tilde{s}}+2\tilde{s})\log^2(8/\varepsilon), a^2_{\tilde{s}} \tilde{s}\Bigr\},
\end{align}
for sufficiently large $C'$ and $C''$, where the final inequality in~\eqref{Eq:adaptive_1} remains true with our modified choice of $r_{t,s}$. \\
(3) $p < \log^{\alpha-2}(\log(8n))$ and $s \geq s_1/2$. We use the same argument as in (1) to obtain the same condition $\rho^2 \geq C'\sqrt{p} \log\log(8n)$. Similarly, we also note that when $s_1/2\leq s < s_1$, we have
\[
\frac{1}{2}\sqrt{p}\log \log(8n) \leq s(p/s)^{2/\alpha} + \log\log(8n) \leq \sqrt{p} \log\log(8n).
\]
(4) $p < \log^{\alpha-2}(\log(8n))$ and $s < s_1/2$. Similar to (2), we have $\mathbb{E}_{\theta}(1-\phi_{\mathcal{P}, \mathrm{adaptive}}) \leq \mathbb{E}_{\theta}(1-{\phi}_{\mathcal{P}, \mathrm{sparse}, \tilde{s}}^{\mathrm{RSM}})$. By carefully examining the alternative term part of the proof of \Cref{thm:finitemoment_upperbound_sparse_improve}, we can obtain $\mathbb{E}_{\theta}(1-{\phi}_{\mathcal{P}, \mathrm{sparse}, \tilde{s}}^{\mathrm{RSM}}) \leq \varepsilon/2$ as long as
\begin{align} \label{Eq:adaptive_3}
\rho^2 &\geq C'\bigl(s(p/s)^{2/\alpha} + \log\log(8n) \bigr) \geq \frac{C'}{2}\bigl(\tilde{s}(p/\tilde{s})^{2/\alpha} + \log\log(8n) \bigr) \nonumber \\
&\geq C''\max\bigl\{(\tilde{r}_{t\neq 1, \tilde{s}} + 2\tilde{s})\log^2(8/\varepsilon), (\tilde{r}_{1,\tilde{s}}+2\tilde{s})\log^2(8/\varepsilon), a^2_{\tilde{s}}\tilde{s}, \tilde{s}\log(ep/\tilde{s}) + \log\log(8n)\bigr\},
\end{align}
for sufficiently large $C'$ and $C''$, where the final inequality in~\eqref{Eq:adaptive_3} remains true with our new choices of $a_s$ and $\tilde{r}_{t,s}$.

The desired result then follows from \Cref{thm:finitemoment_combinerate} and the first part of its proof.
\end{proof}

\subsection{Proofs of lower bound results in Sections~\ref{Sec:hidimtest} and~\ref{Sec:robusttest} }\label{proof:lowerbounds}
For $\mathcal{Q} = \mathcal{G}_{\alpha, K}^\otimes$, to prove \Cref{thm:subweibull-lowerbound-minimum}, we establish the lower bound separately for the sparse and dense regimes. In the sparse regime $s < \sqrt{p} \log^{-2/\alpha}(ep)$, we have  
\[
s\log^{2/\alpha}(ep/s) \leq \sqrt{p} = \sqrt{p (\log \log(8n))^{\omega_1}},
\]
and we shall prove the lower bound $s\log^{2/\alpha}(ep/s) + \log\log(8n)$, as stated in \Cref{prop:weibulldlowerbound_sparse} below. In the dense regime, we first consider when $\sqrt{p} \log^{-2/\alpha}(ep) \leq s \leq \sqrt{p\log \log (8n)}$, we have  
   \[
   s\log^{2/\alpha}(ep/s) \gtrsim \sqrt{p} = \sqrt{p (\log \log(8n))^{\omega_1}}.
   \]
When $s > \sqrt{p\log \log (8n)}$, we still have  
   \[
   s\log^{2/\alpha}(ep/s) \geq s \geq \sqrt{p\log\log(8n)}.
   \]
Thus, in the dense regime, it suffices to prove the lower bound $\sqrt{p (\log \log(8n))^{\omega_1}} + \log \log(8n)$, as stated in \Cref{prop:weibulldlowerbound_dense} below.

Similarly for $\mathcal{Q} = \mathcal{P}_{\alpha, K}^\otimes$, to prove \Cref{thm:finitemoment-lowerbound-minimum}, it suffices to establish \Cref{prop:finitemoment_lowerbound_sparse} for the sparse regime and \Cref{prop:finitemoment_lowerbound_dense} for the dense regime.

\begin{prop} \label{prop:weibulldlowerbound_sparse}
For $\mathcal{Q} = \mathcal{G}_{\alpha, K}^\otimes$ with $0 < \alpha \leq 2$ and $K \geq K_\alpha$, for some constant $K_\alpha > 0$ depending only on $\alpha$. Assume $c \leq s < \sqrt{p}\log^{-2/\alpha}(ep)$, for some absolute constant $c \geq 1$. There exists some constant $c' > 0$ depending only on $\alpha$ and $K$, such that $\mathcal{R}_{\mathcal{G}}(\rho) \geq 1/2$
whenever $\rho^2 \leq c' v_{\mathcal{G}, \mathrm{sparse}}^{\mathrm{L}}$, where 
\[
v_{\mathcal{G}, \mathrm{sparse}}^{\mathrm{L}} := s\log^{2/\alpha}(ep/s) + \log\log(8n).
\]
\end{prop}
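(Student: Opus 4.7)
The plan is to split $v_{\mathcal{G},\mathrm{sparse}}^{\mathrm{L}}$ into its two additive pieces, prove a lower bound of order $s\log^{2/\alpha}(ep/s)$ and one of order $\log\log(8n)$ separately, and combine them using $a + b \leq 2\max(a,b)$.

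The $\log\log(8n)$ piece I would handle by reduction to the one-dimensional Gaussian case. Since $N(0,1) \in \mathcal{G}_{\alpha,K}$ as soon as $K \geq K_\alpha$ for a suitable constant depending only on $\alpha$, the lower bound for $p = s = 1$ under Gaussian noise established in \cite{liu2021minimax} already gives $\mathcal{R}_{\mathcal{G}}(\rho) \geq 1/2$ whenever $\rho^2 \lesssim \log\log(8n)$ in that special case. I lift this to general $(p,s)$ by embedding: let the mean change occur only on the first coordinate and pad the remaining $p-1$ coordinates with independent $N(0,1)$ noise and zero mean. Any $(p,s)$-test induces, by restriction, a test for the univariate problem with at least as much power, so the minimax lower bound transfers intact.

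For the $s\log^{2/\alpha}(ep/s)$ piece, I would fix the change point at $t_0 = 1$, so that the effective-sample-size factor $t_0(n-t_0)/n$ lies in $[1/2,1]$ and the alternative reduces to: $X_1$ has an $s$-sparse mean $\mu_1$ with $\|\mu_1\|_2^2 \gtrsim \rho^2$ while $X_2,\ldots,X_n$ have mean zero (after centering $\mu_2 = 0$). On $H_1$ I place the prior $\mu_1 = \beta \sum_{j \in S} e_j$ with $S$ uniform on $\binom{[p]}{s}$ and
\[
\beta^2 = c_0 \log^{2/\alpha}(ep/s),
\]
so that $\|\mu_1\|_2^2 = s\beta^2 \asymp \rho^2$. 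Since $X_2,\ldots,X_n$ are independent of $\mu_1$, Le Cam's two-point method reduces the problem to bounding the $\chi^2$-divergence of the mixture $\bar{Q}_\beta = \binom{p}{s}^{-1} \sum_S P_0^{\otimes p} \ast \delta_{\beta \sum_{j\in S} e_j}$ from $P_0^{\otimes p}$ for some $P_0 \in \mathcal{G}_{\alpha,K}$; this is where the tail parameter $\alpha$ enters critically.

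The hard part is the coordinate-wise construction and the $\chi^2$ calculation. Via the standard tensorised mixture identity
\[
\chi^2\bigl(\bar{Q}_\beta \,\big\|\, P_0^{\otimes p}\bigr) + 1 = \mathbb{E}_{S,S'}\bigl[(1 + \rho_\beta)^{|S \cap S'|}\bigr],
\]
where $S, S'$ are independent uniform $s$-subsets and $\rho_\beta := \chi^2(P_0 \ast \delta_\beta \,\|\, P_0)$, I need $\rho_\beta \lesssim s^{-1}\log(ep/s)$ exactly when $\beta \leq c_0 \log^{1/\alpha}(ep/s)$; combined with hypergeometric concentration of $|S \cap S'|$ near $s^2/p$ (using the assumption $s < \sqrt{p}\log^{-2/\alpha}(ep)$ to keep the expected intersection small), this yields the required constant bound on the $\chi^2$. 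Following the template of \cite{comminges2021adaptive}, the candidate $P_0$ is a symmetric three-point law of the form $P_0 = (1-2q)\delta_0 + q(\delta_{+M} + \delta_{-M})$ with $M \asymp \log^{1/\alpha}(ep/s)$ and $q$ tuned so that $\mathrm{Var}(P_0) = 1$. Verifying $P_0 \in \mathcal{G}_{\alpha,K}$ for $K = K_\alpha$ large enough, and computing $\rho_\beta$ to leading order when $\beta \asymp M$, are the two delicate calculations; the rest is routine bookkeeping, and the final lower bound is then obtained by taking $c'$ small enough so that the mixture argument still gives testing error $\geq 1/2$.
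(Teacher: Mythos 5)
The decomposition into a $\log\log(8n)$ term and a $s\log^{2/\alpha}(ep/s)$ term, and the reduction of the first to the univariate Gaussian lower bound, are both consistent with the paper (which invokes \citet[Proposition~4.2]{Gao2020isotonic}). Your sparse term, however, has a genuine gap that the paper's construction is specifically designed to circumvent.

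\textbf{Infinite $\chi^2$-divergence.} Your candidate $P_0 = (1-2q)\delta_0 + q(\delta_{+M}+\delta_{-M})$ is discrete, so $P_0 \ast \delta_\beta$ has atoms at $\{\beta, \pm M+\beta\}$ which, for generic $\beta\asymp\log^{1/\alpha}(ep/s)$, are disjoint from the support $\{0,\pm M\}$ of $P_0$. Hence $\chi^2(P_0\ast\delta_\beta\,\|\,P_0)=+\infty$ and the tensorised mixture identity is vacuous. This is not a cosmetic defect you can fix by aligning $\beta$ with a lattice multiple of $M$: the extremal atoms always slide off support.

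\textbf{No rescue with a continuous $P_0$ either.} Even replacing the atoms by smooth bumps does not salvage the ``fixed noise plus deterministic shift'' template. For any $P_0$ with finite Fisher information, $\chi^2(P_0\ast\delta_\beta\,\|\,P_0)\gtrsim\beta^2\asymp\log^{2/\alpha}(ep/s)$, which dwarfs the bound $\rho_\beta\lesssim s^{-1}\log(ep/s)$ you say you need (indeed $\log^{2/\alpha}(ep/s)\geq\log(ep/s)\geq s^{-1}\log(ep/s)$ for $0<\alpha\leq 2$). The shift is simply too large to be hidden behind a single noise law.

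\textbf{What the paper actually does.} The crucial idea you are missing is that the class $\mathcal{G}_{\alpha,K}$ contains many distributions, so the null hypothesis can use a \emph{different} noise law from the alternative. Concretely, the paper builds a bounded, absolutely continuous $\xi_{\mathrm{aux}}$ supported near $\pm 1$, sets $\xi := \sigma_{\mathrm{aux}}^{-1}\xi_{\mathrm{aux}}$ and $\tilde\xi := (1 + \gamma^2 s/(2p))^{-1/2}\sigma_{\mathrm{aux}}^{-1}\xi_{\mathrm{aux}}$ (a slight rescaling), and introduces a random sparse sign-perturbation $\gamma\tilde\omega$. The alternative puts noise $\xi$ and mean shift $\gamma\tilde\omega$ on the first column; the null puts noise $\tilde\xi + \gamma\tilde\omega$ (still mean zero, unit variance, and verified to lie in $\mathcal{G}_{\alpha,K}$) with no mean shift. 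The random perturbation $\gamma\tilde\omega$ therefore cancels across the two hypotheses, and the only discrepancy to control is the Hellinger distance between the scalings $\xi$ and $\tilde\xi$, which is $O(\gamma^4 s^2/p^2)$ by a Fisher-information bound (\citet[Theorem~7.6]{ibragimov2013statistical}). This is small precisely in the regime $s\lesssim\sqrt{p}\log^{-2/\alpha}(ep)$. Your construction has no analogue of this noise-class exploitation, and without it the argument cannot reach the rate $s\log^{2/\alpha}(ep/s)$.
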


\begin{prop} \label{prop:weibulldlowerbound_dense}
For $\mathcal{Q} = \mathcal{G}_{\alpha, K}^\otimes$ with $0 < \alpha \leq 2$ and $K \geq K_\alpha$, for some constant $K_\alpha > 0$ depending only on $\alpha$. Assume $s \geq \sqrt{p}\log^{-2/\alpha}(ep) \vee c$, for some absolute constant $c \geq 1$. There exists some constant $c' > 0$ depending only on $\alpha$ and $K$, such that $\mathcal{R}_{\mathcal{G}}(\rho) \geq 1/2$
whenever $\rho^2 \leq c' v_{\mathcal{G}, \mathrm{dense}}^{\mathrm{L}}$, where 
\[
v_{\mathcal{G}, \mathrm{dense}}^{\mathrm{L}} := \sqrt{p (\log \log(8n))^{\omega_1}} + \log \log(8n)
\]
and $\omega_1 = \mathbbm{1}_{\bigl\{s \geq \sqrt{p \log \log(8n)}\bigr\}}$.
\end{prop}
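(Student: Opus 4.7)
The plan is to establish the lower bound by first reducing to the Gaussian noise case and then separately lower-bounding the minimax error by each of the three terms $\log\log(8n)$, $\sqrt{p}$, and $\sqrt{p \log\log(8n)}$ appearing in $v_{\mathcal{G},\mathrm{dense}}^{\mathrm{L}}$. Since $N(0,1) \in \mathcal{G}_{2, K}$ for $K$ larger than some absolute constant, and $\mathcal{G}_{2, K} \subseteq \mathcal{G}_{\alpha, K_\alpha}$ for any $0 < \alpha \leq 2$ (with a suitably enlarged constant $K_\alpha$ depending only on $\alpha$), it suffices to build alternatives under $N(0,1)$-noise and invoke Le Cam / Ingster--Suslina-type arguments to lower-bound $\inf_\phi \mathcal{R}_{\mathcal{G}}(\rho,\phi)$ by $1/2$ for sufficiently small $\rho$.

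For the $\log\log(8n)$ contribution, I would reduce to a univariate ($p=s=1$) change-point problem by freezing $p-1$ coordinates with no change and placing a univariate change on a single coordinate. Over the dyadic grid $\mathcal{T} = \{1,2,4,\dots,2^{\lfloor\log_2(n/2)\rfloor}\}$, I would average over candidate change locations $t_0 \in \mathcal{T}$, each coupled with a matched jump $|\mu_1-\mu_2| \asymp t_0^{-1/2}$ so that $\rho^2 \asymp 1$ at every scale. A direct Gaussian likelihood-ratio calculation (essentially the one used in \citet{liu2021minimax} for the Gaussian case and in \citet{verzelen2020optimal1d}) shows that the chi-square divergence between the null and this dyadic mixture stays bounded as long as $\rho^2 \lesssim \log\log(8n)$, yielding the first term.

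For the $\sqrt{p}$ contribution, I would fix a single change location $t_0 = \lfloor n/2\rfloor$ and take the prior $\mu_1 = 0$, $\mu_2 = \delta\,\xi_S$, where $S \subseteq [p]$ is sampled uniformly among subsets of size $s$ and $\xi_S \in \{\pm 1\}^S$ is an independent Rademacher vector supported on $S$. Under $N(0,I_p)$ noise, the standard computation gives
\[
\chi^2(\mathbb{P}_0,\bar{\mathbb{P}}_1) + 1 = \mathbb{E}_{S,S',\xi,\xi'}\exp\bigl((n/4)\delta^2\langle \xi_S,\xi'_{S'}\rangle\bigr) \leq \mathbb{E}_{S,S'}\exp\bigl(|S\cap S'|(n\delta^2/4)^2/2\bigr).
\]
Using the hypergeometric concentration of $|S\cap S'|$ around $s^2/p$, this $\chi^2$ remains bounded whenever $s^2(n\delta^2)^2 \lesssim p$, i.e., $\rho^2 = (n/4)\delta^2 s \lesssim \sqrt{p}$, giving the second term. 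This construction respects the sparsity constraint by definition and requires only that $s \leq p$, which holds throughout the dense regime.

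For the enhanced $\sqrt{p\log\log(8n)}$ contribution (active only when $s \geq \sqrt{p\log\log(8n)}$), I would combine the two ideas above: take a mixture over $t_0 \in \mathcal{T}$ and, at each scale $t_0$, draw a random $s$-sparse Rademacher alternative as in the previous step with jump size $\delta_{t_0}$ scaled so that $\rho^2$ is the same at every scale. The total chi-square decomposes as a sum of $|\mathcal{T}| \asymp \log_2(n)$ per-scale chi-squares, each of which is bounded by the same hypergeometric computation. The critical observation is that with $s \geq \sqrt{p\log\log(8n)}$, the per-scale chi-square admits an additional $1/\log_2(n)$ safety factor (by appropriately trading between the support-overlap variance and the effective sample size), so that the union of $\log\log(8n)$ scales remains bounded while allowing $\rho^2 \lesssim \sqrt{p\log\log(8n)}$. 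This is exactly the sqrt-enhancement phenomenon familiar from the Gaussian dense regime in \citet{liu2021minimax}. Finally, the three lower bounds combine to yield $\mathcal{R}_{\mathcal{G}}(\rho)\geq 1/2$ whenever $\rho^2 \leq c'(\sqrt{p(\log\log(8n))^{\omega_1}} + \log\log(8n))$.

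The main technical obstacle will be the multi-scale chi-square bookkeeping in the third step. Concretely, the per-scale chi-square involves the MGF of a hypergeometric random variable, and making the overall sum over $\log\log(8n)$ scales remain bounded by a constant requires a delicate choice of $\delta_{t_0}$ that exploits the sparsity lower bound $s \geq \sqrt{p\log\log(8n)}$ in exactly the right way --- this sparsity threshold is what ensures that the hypergeometric exponent $(s^2/p)(n\delta^2)^2$ can be made as small as $1/\log_2 n$ at the critical value of $\rho^2$. Below this threshold the enhancement provably fails, which matches the exponent $\omega_1$ in the lower bound.
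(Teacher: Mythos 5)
Your first and third constructions match the paper's approach: claim (i) ($\log\log(8n)$) and claim (ii) ($\sqrt{p\log\log(8n)}$ when $s \geq \sqrt{p\log\log(8n)}$) are both established in the paper by embedding Gaussian noise into $\mathcal{G}_{\alpha,K}^{\otimes}$ and citing \citet[][Proposition~4.2]{Gao2020isotonic} and \citet[][Proposition~3]{liu2021minimax} respectively, exactly as you outline. The gap is in your second step.

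For the $\sqrt{p}$ contribution you use a Gaussian Rademacher prior and assert that $\chi^2$ stays bounded whenever $s^2(n\delta^2)^2 \lesssim p$, ``using the hypergeometric concentration of $|S\cap S'|$ around $s^2/p$.'' This substitutes the mean of the overlap into the exponent, which is only valid when the MGF argument $\lambda' \asymp (n\delta^2)^2$ is itself $O(1)$, i.e.\ when $n\delta^2 \lesssim 1$; combined with $\rho^2 \asymp n\delta^2 s$ this yields only $\rho^2 \lesssim s$, not $\sqrt{p}$. If instead you push $\rho^2 \to c\sqrt{p}$ with $s < \sqrt{p}$ then $n\delta^2 \asymp \sqrt{p}/s \gg 1$, the hypergeometric MGF is governed by its tail rather than its mean, and the correct constraint becomes $\rho^2 \lesssim s\log(ep/s^2)$. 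At the lower end of the dense regime, $s^*_{\mathcal{G}} = \sqrt{p}\log^{-2/\alpha}(ep)$, this evaluates to $\sqrt{p}\log^{-2/\alpha}(ep)\cdot\log\log(ep) \ll \sqrt{p}$. Since $s^*_{\mathcal{G}} < \sqrt{p}$ for every $0 < \alpha \leq 2$, your Gaussian construction provably cannot reach $\sqrt{p}$ across the full range $s \geq s^*_{\mathcal{G}}$ covered by this proposition.

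This is not patchable within the Gaussian model: the Gaussian minimax detection rate in that sparsity band really is $s\log(ep/s^2)$, strictly smaller than $\sqrt{p}$, which is why the Gaussian transition boundary sits at $s \asymp \sqrt{p}$ rather than at $s^*_{\mathcal{G}}$. The paper closes the gap by a monotonicity argument (a lower bound at sparsity $s'$ extends to all $s \geq s'$ because $\Theta(p,n,s',\rho) \subseteq \Theta(p,n,s,\rho)$), applying the sparse-regime claim (v) at $s' = s^*_{\mathcal{G}}$ where $s'\log^{2/\alpha}(ep/s') \asymp \sqrt{p}$. Claim (v) in turn requires a genuinely non-Gaussian member of $\mathcal{G}_{\alpha,K}$: the compactly supported bump density $\xi_{\mathrm{aux}}$, rescaled and mixed with a sparse Rademacher perturbation $\gamma\tilde\omega$ of amplitude $\gamma \asymp \log^{1/\alpha}(ep/s)$. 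The sub-Weibull class can hide a larger signal $\gamma$ inside the noise than the Gaussian class can, and this is precisely the mechanism by which the lower bound exceeds the Gaussian sparse rate. Your proposal would need to import this non-Gaussian construction (or an equivalent one) to be complete.
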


\begin{prop} \label{prop:finitemoment_lowerbound_sparse}
For $\mathcal{Q} = \mathcal{P}_{\alpha, K}^\otimes$ with $\alpha \geq 2$ and $K \geq K_\alpha$, for some constant $K_\alpha > 0$ depending only on $\alpha$. Assume $c \leq s < p^{\frac{1}{2} - (\frac{1}{\alpha-2} \wedge \frac{1}{2})}$, for some absolute constant $c \geq 1$. There exists some constant $c' > 0$ depending only on $\alpha$ and $K$, such that $\mathcal{R}_{\mathcal{P}}(\rho) \geq 1/2$
whenever $\rho^2 \leq c' v_{\mathcal{P}, \mathrm{sparse}}^{\mathrm{L}}$, where 
\[
v_{\mathcal{P}, \mathrm{sparse}}^{\mathrm{L}} := s(p/s)^{2/\alpha} + \log \log(8n).
\]
\end{prop}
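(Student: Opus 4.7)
The plan is to prove the two summands of $v_{\mathcal{P},\mathrm{sparse}}^{\mathrm{L}}$ separately, i.e.\ to establish $\mathcal{R}_{\mathcal{P}}(\rho) \geq 1/2$ when $\rho^2 \leq c' s(p/s)^{2/\alpha}$ and, independently, when $\rho^2 \leq c' \log\log(8n)$; combining the two bounds (at the cost of halving $c'$) then yields the claim. Both pieces will be obtained by the standard Ingster / Le Cam framework: exhibit a prior on $\Theta(p,n,s,\rho)$ under which the chi-square divergence to the null mixture is an absolute constant less than~$1$.

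For the $\log\log(8n)$ term, observe that $N(0,1) \in \mathcal{P}_{\alpha, K}$ for every $\alpha \geq 2$ and every $K \geq K_\alpha := (\mathbb{E}|Z|^\alpha)^{1/\alpha}$, so $N(0,1)^{\otimes} \in \mathcal{P}_{\alpha,K}^\otimes$. Consequently the Gaussian lower bound $v^*_{N^\otimes(0,1)}(1,n,1) \gtrsim \log\log(8n)$ from \eqref{Eq:GaussianRate} transfers verbatim. I would simply embed the univariate ($p=s=1$) construction of \citet{liu2021minimax} and \citet{verzelen2020optimal1d}---a uniform prior over change locations on the dyadic grid $\{1,2,4,\dots,2^{\lfloor\log_2(n/2)\rfloor}\}$ with a fixed constant jump---into the first coordinate, setting the remaining $p-1$ coordinates to $0$.

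For the $s(p/s)^{2/\alpha}$ term I place the change at the extreme boundary $t_0 = 1$ and take $\mu_2 = 0$ under both hypotheses. Under $H_1$ the single pre-change mean $\mu_1$ is drawn independently across coordinates from $\pi_0 := (1-s/p)\delta_0 + (s/(2p))(\delta_A + \delta_{-A})$ with $A = c_1(p/s)^{1/\alpha}$; a binomial concentration and conditioning argument on $\{\|\mu_1\|_0 \leq s\}$ (an event of probability $\geq 1/2$ when $s \geq c$) reduces to an exactly $s$-sparse prior, and the normalised signal size satisfies $\tfrac{t_0(n-t_0)}{n}\|\mu_1\|_2^2 \asymp sA^2 \asymp s(p/s)^{2/\alpha}$. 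Since $X_2,\dotsc,X_n$ have identical laws under $H_0$ and $H_1$, by sufficiency the problem reduces to a single-observation sparse mean test on $X_1$, and the product structure of $\pi_0$ gives
\[
\chi^2\bigl(P_1, P_0\bigr) = \bigl(1+(s/p)^2 J(A)\bigr)^p - 1 \leq \exp\bigl((s^2/p)\, J(A)\bigr)-1, \qquad J(A) := \int \tfrac{\{(f(x-A)+f(x+A))/2 - f(x)\}^2}{f(x)}\,dx.
\]
It therefore suffices to exhibit $f \in \mathcal{P}_{\alpha, K}$ with $J(A) \lesssim p/s^2$ at $A = c_1(p/s)^{1/\alpha}$, which makes the right-hand side a small constant and forces Bayes error $\geq 1/2$.

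The main obstacle is precisely this construction of $f$. Following the strategy of \citet{comminges2021adaptive}, I would use a symmetric, variance-one density with a large flat plateau on $[-B,B]$, $B \gg A$, on whose interior the integrand in $J(A)$ vanishes pointwise because $f(\cdot-A) = f(\cdot+A) = f$, together with a small heavy-tailed component at $\pm M$ carrying mass $\asymp K^\alpha M^{-\alpha}$, with $B$ and $M$ calibrated so that the $\alpha$-th moment hits $K^\alpha$ and the variance equals~$1$. Smoothing the plateau edges (e.g.\ by a mollified bump) and tracking $f$ and $1/f$ on the transition region yields the only nonzero contribution to $J(A)$; the scaling $J(A) \lesssim A^{2\alpha}/p$ at $A = c_1(p/s)^{1/\alpha}$, equivalent to $J(A) \lesssim p/s^2$, is what the $\alpha$-th moment budget exactly affords. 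The remaining pieces---Bernoulli-to-exact sparsity rounding, the passage from $\chi^2$ to total variation, and verifying $f \in \mathcal{P}_{\alpha,K}$---are standard once the density is in hand.
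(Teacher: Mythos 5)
Your decomposition into a $\log\log(8n)$ piece and an $s(p/s)^{2/\alpha}$ piece is the right high-level plan (and matches the paper's), and the $\log\log(8n)$ piece is fine: embedding a univariate Gaussian construction works, since $N(0,1)\in\mathcal P_{\alpha,K}$.

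The $s(p/s)^{2/\alpha}$ piece, however, has a fatal gap: the approach of keeping the noise density $f$ \emph{fixed} under both hypotheses cannot reach the target rate, no matter how cleverly $f$ is engineered. The obstruction is information-theoretic, not a matter of smoothing. For any density $f$ with mean $0$, variance $1$ and $\int|x|^\alpha f\leq K^\alpha$, and any $A>20$, set $S_j:=[jA-10,jA+10]$, $a_j:=\int_{S_j}f$. Chebyshev gives $a_0\geq 0.99$, $a_2\leq 1/100$, and Markov on the $\alpha$-th moment gives $a_1\leq 2^\alpha K^\alpha A^{-\alpha}$. Since $\int_{S_1}\bigl\{\tfrac12(f(\cdot-A)+f(\cdot+A))-f\bigr\}=\tfrac12(a_0+a_2)-a_1\geq 0.395$ once $a_1\leq 0.1$, Cauchy--Schwarz on $S_1$ yields
\[
J(A)\;\geq\;\frac{\bigl(\tfrac12(a_0+a_2)-a_1\bigr)^2}{a_1}\;\geq\;\frac{0.395^2}{a_1}\;\gtrsim_{\alpha,K}\;A^\alpha.
\]
Combined with the requirement $J(A)\lesssim p/s^2$ that your $\chi^2$ calculation imposes, this forces $A\lesssim(p/s^2)^{1/\alpha}$ and hence $\rho^2=sA^2\lesssim s^{1-4/\alpha}p^{2/\alpha}$, which is weaker than the target $s(p/s)^{2/\alpha}=s^{1-2/\alpha}p^{2/\alpha}$ by a factor of $s^{2/\alpha}$. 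So the "flat plateau plus heavy-tail bump" density you hope for cannot exist (your claimed scaling $J(A)\lesssim A^{2\alpha}/p$ would contradict the above bound whenever $A^\alpha<p$, which holds at $A=(p/s)^{1/\alpha}$ with $s>1$).

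What the paper does --- and, contrary to your recollection, what \citet{comminges2021adaptive} also do --- is to \emph{change the noise distribution between the two hypotheses} so that the sparse spike prior is absorbed into the null noise. Concretely, with $\tilde\omega$ the Bernoulli--Rademacher spike prior, the null noise is $\tilde\pi:=\tilde\xi+\gamma\tilde\omega$ and the alternative noise is $\xi$, where $\tilde\xi$ is a slight rescaling of $\xi$ chosen so that $\tilde\pi$ has variance $1$. The null mixture then collapses to a single (non-mixture) distribution $P_{0,\tilde\Pi}$, and the only discriminating feature is the Hellinger distance between $\xi$ and $\tilde\xi$ --- two rescalings of the same \emph{bounded} auxiliary density, whose Hellinger distance is quartic in the scaling perturbation and hence easily controlled at level $1/p$. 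This sidesteps the $J(A)\gtrsim A^\alpha$ obstruction entirely and buys exactly the missing $s^{2/\alpha}$ factor. Without this device, the lower bound cannot reach $s(p/s)^{2/\alpha}$.
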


\begin{prop} \label{prop:finitemoment_lowerbound_dense}
For $\mathcal{Q} = \mathcal{P}_{\alpha, K}^\otimes$ with $\alpha \geq 2$ and $K \geq K_\alpha$, for some constant $K_\alpha > 0$ depending only on $\alpha$. Assume $s \geq p^{\frac{1}{2} - (\frac{1}{\alpha-2} \wedge \frac{1}{2})} \vee c$, for some absolute constant $c \geq 1$. There exists some constant $c' > 0$ depending only on $\alpha$ and $K$, such that $\mathcal{R}_{\mathcal{P}}(\rho) \geq 1/2$
whenever $\rho^2 \leq c' v_{\mathcal{P}, \mathrm{dense}}^{\mathrm{L}}$, where 
\[
v_{\mathcal{P}, \mathrm{dense}}^{\mathrm{L}} := p^{(2/\alpha) \vee (1/2)}  (\log \log(8n))^{\omega_2} + \log \log(8n)
\]
and $\omega_2 =(1/2) \mathbbm{1}_{\bigl\{s > \sqrt{p \log \log(8n)}\bigr\} \cap \{\alpha \geq 4\} }$.
\end{prop}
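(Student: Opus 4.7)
The plan is to apply Le Cam's method: for any prior $\pi$ on $\Theta(p,n,s,\rho)$ and any $P_e \in \mathcal{P}_{\alpha,K}^\otimes$,
\[
\mathcal{R}_{\mathcal{P}}(\rho) \;\geq\; 1 - \mathrm{TV}\bigl(\mathbb{P}_{0,P_e},\,\mathbb{E}_\pi\mathbb{P}_{\theta,P_e}\bigr),
\]
so it suffices to exhibit, for each of the three summands of $v_{\mathcal{P},\mathrm{dense}}^{\mathrm{L}}$, a pair $(P_e,\pi)$ under which the total variation stays below a constant strictly less than $1/2$ whenever $\rho^2\leq c'\cdot(\text{that summand})$. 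Taking the maximum of the three sub-lower-bounds will yield the claim.

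For the $\log\log(8n)$ contribution I would take $P_e$ to be a product of standard Gaussians (valid once $K\geq K_\alpha$) and randomise the change location $t_0$ uniformly over the dyadic scale $\{2^k:0\leq k\leq\lfloor\log_2(n/2)\rfloor\}$, with a fixed one-coordinate jump of magnitude $\rho\sqrt{n/(t_0(n-t_0))}$; a telescoping dyadic $\chi^2$ computation along the lines of Verzelen et al.\ (2020) and Liu et al.\ (2021) closes this case. For the $p^{(2/\alpha)\vee(1/2)}$ contribution I fix $t_0=\lfloor n/2\rfloor$ and place a prior on $\mu_2-\mu_1$ that draws a uniformly random support $S\subseteq[p]$ of size $s\wedge p$ with independent Rademacher signs of common magnitude $\nu$ saturating the signal constraint. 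For $\alpha\geq 4$, take $P_e$ Gaussian and use the $\cosh$-identity for the $\chi^2$ together with hypergeometric concentration of $|S\cap S'|$ around $s^2/p$ to obtain the bound $\rho^2\lesssim\sqrt{p}$. For $\alpha\in[2,4)$, take $P_e$ to be a carefully-constructed heavy-tailed density in $\mathcal{P}_{\alpha,K}$ in the spirit of Comminges, Dalalyan and Tsybakov (2021), designed so that the Hellinger distance against a shift $\nu$ decays at the extremal rate $|\nu|^\alpha$; tensorising across shifted samples and active coordinates then yields $\rho^2\lesssim p^{2/\alpha}$. The hypothesis $s\geq p^{1/2-(1/(\alpha-2)\wedge 1/2)}$ is precisely the threshold at which the sparse-prior bound $s(p/s)^{2/\alpha}$ meets the dense rate $p^{(2/\alpha)\vee(1/2)}$, so the Rademacher prior on size-$(s\wedge p)$ supports delivers the claimed bound.

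For the $\sqrt{p\log\log(8n)}$ contribution, which is active only when $\alpha\geq 4$ and $s>\sqrt{p\log\log(8n)}$, I would combine the above two constructions: take $P_e$ Gaussian, draw $t_0$ uniformly over the dyadic scale, and, conditional on $t_0$, draw an independent random support of size $s\wedge p$ with Rademacher signs of magnitude tuned to saturate the signal constraint at that scale. The Gaussian $\chi^2$-divergence factorises both across dyadic scales and across coordinates; balancing the two contributions yields boundedness whenever $\rho^4\lesssim p\log\log(8n)$, i.e.\ $\rho^2\lesssim\sqrt{p\log\log(8n)}$. The sparsity condition $s>\sqrt{p\log\log(8n)}$ is exactly what is required so that the Rademacher prior of the needed magnitude fits inside $\Theta(p,n,s,\rho)$.

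The main obstacle is the heavy-tailed case $\alpha\in[2,4)$ in the second construction. Designing a density in $\mathcal{P}_{\alpha,K}$ that is simultaneously centred with unit variance, has bounded $\alpha$-th moment, and whose Hellinger distance against a small shift decays at the extremal $|\nu|^\alpha$ rate---rather than the quadratic $\nu^2$ rate delivered by any smooth density with finite Fisher information---is delicate; it is what forces the restriction $K\geq K_\alpha$ in the hypothesis. The subsequent mixture-over-supports calculation also requires tight hypergeometric intersection bounds to ensure that the $\chi^2$ (or Hellinger) stays controlled at the correct scale and that the Rademacher prior is not unduly concentrated on nearby supports.
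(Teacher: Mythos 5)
Your plan matches the paper's for the $\log\log(8n)$ and $\sqrt{p\log\log(8n)}$ contributions, which are indeed established via Gaussian noise, dyadic randomisation of the change location, and citation of \citet{Gao2020isotonic} and \citet{liu2021minimax} respectively, and your Gaussian $\chi^2$ calculation at $t_0=\lfloor n/2\rfloor$ with a random Rademacher prior on supports is a valid (and self-contained) alternative to claim (iv) of the paper's proof for the $\sqrt{p}$ contribution when $\alpha\geq 4$. However, there is a genuine gap in your treatment of the $p^{2/\alpha}$ contribution when $\alpha\in[2,4)$.

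First, a density $f$ on $\mathbb{R}$ for which $\mathrm{H}^2\bigl(f,f(\cdot-\nu)\bigr)\asymp |\nu|^\alpha$ with $\alpha>2$ does not exist. For any absolutely continuous density, the Hellinger affinity $\int\sqrt{f(x)f(x-\nu)}\,dx$ is locally at most $1-c\nu^2$ for some $c>0$ (this is the content of positivity of Fisher information; irregular densities only make the decay of the affinity \emph{faster}, never slower), so $\mathrm{H}^2(f,f(\cdot-\nu))\gtrsim\nu^2$. Achieving $|\nu|^\alpha$ with $\alpha>2$, i.e.\ a \emph{sub}-quadratic Hellinger discrepancy under shift, is impossible. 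Second, even granting such a density, placing the change at $t_0=n/2$ means each of the $s\asymp p$ active coordinates is shifted in roughly $n/2$ samples; tensorising Hellinger over these $\asymp np$ shifted entries gives the constraint $np\cdot|\nu|^\alpha\lesssim 1$ and hence $\rho^2\asymp ns\nu^2\lesssim(np)^{1-2/\alpha}$, which for $\alpha=2$ reduces to a constant rather than the required $p$, and is never $\gtrsim p^{2/\alpha}$ uniformly in $n$. Your construction therefore cannot deliver the claimed bound.

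The paper sidesteps both obstacles with a change-of-measure argument. It places the change at $t_0=1$, so only a \emph{single} observation carries signal, eliminating the $n$-dependence, and it does not compare a fixed density against its shift. Instead it constructs two nearby noise distributions $\xi$ and $\tilde\xi$ and a random sparse perturbation $\gamma\tilde\omega$ (Bernoulli--Rademacher), arranges that $\tilde\pi:=\tilde\xi+\gamma\tilde\omega$ (mean zero, unit variance, bounded $\alpha$-th moment) is a legitimate \emph{null} noise in $\mathcal{P}_{\alpha,K}$, and arranges that $\xi$ is a legitimate \emph{alternative} noise. The null distribution with noise $\tilde\pi$ then coincides exactly with a prior mixture over alternatives with noise $\xi$ and mean $\gamma\tilde\omega$, up to a total variation error controlled by $p\cdot\mathrm{H}^2(P_\xi,P_{\tilde\xi})$. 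The required smallness of $\mathrm{H}^2(P_\xi,P_{\tilde\xi})$ is no longer constrained by quadratic shift-Hellinger asymptotics, because $\xi$ and $\tilde\xi$ differ by a discrete mass reallocation to a distant atom (a moment-matching construction), not a shift. This is the key idea your proposal is missing for the $\alpha\in[2,4)$ regime, and it is also how the paper proves the $\sqrt p$ bound for $\alpha\geq4$ (via a boundary instance of the sparse lower bound), so the minimax $\sup_{P_e}$ over noise distributions is used essentially rather than fixed to Gaussian.
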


We now prove all these four lower bound results. Throughout the proof, we use $P_{\theta, \Xi}$ to denote the probability distribution of $X \in \mathbb{R}^{p\times n}$ that satisfies $X - \theta \sim \Xi$, and $E_{\theta, \Xi}$ the corresponding expectation under this distribution. It suffices to prove the five lower bound rate claims below, as they then immediately imply Propositions~\ref{prop:weibulldlowerbound_sparse},~\ref{prop:weibulldlowerbound_dense},~\ref{prop:finitemoment_lowerbound_sparse} and~\ref{prop:finitemoment_lowerbound_dense}.
\begin{enumerate}[label=(\roman*).]
\item $\log\log(8n)$, for $\mathcal{G}_{\alpha,K}^{\otimes}$ with $0 < \alpha \leq 2$ and $K \geq 2^{1+2/\alpha}$ and for $\mathcal{P}_{\alpha,K}^{\otimes}$ with $\alpha > 2$ and $K\geq \sqrt{\alpha+1}$ or $\alpha=2$ and $K \geq 1$;
\item $\sqrt{p\log\log(8n)}$ when $s \geq \sqrt{p\log\log(8n)}$, for $\mathcal{G}_{\alpha,K}^{\otimes}$ with $0 < \alpha \leq 2$ and $K \geq 2^{1+2/\alpha}$ and for $\mathcal{P}_{\alpha,K}^{\otimes}$ with $\alpha > 2$ and $K\geq \sqrt{\alpha+1}$ or $\alpha=2$ and $K \geq 1$;
\item $p^{2/\alpha}$ when $s \geq 30$, for $\mathcal{P}_{\alpha,K}^{\otimes}$ with $\alpha > 2$ and $K \geq \sqrt{2}$ or $\alpha=2$ and $K \geq 1$;
\item $s(p/s)^{2/\alpha}$ when $30 \leq s \leq p^{\frac{\alpha-4}{2\alpha-4}}$, for $\mathcal{P}_{\alpha,K}^{\otimes}$ with $\alpha \geq 4$ and $K \geq \sqrt{2}$;
\item $s\log^{2/\alpha}(ep/s)$ when $30 \leq s \leq \sqrt{p}\log^{-2/\alpha}(ep)$, for $\mathcal{G}_{\alpha,K}^{\otimes}$ with $0 < \alpha \leq 2$ and $K \geq 2^{1+2/\alpha}$.
\end{enumerate}

\medskip

\noindent \textit{Proof of claim (i).} We first consider that each entry of the noise matrix $E$ follows an independent standard normal distribution. Then for $0 < \alpha \leq 2$, $i \in [n]$, $j \in [p]$ and $x \geq 2^{1+2/\alpha}$, we have
\begin{align*}
\mathbb{E}\Biggl[\exp\biggl\{ \biggl(\frac{|E_i(j)|}{x} \biggr)^\alpha\biggr\}\Biggr]  &= \mathbb{E}\Biggl[\exp\biggl\{ \biggl(\frac{|E_i(j)|}{x} \biggr)^\alpha\biggr\} \mathbbm{1}_{\{|E_i(j)| \geq 2\}}\Biggr] + \mathbb{E}\Biggl[\exp\biggl\{ \biggl(\frac{|E_i(j)|}{x} \biggr)^\alpha\biggr\} \mathbbm{1}_{\{|E_i(j)| < 2\}}\Biggr] \\
&\leq \mathbb{E}\Biggl[\exp\biggl\{ \biggl(\frac{|E_i(j)|}{2} \biggr)^2\biggr\} \mathbbm{1}_{\{|E_i(j)| \geq 2\}}\Biggr] + \exp\bigl\{ (2/x)^\alpha\bigr\} \\
&= \sqrt{2} - \mathbb{E}\Biggl[\exp\biggl\{ \biggl(\frac{|E_i(j)|}{2} \biggr)^2\biggr\} \mathbbm{1}_{\{|E_i(j)| < 2\}}\Biggr] + \exp\bigl\{ (2/x)^\alpha\bigr\} \\
&\leq \sqrt{2} - \bigl(1-\exp(-2)\bigr) + \exp\bigl\{ (2/x)^\alpha\bigr\} < 2,
\end{align*}
where the penultimate inequality follows from the standard Gaussian tail bound. Thus, for any $K \geq 2^{1+2/\alpha}$, we have $\|E_i(j)\|_{\psi_\alpha} \leq K$. Furthermore, by Jensen's inequality, we obtain for $\alpha > 2$
\[
\mathbb{E}|E_i(j)|^\alpha \leq \biggl\{\mathbb{E}|E_i(j)|^{2\lceil\alpha/2\rceil} \biggr\}^{\frac{\alpha/2}{\lceil\alpha/2\rceil}} = \biggl\{ \prod_{i=1}^{\lceil\alpha/2\rceil} (2i-1)\biggr\}^{\frac{\alpha/2}{\lceil\alpha/2\rceil}} \leq \bigl(2\lceil \alpha/2\rceil-1 \bigr)^{\alpha/2} \leq (\alpha+1)^{\alpha/2}.
\]
Therefore $P_e \in \mathcal{G}_{\alpha, K}^{\otimes}$ for all $0 < \alpha \leq 2$ and $K \geq 2^{1+\alpha/2}$ and $P_e \in \mathcal{P}_{\alpha, K}^{\otimes}$ for all $\alpha \geq 2$ and $K \geq \sqrt{\alpha+1}$ or $\alpha=2$ and $K \geq 1$. For the mean vectors $\mu_1$ and $\mu_2$ in the definition of $\Theta^{(t_0)}(p,n,s,\rho)$, we restrict them to be equal in all coordinates except perhaps the first. Then under this setting, the lower bound $\log\log(8n)$ of the detection rate is established in \citet[][Proposition~4.2]{Gao2020isotonic}. Note that this lower bound holds for all $1 \leq s \leq p$.

\medskip

\noindent \textit{Proof of claim (ii).} When $s \geq \sqrt{p\log\log(8n)}$, we again consider the independent standard normal noise structure. The lower bound $\sqrt{p\log\log(8n)}$ is shown in \citet[][Proposition~3]{liu2021minimax}.

\medskip

We now use a unified approach to establish the three remaining rates. Let $\xi$ and $\tilde{\xi}$ be two independent random variables on $\mathbb{R}$, whose distributions are to be specified later; let $\tilde{\mathcal{\omega}}$ be an discrete random variable (independent of $\xi, \tilde{\xi}$), taking values
\begin{equation} \label{Eq:omega_tilde}
\tilde{\omega} = \begin{cases}
    +1  &\text{ w.p. } \frac{s}{4p}\bigl(1+\frac{\gamma^2 s}{2p}\bigr)^{-1}\\
    -1 &\text{ w.p. } \frac{s}{4p}\bigl(1+\frac{\gamma^2 s}{2p}\bigr)^{-1}\\
    0 &\text{ otherwise,}
\end{cases}
\end{equation}
where $\gamma > 0$ is also to be specified later; let $\tilde{\pi} := \tilde{\xi} + \gamma \tilde{\omega}$. We remark that $\tilde{\omega}$ can be viewed as a Rademacher random variable being multiplied by a Bernoulli random variable. Denote $\underline{\xi} := (\xi(1), \dotsc, \xi(p))^\top \in \mathbb{R}^p$, where the coordinates are i.i.d. copies of $\xi$ and we use similar notations $\underline{\tilde{\xi}}, \underline{\tilde{\omega}}, \underline{\tilde{\pi}}$. Let $\nu$ denote the distribution of $\gamma \tilde{\underline{\omega}} \in \mathbb{R}^p$, and $\bar{\nu}$ the distribution restricted to $\mathcal{V}_s := \{v\in \mathbb{R}^p: s/6 \leq \|v\|_0 \leq s\}$, i.e.\ $\bar{\nu}(A) = \frac{\nu(A \cap \mathcal{V}_s) }{ \nu(\mathcal{V}_s ) }$ for any Borel set $A \subseteq \mathbb{R}^p$. Consequently, the support of this restricted measure satisfies
\begin{equation} \label{Eq:restricted_measure}
    \mathrm{supp}(\bar{\nu}) \subseteq \bigl\{v \in \mathbb{R}^p: \|v\|_0 \leq s, \|v\|^2_2 \geq s\gamma^2/6 \bigr\}.
\end{equation}
We also have
\begin{equation} \label{Eq:measure_close}
    - \nu(\mathcal{V}_s^c) = -\Bigl(\frac{1}{\nu(\mathcal{V}_s)}-1\Bigr) \nu(\mathcal{V}_s)\leq \nu(A) - \bar{\nu}(A) = \nu(A \cap \mathcal{V}_s^c) - \Bigl(\frac{1}{\nu(\mathcal{V}_s)}-1\Bigr) \nu(A \cap \mathcal{V}_s) \leq \nu(\mathcal{V}_s^c).
\end{equation}
for any Borel set $A$. Denote $\Xi^*$ to be the distribution of $(\underline{\xi},R_2, \dotsc, R_n) \in \mathbb{R}^{p\times n}$, $\tilde{\Xi}^*$ the distribution of $(\tilde{\underline{\xi}}, R_2, \dotsc, R_n)$, and $\tilde{\Pi}$ the distribution of $(\tilde{\underline{\pi}}, R_2, \dotsc, R_n)$, where $(R_i(j))_{i\in \{2,\dotsc,n\}, j \in [p]}$ are i.i.d. Rademacher random variables, independent of $\underline{\xi},\tilde{\underline{\xi}}, \tilde{\underline{\pi}}$. Now we consider the following mixture measures:
\[
\bar{\mathbf{P}}^* := \int P_{\theta^{(1)}, {\Xi}^*} \, \bar{\nu}(d\theta_1), \quad \mathbf{P}^* := \int P_{\theta^{(1)}, {\Xi}^*} \, \nu(d\theta_1), \quad \text{and} \quad \tilde{\mathbf{P}}^* := \int P_{\theta^{(1)}, \tilde{\Xi}^*} \, \nu(d\theta_1),
\]
where $\theta^{(1)} := (\theta_1,0, \dotsc, 0) \in \mathbb{R}^{p \times n}$. Observe that $\tilde{\mathbf{P}}^* = P_{0, \tilde{\Pi}}$, as both sides represent the distribution of $(\tilde{\underline{\pi}},R_2, \dotsc, R_n)$. We first provide an upper bound on the total variation distance between ${\mathbf{P}}^*$ and $\bar{\mathbf{P}}^*$. By~\eqref{Eq:measure_close}, we have
\begin{equation} \label{Eq:TV_1}
\mathrm{TV}({\mathbf{P}}^*, \bar{\mathbf{P}}^*) \leq \mathrm{TV}(\nu, \bar{\nu}) = \sup_{A} |\nu(A) - \bar{\nu}(A)| \leq \nu(\mathcal{V}_s^c) = \mathbb{P}\bigl(\| \underline{\tilde{ \omega} } \|_0 > s\bigr) + \mathbb{P}\bigl(\| \underline{\tilde{ \omega}} \|_0 < s/6\bigr).
\end{equation}
Suppose $\gamma$ is chosen to satisfy $\gamma \leq \sqrt{p/s}$. Then from~\eqref{Eq:omega_tilde}, we deduce $\frac{s}{3p}\leq \mathbb{P}(\tilde{\omega}(1) \neq 0) < \frac{s}{2p}$. By Chernoff bounds, we have
\begin{align}
    \mathbb{P}\bigl(\| \underline{\tilde{\omega}} \|_0 > s\bigr) &\leq \frac{\mathbb{E}\bigl[e^{\| \underline{\tilde{\omega}} \|_0 \log 2  }\bigr]}{e^{s\log 2}} \leq \frac{\bigl(1+s/(2p)\bigr)^p}{e^{s\log 2}} \leq e^{-s/6}, \nonumber \\
    \mathbb{P}\bigl(\| \underline{\tilde{\omega}} \|_0 < s/6\bigr) &\leq \frac{\mathbb{E}\bigl[e^{-\| \underline{\tilde{\omega}} \|_0 \log 2  }\bigr]}{e^{-(s\log 2)/6}} \leq \frac{\bigl(1-s/(6p)\bigr)^p}{e^{-(s\log 2)/6}} \leq e^{-s/20}. \label{Eq:measure_sparsity}
\end{align}
The key step of the proof is to carefully construct two random variables $\xi$ and $\tilde{\xi}$ such that the following three conditions are satisfied:
\begin{align}
    \Xi^* \in \mathcal{G}_{\alpha, K} \  &(\text{resp. } \mathcal{P}_{\alpha, K}), \label{Eq:LB_con1} \\
    \tilde{\Pi} \in \mathcal{G}_{\alpha, K} \  &(\text{resp. } \mathcal{P}_{\alpha, K}), \label{Eq:LB_con2} \\
    \mathrm{H}^2(P_\xi, P_{\tilde{\xi}}) &\leq \frac{1}{16p}, \label{Eq:LB_con3}
\end{align}
where, in a slight abuse of notation, we denote $P_{\xi}$ and $P_{\tilde{\xi}}$ to be the distribution of $\xi$ and $\tilde{\xi}$ respectively. Then, by data processing inequality as well as some basic properties of the total variation distance and the Hellinger distance, we obtain
\begin{align} 
    \mathrm{TV}(\tilde{\mathbf{P}}^*, {\mathbf{P}}^*) &\leq \mathrm{TV}(P_{0, \tilde{\Xi}^*}, P_{0, {\Xi}^*}) \leq \mathrm{TV}\bigl(P_{\tilde{\underline{\xi}}}, P_{\underline{\xi}}\bigr) \leq \mathrm{H}\bigl(P_{\tilde{\underline{\xi}}}, P_{\underline{\xi}}\bigr) = \sqrt{2\bigl(1-(1-\mathrm{H}^2(P_{\tilde{\xi}}, P_\xi)/2)^p\bigr)} \nonumber \\
    &\leq \sqrt{p\mathrm{H}^2(P_\xi, P_{\tilde{\xi}})} \leq 1/4, \label{Eq:TV_2}
\end{align}
where the penultimate inequality follows from the fact that $(1-x)^p \geq 1-px$ for all $0 \leq x \leq 1$ and $p \geq 1$. Combining~\eqref{Eq:restricted_measure},~\eqref{Eq:TV_1},~\eqref{Eq:measure_sparsity}, and~\eqref{Eq:TV_2}, when $s \geq 30$, for all $\rho^2 \leq s\gamma^2/12$, we have
\begin{align*}
\mathcal{R}_{\mathcal{Q}}(\rho) &= \inf_{\phi \in \Phi} \biggl\{ \sup_{P_e \in \mathcal{Q}}\sup_{\theta \in \Theta_0(p,n)}\mathbb{E}_{\theta, P_e} \phi + \sup_{P_e \in \mathcal{Q}}\sup_{\theta \in \Theta(p,n,s,\rho)}\mathbb{E}_{\theta, P_e} (1-\phi) \biggr\} \\
&\geq 1-\mathrm{TV}(P_{0, \tilde{\Pi}}, \bar{\mathbf{P}}^*) = 1-\mathrm{TV}(\tilde{\mathbf{P}}^*, \bar{\mathbf{P}}^*) \geq 1-\mathrm{TV}(\tilde{\mathbf{P}}^*, {\mathbf{P}}^*)-\mathrm{TV}({\mathbf{P}}^*, \bar{\mathbf{P}}^*) \\
&\geq 3/4 - e^{-s/6} - e^{-s/20} \geq 1/2,  
\end{align*}
where the class $\mathcal{Q}$ is either $\mathcal{G}_{\alpha, K}^{\otimes}$ or $\mathcal{P}_{\alpha, K}^{\otimes}$. Below, we give three constructions of $\xi$ and $\tilde{\xi}$ that satisfy conditions~\eqref{Eq:LB_con1},~\eqref{Eq:LB_con2} and~\eqref{Eq:LB_con3}, and specify the corresponding choices of $\gamma$. Each construction corresponds to a rate given at the beginning of the proof.

\medskip

\noindent \textit{Proof of claim (iii).} We work within the noise distribution class $\mathcal{P}_{\alpha, K}^{\otimes}$ with $\alpha > 2$ and $K \geq \sqrt{2}$ or $\alpha=2$ and $K \geq 1$ and we only consider $s = 30$ (a constant) in this construction. Let $\xi$ and $\tilde{\xi}$ be two independent discrete random variables such that
\[
\tilde{\xi} = 
\begin{cases}
\bigl(1 + \frac{\gamma^2 s}{2p}\bigr)^{-1/2}  &\text{ w.p. } 1/2 \\
-\bigl(1 + \frac{\gamma^2 s}{2p}\bigr)^{-1/2}  &\text{ w.p. } 1/2
\end{cases}
\qquad \text{and} \qquad
\xi = 
\begin{cases}
\bigl(1 + \frac{\gamma^2 s}{2p}\bigr)^{-1/2}  &\text{ w.p. } \frac{t_0^2-1}{2\Bigl(t_0^2 - \bigl(1 + \frac{\gamma^2 s}{2p}\bigr)^{-1}\Bigr)}\\
-\bigl(1 + \frac{\gamma^2 s}{2p}\bigr)^{-1/2}  &\text{ w.p. } \frac{t_0^2-1}{2\Bigl(t_0^2 - \bigl(1 + \frac{\gamma^2 s}{2p}\bigr)^{-1}\Bigr)}\\
t_0 &\text{ w.p. } \frac{1-\bigl(1 + \frac{\gamma^2 s}{2p}\bigr)^{-1}}{2\Bigl(t_0^2 - \bigl(1 + \frac{\gamma^2 s}{2p}\bigr)^{-1}\Bigr)} \\
-t_0 &\text{ w.p.  } \frac{1-\bigl(1 + \frac{\gamma^2 s}{2p}\bigr)^{-1}}{2\Bigl(t_0^2 - \bigl(1 + \frac{\gamma^2 s}{2p}\bigr)^{-1}\Bigr)}.
\end{cases}
\]
Direct calculations show that both $\xi$ and $\tilde{\xi} + \gamma \tilde{\omega}$ have mean 0 and variance 1. Choose
\[
\gamma = \max \biggl\{ -1 + \frac{\bigl\{(K^\alpha-1)p/s\bigr\}^{1/\alpha}}{\max\{32, K\}}, \frac{\sqrt{2}}{32}\biggr\}
 \quad \text{and} \quad t_0 = 32\gamma \geq \sqrt{2}.
\]
Note that we have $\gamma \leq \sqrt{p/s}$. Now, to check~\eqref{Eq:LB_con1} and~\eqref{Eq:LB_con2}, it suffices to only verify that $\mathbb{E}|\xi|^\alpha \leq K^\alpha$ and that $\mathbb{E}\bigl|\tilde{\xi} + \gamma \tilde{\omega} \bigr|^\alpha \leq K^\alpha$ respectively. Indeed, as $\alpha > 2$ and $K \geq \sqrt{2}$, we have
\begin{align*}
\mathbb{E}|\xi|^\alpha &\leq 1 + t_0^\alpha \frac{1-\bigl(1 + \frac{\gamma^2 s}{2p}\bigr)^{-1}}{t_0^2 - \bigl(1 + \frac{\gamma^2 s}{2p}\bigr)^{-1}} \leq 1 + t_0^\alpha \frac{\gamma^2 s/(2p)}{t_0^2-1} = 1 + \frac{\gamma^2 s t_0^{\alpha-2} }{p} \leq 1 + \frac{32^{\alpha-2} \gamma^\alpha s}{p} \\
&\leq 1 + \max\biggl\{ K^\alpha - 1, 2^{\alpha/2 - 10} \biggr\} \leq K^\alpha,
\end{align*}
and
\begin{align*}
\mathbb{E} \bigl| \tilde{\xi} + \gamma \tilde{\omega} \bigr|^\alpha &\leq 1 + (1+\gamma)^\alpha \cdot \mathbb{P}(\tilde{\omega} \neq 0) \leq 1 + \frac{(1+\gamma)^\alpha s}{2p} \leq 1 + \max\biggl\{ K^\alpha - 1, \frac{(17/16)^\alpha}{2} \biggr\} \leq K^\alpha.
\end{align*}
We also verify~\eqref{Eq:LB_con3}:
\begin{align*}
    \mathrm{H}^2(P_\xi, P_{\tilde{\xi}}) &= \Biggl( 1 - \sqrt{\frac{t_0^2-1}{t_0^2 - \bigl(1 + \frac{\gamma^2 s}{2p}\bigr)^{-1}}}\Biggr)^2 + \frac{1-\bigl(1 + \frac{\gamma^2 s}{2p}\bigr)^{-1}}{t_0^2 - \bigl(1 + \frac{\gamma^2 s}{2p}\bigr)^{-1}} \leq \frac{2 \Bigl(1-\bigl(1 + \frac{\gamma^2 s}{2p}\bigr)^{-1} \bigr)}{t_0^2 - \bigl(1 + \frac{\gamma^2 s}{2p}\bigr)^{-1}} \\
    &\leq \frac{2\gamma^2 s}{pt_0^2} = \frac{60\gamma^2}{(32\gamma)^2 p}\leq \frac{1}{16p}.
\end{align*}
We thus conclude that under the noise distribution class $\mathcal{P}_{\alpha, K}^{\otimes}$ with $\alpha > 2$ and $K \geq \sqrt{2}$, whenever $s \geq 30$ and
\[
\rho^2 \leq \frac{30}{12} \Biggl( \max \biggl\{ -1 + \frac{\bigl\{(K^\alpha-1)p/30\bigr\}^{1/\alpha}}{\max\{32, K\}}, \frac{\sqrt{2}}{32}\biggr\} \Biggr)^2 \leq c\cdot p^{2/\alpha},
\]
for some $c > 0$ depending only on $\alpha$ and $K$, we have
$\mathcal{R}_{\mathcal{P}}(\rho) \geq 1/2$. When $\alpha=2$, we can simply set $\gamma = \sqrt{p/s}$ and $t_0=32\gamma$ and reach the same conclusion.

\medskip

\noindent \textit{Proof of claim (iv).} We work within the noise distribution class $\mathcal{P}_{\alpha, K}^{\otimes}$ with $\alpha \geq 4$ and $K \geq \sqrt{2}$. We first define an auxiliary random variable $\xi_{\mathrm{aux}}$ and with the following density elsewhere:
\begin{equation*}
f_{\xi_{\mathrm{aux}}}(x) = \begin{cases}
    1000(x- \mathrm{sgn}(x) \cdot 0.9)^2 &\qquad 0.9 \leq |x| < 0.95 \\
    5-1000(x-\mathrm{sgn}(x))^2 &\qquad 0.95 \leq |x| \leq 1.05 \\
    1000(x-\mathrm{sgn}(x) \cdot 1.1)^2 &\qquad 1.05< |x| \leq 1.1 \\
    0 &\qquad \mathrm{otherwise}.
\end{cases}
\end{equation*}
Observe that $\mathbb{E}\xi_{\mathrm{aux}} = 0$ and $\sigma_{\mathrm{aux}}^2 := \mathbb{E}\xi_{\mathrm{aux}}^2 \in (1, 1.01)$. Now let $\xi$ and $\tilde{\xi}$ be independent random variables such that $\xi \stackrel{d}{=} \sigma_{\mathrm{aux}}^{-1} \xi_{\mathrm{aux}}$ and $\tilde{\xi} \stackrel{d}{=} \bigl(1 + \frac{\gamma^2 s}{2p}\bigr)^{-1/2} \sigma_{\mathrm{aux}}^{-1} \xi_{\mathrm{aux}}$. Again, direct calculations show that both $\xi$ and $\tilde{\xi} + \gamma \tilde{\omega}$ have mean 0 and variance 1. For condition~\eqref{Eq:LB_con1}, since $|\xi| < 1.1$ holds with probability one,  we have $\Xi^* \in \mathcal{P}_{\alpha, K}$ for all $\alpha \geq 4$ and $K \geq \sqrt{2}$. We choose
\[
\gamma = \frac{1}{12}(p/s)^{1/\alpha}.
\]
Note that $\gamma \leq \sqrt{p/s}$. We verify~\eqref{Eq:LB_con2} as follows:
\begin{align} \label{Eq:verify_con2}
\mathbb{E} \bigl| \tilde{\xi} + \gamma \tilde{\omega} \bigr|^\alpha &\leq 1.1^\alpha + (1.1+\gamma)^\alpha \cdot \mathbb{P}(\tilde{\omega} \neq 0) \leq 1.1^\alpha + \frac{(1.1+\gamma)^\alpha s}{2p} \leq 1.1^\alpha + \frac{\max\{1.2, 12\gamma\}^\alpha s}{2p} \nonumber \\
&\leq \max\biggl\{1.1^\alpha + \frac{1.2^\alpha}{2}, 1.1^\alpha + \frac{1}{2}\biggr\} \leq 2^{\alpha/2} \leq K^\alpha.
\end{align}
Finally, by \citet[Theorem~7.6]{ibragimov2013statistical}, we have when $s \leq p^{\frac{\alpha-4}{2\alpha-4}}$
\begin{align} \label{Eq:verify_con3}
\mathrm{H}^2(P_\xi, P_{\tilde{\xi}}) &\leq \frac{\Bigl(\sigma_{\mathrm{aux}}^{-1}-\bigl(1 + \frac{\gamma^2 s}{2p}\bigr)^{-1/2} \sigma_{\mathrm{aux}}^{-1} \Bigr)^2}{4} \sup_{u \in \bigl[\bigl(1 + \frac{\gamma^2 s}{2p}\bigr)^{-1/2} \sigma_{\mathrm{aux}}^{-1}, \sigma_{\mathrm{aux}}^{-1}\bigr]} \frac{\int_{\mathrm{supp}(\xi_{\mathrm{aux}})} \bigl(f'_{\xi_{\mathrm{aux}}}(x)\bigr)^2/f_{\xi_{\mathrm{aux}}}(x)\, dx}{u^2} \nonumber \\
&\leq \frac{\Bigl(\bigl(1 + \frac{\gamma^2 s}{2p}\bigr)^{1/2}-1\Bigr)^2}{4}\int_{\mathrm{supp}(\xi_{\mathrm{aux}})} \bigl(f'_{\xi_{\mathrm{aux}}}(x)\bigr)^2/f_{\xi_{\mathrm{aux}}}(x)\, dx \nonumber \\
&\leq \frac{\gamma^4s^2}{64p^2} \cdot 4\biggl(\int_{0}^{0.05} \frac{(-2000x)^2}{5-1000x^2} \, dx + \int_{0.05}^{0.1} \frac{(2000(x-0.1))^2}{1000(x-0.1)^2} \, dx \biggr) \leq \frac{25\gamma^4 s^2}{p^2} \leq \frac{1}{16p},
\end{align}
and this verifies~\eqref{Eq:LB_con3}. Therefore, under the noise distribution class $\mathcal{P}_{\alpha, K}^{\otimes}$ with $\alpha \geq 4$ and $K \geq \sqrt{2}$, whenever $30 \leq s \leq p^{\frac{\alpha-4}{2\alpha-4}}$ and $\rho^2 \leq s(p/s)^{2/\alpha}/1728$, we have
$\mathcal{R}_{\mathcal{P}}(\rho) \geq 1/2$.

\medskip

\noindent \textit{Proof of claim (v).} We work within the noise distribution class $\mathcal{G}_{\alpha, K}^{\otimes}$ with $0 < \alpha \leq 2$ and $K \geq 2^{1+2/\alpha}$. We use the same construction as in (iv), but now choose instead
\[
\gamma = \frac{1}{3\cdot(8/\alpha)^{1/\alpha}}\log^{1/\alpha}(ep/s).
\]
Since $\log x \leq \frac{2}{e\alpha} x^{\alpha/2}$ for all $x \geq e$, we can verify that $\gamma \leq \sqrt{p/s}$. Again, for condition~\eqref{Eq:LB_con1}, since $|\xi| < 1.1$ holds with probability one, we have $\Xi^* \in \mathcal{G}_{\alpha, K}$ for all $\alpha \geq 4$ and $K \geq 2^{1+2/\alpha}$, as $\exp\{(1.1/K)^\alpha\} \leq e^{1/4}<2$. We now verify~\eqref{Eq:LB_con2} using the technique in~\eqref{Eq:verify_con2}:
\begin{align*}
\mathbb{E}\Biggl[\exp\biggl\{ \biggl(\frac{|\tilde{\xi} + \gamma \tilde{\omega}|}{K} \biggr)^\alpha\biggr\}\Biggr] &\leq \exp\biggl\{\biggl(\frac{1.1}{K}\biggr)^\alpha\biggr\} + \frac{s}{2p}\exp\biggl\{\biggl(\frac{\max\{2, 3\gamma\}}{K}\biggr)^\alpha\biggr\}  \\
&\leq e^{1/4} +  \max\Biggl\{e^{1/4}\frac{s}{2p}, \Bigl(\frac{s}{2p}\Bigr)^{1 - \frac{2\alpha}{8K^\alpha}}  \Biggr\} \\
&\leq e^{1/4} + \max\bigl\{e^{1/4}/2, \sqrt{1/2}\bigr\} < 2.
\end{align*}
We then follow~\eqref{Eq:verify_con3} to verify~\eqref{Eq:LB_con3} as well:
\[
\mathrm{H}^2(P_\xi, P_{\tilde{\xi}}) \leq \frac{25\gamma^4 s^2}{p^2} \leq \frac{1}{16p},
\]
when $s \leq \sqrt{p}\log^{-2/\alpha}(ep)$. Therefore, under the noise distribution class $\mathcal{G}_{\alpha, K}^{\otimes}$ with $\alpha \leq 2$ and $K \geq 2^{1+2/\alpha}$, whenever $30 \leq s \leq \sqrt{p}\log^{-2/\alpha}(ep)$ and $\rho^2 \leq \frac{s\log^{2/\alpha}(ep/s)}{36\cdot (8/\alpha)^{1/\alpha}}$, we have
$\mathcal{R}_{\mathcal{G}}(\rho) \geq 1/2$.

\section{Technical details of Section~\ref{Sec:extensions} }\label{sec:app-extensions}
\subsection{Multiple change points}
\label{sec:app_multiple}
\subsubsection{Testing procedure}\label{sec:multiple-app}

To describe our testing procedure for multiple change points, we first denote
\[
J := \Bigl\{(\ell,t): t = 2^{1 + \lceil \log_2(\log(n)) \rceil}, 2^{2+ \lceil \log_2(\log(n)) \rceil}, \dotsc, 2^{\lfloor \log_2(n) \rfloor - 1}, \ell=t, \dotsc,n-t \Bigr\}.
\]
Recall that in Section~\ref{subSec:robust_dense}, we use a median-of-means-type statistic $A_t^{\mathrm{MoM}}$ in \eqref{eq:A_mom} to determine whether there is a single change at or near each $t \in \mathcal{T}$ based on data $X_1, \ldots, X_t, X_{n-t+1}, \ldots X_n$. Here, we compute the same statistic using $X_{\ell-t+1},\ldots, X_{\ell}, X_{\ell+1}, \ldots, X_{\ell+t}$ for each pair $(\ell, t) \in J$,  i.e.\ $Z_{i,\ell,t} := (X_{\ell-t+i} - X_{\ell+t+1-i})/\sqrt{2}$ for $i \in [t]$.  We then split $\{Z_{1,\ell,t}, Z_{2, \ell, t}, \dotsc,Z_{t,\ell,t}\}$ into $G$ groups of equal size, with $G$ specified later in \Cref{thm:h0_vs_h2-app}.  Let $V_{g,\ell,t} \in \mathbb{R}^p$ be with $j$-th coordinate $V_{g,\ell,t}(j) := \overline{Z}_{g,\ell,t}^2(j) - G/t$, $j \in [p]$ and $g \in [G]$, where $\overline{Z}_{g,\ell,t}$ is the sample mean of the $g$-th group. We then have that
\begin{equation*}
    T_{\ell,t}:= \mathbbm{1}_{\left\{ A_{\ell, t}^{\mathrm{MoM}} > r \right\}},
\end{equation*}
where
\begin{equation*}
    A_{\ell, t}^{\mathrm{MoM}}:= t \cdot \mathrm{median} \Biggl(  \sum_{j=1}^p V_{1, \ell, t}(j),  \sum_{j=1}^p V_{2, \ell, t}(j), \dotsc, \sum_{j=1}^p V_{G_t, \ell, t}(j)\Biggl),
\end{equation*}
and the threshold $r$ is specified in \Cref{thm:h0_vs_h2-app}. Our test for the multiple change points case is
\begin{equation}\label{eq:h0h2_test}
    \phi_{\mathcal{P},\mathrm{multi}}^{\mathrm{MoM}} := \max_{(\ell, t) \in J} T_{\ell, t}.
\end{equation}

\subsubsection{Proofs of Theorem~\ref{thm:h0_vs_h2} and Proposition~\ref{prop:h0_h2}}

We prove the following theorem on the theoretical guarantee of the test $\phi_{\mathcal{P},\mathrm{multi}}^{\mathrm{MoM}}$, constructed above in \Cref{sec:multiple-app}. Theorem~\ref{thm:h0_vs_h2} then follows as an immediate consequence.

\begin{thm}\label{thm:h0_vs_h2-app} 
Assume $n \geq 50$ and $\alpha \geq 4$. For any $\varepsilon \in (0,1)$, there exist $C_1, C_2> 0$ depending only on $\alpha$, $K$ and $\varepsilon$, such that the test $\phi_{\mathcal{P},\mathrm{multi}}^{\mathrm{MoM}}$ defined in~\eqref{eq:h0h2_test} with
\begin{align*}
 r =  C_1\sqrt{p} G \quad \text{and} \quad G = 2^{1 + \lceil \log_2(\log(n)) \rceil}
\end{align*}
satisfies that
\[
\mathcal{R}_{\mathcal{P}, \mathrm{multi}}(\rho, \phi_{\mathcal{P},\mathrm{multi}}^{\mathrm{MoM}}) \leq \varepsilon,
\]
as long as $\rho^2 \geq C_2 v_{\mathcal{P}}^{\mathrm{U}}$, where 
\[
v_{\mathcal{P}}^{\mathrm{U}} := \sqrt{p} \log(n).
\]
\end{thm}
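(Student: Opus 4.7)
The plan is to adapt the argument used to establish \Cref{thm:finitemoment_upperbound_dense} by running the median-of-means statistic $A_{\ell,t}^{\mathrm{MoM}}$ as a local test in a window of half-width $t$ centred at $\ell$, and then taking a union bound over the polynomially-sized grid $J$. The scale grid starts at $2^{1+\lceil \log_2 \log n\rceil}$ (so every $t$ used satisfies $t\geq 2\log n$), which both matches the minimum-spacing requirement $\Delta_i\geq 4\log n$ in the alternative and provides enough groups $G$ to beat a $\log n$-factor union bound.

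For the null term, fix $\theta\in\Theta_0(p,n)$ and $(\ell,t)\in J$. Under $\mathrm{H}_0$ the variables $Z_{i,\ell,t}$ are i.i.d.\ with mean $0$ and variance $1$, and Jensen's inequality together with $\alpha\geq 4$ gives $\mathbb{E}_\theta Z_{i,\ell,t}^4(j)\leq (K^4+3)/2$, exactly as in \eqref{eq:4thmoment_C1}. I would then copy the per-group Chebyshev bound \eqref{Eq:MoM_robust_null} to obtain, for a sufficiently large $C_1$, $\mathbb{P}_\theta\bigl(\sum_{j}V_{g,\ell,t}(j)>C_1\sqrt{p}G/t\bigr)\leq \varepsilon/36$, followed by the multiplicative Chernoff step \eqref{Eq:binomial_bound_robust} to conclude $\mathbb{P}_\theta(A_{\ell,t}^{\mathrm{MoM}}>r)\leq \exp\bigl\{-(G/2)\log(6/\varepsilon)\bigr\}$. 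Since $G\geq 2\log n$ and $|J|\leq n^2$, choosing $C_1$ large enough (so that $(G/2)\log(6/\varepsilon)$ dominates $2\log n$) and applying a union bound over $J$ would yield $\mathbb{E}_\theta \phi_{\mathcal{P},\mathrm{multi}}^{\mathrm{MoM}}\leq\varepsilon/2$.

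For the alternative term, fix $\theta\in\Theta_{\mathrm{multi}}(p,n,\rho)$ and let $i\in[k]$ be the index guaranteed to satisfy $\Delta_i\geq 4\log n$ and $\kappa_i^2\Delta_i\geq\rho^2$. Since $\Delta_i/2\geq 2\log n$, there is a dyadic $t^*$ in the scale grid of $J$ with $\Delta_i/2\leq t^*\leq \Delta_i$, and by definition of $\Delta_i$ we have $t^*\leq\min\{\tau_i,n-\tau_i\}$, so $(\tau_i,t^*)\in J$. Because $t^*\leq\Delta_i$, the window $[\tau_i-t^*+1,\tau_i+t^*]$ contains no change point other than $\tau_i$, so each $Z_{i,\tau_i,t^*}$ is an i.i.d.\ random vector with mean $(\mu_i-\mu_{i+1})/\sqrt{2}$ and covariance $I_p$. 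The rest follows the alternative-term analysis in the proof of \Cref{thm:finitemoment_upperbound_dense}: by expanding $\overline Z_{g,\tau_i,t^*}=\overline Z'_{g,\tau_i,t^*}+(\mu_i-\mu_{i+1})/\sqrt{2}$ and using Chebyshev for the centred quadratic and linear pieces, as in \eqref{Eq:MoM_main_alternative}--\eqref{Eq:robust_fourthmoment}, each group satisfies $\mathbb{P}_\theta(t^*\sum_j V_{g,\tau_i,t^*}(j)\leq r)\leq \varepsilon/12$ as soon as $t^*\kappa_i^2\geq C\sqrt{p}G$, which follows from $\rho^2\geq C_2\sqrt{p}\log n$ with $C_2$ large. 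A second Chernoff step on the median then brings the failure probability down to at most $\varepsilon/2$.

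The main obstacle will be calibrating the constants and the scale $G$ so that the exponential per-pair null bound $\exp\{-(G/2)\log(6/\varepsilon)\}$ still beats the union bound over $|J|=O(n^2)$ local tests while keeping $G$ small enough that the alternative signal strength needed is only $\sqrt{p}\log n$. This is precisely why the smallest scale in $J$ is chosen of order $\log n$, which in turn is the reason the minimum-spacing condition $\Delta_i\geq 4\log n$ is imposed; the heuristic in \Cref{sec:app_multiple} indicates that without it, heavy tails would force a polynomially larger signal.
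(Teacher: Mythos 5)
Your proposal is correct and follows essentially the same route as the paper's proof: bound the null error via a union bound over $J$ and the per-pair Chernoff bound $\exp\{-(G/2)\log(6/\varepsilon)\}$ inherited from \eqref{Eq:binomial_bound_robust}, then handle the alternative by locating the dyadic $t^*\in[\Delta_{i^*}/2,\Delta_{i^*}]$, checking $(\tau_{i^*},t^*)\in J$, and reusing the alternative-term analysis of \Cref{thm:finitemoment_upperbound_dense}. The only minor inaccuracy is the phrase ``choosing $C_1$ large enough so that $(G/2)\log(6/\varepsilon)$ dominates $2\log n$'': increasing $C_1$ lowers the per-group failure probability below $\varepsilon/36$, which in turn strengthens the Chernoff exponent, but the exponent is not directly a function of $C_1$; the paper instead keeps the per-group bound at $\varepsilon/36$ and absorbs the remaining slack via the assumption $n\geq 50$ and the sharper count $|J|\leq n\log_2(n/2)$. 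Both calibrations work, and the substance of the argument is identical.
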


\begin{proof}
    \textbf{Null term.} For any $\theta \in \Theta_{0}(p,n)$, we have, by a union bound
    \begin{align*}
    \mathbb{E}_{\theta}\phi_{\mathcal{P},\mathrm{multi}}^{\mathrm{MoM}} &= \mathbb{P}_{\theta} \Bigl( \max_{(\ell, t) \in J} T_{\ell, t} = 1 \Bigr)\leq \sum_{(\ell,t) \in J } \mathbb{P}_\theta(T_{\ell,t} = 1). 
    \end{align*}
    Note that, for a fixed pair $(\ell, t)$, the test variable $T_{\ell,t}$ is constructed in the exact same way as the MoM test $\phi_{\mathcal{P}, \mathrm{dense}}$ we studied in Section~\ref{subSec:robust_dense}. Therefore, by following the proof of \Cref{thm:finitemoment_upperbound_dense} and, in particular,~\eqref{Eq:binomial_bound_robust}, we have 
    \[
    \mathbb{P}_\theta(T_{\ell,t} = 1) = \mathbb{P}_\theta \bigl(A_{\ell, t}^{\mathrm{MoM}} > r \bigr)  \leq \exp\biggl\{-\frac{G}{2}\log(6/\varepsilon)\biggr\},
    \]
    when we choose $r = C_1 \sqrt{p}G$ for some sufficiently large constant $C_1$ that depends on $\varepsilon$. Now, with $G = 2^{1 + \lceil \log_2(\log(n)) \rceil} \geq 2\log(n)$ and $n \geq 50$, we have for all $(\ell, t) \in J$ that
    \[
    \mathbb{P}_\theta(T_{\ell,t} = 1) \leq \exp\biggl\{-\frac{G}{2}\log(6/\varepsilon)\biggr\} \leq \exp\Bigl\{-\frac{4}{3}\log(n) - \log(6/\varepsilon)\Bigr\} = \frac{\varepsilon}{6n^{4/3}},
    \]
    where the second inequality is derived from the calculation $\bigl(\log(n)-1\bigr)\bigl(\log(6/\varepsilon)-4/3\bigr) \geq 4/3$. Therefore, as long as $n \geq 50$, we conclude
    \[
    \sum_{(\ell,t) \in J } \mathbb{P}_\theta(T_{\ell,t} = 1) \leq \frac{\varepsilon}{6n^{4/3}}|J| \leq \frac{\varepsilon}{6n^{4/3}} n\log_2(n/2) \leq  \frac{\varepsilon}{4}.  
    \]
    \textbf{Alternative term.} For any $\theta \in \Theta_{\mathrm{multi}}(p,n,\rho)$, by definition, there exists an $i^* \in \mathbb{Z}^+$ such that $\Delta_{i^*} \geq 4\log(n)$ and $\kappa_{i^*}^2 \Delta_{i^*} \geq \rho^2$. Consequently, there exists a corresponding $(\tau_{i^*}, t^*) \in J$ such that $\Delta_{i^*}/2 \leq t^* \leq \Delta_{i^*}$. Then, we have
    \begin{align*}
    \mathbb{E}_{\theta}\bigl(1-\phi_{\mathcal{P},\mathrm{multi}}^{\mathrm{MoM}}\bigr) &= \mathbb{P}_{\theta} \Bigl( \max_{(\ell, t) \in J} T_{\ell, t} = 0 \Bigr)  \leq \mathbb{P}_{\theta}\Bigl(T_{\tau_{i^*}, t^*} = 0\Bigr).
    \end{align*}
    Since we use the same type of test statistic as $\phi_{\mathcal{P}, \mathrm{dense}}$, and that $\{Z_{1,\tau_{i^*}, t^*}, \dotsc, Z_{t^*, \tau_{i^*}, t^*}\}$ are independent random vectors each with mean $(\mu_1 - \mu_2)/\sqrt{2}$, we can again follow the proof of \Cref{thm:finitemoment_upperbound_dense} to obtain 
    \[
    \mathbb{P}_{\theta} \Big(T_{\tau_{i^*}, t^*} = 0\Big) \leq \exp\biggl\{-\frac{\varepsilon G}{12} \biggl( \frac{6}{\varepsilon }\log\Bigl(\frac{6}{\varepsilon}\Bigr) - \frac{6}{\varepsilon} + 1\biggr)  \biggr\} \leq \frac{\varepsilon}{4},
    \]
    provided that $t^*\|\mu_1-\mu_2\|_2^2/2 \geq \rho^2/4 \geq \frac{C_2}{4}\sqrt{p}\log(n)$ for some sufficiently large constant $C_2$ that depends on $\varepsilon$ and this completes the proof.
\end{proof}

\begin{proof}[Proof of \Cref{prop:h0_h2}]

Throughout this proof, we take $P_e = N^\otimes (0,1)$. Recall that $N^\otimes (0,1)$ denotes the joint distribution of all $pn$ independent $N(0,1)$ entries in $E$. Following the calculation in the lower bound proof of claim (i) in Section~\ref{proof:lowerbounds}, we confirm that $P_e \in \mathcal{P}_{\alpha,K}^\otimes$ for all $\alpha \geq 2$ and $K \geq \sqrt{\alpha+1}$. We write $P_{\theta,E}$ to denote the distribution of $\theta + E$ where $\theta \in \mathbb{R}^{p \times n}$, and $E \in \mathbb{R}^{p \times n} $ is a matrix with entries being i.i.d standard normal random variables. We also write $P_{\theta \sim \pi, E}$ to denote the distribution of $\theta +E$ when $\theta \sim \pi$.

This result essentially follows from Theorem 2 in \cite{pilliat2020optimalhighdim}. 
    To lower bound the minimax testing error, we have
    \begin{align*}
        \mathcal{R}_{\mathcal{\mathcal{P}}, \mathrm{multi}}(\rho) &= \inf_{\phi \in \Phi} \biggl\{ \sup_{P_e \in \mathcal{P}_{\alpha,K}^\otimes}\sup_{\theta \in \Theta_0(p,n)}\mathbb{E}_{\theta, P_e} (\phi) + \sup_{P_e \in \mathcal{P}_{\alpha,K}^\otimes}\sup_{\theta \in \Theta_{\mathrm{multi}}(p,n,\rho)}\mathbb{E}_{\theta, P_e} (1-\phi) \biggr\} \\
        & \geq 1-\mathrm{TV}(P_{0,E}, P_{\theta \sim \pi, E}),
    \end{align*}
    for any $\pi$ that has support only on $\theta$ with two change points such that $\min\{\Delta_1, \Delta_2\} \geq 4\log(n)$. The constructions of $\tilde{\Theta}_{(2)}$ and $\tilde{\Theta}_{(3)}$ in Case 2 and Case 3 in the proof of Theorem 2 in \cite{pilliat2020optimalhighdim} with $r = 4\log(n)$ both have support  on $\theta$ with two change points such that $\min\{\Delta_1, \Delta_2\} \geq 4\log(n)$. The remaining calculation therein shows that when $n/4 \geq \lceil 4\log(n) \rceil$ and
    \[
    \rho^2 \leq c\big\{\sqrt{p\log(n)}+\log(n)\big\},
    \]
    for some sufficiently small constant $c>0$, we have $\mathrm{TV}(P_{0,E}, P_{\theta \sim \pi, E}) \leq 1/2$.
\end{proof}

\paragraph*{A heuristic example for requiring minimum spacing}

We provide a heuristic example illustrating why the heavy-tailed nature of the data necessitates a minimum spacing condition to achieve a rate with logarithmic dependence on~$n$.

Consider independent univariate random variables $U_1,\ldots, U_n \in \mathbb{R}$ with $\mathbb{E}U_i = \mu_i$ for $i \in [n]$. For $\alpha \geq 2$ and $K < \infty$, we assume that $\mathbb{E}\bigl[ \bigl(|U_i-\mu_i|/K\bigr)^\alpha\bigr] \leq 1$ for all $i$. Now, consider the testing problem
\[
\mathrm{H}_0: \mu_1 = \cdots = \mu_n = 0 \quad \text{vs.} \quad
\mathrm{H}_1: \exists j \in [n] \text{ s.t. } \mu_i = 0 \text{ for all } i \neq j \text{ and } |\mu_j| \geq \rho.
\]
This can be viewed as testing for a single outlier caused by a mean shift at an unknown time, which corresponds to two change points with \textit{no minimum spacing requirement}.

A natural test to consider is  
\[
\max_{i \in [n]} \mathbbm{1}_{\{|U_i| \geq r\}},
\]
for some threshold $r$. Suppose we want to control the Type I error probability at some $\varepsilon > 0$. Under the null, applying a union bound and Chebyshev's inequality, we obtain the upper bound  
\begin{align*}
    \text{Type I error prob.} \leq \sum_{i=1}^n \mathbb{P}(|U_i| \geq r) \leq \frac{n}{(r/K)^\alpha}.
\end{align*}
This is at most $\varepsilon$ when $r \geq K(n/\varepsilon)^{1/\alpha}$. To distinguish the null from the alternative, the signal strength $\rho$ must exceed this threshold, implying that $\rho$ must be at least of order $n^{1/\alpha}$,~i.e. polynomial in $n$.

\subsection{Temporal dependence}\label{sec:proof-temporal}
\subsubsection{Testing procedure}\label{sec:temporal-app}
Denote $Z_i := \bigl(X_i - X_{n+1-i}\bigr)/\sqrt{2}$ for $i \in [n/2]$. For $t\in \mathcal{T}$, we split $\{Z_1, \dotsc, Z_{t}\}$ into $G_t$ groups of equal size that
\begin{align*}
    \mathcal{Z}_{t,1} := \Bigl\{Z_1, \ldots, Z_{\frac{t}{G_t}}\Bigr\}, \quad  \mathcal{Z}_{t,2} := \Bigl\{Z_{\frac{t}{G_t}+1}, \ldots, Z_{\frac{2t}{G_t}}\Bigr\},\quad \dotsc, \quad  \mathcal{Z}_{t,G_t} := \Bigl\{Z_{\frac{(G_t-1)t}{G_t}+1}, \ldots, Z_{t}\Bigr\}.
\end{align*}
The procedure to form the test remains the same as in~\eqref{eq:V_tg(j)} and~\eqref{eq:A_mom} from Section~\ref{subSec:robust_dense}, except that we replace $G_t/t$ with $\mathbb{E}\overline{Z}^2_{t,g}(j)$ in $V_{t,g}(j)$.
Our test is
\begin{equation}\label{eq:finitemoment_dense_test_temp}
\phi_{\mathcal{P}_\mathrm{Temp}} := \mathbbm{1}_{\left\{\max_{t \in \mathcal{T}} A_{t}^{\mathrm{MoM}}/r_t^{\mathrm{Temp}} > 1 \right\}},
\end{equation}
with the detection threshold $r_t^{\mathrm{Temp}}$ specified in \Cref{thm:temporal-app}. We assume $\mathbb{E}\overline{Z}^2_{t,g}(j)$ to be known, though this can be relaxed if this quantity can be estimated reasonably well. We discuss this aspect in detail and provide a specific example where the noise is generated by a moving average process in \Cref{sec:ma1}.

\subsubsection{Proof of Theorem~\ref{thm:temporal}} \label{sec:proof-temporal-app}

We prove the following theorem on the theoretical guarantee of the test $\phi_{\mathcal{P}_\mathrm{Temp}}$, constructed above in \Cref{sec:temporal-app}. Theorem~\ref{thm:temporal} then follows as an immediate consequence.

\begin{thm}\label{thm:temporal-app}
Assume $\alpha > 4$. For any $\varepsilon \in (0,1)$, there exist $C_1, C_2, C_3 > 0$ depending only on~$\alpha$,~$K$,~$c_1$,~$c_2$ and $\varepsilon$, such that the test $\phi_{\mathcal{P}_\mathrm{Temp}}$ defined in~\eqref{eq:finitemoment_dense_test_temp} with
\begin{align*}
 r_t^{\mathrm{Temp}} =  C_1p^{1/2}G_t, \quad G_t = t \wedge \Delta \quad  \mbox{and} \quad \Delta =  2^{ \lceil \log_2 (C_2 (\log \log (8n)) (\log \log \log (16n))^2 ) \rceil},
\end{align*}
satisfies that
\[
\mathcal{R}_{\mathcal{P}_{\mathrm{Temp}}}(\rho, \phi_{\mathcal{P}_\mathrm{Temp}}) \leq \varepsilon,
\]
as long as $\rho^2 \geq C_3 v_{\mathcal{P}_{\mathrm{Temp}}}^{\mathrm{U}}$, where 
\[
v_{\mathcal{P}_{\mathrm{Temp}}}^{\mathrm{U}} := p^{1/2} (\log \log (8n)) (\log \log \log (64n))^2.
\]
\end{thm}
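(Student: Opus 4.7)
The plan is to mirror the proof of \Cref{thm:finitemoment_upperbound_dense}, separating the analysis into the null term $\sup_{\theta \in \Theta_0}\mathbb{E}_\theta \phi_{\mathcal{P}_{\mathrm{Temp}}}$ and the alternative term, but with two modifications to handle temporal dependence: (i) replace independent-case moment bounds with their weak-dependence analogues, using the fact that the interlaced $\alpha^*$-mixing coefficient decays exponentially, and (ii) replace the standard binomial Chernoff bound in the median-of-means step by a coupled version that accounts for dependence between temporally adjacent groups. The per-time-scale threshold $r_t^{\mathrm{Temp}} = C_1 p^{1/2} G_t$ is structurally the same as in \Cref{thm:finitemoment_upperbound_dense}, so the rate inflation must come entirely from enlarging $\Delta$ (and hence $G_t$).

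For the null term, $\alpha > 4$ together with bounded $K$ yields uniformly bounded fourth moments of each $Z_i(j)$; combined with exponential $\alpha^*$-mixing, Rio/Rosenthal-type inequalities give $\mathrm{Var}(\overline{Z}_{t,g}(j)) \asymp G_t/t$ and $\mathbb{E}\overline{Z}_{t,g}^4(j) \asymp (G_t/t)^2$ with constants depending only on $\alpha, K, c_1, c_2$, exactly matching the independent case up to constants. Chebyshev applied to $\sum_{j=1}^{p} V_{t,g}(j)$ (with $\mathbb{E}\overline{Z}^2_{t,g}(j)$ now subtracted, which is why it appears in the definition of $V_{t,g}$) gives a per-group failure probability at most $\varepsilon/36$ for $C_1$ sufficiently large. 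To pass from per-group to median, I would use a Bernstein-style long-block/short-buffer coupling: couple the joint distribution of the $G_t$ block statistics with $G_t$ independent copies of the same marginals, at total variation cost at most $G_t c_1 e^{-c_2 b}$ under interlaced $\alpha^*$-mixing. On the coupled event, the standard binomial Chernoff argument from \eqref{Eq:binomial_bound_robust} gives $\exp(-c G_t)$, so summing over $t \in \mathcal{T}$ (of size $\leq \log_2 n$) bounds the null error by a term of order $\log n \bigl[\exp(-c\Delta) + \Delta c_1 e^{-c_2 b}\bigr]$. The alternative term follows the same expansion as in the independent case: for the unique $\tilde t \in \mathcal{T}$ with $t_0/2 < \tilde t \leq t_0$, the signal contribution $\tilde t \|\mu_1 - \mu_2\|_2^2/2 \geq \rho^2/4$ dominates the threshold as soon as $\rho^2 \gtrsim \sqrt{p}\,\Delta$, and a parallel Chebyshev-plus-coupling argument converts the per-group bound into the desired median-of-means guarantee.

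The main obstacle will be the three-way balancing of parameters. Constraint (a): $\Delta$ must be large enough that $\log n \cdot \exp(-c\Delta) \leq \varepsilon/2$, forcing $\Delta \gtrsim \log\log(8n)$. Constraint (b): the buffer $b$ must satisfy $\log n \cdot \Delta c_1 e^{-c_2 b} \leq \varepsilon/2$, forcing $b \gtrsim c_2^{-1} \log(\Delta \log n/\varepsilon)$. Constraint (c): within-block Chebyshev is meaningful only when the effective block size $t/G_t - b$ is comparable to $t/G_t$, which demands $t \gtrsim G_t b$ for every $t \in \mathcal{T}$ that falls in the ``large-$t$'' regime ($G_t = \Delta$). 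The cleanest way to satisfy all three simultaneously is to inflate $\Delta$ from $\log\log(8n)$ to $\log\log(8n)\cdot(\log\log\log(64n))^2$: constraint (a) is still easily met, (b) forces $b$ only of order $\log\log(8n)$, and (c) is then satisfied at all dyadic scales. This explains the choice of $\Delta$ in the theorem statement and accounts for the additional $(\log\log\log(64n))^2$ factor in $v^{\mathrm{U}}_{\mathcal{P}_{\mathrm{Temp}}}$ compared to the independent-case rate $v^{\mathrm{U}}_{\mathcal{P},\mathrm{dense}}$.
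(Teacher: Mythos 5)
Your high-level plan is right in spirit (mirror the proof of \Cref{thm:finitemoment_upperbound_dense}, replace the independent-case moment bounds by mixing analogues, and deal with the dependence between the $G_t$ group indicators), and the Rio/Rosenthal step for the fourth moment of $\overline{Z}_{t,g}$ is essentially what the paper does via \citet[Theorem~4.1]{ShaoYu1996} after reordering $\{E_i\}$ into an interleaved sequence so the interlaced $\alpha^*$-mixing becomes ordinary $\alpha$-mixing. The gap is in the median-of-means step, where your long-block/short-buffer coupling does not apply to the test as defined. The blocks $\mathcal{Z}_{t,1},\dotsc,\mathcal{Z}_{t,G_t}$ are contiguous with no buffers inserted, so the total-variation cost of coupling adjacent block indicators to independence is governed by $\alpha^*(1)$, which under the hypothesis $\alpha^*(i)\leq c_1e^{-c_2 i}$ is only a (possibly large) constant, not $c_1e^{-c_2 b}$ for a free $b$. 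To get the $e^{-c_2 b}$ rate you either have to modify $\phi_{\mathcal{P}_{\mathrm{Temp}}}$ to insert buffers of size $b$ between blocks (changing the test, contrary to the theorem), or compute the indicator on a trimmed ``main'' portion and separately control the buffer's contribution to $\overline{Z}_{t,g}$ -- neither of which you address.

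Even granting a buffered coupling, your constraint (c) cannot be met at all the relevant scales. For $t$ just above $\Delta$ one has $G_t=\Delta$ and $t/G_t$ is a small constant, while (b) forces $b\gtrsim\log\log(8n)$, so (c) would require $t\gtrsim\Delta\cdot\log\log(8n)$, leaving a nontrivial range of dyadic $t\in(\Delta,\Delta\log\log(8n))$ where blocks are far too short to contain a buffer of that size. Pushing $\Delta$ up to repair this would inflate the final rate beyond $p^{1/2}(\log\log(8n))(\log\log\log(64n))^2$, so the proposal as written does not reproduce the stated $v_{\mathcal{P}_{\mathrm{Temp}}}^{\mathrm{U}}$. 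The paper sidesteps all of this by applying a Bernstein-type concentration inequality for exponentially $\alpha$-mixing sequences \citep[Theorem~1]{merlevede2009} directly to the sum $\sum_g \mathbbm{1}\{t\sum_j V_{t,g}(j)>r_t^{\mathrm{Temp}}\}$; this gives $\exp\{-C G_t/(\log G_t\log\log G_t)\}$ without any coupling or buffering, and it is precisely the $\log G_t\log\log G_t$ denominator here, not a buffer-balancing argument, that forces $\Delta$ (and hence the rate) to carry the extra $(\log\log\log(64n))^2$ factor.
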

\begin{proof}
If we define $\{\tilde{E}_i\}_{i \in [n]}$ as the reordered sequence of $\{E_i\}_{i \in [n]}$ with
\begin{equation} \label{Eq:noise_reorder}
\tilde{E}_i = \begin{cases}
E_{(i+1)/2} \quad \text{for odd $i$}, \\
E_{n+1-i/2} \quad \text{for even $i$},
\end{cases}
\end{equation}
then we verify that the (usual) $\alpha$-mixing coefficient of this reordered process $\{\tilde{E}_i\}_{i \in [n]}$ satisfies
\begin{align} \label{Eq:alpha_mix_reorder}
    \alpha_{\mathrm{pa}}(i) &:= \sup_{\ell \in [n-i]} \sup_{A \in \sigma(\tilde{E}_j: 1 \leq j \leq \ell), B \in \sigma(\tilde{E}_j: \ell+i \leq j \leq n)} \bigl|\mathbb{P}(A \cap B ) - \mathbb{P}(A)\mathbb{P}(B)\bigr|  \nonumber \\
    &\leq \alpha^*(\lfloor i/2 \rfloor) \leq c_1e^{-c_2i/3},
\end{align}
for all $i \in [n-1]$. 

\noindent \textbf{Null term.} Similar to the proof of Theorem~\ref{thm:finitemoment_upperbound_dense}, we still have for all $t \in \mathcal{T}$ and $g \in [G_t]$ that
\begin{align} \label{Eq:MoM_robust_null_temp}
\mathbb{P}_\theta\biggl(t\sum_{j=1}^p V_{t,g}(j) > r_t^{\mathrm{Temp}}\biggr) &=  \mathbb{P}_\theta\biggl(\sum_{j=1}^p \biggl(\overline{Z}^2_{t,g}(j) - \mathbb{E}\overline{Z}^2_{t,g}(j) \biggr) > \frac{C_1\sqrt{p}G_t}{t}\biggr) \leq \frac{t^2\sum_{j=1}^p\mathbb{E}_\theta \overline{Z}^4_{t,g}(j)}{C_1^2pG_t^2} \nonumber \\
&\leq \frac{pt^2 C_4 (G_t/t)^2}{C_1^2pG_t^2}  = \frac{C_4}{C_1^2}.
\end{align}
for some $C_4>0$, depending on $\alpha, K$ and $c$. The second inequality above now follows from \citet[][Theorem~4.1]{ShaoYu1996}. According to the reordered sequence $\{\tilde{E}_i\}_{i\in [n]}$ defined in \eqref{Eq:noise_reorder}, we have, for $g \in [G_t]$
\begin{align*}
\mathbbm{1}_{\bigl\{ t\sum_{j=1}^p V_{t,g}(j) > r_t^{\mathrm{Temp}} \bigr\}} \in \sigma\Bigl(\tilde{E}_i: \frac{2(g-1)t}{G_t}+1 \leq i \leq \frac{2gt}{G_t}\Bigr).
\end{align*}
Thus, as long as $C_4/C_1^2 \leq 1/4$ in \eqref{Eq:MoM_robust_null_temp}, by \eqref{Eq:alpha_mix_reorder} and \citet[][Theorem~1]{merlevede2009}\footnote{Theorem 1 in the cited work assumes that the $\alpha$-mixing coefficient is bounded by $\alpha(i) \leq e^{-ci}$ for some $c > 0$. A closer examination of the proof reveals that this result remains valid even if the bound is relaxed to $c'e^{-ci}$ for some constants $c, c' > 0$.}, we have for all $t \in \mathcal{T}$ with $G_t \geq 4$
\begin{align} \label{Eq:binomial_bound_robust_temp}
&\mathbb{P}_\theta \Bigl(A_t^{\mathrm{MoM}} > r_t^{\mathrm{Temp}}\Bigr) \leq  \mathbb{P}_\theta\Biggl( \sum_{g=1}^{G_t} \mathbbm{1}_{\bigl\{ t\sum_{j=1}^p V_{t,g}(j) > r_t^{\mathrm{Temp}} \bigr\}}  \geq G_t/2\Biggr) \leq \exp\biggl\{-\frac{C_5G_t}{\log G_t \log \log G_t}\biggr\},
\end{align}
for some $C_5 > 0$, depending only on $c_1$ and $c_2$, the constants in the interlaced $\alpha$-mixing condition \eqref{Eq:alpha*_mixing}. To this end, we observe that there exists sufficiently large $C_2 > 0$, depending only on $C_5$ and $\varepsilon$, such that for all $t \geq C_2/12$ and for our choice $\Delta \geq  C_2 (\log \log(8n)) (\log \log \log(64n))^2 > C_2/12$, we have
\begin{align} 
\label{Eq:bound_middle_t_temp}
\exp\biggl\{-\frac{C_5t}{\log t \log \log t}\biggr\} \leq \frac{\varepsilon}{10}e^{-\sqrt{t}},
\end{align}
and
\begin{align}
\label{Eq:bound_Delta_temp}
\exp\biggl\{-\frac{C_5\Delta}{\log \Delta \log \log \Delta}\biggr\} \leq \frac{\varepsilon}{10\log_2(n/2)},
\end{align}
Thus, by choosing $C_1 > \sqrt{C_2^2C_4/\varepsilon}$ and combining~\eqref{Eq:MoM_robust_null_temp},~\eqref{Eq:binomial_bound_robust_temp},~\eqref{Eq:bound_middle_t_temp},~\eqref{Eq:bound_Delta_temp} and a union bound, we conclude, when $C_2$ is sufficiently large, that
\begin{align*}
\mathbb{E}_\theta \phi_{\mathcal{P}_{\mathrm{Temp}}} &\leq \sum_{t \in \mathcal{T}: \, t<C_2/12} \sum_{g \in [t]}\mathbb{P}_\theta\biggl(t\sum_{j=1}^p V_{t,g}(j) > r_t^{\mathrm{Temp}}\biggr)  + \sum_{t \in \mathcal{T}: \, C_2/12 \leq t \leq \Delta} \mathbb{P}_\theta\bigl(A_t^{\mathrm{MoM}} > r_t^{\mathrm{Temp}} \bigr) \\
&\qquad + \sum_{t \in \mathcal{T}: \,  t > \Delta}   \mathbb{P}_\theta\bigl(A_t^{\mathrm{MoM}} > r_t^{\mathrm{Temp}} \bigr)  \\
&\leq \frac{C_2^2C_4}{12C_1^2} +  \sum_{t \in \mathcal{T}: \, C_2/12 \leq t \leq \Delta} \frac{\varepsilon}{10} e^{-\sqrt{t}} + \sum_{t \in \mathcal{T}: \,  t > \Delta} \frac{\varepsilon}{10\log_2(n/2)} 
\leq \varepsilon/12 + \varepsilon/10 + \varepsilon/10 < \varepsilon/2.
\end{align*}
for all $\theta \in \Theta_0(p,n)$.

\medskip
\noindent \textbf{Alternative term.} We again follow the proof of Theorem~\ref{thm:finitemoment_upperbound_dense} and reach
\begin{align} \label{Eq:MoM_main_alternative_temp}
&\mathbb{P}_\theta\biggl(\tilde{t}\sum_{j=1}^p V_{\tilde{t},g}(j) \nonumber \leq r_{\tilde{t}}^{\mathrm{Temp}}\biggr)  \\
&\leq \mathbb{P}_\theta\Biggl(\sum_{j=1}^p \biggl( \bigl(\overline{Z}'_{\tilde{t},g}(j)\bigr)^2 - \mathbb{E}\bigl( \overline{Z}'_{\tilde{t},g}(j)\bigr)^2 + \sqrt{2}\bigl(\mu_1(j) - \mu_2(j)\bigr)\overline{Z}'_{\tilde{t},g}(j) \biggr) \leq -\frac{\rho^2}{16\tilde{t}}-\frac{\|\mu_1-\mu_2\|_2^2}{4} \Biggr).
\end{align}
Again by \citet[][Theorem~4.1]{ShaoYu1996}, we have
\begin{align*}
    \mathbb{P}_\theta\biggl(\sum_{j=1}^p \Bigl( \bigl(\overline{Z}'_{\tilde{t},g}(j)\bigr)^2 - \mathbb{E}\bigl( \overline{Z}'_{\tilde{t},g}(j)\bigr)^2 \Bigr) \leq -\frac{\rho^2}{16\tilde{t}} \biggr) \leq \frac{256(\tilde{t})^2 \sum_{j=1}^p\mathbb{E}_\theta \bigl(\overline{Z}'_{\tilde{t},g}(j)\bigr)^4 }{\rho^4} \leq \frac{C_6pG^2_{\tilde{t}}}{\rho^4},
\end{align*}
and
\begin{align*}
    \mathbb{P}_\theta\biggl(\sum_{j=1}^p \sqrt{2}\bigl(\mu_1(j) - \mu_2(j)\bigr)\overline{Z}'_{\tilde{t},g}(j) \leq -\frac{\|\mu_1-\mu_2\|_2^2}{4} \biggr) &\leq \frac{32 \sum_{j=1}^p (\mu_1(j) - \mu_2(j))^2 \mathbb{E}_\theta \bigl(\overline{Z}'_{\tilde{t},g}(j)\bigr)^2 }{\|\mu_1-\mu_2\|_2^4} \\
    &\leq \frac{C_6G_{\tilde{t}}}{\rho^2},
\end{align*}
for some $C_6>0$, depending only on $\alpha$, $K$ and $c$. Combining these with~\eqref{Eq:MoM_main_alternative_temp}, as long as
\[
\rho^2 \geq C_3 p^{1/2} (\log \log (8n)) (\log \log \log (64n))^2,
\]
for sufficiently large $C_3$, we are guaranteed that
\[
\mathbb{P}_\theta\biggl(\tilde{t}\sum_{j=1}^p V_{\tilde{t},g}(j) \leq r_{\tilde{t}}^{\mathrm{Temp}}\biggr) \leq \frac{C_4}{C_1^2} < \frac{\varepsilon}{12}.
\]
Note that 
\[
\mathbb{E}_\theta(1-\phi_{\mathcal{P}_{\mathrm{Temp}}}) \leq \mathbb{P}_\theta \bigl(A_{\tilde{t}}^{\mathrm{MoM}} \leq r_{\tilde{t}}^{\mathrm{Temp}}\bigr) \leq \mathbb{P}_\theta\Biggl( \sum_{g=1}^{G_{\tilde{t}}} \mathbbm{1}_{\bigl\{ \tilde{t}\sum_{j=1}^p V_{\tilde{t},g}(j) \leq r_{\tilde{t}}^{\mathrm{Temp}} \bigr\}}  \geq G_{\tilde{t}}/2\Biggr).
\]
We consider three cases separately: (i) $\tilde{t} < C_\varepsilon$, (ii)  $C_\varepsilon \leq \tilde{t} < \Delta$, and (iii) $\tilde{t} \geq \Delta$. Using the same argument as in controlling the null term, we can show that $\mathbb{E}_\theta(1-\phi_{\mathcal{P}_{\mathrm{Temp}}}) \leq \varepsilon/2$ and this completes the proof for $\alpha > 4$.
\end{proof}

\begin{remark}\label{rmk:temporal}
A similar result can be obtained for $2 < \alpha \leq 4$. In this case, \Cref{thm:temporal-app} holds with $r_t^{\mathrm{Temp}} =  C_1p^{2/\alpha'}G_t$ and $v_{\mathcal{P}_{\mathrm{Temp}}}^{\mathrm{U}} = p^{2/\alpha'} (\log \log (8n)) (\log \log \log (64n))^2$ for any $\alpha' < \alpha$, with $G_t$ and $\Delta$ remaining unchanged.  
\end{remark}

\subsubsection{An MA(1) example}
\label{sec:ma1}

When forming the test $\phi_{\mathcal{P}_{\mathrm{Temp}}}$, we assume $\mathbb{E}\overline{Z}^2_{t,g}(j)$ is known for all $t \in \mathcal{T}$, $g \in [G_t]$, and $j \in [p]$. This term also appears in the setting with independent observations, where it simplifies to $G_t/t$, given the assumption that the variance of each error term is $1$; see Definition~\ref{def-palphak}. A close examination of the proof of Theorem~\ref{thm:temporal-app} reveals that if, with high probability, $\sum_{j=1}^p \mathbb{E}\overline{Z}^2_{t,g}(j)$ can be accurately estimated with an error of $O(\sqrt{p}G_t /t)$ for all $t \in \mathcal{T}$, $g \in [G_t]$, and $j \in [p]$, then the conclusion of Theorem~\ref{thm:temporal-app} still holds for the modified test $\phi_{\mathcal{P}_{\mathrm{Temp}}}$, where the exact expectations $\mathbb{E}\overline{Z}^2_{t,g}(j)$ are replaced by their estimators.

We consider an example of a specific temporal dependence model. Assume that $E$ has independent component series, and for each $j \in [p]$, the $j$-th component series $\{E_i(j)\}_{i \in [n]}$ follows a moving average process of order 1 (MA(1))
\begin{equation} \label{Eq:MA1_mechanism}
    E_i(j) = \omega_i(j) + \pi_{\mathrm{ma}}\omega_{i-1}(j),
\end{equation}
where $\{\omega_i(j)\}_{i=0,1,\ldots}$ is an independent white noise sequence satisfying $\mathbb{E}[\omega_i(j)^2] = (1 + \pi^2_{\mathrm{ma}})^{-1}$ and $\mathbb{E}\bigl|\omega_i(j) + \pi_{\mathrm{ma}}\omega_{i-1}(j)\bigr|^\alpha \leq K^\alpha$ for all $i$. The lag-1 autocorrelation is given by $r_1 := \pi_{\mathrm{ma}}/(1+ \pi^2_{\mathrm{ma}})$. As discussed in \Cref{sec:temporal}, the interlaced $\alpha$-mixing coefficient of our noise sequence $\{E_i\}_{i \in [n]}$ satisfies $\alpha^*(i) \leq e^{1-i}$ for all $i \in [n-1]$. Now, if we can estimate $r_1$ well, then a plug-in estimator for $\mathbb{E}\overline{Z}^2_{t,g}(j)$ can be used. We formalise this in the following corollary.

\begin{cor} \label{cor:MAresult}
Assume $\alpha > 4$ and let $\varepsilon \in (0,1)$. Consider the MA(1) data-generating mechanism for the noise sequence described by~\eqref{Eq:MA1_mechanism} and in the last paragraph. Let $\hat{r}_1$ be any estimator for $r_1$ satisfying
\begin{equation} \label{Eq:con_on_lag1_est}
    \mathbb{P}\bigl(| \hat{r}_1 - r_1| > cp^{-1/2}\bigr) \leq \varepsilon/2,
\end{equation}
for some $c > 0$, depending on $\varepsilon$. Then, after modifying $V_{t, g}$ to become
\[
V_{t,g}(j) :=   \overline{Z}^2_{t,g}(j) - \frac{(t/G_t) + 2\bigl\{(t/G_t)-1\bigr\} \hat{r}_1 }{(t/G_t)^2},
\]
the theoretical guarantee on $\phi_{\mathcal{P}_{\mathrm{Temp}}}$ in Theorem~\ref{thm:temporal-app} remains valid, with possibly increased values of $C_1, C_2$ and $C_3$.
\end{cor}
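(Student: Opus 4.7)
The strategy is a stability argument: we shall show that replacing $r_1$ by $\hat{r}_1$ in the centering perturbs each statistic $A_t^{\mathrm{MoM}}$ by at most a constant multiple of the original detection threshold $\sqrt{p}\,G_t$, and then absorb this perturbation by slightly enlarging $C_1$ (and $C_3$). To make this concrete, first use the MA(1) covariance structure under $\mathrm{H}_0$. Since the pairing $Z_i = (X_i - X_{n+1-i})/\sqrt{2}$ ensures that the two paired noise entries are separated (for large enough $n$), we have $\mathrm{Var}(Z_i(j)) = 1$, $\mathrm{Cov}(Z_i(j), Z_{i+1}(j)) = r_1$ and vanishing covariances at higher lags. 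Writing $L := t/G_t$, a direct calculation then gives
\[
\mathbb{E}\overline{Z}^2_{t,g}(j) \;=\; \frac{L + 2(L-1)r_1}{L^2},
\]
which exactly matches the modified centering in the statement of the corollary with $r_1$ in place of $\hat{r}_1$. Consequently, the dependence of the plug-in centering on $\hat{r}_1$ is \emph{linear}, which makes perturbation bookkeeping immediate.

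Let $\mathcal{E} := \{|\hat{r}_1 - r_1| \leq cp^{-1/2}\}$, so that $\mathbb{P}(\mathcal{E}) \geq 1 - \varepsilon/2$ by assumption. Denote by $A_t^{\mathrm{MoM},\mathrm{est}}$ the modified statistic using $\hat{r}_1$, and by $A_t^{\mathrm{MoM},\mathrm{true}}$ the oracle statistic using $r_1$ (the one analysed in Theorem~\ref{thm:temporal-app}). Since the centering shift
\[
\frac{2(L-1)(\hat{r}_1 - r_1)}{L^2}
\]
is a deterministic constant common to every coordinate $j$ and every group $g$, it contributes the same shift to each of the $G_t$ group-sums $\sum_{j=1}^p V_{t,g}(j)$ and hence to their median. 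Multiplying by $t$ and summing across $p$ coordinates, we obtain the pointwise deterministic bound
\[
\bigl|A_t^{\mathrm{MoM},\mathrm{est}} - A_t^{\mathrm{MoM},\mathrm{true}}\bigr| \;\leq\; t \cdot p \cdot \frac{2(L-1)|\hat{r}_1 - r_1|}{L^2} \;\leq\; 2p G_t |\hat{r}_1 - r_1|,
\]
which is at most $2c\sqrt{p}\,G_t$ on the event $\mathcal{E}$, uniformly in $t \in \mathcal{T}$. Crucially, this inclusion is \emph{deterministic}, so no independence between $\hat{r}_1$ and the remaining data is needed.

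It now suffices to relate the Type I and Type II error probabilities of the modified test to those in Theorem~\ref{thm:temporal-app} via the threshold perturbation. Setting the new detection threshold to $r_t^{\mathrm{Temp},\mathrm{new}} := (C_1 + 2c)\sqrt{p}\,G_t$, on $\mathcal{E}$ one has the set inclusions
\[
\bigl\{A_t^{\mathrm{MoM},\mathrm{est}} > r_t^{\mathrm{Temp},\mathrm{new}}\bigr\} \subseteq \bigl\{A_t^{\mathrm{MoM},\mathrm{true}} > C_1\sqrt{p}\,G_t\bigr\}
\]
and
\[
\bigl\{A_t^{\mathrm{MoM},\mathrm{est}} \leq r_t^{\mathrm{Temp},\mathrm{new}}\bigr\} \subseteq \bigl\{A_t^{\mathrm{MoM},\mathrm{true}} \leq (C_1 + 4c)\sqrt{p}\,G_t\bigr\}.
\]
Applying Theorem~\ref{thm:temporal-app} with constants chosen so that its total error is at most $\varepsilon/2$ (possibly enlarging $C_1$ to dominate $C_1 + 4c$ in the alternative inclusion, and correspondingly enlarging $C_3$), controls the right-hand events at total probability $\varepsilon/2$, and adding $\mathbb{P}(\mathcal{E}^c) \leq \varepsilon/2$ yields $\mathcal{R}_{\mathcal{P}_{\mathrm{Temp}}}(\rho, \phi_{\mathcal{P}_{\mathrm{Temp}}}^{\mathrm{est}}) \leq \varepsilon$.

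The main obstacle is essentially conceptual rather than technical: one must verify that the particular affine structure of the MA(1) null variance in $r_1$ ensures the plug-in error is (i) uniform across $j, g$ and (ii) of the same order $\sqrt{p}\,G_t$ as the natural fluctuation scale of $A_t^{\mathrm{MoM},\mathrm{true}}$; once this is seen, the rest is routine bookkeeping of constants.
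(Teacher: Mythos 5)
Your stability argument is correct and matches the paper's (unstated, but implicit) approach: the remark preceding the corollary observes that the conclusion of Theorem~\ref{thm:temporal-app} survives if the centering $\sum_j \mathbb{E}\overline{Z}_{t,g}^2(j)$ is estimated with error $O(\sqrt{p}G_t/t)$, and your proof simply verifies that the plug-in centering $\{(t/G_t)+2((t/G_t)-1)\hat{r}_1\}/(t/G_t)^2$ achieves exactly this tolerance on the event $\mathcal{E} = \{|\hat{r}_1 - r_1| \leq cp^{-1/2}\}$. Your three key observations are the right ones: (1) the MA(1) null covariance gives $\mathbb{E}\overline{Z}_{t,g}^2(j) = \{L + 2(L-1)r_1\}/L^2$ with $L = t/G_t$; (2) the plug-in error is a coordinate- and group-independent constant, so it passes linearly through the sum over $j$, the median over $g$ and the scaling by $t$, yielding the deterministic bound $2pG_t|\hat{r}_1-r_1| \leq 2c\sqrt{p}\,G_t$ on $\mathcal{E}$; and (3) this perturbation is absorbed by a threshold inflation of the same order $\sqrt{p}\,G_t$, plus a union bound with $\mathbb{P}(\mathcal{E}^c) \leq \varepsilon/2$. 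The observation that the set inclusions are pathwise—so independence between $\hat{r}_1$ and the data is not required—is a nice touch beyond what the paper says explicitly. One small caveat worth noting (which the paper also passes over silently): when $n/2 \in \mathcal{T}$, the last observation $Z_{n/2}(j)$ pairs the adjacent indices $n/2$ and $n/2+1$, so $\mathrm{Var}(Z_{n/2}(j)) = 1 - r_1 \neq 1$ and the formula $\{L+2(L-1)r_1\}/L^2$ is off by $r_1/L^2$ for the group containing $Z_{n/2}$. This contributes an extra $O(pG_t r_1/L)$ to at most one group-sum and hence to the median; for $t = n/2$ this is $O(p\Delta^2/n)$, which is dominated by $\sqrt{p}\,G_t$ under a mild $n \gtrsim \sqrt{p}\,\Delta$ constraint. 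Your phrase ``for large enough $n$'' implicitly covers this but it would be cleaner to say ``for all $i \leq n/2 - 1$'' and then absorb the single boundary index separately. This is a minor gloss; the argument is sound.
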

Suppose we have a (historical) dataset $X_1^{(h)}, \ldots, X_m^{(h)}$ of size $m \in \mathbb{Z}^+$, within which no change point is present. Each observation can be written as $X_i^{(h)} = \mu_0  +E_i^{(h)}$, where the noise follows the data-generating mechanism described in the previous paragraph. The lag-1 autocorrelation can be estimated by
\[
\hat{r}_1 :=\frac{1}{(m-1)p} \sum_{j=1}^p \sum_{i=1}^{m-1} \biggl(X_i^{(h)}(j) - \frac{1}{m}\sum_{k=1}^m X_k^{(h)}(j) \biggr) \biggl(X_{i+1}^{(h)}(j) - \frac{1}{m}\sum_{k=1}^m X_k^{(h)}(j) \biggr).
\]
This estimator can be shown to satisfy condition~\eqref{Eq:con_on_lag1_est} when, informally speaking, the sample size $m$ is significantly larger than $\sqrt{p}$.

Note that in \eqref{Eq:MA1_mechanism}, all components share the same lag-1 coefficient. If this assumption does not hold, then the requirement on the estimators becomes $\mathbb{P}\bigl( \sum_{j=1}^p | \hat{r}_1(j) - r_1(j)| > cp^{-1/2}\bigr) \leq \varepsilon/2$ in place of \eqref{Eq:con_on_lag1_est} in Corollary~\ref{cor:MAresult}, where $r_1(j)$ denotes the lag-1 autocorrelation for the $j$-th component series and $\hat{r}_1(j)$ its estimator.

\subsection{Fewer than two finite moments} \label{sec:proof-<2moments}
\subsubsection{Testing procedure}
\label{sec:test-2moment}

As mentioned in \Cref{sec:<2moment} of the main text, our test has two components. One component utilises a robust mean estimator $\hat{\mu}^{\mathrm{RM}}$ from \citet[][c.f. Algorithm 1-7]{cherapanamjeri2022optimal}, developed specifically for distributions satisfying \eqref{eq:weakmoment}.

\begin{prop}[\cite{cherapanamjeri2022optimal}]\label{prop:weakmoment}
    Let $1 \leq \alpha \leq 2$.  For $t \in \mathbb{Z}^+$, let $X_1, \dotsc, X_t$ be independent random vectors in $\mathbb{R}^p$ with mean $\mu$. Assume that the distribution of $W_i := X_i - \mu$ belongs to $\mathcal{W}_\alpha$ for each $i \in [t]$. Then, there exists a polynomial-time algorithm that, given inputs $X_1, \dotsc, X_t$ and $\eta > 0$, outputs $\hat{\mu}^{\mathrm{RM}}_t(\{X_i\}_{i=1}^t; \eta)$. There exist absolute constants $C_{01}, C_{02} > 0$ such that, for any $0 < \eta < 1$, when $t \geq C_{01}\log(1/\eta)$, with probability at least $1 - \eta$, it holds that
    \[
        \|\hat{\mu}^{\mathrm{RM}}_t(\{X_i\}_{i=1}^t; \eta) - \mu\|_2 \leq C_{02} \Biggl\{\sqrt{\frac{p}{t}} + \left(\frac{p}{t}\right)^{\frac{\alpha-1}{\alpha}} + \left(\frac{\log(1/\eta)}{t}\right)^{\frac{\alpha-1}{\alpha}}\Biggr\}.
    \]
\end{prop}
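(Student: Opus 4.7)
The plan is to reduce the $p$-dimensional robust mean estimation problem to a collection of one-dimensional problems indexed by directions $v \in \mathcal{S}^{p-1}$, exploiting the fact that the weak moment condition $\mathbb{E}|\langle W, v\rangle|^\alpha \leq 1$ is precisely a uniform one-dimensional $\alpha$-th moment bound on projections. First, I would establish a univariate building block: for any fixed unit vector $v \in \mathcal{S}^{p-1}$, given the scalar samples $\{\langle X_i, v\rangle\}_{i=1}^t$, a truncated-mean or Catoni-type M-estimator with threshold $\tau \asymp (t/\log(1/\eta))^{1/\alpha}$ yields an estimator $\hat{m}_v$ satisfying
\[
|\hat{m}_v - \langle \mu, v\rangle| \lesssim t^{-1/2} + \bigl(\log(1/\eta)/t\bigr)^{(\alpha-1)/\alpha}
\]
with probability at least $1-\eta$. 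The choice of $\tau$ balances the truncation bias, which the $\alpha$-th moment bound forces to be of order $\tau^{1-\alpha}$, against the variance of the clipped observations; this is the classical Bubeck--Cesa-Bianchi--Lugosi mechanism.

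Second, I would construct the $p$-dimensional estimator as an approximate minimiser of the score
\[
\Psi(\mu) := \sup_{v \in \mathcal{S}^{p-1}} \bigl\{ |\langle \mu, v\rangle - \hat{m}_v| - r(v)\bigr\},
\]
where $r(v)$ is the target one-dimensional radius. Whenever the empirical 1D estimators are uniformly accurate at scale $r(v)$, the true mean satisfies $\Psi(\mu) \leq 0$, so any near-minimiser lies within the desired neighbourhood of $\mu$. Upgrading pointwise 1D concentration to a uniform bound is the critical statistical step: an $\varepsilon$-net of $\mathcal{S}^{p-1}$ has cardinality $e^{O(p)}$, and a union bound inflates the $\log(1/\eta)$ penalty by an additive $O(p)$ term, which, after the $((\cdot)/t)^{(\alpha-1)/\alpha}$ transformation, generates exactly the $(p/t)^{(\alpha-1)/\alpha}$ contribution in the stated rate; the remaining sub-Gaussian-like $\sqrt{p/t}$ term arises from a standard covariance/Rademacher-complexity argument applied to the bounded (untruncated) part of the data, whose spectral norm controls the supremum over the sphere.

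Third, for the polynomial-time guarantee, I would not minimise $\Psi$ directly (as the inner supremum over $\mathcal{S}^{p-1}$ is intractable in closed form), but instead implement an iterative descent. At each iterate $\mu^{(k)}$, an approximate worst direction $v^{(k)}$ is produced by a spectral routine (power iteration on an appropriately reweighted empirical covariance that downweights observations far from the current iterate), and $\mu^{(k+1)}$ is obtained by a projected gradient step along $v^{(k)}$ using the scalar update $\hat{m}_{v^{(k)}}$. Standard convergence analysis of stochastic subgradient/mirror descent, combined with the approximate one-sided Lipschitz structure of $\Psi$, yields convergence to a constant-factor-optimal minimiser in $\mathrm{poly}(t, p, \log(1/\eta))$ steps.

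The main obstacle is tying together the statistical and algorithmic components: one must design a score whose optimisation landscape is tractable enough for an efficient algorithm while simultaneously preserving the sharp confidence dependence $(\log(1/\eta)/t)^{(\alpha-1)/\alpha}$, which is strictly better than what a crude net-plus-union-bound would deliver. Achieving this entails showing that the spectral routine effectively implements a \emph{data-adaptive} truncation tailored to the current iterate, so that the residual along $v^{(k)}$ behaves like a bounded random variable with the right tail exponent; this is where the argument of \cite{cherapanamjeri2022optimal} is delicate, and where a careful decomposition of the empirical process into a bounded piece (treated by Bernstein-type inequalities) and a tail piece (controlled by the moment condition) is essential.
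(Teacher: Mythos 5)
The paper does not prove this proposition; it is quoted verbatim from \cite{cherapanamjeri2022optimal} and the "proof" consists of the citation to Algorithms 1--7 of that reference. So there is no internal proof to compare against, and your sketch should be judged against what the cited paper actually does and on its own correctness.

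Your high-level architecture (truncated 1D projection estimators, a score of the form $\Psi(\mu)=\sup_v\{|\langle\mu,v\rangle-\hat m_v|-r(v)\}$, and a spectral descent to make it tractable) is the right family of ideas, in the Lugosi--Mendelson / Hopkins lineage that \cite{cherapanamjeri2022optimal} builds on. But there are concrete gaps. First, the claimed 1D rate $t^{-1/2}+(\log(1/\eta)/t)^{(\alpha-1)/\alpha}$ is not what truncation gives you under only a bounded $\alpha$-th moment: carrying out the bias/variance trade-off with $\tau\asymp(t/\log(1/\eta))^{1/\alpha}$ gives all three contributions (truncation bias, Bernstein variance term, Bernstein linear term) collapsing to $(\log(1/\eta)/t)^{(\alpha-1)/\alpha}$ --- there is no free $t^{-1/2}$, because there is no variance bound when $\alpha<2$. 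The $t^{-1/2}$ you write is subdominant, so the stated inequality is not false, but the decomposition is misleading and signals a confusion that matters at the next step. Second, you attribute the multivariate $\sqrt{p/t}$ term to the \emph{spectral norm} of the truncated data's empirical covariance. Under $\mathbb{E}|\langle W,v\rangle|^\alpha\le 1$, the operator norm of the truncated covariance is $O(R^{2-\alpha})$, direction-free; the $\sqrt{p/t}$ enters through the \emph{trace} of the truncated covariance (equivalently $\mathbb{E}\|\hat W\|_2^2$), which is of order $R^{2-\alpha}p^{\alpha/2}$ after using the bound $\mathbb{E}\|W\|_2^\alpha\lesssim p^{\alpha/2}$ (this is exactly Lemma~4.1 of \cite{cherapanamjeri2022optimal}, reproduced in \Cref{lemma:weak_mean} of this paper). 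Getting $\sqrt{p/t}$ rather than a spurious $\sqrt{p\,R^{2-\alpha}/t}$ requires coupling the truncation level to the effective sample size, and this coupling is not visible in your sketch. Third, and most importantly, the polynomial-time claim is where \cite{cherapanamjeri2022optimal} is genuinely difficult: a naive minimisation of $\Psi$ over a net is exponential, and the cited paper replaces it by a carefully analysed median-of-means \emph{distance estimation} subroutine combined with an SDP-free spectral filtering step, with an explicit convergence-rate proof. Your "power iteration on a reweighted empirical covariance plus projected subgradient step" is the right intuition but is asserted rather than argued --- in particular, you do not establish that the approximate worst direction returned by the spectral routine certifies a usable descent inequality for $\Psi$, which is exactly the delicate statistical-to-algorithmic handoff that the reference develops over several algorithms. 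As written, your proposal describes the strategy but does not close either the trace-versus-operator-norm accounting or the efficient-certification step, so it would not stand as a proof of the proposition.
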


 Similar to the limitations of using $\hat{\mu}^{\mathrm{RSM}}$ as discussed in Section~\ref{subSec:robust_sparse}, test statistics based on $\hat{\mu}^{\mathrm{RM}}$ only have theoretical guarantee when the change is sufficiently away from the boundary, due to the condition $t \geq C_{01}\log(1/\eta)$ in  \Cref{prop:weakmoment}. To cover the case when the potential change occurs near the boundary, we need to adopt a different strategy that does not have this limitation on the minimal sample size.
 
Recall that $Z_i = (X_i - X_{n-i+1})/2$, for $i \leq n/2$. We let
\[
\begin{cases}
\tilde{A}_t = \|\sum_{i=1}^t Z_i/t\|_2, &t \in \mathcal{T} \cap \{ t \leq \tilde{\Delta}_1\}, \\
    \tilde{A}_t^{\mathrm{RM}} = \|\hat{\mu}^{\mathrm{RM}}_t(\{Z_i\}_{i=1}^t; \eta_t)\|_2, &t \in \mathcal{T} \cap \{ t > \tilde{\Delta}_1\},
\end{cases}
\]
and define the test
\begin{equation}\label{eq:weakmoment-test}
    \phi^{\mathrm{RM}}_{\mathcal{W}_{\alpha}} := \mathbbm{1}_{\left\{\max_{t \in \mathcal{T}\cap \{t \leq \tilde{\Delta}_1\}} \tilde{A}_{t}/\tilde{r}_t > 1 \right\}} \vee \mathbbm{1}_{\left\{\max_{t \in \mathcal{T}\cap \{t > \tilde{\Delta}_1\}} \tilde{A}^{\mathrm{RM}}_{t}/\tilde{r}^{\mathrm{RM}}_t > 1 \right\}},
\end{equation}
where $\tilde{\Delta}_1, \tilde{r}_t, \tilde{r}^{\mathrm{RM}}_t$ and $\eta_t$ are specified later in \Cref{thm:lessthan2-app}.

\subsubsection{Proof of Theorem~\ref{thm:lessthan2} and Proposition~\ref{prop:lowerbound-lessthan2}}\label{sec:<2proof}

We prove the following theorem on the theoretical guarantee of the test $\phi^{\mathrm{RM}}_{\mathcal{W}_{\alpha}}$, constructed above in \Cref{sec:test-2moment}. Theorem~\ref{thm:lessthan2} then follows as an immediate consequence.

\begin{thm}\label{thm:lessthan2-app}
Assume $1 \leq \alpha \leq 2$. For any $\varepsilon \in (0, 1)$, there exist $C_1,C_2,C_3,C_4 > 0$ depending only on $\alpha$ and $\varepsilon$, such that the test defined in~\eqref{eq:weakmoment-test} with
\begin{align*}
& \tilde{r}_t = C_1 \tilde{\Delta}_1^{\frac{2-\alpha}{2\alpha}}\log^{\frac{1}{\alpha}}(\tilde{\Delta}_1) \sqrt{\frac{p}{t}}, \\
&\eta_t = \exp\biggl\{-\frac{t \wedge \tilde{\Delta}_2}{C_2}\biggr\}, \quad \tilde{r}^{\mathrm{RM}}_t = C_3 \left(\sqrt{\frac{p}{t}}+\Big(\frac{p}{t}\Big)^{\frac{\alpha-1}{\alpha}} + \biggl(\frac{\log(1/\eta_t)}{t}\biggr)^{\frac{\alpha-1}{\alpha}}\right), \\
&\tilde{\Delta}_1 = C_2\log(16/\varepsilon) \quad \text{and} \quad \tilde{\Delta}_2 = C_2\log(16\log(2n)/\varepsilon),
\end{align*}
satisfies that
\[
\mathcal{R}_{\mathcal{W}_{\alpha}^{\otimes}}(\{\rho_{t_0}\}_{t_0 \in[n-1]}, \phi^{\mathrm{RM}}_{\mathcal{W}_{\alpha}}) \leq \varepsilon,
\]
as long as
\begin{equation}\label{eq:rate-less-than2-app}
    \rho_{t_0} \geq C_4 \biggl\{\sqrt{\frac{p}{t_0 \wedge (n-t_0)}}+\Big(\frac{p}{t_0 \wedge (n-t_0)}\Big)^{\frac{\alpha-1}{\alpha}} + \Big(\frac{\log\log(n)}{t_0 \wedge (n-t_0)}\Big)^{\frac{\alpha-1}{\alpha}}\biggr\},
\end{equation}
for all $t_0 \in [n-1]$.
\end{thm}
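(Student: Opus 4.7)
The plan is to bound $\mathcal{R}_{\mathcal{W}_{\alpha}^{\otimes}}(\{\rho_{t_0}\}, \phi^{\mathrm{RM}}_{\mathcal{W}_{\alpha}})$ by $\varepsilon/2 + \varepsilon/2$, splitting into null and alternative terms; within each, the two sub-tests (the norm-of-sample-mean $\tilde{A}_t$ for $t \leq \tilde{\Delta}_1$ and the robust estimator $\tilde{A}_t^{\mathrm{RM}}$ for $t > \tilde{\Delta}_1$) are handled by different arguments. Throughout, fix any $P_e \in \mathcal{W}_\alpha^{\otimes}$ and observe that the centred paired vectors $W_i := Z_i - \mathbb{E} Z_i$ are independent with mean zero and each satisfies a uniform weak-moment bound $\mathbb{E}|\langle W_i, v\rangle|^\alpha \leq C$ for every unit $v \in \mathbb{R}^p$ (with $C$ depending only on $\alpha$), inherited from the $\mathcal{W}_\alpha$ condition on each column of $E$ via Minkowski's inequality applied to the difference of two $\mathcal{W}_\alpha$ vectors.

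The technical core of the small-scale analysis is the moment estimate
\[
\mathbb{E}\|\bar{W}_t\|_2^\alpha \leq C_\alpha\, p^{\alpha/2}\, t^{1-\alpha}, \qquad \bar{W}_t := \frac{1}{t} \sum_{i=1}^t W_i,
\]
to be established in two steps: (i) for every fixed unit $v \in \mathbb{R}^p$, the von Bahr--Esseen inequality gives $\mathbb{E}|\langle \bar{W}_t, v\rangle|^\alpha \lesssim t^{1-\alpha}$; (ii) integrating against the uniform measure on the sphere $S^{p-1}$ and using the identity $\mathbb{E}_v |\langle w, v\rangle|^\alpha = \|w\|_2^\alpha \cdot \mathbb{E}|v_1|^\alpha$ together with the Beta-density estimate $\mathbb{E}|v_1|^\alpha \asymp p^{-\alpha/2}$, Fubini yields the claimed bound. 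Combining this with Markov's inequality at the threshold $\tilde{r}_t$ and using $t \leq \tilde{\Delta}_1$ to absorb the $\tilde{\Delta}_1^{(2-\alpha)/2} \log \tilde{\Delta}_1$ factor, the per-scale null probability is $\lesssim C_1^{-\alpha}/\log \tilde{\Delta}_1$, so a union bound across the $O(\log \tilde{\Delta}_1)$ dyadic scales yields a total null contribution $\leq \varepsilon/4$ for sufficiently large $C_1$. For the alternative, at the dyadic approximant $\tilde{t}$ with $t_0/2 \leq \tilde{t} \leq t_0$ (WLOG $t_0 \leq n/2$), the same moment bound controls $\|\bar{W}_{\tilde{t}}\|_2$, and the reverse triangle inequality $\|\bar{Z}_{\tilde{t}}\|_2 \geq \|(\mu_1 - \mu_2)/\sqrt{2}\|_2 - \|\bar{W}_{\tilde{t}}\|_2$ forces detection whenever $\rho_{t_0} \gtrsim \sqrt{p/(t_0 \wedge (n-t_0))}$, matching the first term of \eqref{eq:rate-less-than2-app}.

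The large-scale sub-test is handled by direct application of \Cref{prop:weakmoment}. With $C_2 \geq C_{01}$, the sample-size requirement $t \geq C_{01} \log(1/\eta_t)$ holds for every $t \in \mathcal{T} \cap \{t > \tilde{\Delta}_1\}$ in both regimes $t \leq \tilde{\Delta}_2$ and $t > \tilde{\Delta}_2$, so with probability at least $1 - \eta_t$ we have $\|\hat{\mu}^{\mathrm{RM}}_t - \mathbb{E} \bar{Z}_t\|_2 \leq \tilde{r}_t^{\mathrm{RM}}$. Summing $\eta_t = e^{-(t \wedge \tilde{\Delta}_2)/C_2}$ over the dyadic grid splits into a geometric-type tail over $\tilde{\Delta}_1 < t \leq \tilde{\Delta}_2$ contributing $\lesssim e^{-\tilde{\Delta}_1/C_2} = \varepsilon/16$ and a second tail over $t > \tilde{\Delta}_2$ contributing $\log_2(n) \cdot e^{-\tilde{\Delta}_2/C_2} \leq \varepsilon/16$, so the null contribution from this piece is $\leq \varepsilon/4$. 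For the alternative, at $\tilde{t} \asymp t_0 \wedge (n - t_0)$, the reverse triangle inequality combined with the concentration bound yields success provided $\rho_{t_0}$ exceeds a constant multiple of $\sqrt{p/\tilde{t}} + (p/\tilde{t})^{(\alpha-1)/\alpha} + (\log(1/\eta_{\tilde{t}})/\tilde{t})^{(\alpha-1)/\alpha}$; since $\log(1/\eta_{\tilde{t}}) \leq \tilde{\Delta}_2/C_2 \asymp \log \log(n)$, this matches the remaining two terms of \eqref{eq:rate-less-than2-app}.

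The principal difficulty is establishing the moment bound $\mathbb{E}\|\bar{W}_t\|_2^\alpha \lesssim p^{\alpha/2}\, t^{1-\alpha}$ under only a weak-moment hypothesis: the naive coordinatewise subadditivity $\|\bar{W}_t\|_2^\alpha \leq \sum_{j=1}^p |\bar{W}_t(j)|^\alpha$ yields only $p \cdot t^{1-\alpha}$, which would inflate the threshold to order $p^{1/\alpha}$ and break the $\sqrt{p/t}$ rate. The sphere-averaging trick recovers the correct $p^{\alpha/2}$ scaling precisely because the weak-moment assumption controls one-dimensional projections rather than coordinates, so the rotational invariance of the uniform measure on $S^{p-1}$ is the right symmetry to exploit. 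A secondary book-keeping concern is the pairing structure of $Z_i$ under non-identically distributed columns of $E$: independence alone suffices for von Bahr--Esseen, but uniformity of the weak-moment constant across $i$ must be tracked through the pairing, handled by the Minkowski-type step mentioned above.
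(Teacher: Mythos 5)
Your proposal is correct and follows essentially the same route as the paper's proof: split into null and alternative terms, use a moment bound $\mathbb{E}\|\bar{W}_t\|_2^\alpha \lesssim p^{\alpha/2}t^{1-\alpha}$ plus Markov for the boundary scales $t \leq \tilde{\Delta}_1$, and invoke Proposition~\ref{prop:weakmoment} with the specified $\eta_t$ for the interior scales, with the reverse triangle inequality handling the alternative in both regimes. The only cosmetic difference is that you re-derive the moment bound from scratch (von Bahr--Esseen for one-dimensional projections, then sphere-averaging to lift to the $\ell_2$-norm), whereas the paper encapsulates this as Lemma~\ref{lemma:weak_mean} by citing Lemmas~4.1 and~4.2 of \cite{cherapanamjeri2022optimal} --- but those cited lemmas prove precisely the two steps you outline, so this is the same argument.
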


\begin{proof}
    We denote $\tilde{\mathcal{T}}_1 := \{t \in \mathcal{T}: t \leq \tilde{\Delta}_1\}$, $\tilde{\mathcal{T}}_2 := \{t \in \mathcal{T}: \tilde{\Delta}_1 < t \leq \tilde{\Delta}_2\}$ and $\tilde{\mathcal{T}}_3 := \{t \in \mathcal{T}: t > \tilde{\Delta}_2\}$.
    \textbf{Null term.} Under the null hypothesis, we need to control 
    \[
    \mathbb{E}_\theta \phi^{\mathrm{RM}}_{\mathcal{W}_{\alpha}}  = \sum_{t \in \tilde{\mathcal{T}}_1}\mathbb{P}_\theta (\tilde{A}_t > \tilde{r}_t) + \sum_{t \in \tilde{\mathcal{T}}_2} \mathbb{P}_\theta (\tilde{A}_t^{\mathrm{RM}} > \tilde{r}^{\mathrm{RM}}_t) + \sum_{t \in \tilde{\mathcal{T}}_3} \mathbb{P}_\theta (\tilde{A}_t^{\mathrm{RM}} > \tilde{r}^{\mathrm{RM}}_t).
    \]
    We first control the first term. Notice that $Z_i$ are independent random vectors satisfying \eqref{eq:weakmoment}, since $\mathbb{E}_\theta Z_i = 0$ and
    \[
    \mathbb{E}_\theta|\langle Z_i, v\rangle|^{\alpha} = \frac{1}{2^{\alpha}}\mathbb{E}_\theta|\langle X_i-X_{n-i+1}, v\rangle|^{\alpha} \leq \mathbb{E}_\theta |\langle X_i-\mu, v\rangle|^{\alpha} \leq 1,
    \]
    for any unit vector $v$. Thus, we have by \Cref{lemma:weak_mean}
    \[
    \mathbb{P}_\theta (\tilde{A}_t > \tilde{r}_t) \leq \frac{\pi}{C_1^{\alpha} (\tilde{\Delta}_1 / t)^{\frac{2-\alpha}{2}}\log(\tilde{\Delta}_1)}.
    \]
Now, by setting $C_1 \geq (16\pi/\varepsilon)^{1/\alpha}$, we are guaranteed to have
\[
\sum_{t \in \tilde{\mathcal{T}}_1}\mathbb{P}_\theta (\tilde{A}_t > \tilde{r}_t) \leq \varepsilon/4.
\]
Now, for the second and third terms, we note that $t \geq C_2\log(1/\eta_t)$. Thus, by using \Cref{prop:weakmoment} with $\eta = \eta_t$, we have $\mathbb{P}(\tilde{A}_t^{\mathrm{RM}} > \tilde{r}^{\mathrm{RM}}_t) \leq \eta_t$ and thus
\begin{align*}
\sum_{t \in \tilde{\mathcal{T}}_2} \mathbb{P}_\theta (\tilde{A}_t^{\mathrm{RM}} > \tilde{r}^{\mathrm{RM}}_t) + \sum_{t \in \tilde{\mathcal{T}}_3} \mathbb{P}_\theta (\tilde{A}_t^{\mathrm{RM}} > \tilde{r}^{\mathrm{RM}}_t) &\leq \sum_{t \in \tilde{\mathcal{T}}_2} \exp\biggl\{-\frac{t}{C_2}\biggr\} + \sum_{t \in \tilde{\mathcal{T}}_3} \frac{\varepsilon}{16\log(2n)} \\
&\leq 2 \exp \biggl\{-\frac{\tilde{\Delta}_1}{C_2}\biggr\}+\frac{\varepsilon \log _2(n/2)}{16 \log(2n)}<\varepsilon/4.
\end{align*}
We therefore conclude $\mathbb{E}_\theta \phi^{\mathrm{RM}}_{\mathcal{W}_{\alpha}}  < \varepsilon/2$.

\vspace{1em}

\noindent\textbf{Alternative term.} Consider, as before, the point $\tilde{t} \in \mathcal{T}$ such that $t_0/2< \tilde{t} \leq t_0 \leq n/2$. We shall deal with the case $t_0 > n/2$ later. Note that $Z_1,\dotsc,Z_{\tilde{t}}$ are independent with mean $(\mu_1-\mu_2)/2$ and that $Z_i - (\mu_1-\mu_2)/2$ satisfies \eqref{eq:weakmoment} for each $i$. We first consider the case $\tilde{t} \leq \tilde{\Delta}_1$. In this case, we have 
\begin{equation}\label{eq:weak-type-ii-1}
    \mathbb{P}_\theta (\tilde{A}_{\tilde{t}} < \tilde{r}_{\tilde{t}}) = \mathbb{P}_\theta \Biggl(\biggl\|\sum_{i=1}^{\tilde{t}} Z_i/\tilde{t}\biggr\|_2 < \tilde{r}_{\tilde{t}}\Biggr) \leq \mathbb{P}_\theta \Biggl(\Biggl|\biggl\|\frac{\mu_1-\mu_2}{2}\biggr\|_2-\biggl\|\sum_{i=1}^{\tilde{t}} Z_i/\tilde{t} - \frac{\mu_1-\mu_2}{2}\biggr\|_2\Biggr|<\tilde{r}_{\tilde{t}}\Biggr).
\end{equation}
By \Cref{lemma:weak_mean} and the choice of $\tilde{r}_{\tilde{t}}$ with $C_1 \geq (16\pi/\varepsilon)^{1/\alpha}$, we have
\[
\mathbb{P}_\theta \Biggl(\biggl\|\sum_{i=1}^{\tilde{t}} Z_i/\tilde{t} - \frac{\mu_1-\mu_2}{2}\biggr\|_2 > \tilde{r}_{\tilde{t}}\Biggr) \leq \frac{\varepsilon}{16},
\]
Therefore, as long as 
\[
\biggl\|\frac{\mu_1-\mu_2}{2}\biggr\|_2 \geq 4C_1 \tilde{\Delta}_1^{\frac{2-\alpha}{2\alpha}}\log^{\frac{1}{\alpha}}(\tilde{\Delta}_1) \sqrt{\frac{p}{t_0}},
\]
we obtain $\bigl\|\frac{\mu_1-\mu_2}{2}\bigr\|_2 \geq 2\tilde{r}_{\tilde{t}}$ and thus $\mathbb{E}_\theta (1-\phi^{\mathrm{RM}}_{\mathcal{W}_\alpha})\leq \mathbb{P}_\theta (\tilde{A}_{\tilde{t}} < r_{\tilde{t}}) \leq \varepsilon/16$. In the case of $t_0 > n/2$, we should consider instead the point $\tilde{t} \in \mathcal{T}$ such that $(n-t_0)/2< \tilde{t} \leq n-t_0 \leq n/2$, and the same arguments as above show that $\mathbb{E}_\theta (1-\phi^{\mathrm{RM}}_{\mathcal{W}_\alpha})\leq \mathbb{P}_\theta (\tilde{A}_{\tilde{t}} < r_{\tilde{t}}) \leq \varepsilon/16$ as long as 
\[
\biggl\|\frac{\mu_1-\mu_2}{2}\biggr\|_2 \geq 4C_1 \tilde{\Delta}_1^{\frac{2-\alpha}{2\alpha}}\log^{\frac{1}{\alpha}}(\tilde{\Delta}_1) \sqrt{\frac{p}{n-t_0}}.
\]

Now, consider the case $\tilde{t} > \tilde{\Delta}_1$, when $t_0 \leq n/2$. Similar to~\eqref{eq:weak-type-ii-1}, we now have
\begin{align*}
\mathbb{P}_\theta (\tilde{A}_{\tilde{t}}^{\mathrm{RM}} < \tilde{r}^{\mathrm{RM}}_{\tilde{t}}) &= \mathbb{P}_\theta \biggl(\Bigl\|\hat{\mu}^{\mathrm{RM}}_{\tilde{t}}(\{Z_i\}_{i=1}^{\tilde{t}}; \eta_{\tilde{t}})\Bigr\|_2 < \tilde{r}^{\mathrm{RM}}_{\tilde{t}} \biggr) \\
&\leq \mathbb{P}_\theta \Biggl(\Biggl|\biggl\|\frac{\mu_1-\mu_2}{2}\biggr\|_2-\biggl\| \hat{\mu}^{\mathrm{RM}}_{\tilde{t}}(\{Z_i\}_{i=1}^{\tilde{t}}; \eta_{\tilde{t}}) - \frac{\mu_1-\mu_2}{2}\biggr\|_2\Biggr|< \tilde{r}^{\mathrm{RM}}_{\tilde{t}} \Biggr).
\end{align*}
Thus, as long as 
\[
\biggl\|\frac{\mu_1-\mu_2}{2}\biggr\|_2 \geq 4C_3 \biggl(\sqrt{\frac{p}{t_0}}+\Big(\frac{p}{t_0}\Big)^{\frac{\alpha-1}{\alpha}} + \Big(\frac{\log\log(n)}{t_0}\Big)^{\frac{\alpha-1}{\alpha}}\biggr),
\]
we have $\mathbb{P}_\theta (\tilde{A}_{\tilde{t}}^{\mathrm{RM}} < \tilde{r}^{\mathrm{RM}}_{\tilde{t}}) \leq \varepsilon/16$. In the case of $t_0 > n/2$, we should consider instead the point $\tilde{t} \in \mathcal{T}$ such that $(n-t_0)/2< \tilde{t} \leq n-t_0 \leq n/2$, and the same arguments as above show that  $\mathbb{P}_\theta (\tilde{A}_{\tilde{t}}^{\mathrm{RM}} < \tilde{r}^{\mathrm{RM}}_{\tilde{t}}) \leq \varepsilon/16$ as long as 
\[
\biggl\|\frac{\mu_1-\mu_2}{2}\biggr\|_2 \geq 4C_3 \biggl(\sqrt{\frac{p}{n-t_0}}+\Big(\frac{p}{n-t_0}\Big)^{\frac{\alpha-1}{\alpha}} + \Big(\frac{\log\log(n)}{n-t_0}\Big)^{\frac{\alpha-1}{\alpha}}\biggr). 
\]
Together, we obtain $\mathbb{E}_\theta (1-\phi^{\mathrm{RM}}_{\mathcal{W}_\alpha}) \leq \varepsilon/16$, as long as 
\[
\rho_{t_0} \geq C_4 \biggl(\sqrt{\frac{p}{t_0 \wedge (n-t_0)}}+\Big(\frac{p}{t_0 \wedge (n-t_0)}\Big)^{\frac{\alpha-1}{\alpha}} + \Big(\frac{\log\log(n)}{t_0 \wedge (n-t_0)}\Big)^{\frac{\alpha-1}{\alpha}}\biggr),
\]
for all $t_0 \in [n-1]$ and some $C_4 >0$. 
\end{proof}

\begin{prop}\label{prop:lowerbound-lessthan2}
    For the testing problem 
    \begin{equation*}
\mathrm{H}_0: \theta \in \Theta_0(p,n) \quad \mathrm{vs.}~\quad  \mathrm{H}_1: \theta \in  \bigcup_{t_0 = 1}^{n-1}\Theta(p,n, \rho_{t_0}),
\end{equation*}
when $p = 1$, it holds that 
\[
\inf_{\phi \in \Phi} \mathcal{R}_{\mathcal{W}_{\alpha}^{\otimes}}(\{\rho_{t_0}\}_{t_0 \in[n-1]}, \phi)\geq 1/2,
\]
if  
\begin{equation}\label{eq:lowerbound-lessthan2-rate}
    \rho_{t_0} \leq \bigl(t_0 \wedge (n-t_0) \bigr)^{-(1-\frac{1}{\alpha})},
\end{equation}
for some $t_0 \in [n-1].$
\end{prop}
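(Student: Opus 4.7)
\textbf{Proof plan for Proposition~\ref{prop:lowerbound-lessthan2}.} The plan is to establish the bound by a standard two-point LeCam argument, with the main work being to construct a pair of univariate distributions in the class $\mathcal{W}_\alpha$ whose product measures are statistically indistinguishable when only the difference is a small mean shift on a segment of length $t_0 \wedge (n - t_0)$. Without loss of generality I will assume that the candidate index satisfies $t_0 \leq n/2$, so that $t_0 \wedge (n - t_0) = t_0$; the complementary case $t_0 > n/2$ is handled symmetrically by swapping the roles of the two segments. Under the null I will take $\mathbb{P}_0 = P_0^{\otimes n}$, where $P_0$ is a symmetric three-point measure
\[
P_0 = (1 - 2\varepsilon)\delta_0 + \varepsilon\,\delta_{-c} + \varepsilon\,\delta_{c},
\]
with mean $0$ and $\mathbb{E}_{P_0}|X|^\alpha = 2\varepsilon c^\alpha$. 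Under the alternative I will take $\mathbb{P}_1 = P_1^{\otimes t_0}\otimes P_0^{\otimes (n-t_0)}$, where the first-segment marginal is obtained by tilting the two atoms of $P_0$ away from symmetry:
\[
P_1 = (1 - 2\varepsilon)\delta_0 + (\varepsilon - \tfrac{\rho_{t_0}}{2c})\delta_{-c} + (\varepsilon + \tfrac{\rho_{t_0}}{2c})\delta_{c},
\]
so that $\mathbb{E}_{P_1}X = \rho_{t_0}$ and the mean gap between the two segments equals exactly $\rho_{t_0}$. This puts the signal $\theta$ into $\Theta(p,n,\rho_{t_0})$ with $p = 1$.

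With this construction, the first step is to verify the membership conditions of Definition~\ref{def:w-alpha-<2}. For $P_0$ this is immediate once $2\varepsilon c^\alpha \leq 1$; for $P_1$ I will show that $\mathbb{E}_{P_1}|X - \rho_{t_0}|^\alpha$ equals $2\varepsilon c^\alpha$ up to lower-order terms of size $O(\rho_{t_0}^\alpha + \rho_{t_0}^2 c^{\alpha - 2})$, which remain negligible in the regime of interest and can be absorbed by choosing $\varepsilon$ slightly below $1/(2c^\alpha)$. The second step is the total variation calculation: the three atoms are shared between $P_0$ and $P_1$ with mass differences $0$, $\rho_{t_0}/(2c)$, $\rho_{t_0}/(2c)$, so $\mathrm{TV}(P_0, P_1) = \rho_{t_0}/(2c)$, and the tensorisation bound gives
\[
\mathrm{TV}(\mathbb{P}_0, \mathbb{P}_1) \;\leq\; t_0\,\mathrm{TV}(P_0, P_1) \;=\; \frac{t_0\,\rho_{t_0}}{2c}.
\]

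The third step is to optimise over the free parameters $(\varepsilon, c)$. Setting $2\varepsilon c^\alpha = 1$ (saturating the moment bound) and maximising $c$ subject to the nonnegativity constraint $\varepsilon \geq \rho_{t_0}/(2c)$, which amounts to $c \leq \rho_{t_0}^{-1/(\alpha-1)}$, yields $c \asymp \rho_{t_0}^{-1/(\alpha - 1)}$. Substituting back, $\mathrm{TV}(\mathbb{P}_0, \mathbb{P}_1) \lesssim t_0\,\rho_{t_0}^{\alpha/(\alpha - 1)}$, which is at most $1/2$ precisely under the assumed bound $\rho_{t_0} \leq t_0^{-(1-1/\alpha)}$. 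The proof will then be concluded by the standard lower-bound inequality
\[
\inf_{\phi \in \Phi} \mathcal{R}_{\mathcal{W}_\alpha^{\otimes}}(\{\rho_{t_0}\}_{t_0 \in [n-1]}, \phi) \;\geq\; 1 - \mathrm{TV}(\mathbb{P}_0, \mathbb{P}_1) \;\geq\; \tfrac{1}{2}.
\]

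The main technical obstacle is the verification that the shifted distribution $P_1$ still lies in $\mathcal{W}_\alpha$: the naive bound $\mathbb{E}_{P_1}|X - \rho_{t_0}|^\alpha \leq \mathbb{E}_{P_1}|X|^\alpha + |\rho_{t_0}|^\alpha$ is too crude, and one must carefully expand $|c \pm \rho_{t_0}|^\alpha$ and exploit the cancellation between the $+\rho_{t_0}/(2c)$ and $-\rho_{t_0}/(2c)$ perturbations of the atomic weights. Beyond this, the other steps are elementary algebra. Finally, the symmetric construction (with the first $t_0$ coordinates drawn from $P_0$ and the last $n - t_0$ from $P_1$, appropriately recentered) covers the case $t_0 > n/2$ and yields the $(n - t_0)^{-(1 - 1/\alpha)}$ bound, so the two cases combine to give the stated threshold $(t_0 \wedge (n - t_0))^{-(1 - 1/\alpha)}$.
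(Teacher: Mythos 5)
Your plan follows the right template---a two-point Le~Cam reduction with a matched pair of discrete atomic distributions---and the paper's own proof is of this kind. However, the specific three-atom construction you propose cannot deliver the stated threshold with constant $1$, and the step you correctly flag as ``the main technical obstacle'' is precisely where the shortfall occurs.

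The difficulty is the atom at $0$. After centring, your $P_1$ places mass $1-2\varepsilon$ at $-\rho_{t_0}$, contributing $(1-2\varepsilon)\rho_{t_0}^{\alpha}\approx\rho_{t_0}^{\alpha}$ to the $\alpha$-th central moment. Combining this with the elementary inequality $(c+\rho)^{\alpha}+(c-\rho)^{\alpha}\le 2(c^{\alpha}+\rho^{\alpha})$ for $1\le\alpha\le 2$ (subadditivity and concavity of $x\mapsto x^{\alpha/2}$), one finds
\[
\mathbb{E}_{P_1}\bigl|X-\rho_{t_0}\bigr|^{\alpha}\le 2\varepsilon c^{\alpha}+\rho_{t_0}^{\alpha},
\]
so membership in $\mathcal{W}_{\alpha}$ forces $2\varepsilon c^{\alpha}\le 1-\rho_{t_0}^{\alpha}$. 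Combined with the nonnegativity constraint $\varepsilon\ge\rho_{t_0}/(2c)$, this caps $c$ at $\bigl\{(1-\rho_{t_0}^{\alpha})/\rho_{t_0}\bigr\}^{1/(\alpha-1)}$, and therefore
\[
t_0\,\mathrm{TV}(P_0,P_1)=\frac{t_0\rho_{t_0}}{2c}\ge\frac{t_0\rho_{t_0}^{\alpha/(\alpha-1)}}{2(1-\rho_{t_0}^{\alpha})^{1/(\alpha-1)}},
\]
which exceeds $1/2$ when $\rho_{t_0}=t_0^{-(\alpha-1)/\alpha}$. Your construction only proves the claim under the strictly stronger hypothesis $\rho_{t_0}\le(t_0^{\alpha-1}+1)^{-1/\alpha}$; it gives the right rate up to constants, but not \eqref{eq:lowerbound-lessthan2-rate} with constant $1$.

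The paper sidesteps this by using a two-atom reflection pair. It takes $P_{+}=(1-u)\delta_0+u\delta_{c}$ as the common marginal under the null, and $P_{-}=(1-u)\delta_0+u\delta_{-c}$ on the first $t_0$ coordinates under the alternative. Because the centred laws $P_{+}-cu$ and $P_{-}+cu$ are reflections of each other, their $\alpha$-th absolute moments are \emph{identical}: there is no perturbative error to absorb, and a single moment bound $2c^{\alpha}u\le 1$ handles both marginals. Choosing $u=1/(2t_0)$ and $c=(2u)^{-1/\alpha}$ saturates this bound, gives mean gap $2cu=t_0^{-(\alpha-1)/\alpha}$, and yields $t_0\,\mathrm{TV}(P_{+},P_{-})=t_0u=1/2$ exactly. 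To recover the proposition as stated, replace your three-atom perturbation with this reflection construction (handling $t_0>n/2$ by symmetry, as you already anticipate).
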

\begin{proof}
    Consider two distributions $P_{+}$ and $P_{-}$ on $\mathbb{R}$ such that 
    \[
    P_{+}(\{0\}) = P_{-}(\{0\}) = 1-u, \quad P_{+}(\{c\}) = P_{-}(\{-c\}) = u,
    \]
    where $u \in [0,1]$ and $c>0$ are to be specified. Let $\mu_{+} = cu$ and $ \mu_{-} = -cu$ denote the mean of $P_+$ and  $P_{-}$, respectively. Note that we have $|\mu_{+}-\mu_{-}| = 2cu$. Also, we have the $\alpha$-th central moment of both distributions satisfies
    \[
    \mathbb{E}_{P_{+}}(|X - \mu_{+}|^{\alpha}) = \mathbb{E}_{P_{-}}(|X - \mu_{-}|^{\alpha}) = c^{\alpha}u(1-u)(u^{\alpha-1}+(1-u)^{\alpha-1}) \leq 2c^{\alpha}u,
    \]
 if $u \leq 1/2$.
 
   We first focus on the case where $t_0 \leq n/2$, i.e.\ when $\Big(\frac{1}{t_0}\Big)^{\frac{\alpha-1}{\alpha}}$ dominates.
   Consider the following two sequences of random variables
    \[
    X_1,\dotsc, X_{n} \overset{i.i.d}{\sim} P_{+}, \qquad Y_{1},\dotsc, Y_{t_0} \overset{i.i.d}{\sim} P_{-} , \;Y_{t_0+1},\dotsc, Y_{n} \overset{i.i.d}{\sim} P_{+} . 
    \]
    Let $P_{+}^{\otimes n}$ and $P_{-}^{\otimes n}$ denote the $n$-fold product distribution of $P_{+}$ and $P_{-} $, respectively. With the choice of $u = 1/(2t_0)$ and $c = (2u)^{-\frac{1}{\alpha}}$, we have $2c^{\alpha}u \leq 1$ and 
    \[
    1-\mathrm{TV}(P_{+}^{\otimes t_0}, P_{-}^{\otimes t_0}) \geq 1-t_0\mathrm{TV}(P_{+},P_{-}) = 1-t_0u \geq 1/2,
    \]
    with $|\mu_{+}-\mu_{-}| = t_0^{-\frac{\alpha-1}{\alpha}}$, which proves the claim. 
    
    In the case of $t_0 > n/2$, one simply chooses 
    \[
    Y_{1},\dotsc, Y_{t_0} \overset{i.i.d}{\sim} P_{+} , \;Y_{t_0+1},\dotsc, Y_{n} \overset{i.i.d}{\sim} P_{-}, 
    \]
    and the same arguments lead to the corresponding result.
\end{proof}

\subsection{Change away from boundary}\label{sec:app-away}
\subsubsection{Testing procedures}
\label{sec:test-awayfromboundary}

For $\mathcal{Q} = \mathcal{P}_{\alpha, K}^\otimes$ with $\alpha \geq 4$, we modify the median-of-means-type test proposed in Section~\ref{subSec:robust_sparse}. Recall that $\{Z_2, Z_4, \dotsc, Z_t\}$ is used for coordinate selection in \eqref{Eq:V_tga}. We now split this set into $G^{\mathrm{res}}$ groups of equal size, and use $\overline{Z}_{t, g, 2}$ to denote the sample mean of the $g$-th group. Our new test relies on a more robust coordinate selection step compared to \eqref{Eq:V_tga} by considering the following statistic:
\[
V^{\mathrm{res}}_{t,g,a^{\mathrm{res}}}(j) := \biggl( \overline{Z}^2_{t,g,1}(j) - \frac{2G_t}{t} \biggr) \mathbbm{1}_{ \Bigl\{ \sqrt{t/(2G^{\mathrm{res}})} \bigl| \mathrm{median}\bigl(\overline{Z}_{t, 1, 2}(j), \ldots, \overline{Z}_{t, G^{\mathrm{res}}, 2}(j)\bigr) \bigr| \geq a^{\mathrm{res}} \Bigr\} }, \quad j \in [p],
\]
where both the number of groups $G^{\mathrm{res}}$ and the threshold $a^{\mathrm{res}}$ are to be specified in \Cref{thm:awayfromboundaryP-app}. With $\mathcal{T}^{\mathrm{res}} := \mathcal{T} \cap \bigl\{\lceil (t^{\mathrm{res}}+1)/2 \rceil, \ldots, n+1-\lceil (t^{\mathrm{res}}+1)/2 \rceil  \bigr\}$, our test is 
\begin{equation}\label{eq:finitemoment_sparse_mom_restricted}
    \phi_{\mathcal{P}, \mathrm{sparse}}^{\mathrm{MoM+res}}:= \mathbbm{1}_{\{\max_{t \in \mathcal{T}^{\mathrm{res}}} A_{t,a^{\text{res}}}^{\mathrm{MoM}} / r_t > 1\}},
\end{equation}
where $A_{t,a^{\text{res}}}^{\mathrm{MoM}}$ is the same as \eqref{Eq:A_ta_MoM} but with each $V_{t,g,a}(j)$ replaced by $V^{\mathrm{res}}_{t,g,a^{\mathrm{res}}}(j)$, for $g \in [G_t]$.

For $\mathcal{Q} = \mathcal{G}_{\alpha, K}^\otimes$ with $0 < \alpha < 2$, we adopt a similar robust strategy for selecting signal coordinates in the sparse regime by replacing \eqref{eq:A_t,a} with the following statistic:
\[
A_{t,a}:=  \sum_{j=1}^p \bigl\{Y_{t,1}^2(j)-1\bigr\} \mathbbm{1}_{ \Bigl\{ \sqrt{t/(2G^{\mathrm{res}})} \bigl| \mathrm{median}\bigl(\overline{Z}_{t, 1, 2}(j), \ldots, \overline{Z}_{t, G^{\mathrm{res}}, 2}(j)\bigr) \bigr| \geq a^{\mathrm{res}} \Bigr\} },
\]
and the test takes the form of
\begin{equation}\label{eq:weibull_sparse_restricted}
    \phi_{\mathcal{G}, \mathrm{sparse}}^{\mathrm{res}}:= \mathbbm{1}_{\{\max_{t \in \mathcal{T}^{\mathrm{res}}} A_{t,a} > r\}}.
\end{equation}

\subsubsection{Proof of Theorem~\ref{thm:awayfromboundaryP}}

We prove the following theorem on the theoretical guarantee of the test $\phi_{\mathcal{P}, \mathrm{sparse}}^{\mathrm{MoM+res}}$, constructed above in \Cref{sec:test-awayfromboundary}. Theorem~\ref{thm:awayfromboundaryP} then follows as an immediate consequence.

\begin{thm} \label{thm:awayfromboundaryP-app}
    Let $t^{\mathrm{res}} = 32\bigl\{\log(e^2p/s) + s^{-1} \log \log (8n)\bigr\}$ and $\mathcal{Q} = \mathcal{P}_{\alpha, K}^\otimes$ with $\alpha \geq 4$. For any $\varepsilon \in (0, 1)$, there exist $C_1, C_2, C_3 > 0$ depending only on $\alpha$, $K$ and $\varepsilon$, such that the test $\phi_{\mathcal{P}, \mathrm{sparse}}^{\mathrm{MoM+res}}$ defined in \eqref{eq:finitemoment_sparse_mom_restricted} with
    \begin{equation*} 
    a^{\mathrm{res}} = C_1, \, G^{\mathrm{res}} = 2^ {\lfloor \log_2(t^{\mathrm{res}}/2) \rfloor}, \, r_t =  C_2 \sqrt{s} G_t, \,   
    G_t = (t \wedge \Delta)/2 \mbox{ and } \Delta = 2^{4 + \lceil\log_2 \log \log(8n)\rceil},
    \end{equation*}
satisfies that
\[
\mathcal{R}_{\mathcal{P}}(\rho,  \phi_{\mathcal{P}, \mathrm{sparse}}^{\mathrm{MoM+res}}) \leq \varepsilon,
\]
as long as $\rho^2 \geq C_3 v_{\mathcal{P}, \mathrm{sparse}}^{\mathrm{MoM+res}}$, where 
\[
v_{\mathcal{P}, \mathrm{sparse}}^{\mathrm{MoM+res}} := s\bigl\{\log(ep/s) + \log\log(8n)\bigr\}.
\]
\end{thm}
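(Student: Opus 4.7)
The proof will follow the same scaffolding as the alternative-term analysis for $\phi_{\mathcal{P}, \mathrm{sparse}}^{\mathrm{MoM}}$ in \Cref{thm:finitemoment_upperbound_sparse}, but with the hard-thresholding coordinate selection based on a single sample mean $Y_{t,2}(j)$ replaced by a median-of-means selection with $G^{\mathrm{res}}$ groups. The motivation is that, under the restriction $t^{\mathrm{res}} \gtrsim \log(ep/s) + s^{-1}\log\log(8n)$, every $t \in \mathcal{T}^{\mathrm{res}}$ is large enough to form $G^{\mathrm{res}} \asymp t^{\mathrm{res}}$ groups. Thus the coordinate-selection statistic inherits a Lugosi--Mendelson-type sub-Gaussian concentration and the heavy-tailed $(p/s)^{1/\alpha}$ contribution coming from the Fuk--Nagaev bound in \eqref{Eq:robust_prob_coordinate_null} is replaced by a purely Gaussian-like rate. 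Sample-splitting between $\{Z_1, Z_3, \dotsc\}$ (for the MoM test statistic) and $\{Z_2, Z_4, \dotsc\}$ (for the MoM selection) keeps the two steps independent, so the null and alternative analyses of \Cref{thm:finitemoment_upperbound_sparse} can largely be reused.

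\textbf{Null term.} For fixed $t \in \mathcal{T}^{\mathrm{res}}$ and a coordinate $j$ under $\mathrm{H}_0$, the $G^{\mathrm{res}}$ group sample means $\overline{Z}_{t,g,2}(j)$ are i.i.d.\ with mean $0$ and variance $2G^{\mathrm{res}}/t$, so by the standard median-of-means bound \citep[cf.][Theorem~2]{lugosi2019survey} one obtains $\mathbb{P}\bigl(\sqrt{t/(2G^{\mathrm{res}})}|\mathrm{median}| > C_1\bigr) \leq \exp(-c G^{\mathrm{res}})$ for some $c>0$ depending only on $C_1$, provided $C_1$ is taken small enough to lie in the sub-Gaussian regime. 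Since $G^{\mathrm{res}} \asymp t^{\mathrm{res}}$, choosing the absolute constants so that $c\,t^{\mathrm{res}} \geq 2\log(ep/s) + 2\log\log(8n)$ yields per-coordinate selection probability at most $(s/(ep))\,\log^{-1}(8n)$. A binomial tail bound identical to \eqref{Eq:heoffding_bound_result} gives $\mathbb{P}(|\mathcal{J}_{t,a^{\mathrm{res}}}|>s) \leq \varepsilon/(4\log_2 n)$, and conditionally on $|\mathcal{J}_{t,a^{\mathrm{res}}}|\leq s$, by the independence from sample-splitting, the MoM test statistic $A_{t,a^{\mathrm{res}}}^{\mathrm{MoM}}$ is controlled exactly as in the null bound for $\phi_{\mathcal{P}, \mathrm{sparse}}^{\mathrm{MoM}}$, yielding the remaining $\varepsilon/4$ contribution after union-bounding over $t \in \mathcal{T}^{\mathrm{res}}$.

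\textbf{Alternative term.} Take $\theta \in \Theta_{\mathrm{res}}(p,n,s,\rho)$ with change point $t_0$ and choose the dyadic $\tilde t \in \mathcal{T}^{\mathrm{res}}$ with $t_0/2 < \tilde t \leq t_0$; the boundary restriction guarantees $\tilde t \geq t^{\mathrm{res}}/2$. Define the robust signal set $\mathcal{H}_{\delta}^{\mathrm{res}} := \{j: |\mu_1(j)-\mu_2(j)| \geq 4C_1\sqrt{G^{\mathrm{res}}/\tilde t}\}$. For each $j \in \mathcal{H}_{\delta}^{\mathrm{res}}$, the MoM estimator for $(\mu_1(j)-\mu_2(j))/\sqrt{2}$ has rescaled mean at least $2C_1 = 2a^{\mathrm{res}}$, so the same Lugosi--Mendelson bound gives $\mathbb{P}(j \notin \mathcal{J}_{\tilde t,a^{\mathrm{res}}}) \leq \exp(-cG^{\mathrm{res}})$. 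With $\delta(j) = \sqrt{\tilde t}(\mu_1(j)-\mu_2(j))/2$ as in \Cref{thm:finitemoment_upperbound_sparse}, the signal lost outside $\mathcal{H}_{\delta}^{\mathrm{res}}$ is at most $4sC_1^2 G^{\mathrm{res}}$, so once $\rho^2 \gtrsim sG^{\mathrm{res}} \asymp s\log(ep/s) + \log\log(8n)$, at least half of $\|\delta\|_2^2$ is captured. A Bernstein argument analogous to \eqref{Eq:concentration_bernoulli}--\eqref{Eq:concentration_bernoulli_single} controls the random captured signal, and the MoM median of $V_{\tilde t,g,a^{\mathrm{res}}}^{\mathrm{res}}$ statistics is lower bounded exactly as in \eqref{Eq:robust_sparse_alternative_all}. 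The MoM variance step reproduces the $s\log\log(8n)$ contribution, and combining with $sG^{\mathrm{res}}$ gives the advertised condition $\rho^2 \gtrsim s\{\log(ep/s)+\log\log(8n)\}$.

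\textbf{Main obstacle.} The delicate point is the simultaneous calibration of $C_1$, the number of groups $G^{\mathrm{res}}$, and the boundary distance $t^{\mathrm{res}}$: the sub-Gaussian MoM bound requires $a^{\mathrm{res}} = C_1$ to be dominated by a constant multiple of $\sqrt{G^{\mathrm{res}}/G^{\mathrm{res}}} = 1$, yet $C_1$ must be large enough for the per-coordinate selection probability $\exp(-cG^{\mathrm{res}})$ to beat $s/p$ after incorporating both the $\log(ep/s)$ and $s^{-1}\log\log(8n)$ terms of $t^{\mathrm{res}}$ and to survive the union bounds over coordinates and over $t \in \mathcal{T}^{\mathrm{res}}$. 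Tracking these interacting constants carefully, together with verifying that the MoM selection continues to provide the tight concentration even when $s^{-1}\log\log(8n) \gg \log(ep/s)$, is where the bulk of the technical work lies.
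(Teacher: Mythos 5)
Your proposal is essentially correct and follows the same route as the paper's proof: you split $\mathcal{T}^{\mathrm{res}}$ by a union bound, control the coordinate-selection error with a median-of-means estimator whose exponential tail is driven by $G^{\mathrm{res}} \asymp t^{\mathrm{res}}$, invoke the binomial (Chernoff--Hoeffding) tail bound to control $|\mathcal{J}_{t,a^{\mathrm{res}}}|$, and under the alternative redefine the large-signal set as $\mathcal{H}_{\delta,a^{\mathrm{res}}}$ with the extra $\sqrt{G^{\mathrm{res}}}$ factor so the rest of the argument from Proposition~\ref{thm:finitemoment_upperbound_sparse} carries over. The only cosmetic difference is that you cite the Lugosi--Mendelson MoM bound directly, while the paper derives the same exponential tail from Fuk--Nagaev on the per-group mean followed by a multiplicative Chernoff bound on the median; these give the same $\exp(-cG^{\mathrm{res}})$ guarantee and the remaining bookkeeping (the $sG^{\mathrm{res}}$ signal-capture condition, the Bernstein step, and the MoM variance bound producing $s\Delta \asymp s\log\log(8n)$) is identical. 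The ``main obstacle'' you flag — a tension between the size of $C_1$ and the need for $\exp(-cG^{\mathrm{res}})$ to beat $s/p$ — is not actually a conflict: $C_1$ only needs to be a fixed constant large enough to push the per-group exceedance probability below a universal threshold (e.g.\ $\varepsilon/36$), and the $\exp(-cG^{\mathrm{res}})$ decay then comes entirely from the number of groups, not from growing $C_1$.
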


\begin{proof}
The proof is analogous to that of Proposition~\ref{thm:finitemoment_upperbound_sparse}; we thus omit many details and highlight those places where the arguments differ. \\
\noindent \textbf{Null term.} For any $\theta \in \Theta_0(p,n)$, we have by a union bound that
\begin{align} \label{Eq:sparse_MoMnull_master_restricted}
    \mathbb{E}_\theta \phi_{\mathcal{P},\mathrm{sparse}}^{\mathrm{MoM+res}} \leq \sum_{t\in \mathcal{T}^{\mathrm{res}}} \mathbb{P}_\theta\bigl(A_{t,a}^{\mathrm{MoM}} > r_t\bigr) \leq \sum_{t\in \mathcal{T}^{\mathrm{res}}} \mathbb{P}_\theta (|\mathcal{J}_{t, a^{\mathrm{res}}}| > s) + \sum_{t\in \mathcal{T}^{\mathrm{res}}} \sup_{J \subseteq [p]: |J| \leq s} \mathbb{P}_\theta(A_{t,*,J}^{\mathrm{MoM}} > r_t),
\end{align}
with $A_{t,*,J}^{\mathrm{MoM}}$ unchanged from the proof of Proposition~\ref{thm:finitemoment_upperbound_sparse} and $\mathcal{J}_{t,a^{\mathrm{res}}}$ modified to be
\[
\mathcal{J}_{t,a^{\mathrm{res}}} := \Bigl\{j \in [p]: \sqrt{t/2G^{\mathrm{res}}} \bigl| \mathrm{median}\bigl(\overline{Z}_{t, 1, 2}(j), \ldots, \overline{Z}_{t, G^{\mathrm{res}}, 2}(j)\bigr) \bigr| \geq a^{\mathrm{res}} \Bigr\},
\]
for $t \in \mathcal{T}^{\mathrm{res}}$. The second term in~\eqref{Eq:sparse_MoMnull_master_restricted} can still be bounded by $\varepsilon/8$ with $r_t, G_t$ and $\Delta$ unchanged from Proposition~\ref{thm:finitemoment_upperbound_sparse}. We now bound the first term. By Fuk--Nagaev inequality (\Cref{prop:fuknagaev}), we have for any $j \in [p]$, $g \in [G^{\mathrm{res}}]$ and $t \in \mathcal{T}^{\mathrm{res}}$
\begin{align*}
    \mathbb{P}_\theta \biggl( \sqrt{\frac{t}{2G^{\mathrm{res}}}} \bigl|\overline{Z}_{t,g,2}(j) \bigr| \geq a^{\mathrm{res}} \biggr) &\leq 2\Biggl( \frac{(\alpha+2)(K^\alpha 2^{\alpha/2}t/ (2G^{\mathrm{res}}))^{1/\alpha}}{\alpha a^{\mathrm{res}} \sqrt{t/(2G^{\mathrm{res}})}} \Biggr)^\alpha + 2\exp \biggl\{ -\frac{2(a^{\mathrm{res}} )^2}{(\alpha+2)^2e^\alpha} \biggr\}. \\
&\leq \frac{K^\alpha}{(a^{\mathrm{res}} /3)^\alpha (t/G^{\mathrm{res}})^{\alpha/2-1}} + \exp \biggl\{1 -\frac{(a^{\mathrm{res}})^2}{2\alpha^2e^\alpha} \biggr\}\\
&\leq (3K/a^{\mathrm{res}} )^\alpha + \exp \biggl\{1 -\frac{(a^{\mathrm{res}} )^2}{2\alpha^2e^\alpha} \biggr\} \leq \varepsilon/36,
\end{align*}
when $a^{\mathrm{res}} = C_1$ is sufficiently large, depending on $K, \alpha$ and $\varepsilon$. Consequently, by the multiplicative Chernoff bound, we have for $j \in [p]$
\begin{align}
\mathbb{P}_\theta (j \in \mathcal{J}_{t,a^{\mathrm{res}}}) &\leq \mathbb{P}_\theta \Biggl( \biggl| \biggl\{g \in [G^{\mathrm{res}}]: \sqrt{\frac{t}{2G^{\mathrm{res}}}} \bigl|\overline{Z}_{t,g,2}(j) \bigr| \geq a^{\mathrm{res}} \biggr\}\biggr| \geq G^{\mathrm{res}}/2\Biggr) \nonumber \\
&\leq \mathbb{P}_\theta \Biggl( \biggl| \biggl\{g \in [G^{\mathrm{res}}]: \sqrt{\frac{t}{2G^{\mathrm{res}}}} \bigl|\overline{Z}_{t,g,2}(j) \bigr| \geq a^{\mathrm{res}} \biggr\}\biggr| \geq \frac{\varepsilon G^{\mathrm{res}}}{36} \biggl(1 + \Bigl( \frac{18}{\varepsilon}-1 \Bigr) \biggr)\Biggr) \nonumber \\
&\leq \exp\biggl\{-\frac{\varepsilon G^{\mathrm{res}}}{36} \biggl( \frac{18}{\varepsilon }\log\Bigl(\frac{18}{\varepsilon}\Bigr) - \frac{18}{\varepsilon} + 1\biggr)  \biggr\} \leq \exp\biggl\{ -\frac{G^{\mathrm{res}}}{2} \log\bigl(6/\varepsilon\bigr)\biggr\} \leq (\varepsilon/6)^{G^{\mathrm{res}}/2}. \label{Eq:selection_prob_restricted}
\end{align}
Then, again by a standard binomial tail bound and a union bound
\begin{align*}
\sum_{t\in \mathcal{T}^{\mathrm{res}}} \mathbb{P}_\theta (|\mathcal{J}_{t, a^{\mathrm{res}}}| > s) \leq \log_2(n) \biggl( \frac{ep(\varepsilon/6)^{G^{\mathrm{res}}/2}}{s} \biggr)^s \leq \varepsilon/4,
\end{align*}
when $G^{\mathrm{res}} \geq 2\bigl(\log(e^2p/s) + s^{-1}\log\log(8n)\bigr)$. Putting things together, we reach $\mathbb{E}_\theta \phi_{\mathcal{P},\mathrm{sparse}}^{\mathrm{MoM+res}} \leq \varepsilon/2$.

\medskip
\noindent \textbf{Alternative term.} First, according to our definition of $\mathcal{T}^{\mathrm{res}}$, there still exists a unique $\tilde{t} \in \mathcal{T}^{\mathrm{res}}$ such that $t_0/2 < \tilde{t} \leq t_0$, where $t_0 \geq t^{\mathrm{res}}+1$ (and without loss of generality $t_0 \leq n/2$) is the true change point location. We retain most notation used in the proof of Proposition~\ref{thm:finitemoment_upperbound_sparse}, with one modification $\mathcal{H}_{\delta, a^{\mathrm{res}}} := \bigl\{j \in [p]: |\delta(j)| \geq 2a^{\mathrm{res}}\sqrt{G^{\mathrm{res}}}\bigr\}$. The reasoning behind this change is
\begin{align} \label{Eq:median_diff_restricted}
    \sqrt{ \frac{\tilde{t}}{2G^{\mathrm{res}}}} \mathrm{median}\bigl(\overline{Z}_{\tilde{t}, 1, 2}(j), \ldots, \overline{Z}_{\tilde{t}, G^{\mathrm{res}}, 2}(j)\bigr) = \sqrt{ \frac{\tilde{t}}{2G^{\mathrm{res}}}} \mathrm{median}\bigl({\overline{Z}}'_{\tilde{t}, 1, 2}(j), \ldots, {\overline{Z}}'_{\tilde{t}, G^{\mathrm{res}}, 2}(j)\bigr) + \frac{\delta(j)}{\sqrt{G^{\mathrm{res}}}}.
\end{align}
The only major difference in the proof lies in the argument for establishing
\begin{equation} \label{Eq:sum_delta2_again}
    \mathbb{P}_\theta \biggl( \sum_{j\in\mathcal{J}_{\tilde{t}, a^{\mathrm{res}}} \cap \mathcal{H}_{\delta, a^{\mathrm{res}}}} \delta(j)^2 <\frac{\|\delta\|_2^2}{12\log(8/\varepsilon)} \biggr) \leq \varepsilon/8.
\end{equation}
For $j \in \mathcal{H}_{\delta, a^{\mathrm{res}}}$, we have by~\eqref{Eq:median_diff_restricted} and~\eqref{Eq:selection_prob_restricted}
\begin{align*}
    \mathbb{P}_\theta(j \notin \mathcal{J}_{\tilde{t}, a^{\mathrm{res}}})  &= \mathbb{P}_\theta \Biggl( \sqrt{\tilde{t}/2G^{\mathrm{res}}} \bigl| \mathrm{median}\bigl(\overline{Z}_{\tilde{t}, 1, 2}(j), \ldots, \overline{Z}_{\tilde{t}, G^{\mathrm{res}}, 2}(j)\bigr) \bigr|  < a^{\mathrm{res}} \Biggr) \\
    &\leq \mathbb{P}_\theta \Biggl( \sqrt{\tilde{t}/2G^{\mathrm{res}}} \bigl( \mathrm{median}\bigl(\overline{Z}'_{\tilde{t}, 1, 2}(j), \ldots, \overline{Z}'_{\tilde{t}, G^{\mathrm{res}}, 2}(j)\bigr) \bigr)  < -a^{\mathrm{res}} \Biggr) \\
    &\leq (\varepsilon/6)^{G^{\mathrm{res}}/2} \leq \frac{\varepsilon}{2048} \leq \frac{1}{256\log(8/\varepsilon)},
\end{align*}
whenever $G^{\mathrm{res}} \geq 10$. Thus, we still have
\begin{align*}
\sum_{j \in \mathcal{H}_{\delta, a^{\mathrm{res}}}} \mathrm{Var}_\theta\bigl( \delta(j)^2\mathbbm{1}_{ \{ j \in \mathcal{J}_{\tilde{t}, a^{\mathrm{res}}} \} }  \bigr) \leq \frac{\|\delta\|_2^4}{256\log(8/\varepsilon)}.
\end{align*}
The condition $\rho^2 \geq 64a^2 s$ in the previous proof is replaced by $\rho^2 \geq 64(a^{\mathrm{res}})^2 sG^{\mathrm{res}}$ in order to obtain
\[
\sum_{j\in\mathcal{H}_{\delta, a^{\mathrm{res}}}} \delta(j)^2 \geq \|\delta\|_2^2 - s\bigl(2a^{\mathrm{res}}\sqrt{G^{\mathrm{res}}}\bigr)^2 \geq \|\delta\|_2^2/2.
\]
With these in place, \eqref{Eq:sum_delta2_again} can be established and we can deduce $\mathbb{E}_\theta (1 - \phi_{\mathcal{P},\mathrm{sparse}}^{\mathrm{MoM+res}}) \leq \varepsilon/2$ whenever
\[
\rho^2 \geq C\bigl(s\Delta + (a^{\mathrm{res}})^2 s G^{\mathrm{res}} \bigr)
\]
with a sufficiently large $C$.
\end{proof}

Theorem~\ref{thm:awayfromboundaryG} is again a direct consequence of the following result, which provides a theoretical guarantee for the test $\phi_{\mathcal{G}, \mathrm{sparse}}^{\mathrm{res}}$ constructed in \Cref{sec:test-awayfromboundary}.

\begin{thm} \label{thm:awayfromboundaryG-app}
    Let $t^{\mathrm{res}} = 32\bigl\{\log(e^2p/s) + s^{-1} \log \log (8n)\bigr\}$ and $\mathcal{Q} = \mathcal{G}_{\alpha, K}^\otimes$ with $0 < \alpha < 2$.  For any $\varepsilon \in (0, 1)$, there exist $C_1, C_2, C_3 > 0$ depending only on $\alpha$, $K$ and $\varepsilon$, such that the test $\phi_{\mathcal{G}, \mathrm{sparse}}^{\mathrm{MoM+res}}$ defined in \eqref{eq:weibull_sparse_restricted} with
    \begin{equation*}
    a^{\mathrm{res}} = C_1, \quad G^{\mathrm{res}} = 2^ {\lfloor \log_2(t^{\mathrm{res}}/2) \rfloor} \quad \mbox{and} \quad r = C_2\bigl\{\sqrt{s\log\log(8n)} + \log\log(8n)  \bigr\},
\end{equation*}
satisfies that
\[
\mathcal{R}_{\mathcal{G}}(\rho,  \phi_{\mathcal{G}, \mathrm{sparse}}^{\mathrm{res}}) \leq \varepsilon,
\]
as long as $\rho^2 \geq C_3 v_{\mathcal{G}, \mathrm{sparse}}^{\mathrm{res}}$, where 
\[
v_{\mathcal{G}, \mathrm{sparse}}^{\mathrm{res}} := s\log(ep/s) + \log\log(8n).
\]
\end{thm}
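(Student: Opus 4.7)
The plan is to adapt the proof of Theorem~\ref{thm:weibullupperbound_sparse} by replacing the single-threshold coordinate selection with the median-of-means selection, mirroring how Theorem~\ref{thm:awayfromboundaryP-app} refines Proposition~\ref{thm:finitemoment_upperbound_sparse}. The payoff is that the MoM selection statistic enjoys sub-Gaussian-type concentration under sub-Weibull noise (only a bounded second moment is used), which eliminates the $\log^{2/\alpha}(ep/s)$ factor that appears in the original sparse rate $v_{\mathcal{G},\mathrm{sparse}}^{\mathrm{U}}$. Throughout, the independence between $Y_{t,1}$ (odd-indexed sample half) and $\{\overline{Z}_{t,g,2}\}_{g \in [G^{\mathrm{res}}]}$ (built from the even-indexed sample half), which underlies the usefulness of the sample-splitting construction in \eqref{eq:samplespliting}, will again be exploited to decouple selection from aggregation.

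For the null term, a union bound gives $\mathbb{E}_\theta \phi_{\mathcal{G},\mathrm{sparse}}^{\mathrm{res}} \leq \sum_{t \in \mathcal{T}^{\mathrm{res}}} \mathbb{P}_\theta(A_{t,a^{\mathrm{res}}} > r)$, and the argument of \eqref{eq:sparse_weibullnull} splits each summand into $\mathbb{P}_\theta(|\mathcal{J}_{t,a^{\mathrm{res}}}| > s) + \sup_{|J|\leq s}\mathbb{P}_\theta\bigl(\sum_{j\in J}(Y_{t,1}^2(j)-1) > r\bigr)$, where $\mathcal{J}_{t,a^{\mathrm{res}}}$ now denotes the MoM-selected set. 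The second term is bounded by $\varepsilon/8$ exactly as in \eqref{Eq:sparse_weibullnull_term2} via the sub-Weibull quadratic form tail of Proposition~\ref{prop:quadraticweibulltail} with the prescribed $r$. For the first term, each group mean $\overline{Z}_{t,g,2}(j)$ has mean $0$ and variance $2G^{\mathrm{res}}/t$ under the null, so Chebyshev yields $\mathbb{P}_\theta\bigl(\sqrt{t/(2G^{\mathrm{res}})}|\overline{Z}_{t,g,2}(j)|\geq a^{\mathrm{res}}\bigr)\leq 1/(a^{\mathrm{res}})^2 \leq \varepsilon/36$ for $C_1$ large enough; a multiplicative Chernoff bound then delivers $\mathbb{P}_\theta(j\in\mathcal{J}_{t,a^{\mathrm{res}}})\leq(\varepsilon/6)^{G^{\mathrm{res}}/2}$, exactly as in \eqref{Eq:selection_prob_restricted}. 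A binomial tail bound combined with $G^{\mathrm{res}}\geq 2\bigl(\log(e^2p/s)+s^{-1}\log\log(8n)\bigr)$ (guaranteed by the choice of $t^{\mathrm{res}}$) then yields $\sum_{t\in\mathcal{T}^{\mathrm{res}}}\mathbb{P}_\theta(|\mathcal{J}_{t,a^{\mathrm{res}}}|>s)\leq\varepsilon/4$.

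For the alternative term, since $t_0\geq t^{\mathrm{res}}+1$, one may (WLOG $t_0\leq n/2$) pick the unique $\tilde t\in\mathcal{T}^{\mathrm{res}}$ with $t_0/2<\tilde t\leq t_0$. Set $\delta=\sqrt{\tilde t/2}(\mu_1-\mu_2)$ and, in analogy with Theorem~\ref{thm:awayfromboundaryP-app}, redefine $\mathcal{H}_{\delta,a^{\mathrm{res}}}:=\{j\in[p]:|\delta(j)|\geq 2a^{\mathrm{res}}\sqrt{G^{\mathrm{res}}}\}$; the extra factor $\sqrt{G^{\mathrm{res}}}$ accounts for the signal-to-scale ratio of the MoM selector, cf.\ \eqref{Eq:median_diff_restricted}. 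Following the decomposition \eqref{Eq:weibull_sparse_alter_ctrl}, the key step is that for $j\in\mathcal{H}_{\delta,a^{\mathrm{res}}}$ the shifted MoM selector succeeds with overwhelming probability, namely $\mathbb{P}_\theta(j\notin\mathcal{J}_{\tilde t,a^{\mathrm{res}}})\leq(\varepsilon/6)^{G^{\mathrm{res}}/2}\leq 1/(256\log(8/\varepsilon))$ when $G^{\mathrm{res}}\geq 10$, by the same Chebyshev+Chernoff calculation. The remaining two terms are controlled verbatim as in \eqref{Eq:concentration_binomial_alternative}--\eqref{Eq:subweibull_sparse_chebyshev}: Bernstein on the captured signal mass combined with the variance bound on $\sum_j \delta(j)^2 \mathbbm{1}_{\{j\in\mathcal{J}_{\tilde t,a^{\mathrm{res}}}\}}$, and Chebyshev on $\sum_{j\in\mathcal{J}_{\tilde t,a^{\mathrm{res}}}\cap\mathcal{H}_{\delta,a^{\mathrm{res}}}}(Y_{\tilde t,1}^2(j)-\delta(j)^2-1)$ using the bounded fourth moment of $Y_{\tilde t,1}(j)$ from \Cref{Lemma:bounded2kmoments}(a). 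These steps require $\rho^2\gtrsim (a^{\mathrm{res}})^2 s G^{\mathrm{res}}+(r+s)\log^2(8/\varepsilon)$, which, upon substituting $a^{\mathrm{res}}=C_1$, $G^{\mathrm{res}}\asymp\log(ep/s)+s^{-1}\log\log(8n)$, and $r\asymp\sqrt{s\log\log(8n)}+\log\log(8n)$, reduces to $\rho^2\geq C_3\bigl(s\log(ep/s)+\log\log(8n)\bigr)$ as claimed.

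The main obstacle is verifying that the MoM selector retains a dimension-free sub-Gaussian-type Chernoff bound across the entire sub-Weibull class $\mathcal{G}_{\alpha,K}$, including $\alpha<1$ where individual $\overline{Z}_{t,g,2}(j)$'s possess only very heavy tails; fortunately this bound rests solely on the bounded second moment and the independence of the $G^{\mathrm{res}}$ groups, so the argument from Theorem~\ref{thm:awayfromboundaryP-app} transfers without modification. The new restriction $t\in\mathcal{T}^{\mathrm{res}}$ also ensures that the hard-to-handle boundary case $t=1$ (which forced the dedicated Proposition~\ref{prop:2sample}(a) treatment in the unrestricted Theorem~\ref{thm:weibullupperbound_sparse}) never arises, simplifying the null-term accounting.
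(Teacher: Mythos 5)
Your proposal is correct and follows essentially the same route the paper intends: the paper itself omits the proof of Theorem~\ref{thm:awayfromboundaryG-app}, saying it is ``very similar to the last proof'' (Theorem~\ref{thm:awayfromboundaryP-app}), which is exactly the blueprint you follow. You correctly (i) decouple selection from aggregation via the sample split, (ii) control $\sup_{|J|\le s}\mathbb{P}_\theta(\sum_{j\in J}(Y_{t,1}^2(j)-1)>r)$ using the sub-Weibull quadratic concentration as in \eqref{Eq:sparse_weibullnull_term2}, (iii) bound the per-group, per-coordinate selection probability with Chebyshev and then feed it through the multiplicative Chernoff bound to get the $(\varepsilon/6)^{G^{\mathrm{res}}/2}$ rate exactly as in \eqref{Eq:selection_prob_restricted}, and (iv) redefine $\mathcal{H}_{\delta,a^{\mathrm{res}}}$ with the extra $\sqrt{G^{\mathrm{res}}}$ factor and invoke \eqref{Eq:median_diff_restricted} for the alternative. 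Your use of Chebyshev in place of the Fuk--Nagaev bound employed in the proof of Theorem~\ref{thm:awayfromboundaryP-app} is in fact a mild simplification: only the unit variance of $\sqrt{t/(2G^{\mathrm{res}})}\,\overline{Z}_{t,g,2}(j)$ is needed, which holds across the whole sub-Weibull class $\mathcal{G}_{\alpha,K}$ including $\alpha<1$. One small slip: the mean shift of $Y_{\tilde t,1}$ is $\delta=\frac{\sqrt{\tilde t}}{2}(\mu_1-\mu_2)$ (so $\|\delta\|_2^2\ge\rho^2/8$), not $\sqrt{\tilde t/2}\,(\mu_1-\mu_2)$; this is a constant-factor discrepancy that does not affect the argument.
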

The proof of Theorem~\ref{thm:awayfromboundaryG-app} is omitted, as it is very similar to the last proof.

\paragraph*{Comparison with \texorpdfstring{\cite{yu2022robust}}{Yu and Chen (2022)}:}

We compare Theorem \ref{thm:awayfromboundaryG} with the minimum signal strength requirement for a bootstrapped U-statistics-based test \citep[][Theorem 3.3]{yu2022robust}. Their result indicates that, for the sub-exponential noise class $\mathcal{G}_{1,K}^\otimes$, if $\log(p) = o(n^{1/7})$ (a mild dimension condition required for size control in Theorem 3.1 therein) and the change is away from the boundary by at least $O(\log^{5/2}(np))$, then the sum of Type I error and Type II error probabilities can be controlled provided that
\begin{equation} \label{Eq:YuChen_power}
    t_0(n-t_0)\|\mu_1 - \mu_2\|_\infty \gtrsim n^{3/2}\log^{1/2}(np).
\end{equation}
One immediate observation is that our requirement on the boundary removal, $O\bigl(\log(ep/s) + s^{-1}\log\log(8n)\bigr)$, is much smaller than their required $O(\log^{5/2}(np))$. 

Now, suppose the $s$-sparse mean shift $\mu_1 - \mu_2$ takes the form
\[
\mu_1 - \mu_2 = (\underbrace{a, \ldots, a}_{s}, \underbrace{0, \ldots, 0}_{p-s})^\top.
\]
Without loss of generality, assume $t_0 \leq n/2$. Under our framework, the signal strength condition $\rho^2 \geq C_3 v_{\mathcal{G}, \mathrm{sparse}}^{\mathrm{res}}$ implies 
\[
a \gtrsim t_0^{-1/2}\bigl(\log^{1/2}(ep/s) + s^{-1/2}\log^{1/2}\log(8n)\bigr),
\]
while \eqref{Eq:YuChen_power} requires
\[
a \gtrsim (n/t_0)^{1/2}t_0^{-1/2}\bigl(\log^{1/2}(p) + \log^{1/2}(n)\bigr).
\]
Our rate is clearly smaller across all parameter settings, with the advantage becoming particularly pronounced when the change location is not near the middle of the sequence, i.e.\ when $t_0$ is not of the same order as $n$.

\section{Auxiliary results} \label{Sec:aux}
We first present the definition and some basic properties of sub-Weibull random variables. For a more in-depth introduction and discussion, we refer to \citet{vladimirova2020sub} and \citet[][Section 2]{Kuchibhotla2022beyondsubG}.
\begin{defn}[Orlicz norms] \label{def:Orlicz}
Let $f: [0, \infty) \rightarrow [0, \infty)$ be a non-decreasing function with $f(0) = 0$. The $f$-Orlicz norm of a real-valued random variable $X$ is
\[
\|X\|_f := \inf\{t > 0: \mathbb{E}f(|X|/t) \leq 1 \}.
\]
\end{defn}
\begin{defn}[sub-Weibull random variables] \label{def:subWeibull}
A random variable $X$ is sub-Weibull of order $\alpha > 0$, denoted sub-Weibull($\alpha$), if it has mean zero and
\[
\|X\|_{\psi_\alpha} < \infty,
\]
with the function $\psi_\alpha$ defined by $\psi_\alpha(x):=\exp(x^\alpha) - 1$ for $x \geq 0$.
\end{defn}

\begin{prop}[\citealp{vladimirova2020sub}, Theorem 2.1] \label{prop:subweibull-basic-prop}
Let $X$ be a sub-Weibull($\alpha$) random variable with $0 < \alpha \leq 2$ and $\|X\|_{\psi_\alpha} = K < \infty$. Then, we have the following properties: \\
(a) the tails of $X$ satisfy
\[ 
\mathbb{P}(|X| \geq x) \leq 2\exp\{-(x/K)^\alpha\} \qquad \text{for all } x \geq 0;
\]
(b) Let $\|X\|_k := \mathbb{E}(|X|^k)^{1/k}, k \geq 1$, then
\[
\|X\|_k \leq K' k^{1/\alpha}
\]
for some absolute constant $K' >0$. \\
(c)
Conversely, if a random variable $X$ has mean zero and satisfies $\mathbb{P}(|X| \geq x) \leq 2\exp\{-(x/K)^\alpha\}$ for all $x \geq 0$, then there exists $K'' > 0$, depending only on $\alpha$ and $K$, such that
\[
\mathbb{E}\exp\bigl\{(|X|/K'')^\alpha\bigr\} \leq 2.
\]
In other words, $X$ is a sub-Weibull($\alpha$) random variable with $\|X\|_{\psi_\alpha} \leq K'' < \infty$.
\end{prop}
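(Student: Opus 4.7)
The proposition collects three standard equivalences between the Orlicz $\psi_\alpha$-norm, a sub-Weibull tail, and polynomial moment growth of rate $k^{1/\alpha}$. Each part will follow from layer-cake integration combined with a one-line Markov or change-of-variable computation, so the plan is to treat them in turn.

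For part (a), the plan is a direct Markov argument. Since $\|X\|_{\psi_\alpha} = K$ and $\psi_\alpha(x) = \exp(x^\alpha) - 1$, the definition of the Orlicz norm gives $\mathbb{E}\exp(|X|^\alpha/K^\alpha) \leq 2$. The map $x \mapsto \exp(x^\alpha/K^\alpha)$ is monotone on $[0,\infty)$, so $\{|X| \geq x\} = \{\exp(|X|^\alpha/K^\alpha) \geq \exp((x/K)^\alpha)\}$, and Markov's inequality applied to the non-negative random variable $\exp(|X|^\alpha/K^\alpha)$ immediately yields the stated tail bound.

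For part (b), I would use the layer-cake identity $\mathbb{E}|X|^k = \int_0^\infty kx^{k-1}\mathbb{P}(|X| \geq x)\,dx$, insert the tail bound from part (a), and substitute $u = (x/K)^\alpha$ to convert the integral into a Gamma function: $\mathbb{E}|X|^k \leq (2k/\alpha) K^k \Gamma(k/\alpha)$. Taking $k$-th roots and invoking Stirling's approximation $\Gamma(z) \lesssim \sqrt{2\pi/z}(z/e)^z$ yields $\Gamma(k/\alpha)^{1/k} \lesssim (k/\alpha)^{1/\alpha}$, which combines with the bounded prefactor $(2k/\alpha)^{1/k}$ to give $\|X\|_k \leq K' k^{1/\alpha}$ for a constant $K'$ depending on $\alpha$ and $K$. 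For part (c), the same layer-cake identity applied to the non-negative random variable $\exp(|X|^\alpha/K''^\alpha) - 1$, via the change of variable $y = K''(\log(1+t))^{1/\alpha}$ inside $\mathbb{E}[\exp(|X|^\alpha/K''^\alpha) - 1] = \int_0^\infty \mathbb{P}(\exp(|X|^\alpha/K''^\alpha) - 1 \geq t)\,dt$, leads to
\[
\mathbb{E}\exp(|X|^\alpha/K''^\alpha) \leq 1 + \int_0^\infty 2\exp\bigl(-(y/K)^\alpha + (y/K'')^\alpha\bigr)\frac{\alpha y^{\alpha-1}}{K''^\alpha}\,dy.
\]
The further substitution $u = y^\alpha$ reduces the right-hand side to $1 + 2/(K''^\alpha/K^\alpha - 1)$ whenever $K'' > K$, so picking $K'' \geq 3^{1/\alpha} K$ makes the total at most $2$.

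The main obstacle is the Stirling bookkeeping in part (b): one must ensure that the resulting constant genuinely scales as $k^{1/\alpha}$ uniformly over all $k \geq 1$ (including the regime $k \lesssim \alpha$ where the asymptotic is less sharp) and that $K'$ depends only on $\alpha$ and $K$. Parts (a) and (c) are essentially routine quantitative calibrations.
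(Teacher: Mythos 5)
The paper does not prove this proposition; it is imported verbatim from \citet{vladimirova2020sub}, Theorem~2.1, so there is no paper proof to compare against. Your self-contained argument is correct and uses the standard toolkit: Markov's inequality applied to $\exp\{(|X|/K)^\alpha\}$ for part~(a); the layer-cake formula, the substitution $u=(x/K)^\alpha$ to produce $\mathbb{E}|X|^k \leq (2k/\alpha)K^k\Gamma(k/\alpha)$, and Stirling's bound for part~(b); and a layer-cake plus exponential change of variables giving $\mathbb{E}\exp\{(|X|/K'')^\alpha\} \leq 1 + 2/\bigl((K''/K)^\alpha - 1\bigr)$ for part~(c), with $K'' = 3^{1/\alpha}K$ closing the argument. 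The Stirling bookkeeping you worried about in part~(b) is indeed manageable: since $\alpha\leq 2$ and $k\geq 1$ we have $k/\alpha\geq 1/2$, so $\Gamma(k/\alpha)$ is bounded (by $\sqrt{\pi}$) on $[1/2,2]$, while for $k/\alpha > 2$ Stirling yields $\Gamma(k/\alpha)^{1/k} \lesssim (k/\alpha)^{1/\alpha}$, and the prefactor $(2k/\alpha)^{1/k}$ is bounded over $k\geq 1$ by a constant depending only on $\alpha$. Two small observations worth recording: the resulting $K'$ in part~(b) depends on $\alpha$ and $K$, not merely an ``absolute constant'' as the paper's statement loosely says (the intended reading is ``independent of $k$''); and the mean-zero hypothesis in part~(c) plays no role in establishing the Orlicz-norm bound, but is carried along so that $X$ satisfies the paper's Definition~\ref{def:subWeibull}, which builds mean-zero into the term ``sub-Weibull.''
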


\begin{prop}[\citealp{vladimirova2020sub}, Proposition 2.1] \label{lem:subweibull_ordering}
Let $\alpha > \alpha' > 0$ and $X$ be a sub-Weibull($\alpha$) random variable with $\|X\|_{\psi_\alpha} = K < \infty$. Then there exists $K'>0$, depending only on $\alpha'$ and $K$, such that $X$ is a sub-Weibull($\alpha'$) random variable with $\|X\|_{\psi_{\alpha'}} \leq K' < \infty$.
\end{prop}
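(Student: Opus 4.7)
The plan is to reduce to the tail-bound characterisations in Proposition~\ref{prop:subweibull-basic-prop}. Parts (a) and (c) of that proposition give an essentially two-way correspondence between having $\|X\|_{\psi_\beta} < \infty$ and possessing a Weibull-type tail bound of order $\beta$. So it suffices to show that an order-$\alpha$ tail bound implies an order-$\alpha'$ tail bound (with an inflated constant) whenever $\alpha' < \alpha$, after which the result follows by invoking (c) with parameter $\alpha'$. Mean-zero is inherited from the sub-Weibull($\alpha$) hypothesis and requires no extra work.

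Concretely, I would first apply Proposition~\ref{prop:subweibull-basic-prop}(a) to obtain $\mathbb{P}(|X| \geq x) \leq 2\exp\{-(x/K)^\alpha\}$ for every $x \geq 0$. I would then seek $K'' > 0$, depending only on $\alpha$, $\alpha'$ and $K$, such that $\mathbb{P}(|X| \geq x) \leq 2\exp\{-(x/K'')^{\alpha'}\}$ holds for every $x \geq 0$. On the range where the right-hand side is at most $1$, this is equivalent to the exponent comparison $(x/K'')^{\alpha'} \leq (x/K)^\alpha$, which rearranges to $x^{\alpha-\alpha'} \geq K^\alpha/(K'')^{\alpha'}$ and therefore holds for $x$ above an explicit threshold $x_\ast := \bigl(K^\alpha/(K'')^{\alpha'}\bigr)^{1/(\alpha-\alpha')}$. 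On the complementary range $x \leq K''(\log 2)^{1/\alpha'}$ the bound $2\exp\{-(x/K'')^{\alpha'}\} \geq 1$ is automatic, so the trivial probability bound suffices. Arranging $x_\ast \leq K''(\log 2)^{1/\alpha'}$ forces the choice $K'' \geq K (\log 2)^{-(\alpha-\alpha')/(\alpha\alpha')}$, which is finite and of the required form.

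With the order-$\alpha'$ tail bound in place, Proposition~\ref{prop:subweibull-basic-prop}(c), applied with parameter $\alpha'$ and constant $K''$, immediately produces a finite $K'$, depending only on $\alpha'$ and $K''$, and hence only on $\alpha$, $\alpha'$ and $K$, such that $\mathbb{E}\exp\{(|X|/K')^{\alpha'}\} \leq 2$. Together with the mean-zero property of $X$, this verifies $\|X\|_{\psi_{\alpha'}} \leq K'$.

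There is no substantial obstacle; the only care needed is in matching the thresholds in the piecewise tail argument so that the large-$x$ exponential comparison and the small-$x$ trivial bound meet without a gap, which is dispatched by the explicit choice of $K''$ above.
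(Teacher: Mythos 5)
The paper does not give its own proof of this result: it is cited verbatim from \citet[Proposition~2.1]{vladimirova2020sub}, so there is no internal argument to compare against. Your proof is nevertheless a correct, self-contained derivation via the tail-bound characterisation, and it is exactly the standard route: Proposition~\ref{prop:subweibull-basic-prop}(a) to get the order-$\alpha$ Weibull tail, a piecewise argument (trivial bound below a threshold, exponent comparison above it) to convert it to an order-$\alpha'$ Weibull tail with an inflated scale $K''$, and then Proposition~\ref{prop:subweibull-basic-prop}(c) with parameter $\alpha'$ to recover the Orlicz norm bound. The algebra $K'' \geq K(\log 2)^{-(\alpha-\alpha')/(\alpha\alpha')}$ that matches the two regimes at $x_\ast$ is correct.

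Two small points worth flagging. First, the statement asserts $K'$ depends only on $\alpha'$ and $K$, whereas your $K''$ (and hence your $K'$) manifestly involves $\alpha$; this is easily repaired by noting $(\alpha-\alpha')/(\alpha\alpha') = 1/\alpha' - 1/\alpha < 1/\alpha'$, so one may simply take the uniform $K'' = K(\log 2)^{-1/\alpha'}$, which removes the $\alpha$-dependence. (In this paper $\alpha$ is anyway treated as a fixed constant, so this distinction is cosmetic here.) Second, Proposition~\ref{prop:subweibull-basic-prop} as stated in the paper carries the hypothesis $0 < \alpha \leq 2$, so your use of its part (c) at parameter $\alpha'$ technically requires $\alpha' \leq 2$; this is satisfied in every use the paper makes of the lemma, but the lemma is phrased for general $\alpha' > 0$, so a fully rigorous write-up should either cite a version of (c) without the cap or note the restriction.
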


We now provide two tail bound results from literature for sums and quadratic forms of independent sub-Weibull random variables respectively. \Cref{prop:quadraticweibulltail} below can be viewed as an extension of the Hanson--Wright inequality \citep{Hanson1971bound}.
\begin{prop}[\citealp{Kuchibhotla2022beyondsubG}, Theorem 3.1] \label{prop:sumofweibulltail}
Let $\alpha > 0$ and $n \in \mathbb{Z}^+$. Let $X_1, \dotsc, X_n$ be independent mean zero sub-Weibull random variables of order $\alpha$, with $\|X_i\|_{\psi_\alpha} \leq K$ for all $i \in \mathbb{Z}^+$ and for some $K > 0$. Then, there exists a constant $C > 0$, depending only on $\alpha$ and $K$, such that for any vector $u = (u_1, \dotsc, u_n)^\top \in \mathbb{R}^n$ and $x \geq 0$, we have
\begin{equation*}
\mathbb{P}\Bigl(\Bigl|\sum_{i=1}^n u_iX_i\Bigr| \geq x \Bigr) \leq \exp\biggl\{ 1 - \min\biggl\{ \biggl(\frac{x}{C\|u\|_2}\biggr)^2, \biggl(\frac{x}{C\|u\|_{\beta(\alpha)}}\biggr)^\alpha   \biggr\}   \biggr\},
\end{equation*}
where $\beta(\alpha) = \infty$ when $\alpha \leq 1$ and $\beta(\alpha) = \alpha/(\alpha-1)$ when $\alpha > 1$.
\end{prop}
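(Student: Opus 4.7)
The plan is to follow the standard Chernoff/MGF route for $\alpha\geq 1$, and to use a truncation plus Bernstein argument for $0<\alpha<1$. In both regimes the target bound naturally decomposes into a Gaussian-type piece governing small deviations (scaling like $\|u\|_2$) and a heavier sub-Weibull piece governing large deviations (scaling like $\|u\|_{\beta(\alpha)}$), and the $\min$ on the right-hand side reflects a union over these two regimes after optimising the Chernoff parameter.

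For $\alpha\in[1,2]$, I would first establish centred MGF control: for a mean-zero sub-Weibull($\alpha$) variable $X$ with $\|X\|_{\psi_\alpha}\leq K$, there exist constants $c_1,c_2>0$ depending only on $\alpha,K$ such that $\mathbb{E}e^{\lambda X}\leq \exp(c_2\lambda^2)$ for $|\lambda|\leq c_1$. This follows from the bound $\mathbb{E}|X|^k\leq C_1 k^{1/\alpha}$ (Proposition~\ref{prop:subweibull-basic-prop}(b)), expanding the exponential series, and collecting terms. By independence,
\[
\mathbb{E}\exp\!\Bigl(\lambda\sum_i u_iX_i\Bigr)\leq \exp(c_2\lambda^2\|u\|_2^2),\qquad |\lambda|\leq c_1/\|u\|_\infty,
\]
so Markov gives the Gaussian branch $\exp(-x^2/(C\|u\|_2^2))$ when $x\lesssim \|u\|_2^2/\|u\|_\infty$. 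For larger $x$, the Chernoff parameter hits the boundary $\lambda\asymp c_1/\|u\|_\infty$, at which point the bound degrades into a sub-Weibull tail. To recover the sharper $\|u\|_{\beta(\alpha)}$ dependence rather than $\|u\|_\infty$, I would instead compute $\bigl\|\sum_i u_iX_i\bigr\|_{\psi_\alpha}$ directly using a Latała-style inequality, bounding $\bigl\|\sum_i u_iX_i\bigr\|_k\lesssim\sqrt{k}\|u\|_2+k^{1/\alpha}\|u\|_{\beta(\alpha)}$ via symmetrisation and moment comparison, then converting moment bounds back to a tail bound through the $\psi_\alpha$ norm.

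For $\alpha\in(0,1)$, the MGF of $X_i$ may not exist, so I would truncate. Fix a threshold $M=M(x)$ to be chosen, and split $X_i=Y_i+Z_i$ with $Y_i=X_i\mathbbm{1}_{\{|X_i|\leq M\}}$ and $Z_i=X_i\mathbbm{1}_{\{|X_i|> M\}}$. A union bound with Proposition~\ref{prop:subweibull-basic-prop}(a) controls $\mathbb{P}(\exists i:Z_i\neq 0)$ by $\sum_i 2e^{-(M/K)^\alpha}$. For the bounded part, apply Bernstein's inequality to $\sum_i u_i(Y_i-\mathbb{E}Y_i)$, using $|u_iY_i|\leq M|u_i|$ and $\sum_i u_i^2\mathrm{Var}(Y_i)\leq C\|u\|_2^2$, which yields $\exp\{-cx^2/(\|u\|_2^2+M\|u\|_\infty x)\}$. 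The bias term $\sum_i u_i\mathbb{E}Y_i=-\sum_i u_i\mathbb{E}Z_i$ is of smaller order by the tail bound and can be absorbed into $x$. Optimising over $M$ so that the Bernstein Gaussian regime and the truncation regime balance yields exactly the $\min\{x^2/\|u\|_2^2,(x/\|u\|_\infty)^\alpha\}$ form, and since $\beta(\alpha)=\infty$ for $\alpha\leq 1$, this matches the stated bound.

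The main obstacle is getting the sharp $\ell_{\beta(\alpha)}$ dependence for $\alpha\in(1,2]$, rather than the crude $\ell_\infty$ bound that a direct truncation argument would produce. The precise coefficient comes from the observation that $\sum_i 2\exp\{-(x/(CK|u_i|))^\alpha\}$ is not optimally bounded by extracting $\|u\|_\infty$; one must instead use Hölder/Young duality between $\psi_\alpha$ and its conjugate Orlicz function $\varphi_{\alpha/(\alpha-1)}$, or equivalently invoke Talagrand's two-sided moment estimate for sums of independent sub-Weibulls, which identifies the Orlicz-type norm of the sum as the infimal convolution of a Gaussian norm $\|u\|_2$ and a sub-Weibull norm $\|u\|_{\beta(\alpha)}$. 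Once that moment estimate is in hand, converting to tails via Markov applied to $\psi_\alpha(|S|/t)$ delivers the claimed inequality with an absolute leading constant, and the factor $e$ in front of the exponential just absorbs the $\psi_\alpha$ normalisation.
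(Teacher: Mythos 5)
The paper does not prove this statement; it cites Theorem 3.1 of Kuchibhotla and Chakrabortty (2022) directly, so there is no internal argument to compare against. The relevant question is whether your sketch would actually deliver the claimed bound, and as written it has two problems.

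First, the $\alpha\in(0,1)$ truncation argument does not close. The union bound you invoke gives $\mathbb{P}(\exists\, i:Z_i\neq 0)\leq 2n\,e^{-(M/K)^\alpha}$, and the stated inequality has no dependence on $n$ except through $\|u\|_2$ and $\|u\|_{\beta(\alpha)}$; the constant $C$ is allowed to depend only on $\alpha$ and $K$. Taking $u=(1,\dotsc,1)$ so that $\|u\|_\infty=1$ and $\|u\|_2=\sqrt{n}$, the factor $n$ in the union bound cannot be absorbed into an $(x/(C\|u\|_\infty))^\alpha$-type exponent uniformly in $x$; forcing $M$ to grow like $\log^{1/\alpha} n$ then degrades the Bernstein piece by a $\log n$ factor. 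A per-coordinate threshold $M_i\propto x/|u_i|$ does not fix this either, because $\sum_i e^{-(x/(CK|u_i|))^\alpha}$ is again of order $n\,e^{-(x/(CK\|u\|_\infty))^\alpha}$ in the equal-weight case. Some sharper aggregation is required, and your sketch does not supply it.

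Second, your framing of the $\alpha\in(1,2]$ case obscures the real issue. For $\alpha\in(1,2]$ one has $\beta(\alpha)\in[2,\infty)$, hence $\|u\|_\infty\leq\|u\|_{\beta(\alpha)}$, so a bound with $\|u\|_\infty$ in the denominator of the Weibull branch would be strictly \emph{stronger} than the claimed one, not ``cruder'' — and it is simply false (take $\alpha=2$, $u_i\equiv n^{-1/2}$). The reason your Chernoff argument only produces $\|u\|_\infty$ is that you have retained only the small-$\lambda$ quadratic MGF bound and stopped at $|\lambda|\asymp c_1/\|u\|_\infty$, which yields a sub-exponential tail $\exp(-cx/\|u\|_\infty)$, weaker than the claim for large $x$ when $\alpha>1$. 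The correct MGF control for sub-Weibull($\alpha$) with $\alpha\in(1,2]$ is of the form $\log\mathbb{E}\,e^{\lambda X}\lesssim\lambda^2\vee|\lambda|^{\beta(\alpha)}$ for all $\lambda$, and pushing the Chernoff parameter into the $|\lambda|^{\beta(\alpha)}$-dominated range is precisely what produces the $(x/\|u\|_{\beta(\alpha)})^\alpha$ decay.

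Your final paragraph gestures at the right fix: a Lata\l a-type moment comparison $\bigl\|\sum_i u_iX_i\bigr\|_k\lesssim\sqrt{k}\,\|u\|_2+k^{1/\alpha}\|u\|_{\beta(\alpha)}$, converted to tails by Markov at each $k$ and optimising $k$ given $x$. This is essentially the route taken in the cited reference (packaged there in a generalised Bernstein--Orlicz norm). Two remarks: it applies uniformly for all $\alpha>0$, making the case split and the faulty truncation argument unnecessary; and the conversion from moments to tails does not go through a single $\psi_\alpha$-norm of the sum as you suggest, since the sum has a two-regime tail rather than a pure sub-Weibull one — it goes through $\mathbb{P}\bigl(|S|>e\|S\|_k\bigr)\leq e^{-k}$ with $k\asymp\min\bigl\{(x/\|u\|_2)^2,(x/\|u\|_{\beta(\alpha)})^\alpha\bigr\}$.
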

\begin{prop}[\citealp{Gotze2021concentration}, Proposition 1.5] \label{prop:quadraticweibulltail}
Let $\alpha \in (0,1] \cup \{2\}$, $A = (a_{ij}) \in \mathbb{R}^{n \times n}$ be a symmetric matrix and $X_1, \dotsc, X_n$ be independent mean zero sub-Weibull random variables of order $\alpha$, with $\mathbb{E}X_i^2 = \sigma_i^2$ and $\|X_i\|_{\psi_\alpha} \leq K$ for all $i \in \mathbb{Z}^+$ and for some $K > 0$. Then, there exists a constant $C > 0$, depending only on $\alpha$ and $K$, such that for any $x \geq 0$, we have
\begin{equation*}
    \mathbb{P}\biggl(\Bigl|\sum_{1 \leq i, j \leq n} a_{ij}X_i X_j - \sum_{i=1}^n a_{ii} \sigma_i^2\Bigr| \geq x\biggr) \leq \exp\bigl( 1 - \eta_\alpha(x/C; A) \bigr),
\end{equation*}
where
\[
\eta_\alpha(x; A) := \min\Biggl\{ \biggl(\frac{x}{\|A\|_{\mathrm{F}}}\biggr)^2, \frac{x}{\|A\|_2}, \biggl(\frac{x}{\|A\|_{2 \rightarrow \infty}}\biggr)^{\frac{2\alpha}{2+\alpha}}, \biggl(\frac{x}{\|A\|_{\max}}\biggr)^{\frac{\alpha}{2}}  \Biggr\}.
\]
\end{prop}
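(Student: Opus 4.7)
The approach is the moment method. Writing
$$S := \sum_{1 \leq i,j \leq n} a_{ij} X_i X_j - \sum_{i=1}^n a_{ii} \sigma_i^2,$$
the goal is to establish, for every integer $p \geq 2$, the $L^p$-bound
$$\|S\|_p \leq C\Bigl(\sqrt{p}\,\|A\|_{\mathrm{F}} + p\,\|A\|_2 + p^{1/2+1/\alpha}\|A\|_{2\to\infty} + p^{2/\alpha}\|A\|_{\max}\Bigr),$$
with $C$ depending only on $\alpha$ and $K$. Once this is in hand, Markov's inequality gives $\mathbb{P}(|S| \geq e\|S\|_p) \leq e^{-p}$, and inverting each of the four summands in $p$ (i.e.\ taking $p \asymp (x/\|A\|_{\mathrm{F}})^2$, $p \asymp x/\|A\|_2$, $p \asymp (x/\|A\|_{2\to\infty})^{2\alpha/(2+\alpha)}$, and $p \asymp (x/\|A\|_{\max})^{\alpha/2}$, then selecting the dominant regime for the given $x$) produces the tail bound $\exp(1 - \eta_\alpha(x/C; A))$, as stated.

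\textbf{Diagonal part.} Decompose $S = S_{\mathrm{diag}} + S_{\mathrm{off}}$ with $S_{\mathrm{diag}} := \sum_i a_{ii}(X_i^2 - \sigma_i^2)$. By Proposition \ref{lem:subweibull_ordering}, each $X_i^2 - \sigma_i^2$ is centred and sub-Weibull of order $\alpha/2$, so Proposition \ref{prop:subweibull-basic-prop}(b) gives $\|X_i^2 - \sigma_i^2\|_p \lesssim p^{2/\alpha}$. Rosenthal's inequality for independent centred summands (or equivalently a Bernstein-type bound for sub-Weibull$(\alpha/2)$ sums) then yields
$$\|S_{\mathrm{diag}}\|_p \lesssim \sqrt{p}\Bigl(\sum_i a_{ii}^2\Bigr)^{1/2} + p^{2/\alpha}\max_i |a_{ii}| \leq \sqrt{p}\,\|A\|_{\mathrm{F}} + p^{2/\alpha}\|A\|_{\max},$$
which already accounts for the Frobenius and max-norm regimes.

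\textbf{Off-diagonal part.} For $S_{\mathrm{off}} = \sum_{i \neq j} a_{ij} X_i X_j$, the de la Pe\~na and Montgomery-Smith decoupling inequality gives $\|S_{\mathrm{off}}\|_p \lesssim \|\tilde S\|_p$, where $\tilde S := \sum_{i,j} a_{ij} X_i Y_j$ and $(Y_j)$ is an independent copy of $(X_j)$. Conditioning on $X$, $\tilde S = \sum_j b_j(X) Y_j$ is linear in $Y$ with coefficients $b_j(X) := \sum_i a_{ij} X_i$, so integrating the linear tail from Proposition \ref{prop:sumofweibulltail} over $Y$ and taking the $p$-th root gives $\bigl(\mathbb{E}_Y |\tilde S|^p\bigr)^{1/p} \lesssim \sqrt{p}\, \|b(X)\|_2 + p^{1/\alpha}\|b(X)\|_{\beta(\alpha)}$. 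It remains to bound the $L^p(X)$ norms of $\|AX\|_2$ and of $\|b(X)\|_{\beta(\alpha)}$. For the first, $\|AX\|_2^2 = \sum_j b_j(X)^2$ is a sum of independent sub-Weibull$(\alpha/2)$ coordinates with $\|b_j(X)\|_{\psi_\alpha} \lesssim \|A_{\cdot j}\|_2$; a further application of the same Rosenthal-type argument gives $\bigl\|\|AX\|_2\bigr\|_p \lesssim \|A\|_{\mathrm{F}} + \sqrt{p}\,\|A\|_2 + p^{1/\alpha}\|A\|_{2\to\infty}$. For the second, the hypothesis $\alpha \in (0,1]\cup\{2\}$ means $\beta(\alpha) \in \{\infty, 2\}$: when $\beta(\alpha) = \infty$ one writes $\|b(X)\|_\infty = \max_j |\langle A_{\cdot j}, X\rangle|$ and uses that each coordinate is sub-Weibull$(\alpha)$ of scale at most $\|A\|_{2\to\infty}$, giving $\bigl\|\|b(X)\|_\infty\bigr\|_p \lesssim p^{1/\alpha}\|A\|_{2\to\infty}$ from the standard sub-Weibull maxima estimate; when $\beta(\alpha) = 2$ one simply reuses the bound on $\|AX\|_2$. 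Collecting factors of $p$ and combining with the diagonal estimate delivers the target moment bound on $\|S\|_p$.

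\textbf{Main obstacle.} The delicate step is the exponent $1/2 + 1/\alpha$ on $\|A\|_{2\to\infty}$, which is precisely the product of the sub-Gaussian fluctuation of $Y$ (contributing $\sqrt{p}$) and the sub-Weibull$(\alpha)$ fluctuation of the coordinates of $b(X) = A^\top X$ (contributing $p^{1/\alpha}$). A crude bound such as $\|b(X)\|_2 \leq \|A\|_2\|X\|_2$, or $\|b(X)\|_\infty \leq \|A\|_2\|X\|_\infty$, would collapse this intermediate regime into the operator-norm term and forfeit the $\|A\|_{2\to\infty}$ improvement entirely; instead, the $\ell_2$ and $\ell_{\beta(\alpha)}$ norms of $b(X)$ must be controlled separately at their natural scales, and the maxima-of-sub-Weibulls estimate used for $\|b(X)\|_\infty$ must be executed without incurring any stray $\log n$ factor (which requires the optimisation to be carried out on the moment side rather than after a naive union bound). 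The restriction $\alpha \in (0,1]\cup\{2\}$ in the statement is exactly what keeps $\beta(\alpha)$ in the tractable set $\{2,\infty\}$; handling the gap range $\alpha \in (1,2)$ would require an additional interpolation between $\ell_{\alpha/(\alpha-1)}$ and $\ell_\infty$ that is not needed in the present work.
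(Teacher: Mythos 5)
This proposition is not proved in the paper at all: it is imported verbatim from \citet{Gotze2021concentration}, so there is no in-paper argument to compare against. Your overall strategy (equivalent $L^p$-moment bound, diagonal/off-diagonal split, decoupling, conditional linear estimate via Proposition~\ref{prop:sumofweibulltail}) is the natural first attempt and your diagonal estimate is fine, but the off-diagonal part has two genuine gaps. First, the coordinates $b_j(X)=\langle A_{\cdot j},X\rangle$ of $AX$ are \emph{not} independent across $j$ (they are all functions of the same $X$), so your ``Rosenthal-type argument'' for $\bigl\|\|AX\|_2\bigr\|_p\lesssim \|A\|_{\mathrm{F}}+\sqrt{p}\,\|A\|_2+p^{1/\alpha}\|A\|_{2\rightarrow\infty}$ does not apply. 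The naive triangle-inequality substitute gives only $\sqrt{p}\,\|A\|_{\mathrm{F}}$ in place of $\|A\|_{\mathrm{F}}$, which after multiplication by the outer $\sqrt{p}$ destroys the Gaussian regime; obtaining the constant-order term $\|A\|_{\mathrm{F}}$ requires concentration of $\|AX\|_2$ about its mean, i.e.\ essentially a Hanson--Wright bound for $A^\top A$, so the step is circular unless resolved by a separate induction or by the Latala-type chaos machinery that \citet{Gotze2021concentration} actually use.

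Second, your treatment of $\bigl\|\|b(X)\|_\infty\bigr\|_p$ is both unjustified and quantitatively too lossy. A maximum of $n$ sub-Weibull($\alpha$) variables cannot in general be bounded by $p^{1/\alpha}$ times the scale without a $\log^{1/\alpha} n$ term (consider i.i.d.\ exponentials), and no ``optimisation on the moment side'' removes it; the reference avoids logarithms by never forming this maximum, instead invoking two-sided Gluskin--Kwapie\'n estimates in which only the $p$ largest coefficients enter the $\ell_{\beta(\alpha)}$ part. Moreover, even granting your claimed bound $p^{1/\alpha}\|A\|_{2\rightarrow\infty}$, the assembled moment bound acquires the term $p^{2/\alpha}\|A\|_{2\rightarrow\infty}$, which for $\alpha<2$ strictly dominates both $p^{1/2+1/\alpha}\|A\|_{2\rightarrow\infty}$ and $p^{2/\alpha}\|A\|_{\max}$; translated back to tails this replaces $(x/\|A\|_{\max})^{\alpha/2}$ by the smaller exponent $(x/\|A\|_{2\rightarrow\infty})^{\alpha/2}$ in the minimum, yielding a strictly weaker inequality than the one stated. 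The correct route keeps the mixed tail of each $\langle A_{\cdot j},X\rangle$, whose $\alpha$-part is scaled by $\|A_{\cdot j}\|_\infty\leq\|A\|_{\max}$ rather than by $\|A_{\cdot j}\|_2$, so that the $p^{2/\alpha}$ term attaches to $\|A\|_{\max}$ and the cross term $p^{1/2+1/\alpha}$ to $\|A\|_{2\rightarrow\infty}$. Your ``main obstacle'' paragraph correctly locates where the difficulty lies, but the proposed resolution does not close it; as written the argument proves a weaker statement than the proposition.
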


The following proposition presents a concentration inequality for sums of independent random variables with only finite certain number of moments. We use the form of the Fuk--Nagaev type inequalities appeared in \citet{rio2017FukNagaev}.
\begin{prop}[\citealp{Fuk1973inequalities, nagaev1979deviations}] \label{prop:fuknagaev}
Let $X_1, \dotsc, X_n$ be independent random  variables, each having mean $0$ and variance $\sigma^2$. Assume further that for some $q \geq 2$ and $C_q > 0$, we have for all $i \in [n]$
\[
\mathbb{E}[\{\max(X_i, 0)\}^q] \leq C_q.
\]
Then for any $x > 0$, we have
\[
\mathbb{P}\biggl(\sum_{i=1}^n X_i \geq x\biggr) \leq \Biggl( \frac{(q+2)(nC_q)^{1/q}}{qx} \Biggr)^q + \exp \Biggl\{ -\frac{2x^2}{n(q+2)^2e^q\sigma^2} \Biggr\}.
\]
\end{prop}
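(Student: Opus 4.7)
My plan is to establish the Fuk--Nagaev tail bound in Proposition \ref{prop:fuknagaev} by the classical truncation strategy, which splits the sum into a ``bounded'' part handled by an exponential concentration inequality and an ``outlier'' part handled by a moment inequality. Concretely, fix a truncation threshold $M>0$ to be optimised at the end, and for each $i$ decompose $X_i = Y_i + R_i$ with $Y_i = \min(X_i, M)$ and $R_i = (X_i - M)_+$. Because $Y_i \leq X_i$ and $\mathbb{E}X_i = 0$, we have $\mathbb{E}Y_i \leq 0$, and because the negative part of $X_i$ is unaffected by truncation, $Y_i^2 \leq X_i^2$, so $\mathrm{Var}(Y_i) \leq \sigma^2$. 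On the event that every $X_i \leq M$, $\sum X_i = \sum Y_i$, so a union bound yields
\[
\mathbb{P}\Bigl(\sum_i X_i \geq x\Bigr) \leq \mathbb{P}\bigl(\exists i: X_i > M\bigr) + \mathbb{P}\Bigl(\sum_i Y_i \geq x\Bigr).
\]

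For the first term, a union bound combined with Markov's inequality on the $q$-th moment of $(X_i)_+$ gives $\mathbb{P}(\exists i: X_i > M) \leq nC_q/M^q$. For the second term, centre by writing $\widetilde Y_i := Y_i - \mathbb{E}Y_i$, which are independent, mean-zero, bounded above by $M$ (up to a non-positive correction from $\mathbb{E}Y_i$), and satisfy $\mathrm{Var}(\widetilde Y_i) \leq \sigma^2$. Since $\mathbb{E}Y_i \leq 0$, $\mathbb{P}(\sum Y_i \geq x) \leq \mathbb{P}(\sum \widetilde Y_i \geq x)$, and Bennett's inequality delivers
\[
\mathbb{P}\Bigl(\sum_i \widetilde Y_i \geq x\Bigr) \leq \exp\Bigl(-\tfrac{n\sigma^2}{M^2}\, h(Mx/(n\sigma^2))\Bigr), \qquad h(u) = (1+u)\log(1+u) - u.
\]

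I would then choose $M = qx/(q+2)$. This choice makes the moment bound equal to $\bigl((q+2)(nC_q)^{1/q}/(qx)\bigr)^q$, matching the first term of the stated inequality exactly. Substituting the same $M$ into the Bennett bound and using an elementary lower bound on $h$ valid over the relevant range of its argument yields the exponent $2x^2/(n(q+2)^2 e^q \sigma^2)$. Alternatively (and perhaps more cleanly), one can replace the Bennett step by a direct Chernoff argument: bound $\mathbb{E}\exp(\lambda \widetilde Y_i)$ using $e^u \leq 1 + u + u^2 e^{\lambda M}/(\lambda M)^2 \cdot (\lambda M)^2/2$ type expansions for $u \leq \lambda M$, take $\lambda = q/M = (q+2)/x$, and optimise; the factor $e^q$ in the exponent arises naturally from the $e^{\lambda M} = e^q$ that appears after truncation.

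The main obstacle will be Step~5: carrying out the Bennett (or Chernoff) estimate with enough precision to recover the precise constant $2/((q+2)^2 e^q)$ in the exponent. The sub-Gaussian regime of $h$ and the Poisson-tail regime meet awkwardly around $u \asymp 1$, and tracking the constants through the optimisation in $M$ (and through the centring adjustment by $\mathbb{E}Y_i$) requires care. Everything else---the truncation, the union bound, and the moment estimate---is entirely routine.
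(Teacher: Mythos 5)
The paper does not supply its own proof of Proposition~\ref{prop:fuknagaev}: the result is quoted from Fuk (1973), Nagaev (1979), and Rio (2017), with the specific constants in this form taken from Rio. The comparison is therefore against that reference rather than against an argument printed in the paper.

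Your overall strategy --- one-sided truncation, a union bound to split off the outliers, Markov on the $q$-th moment, and an exponential inequality on the truncated sum --- is the classical Fuk--Nagaev template and is the right one. The first half is correct: truncating at $Y_i = X_i\wedge M$ preserves $\mathbb{E} Y_i \le 0$ and $\mathbb{E} Y_i^2 \le \sigma^2$, the union bound plus Markov gives $\mathbb{P}(\exists i\colon X_i > M) \le n C_q/M^q$, and $M = qx/(q+2)$ makes this exactly the stated polynomial term. (One small slip: centring $\widetilde Y_i = Y_i - \mathbb{E}Y_i$ \emph{raises} the upper bound on the summands, since $\mathbb{E}Y_i \le 0$; it is cleaner not to centre, using $\mathbb{E}Y_i \le 0$ directly in the MGF bound.)

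The genuine gap is in Step~5, and it is more than a matter of tracking constants. Write $a := 2x^2/\bigl(n(q+2)^2 e^q\sigma^2\bigr)$ for the argument of the stated exponential. With $\lambda M = q$ (so that $e^{\lambda M} = e^q$), the Chernoff exponent is
\[
-\lambda x + \frac{n\lambda^2 e^{\lambda M}\sigma^2}{2} \;=\; -(q+2) + \frac{1}{a},
\]
and this is $\le -a$ precisely when $a + a^{-1} \le q+2$, which fails for all $a > \tfrac{1}{2}\bigl((q+2)+\sqrt{(q+2)^2-4}\bigr)$, i.e.\ for $x$ above a $q$-dependent multiple of $\sqrt{ne^q}\,\sigma$. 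Optimising over $\lambda$ subject to $\lambda M \le q$ does not help: the achievable exponent is bounded below by $-(q+2)$, while the target decays like $-x^2$. The obstruction is real, not an artefact of loose bounding: taking $Y_i$ to be a two-point law at $M$ and $-\sigma^2/M$ (which meets all the constraints) makes $\mathbb{P}\bigl(\sum_i Y_i \ge x\bigr)$ decay only polynomially in $x$, so no exponential-moment argument on $\sum_i Y_i$ with this truncation can deliver a bound of order $\exp\{-cx^2/(n\sigma^2)\}$.

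Closing the argument therefore requires a separate treatment of the large-$x$ regime, where the slack factor $((q+2)/q)^q \ge 4$ hidden in the polynomial term must absorb what the exponential bound cannot; Rio's proof is organised precisely around keeping a free parameter and separating the regimes. Your proposal correctly flags Step~5 as the obstacle, but attributes it to the sub-Gaussian and Poisson-tail pieces of the Bennett function meeting awkwardly near $u\asymp 1$; the actual issue is that for $x$ large the claimed exponential bound on the truncated sum is outright false, so no amount of constant-chasing within that step can close the proof.
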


\begin{prop} \label{prop:2sample}
    Let $X_1,\dotsc, X_p$ be independent random variables, each with mean zero and unit variance. Let $a \geq 0$ and $Z := \sum_{i=1}^p (X_i^2 - 1) \mathbbm{1}_{\{|X_i| \geq a\}}$.\\
    (a) Let $\alpha > 0$, $K > 0$, $0 < \varepsilon \leq 1$ and $1 \leq s \leq \sqrt{p}$. Assume that $X_1,\dotsc, X_p$ are independent sub-Weibull random variables of order $\alpha$, with $\|X_i\|_{\psi_\alpha} \leq K$ for all $i \in [p]$. By setting
    \[
    a \geq K\log^{1/\alpha}\Bigl(\frac{4ep}{\varepsilon s}\Bigr) \qquad \text{and} \qquad r = 2^{2/\alpha}K^2s\log^{2/\alpha}\Bigl(\frac{4p}{\sqrt{\varepsilon} s}\Bigr),
    \]
    we have $\mathbb{P}(Z > r) \leq \varepsilon$.\\
    (b) Let $\alpha \geq 2$, $K > 0$, $0 < \varepsilon \leq 1$ and $1 \leq s \leq p$. Assume that $\mathbb{E}|X_i/K|^\alpha \leq 1$ for all $i \in [p]$. By setting
    \[
    a \geq K \Bigl(\frac{2ep}{\varepsilon s}\Bigr)^{1/\alpha} \qquad \text{and} \qquad r = \frac{\alpha-2}{\alpha}K^2s\Bigl(\frac{2ep}{\varepsilon s}\Bigr)^{2/\alpha},
    \]
    we have $\mathbb{P}(Z > r) \leq \varepsilon$.\\
    (c) Assume the same conditions as in (b). Write $Z_s := \sum_{i=1}^p (X_i^2 - 1) \mathbbm{1}_{\{|X_i| \geq a_s\}}$ to make the dependence on $s$ explicit. By choosing the same $a_s$ and $r_s$ as in (b), we have 
    \[
    \mathbb{P}\bigl(\max_{s \in [p]} Z_s/r_s > 1\bigr) \leq 2\varepsilon.
    \]
\end{prop}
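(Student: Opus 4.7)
The plan for all three parts begins with the deterministic upper bound $Z \leq \sum_{i=1}^p X_i^2 \mathbbm{1}_{\{|X_i|\geq a\}}$; the noise-class-specific tail estimates then take over. For (a), I would introduce a secondary truncation level $b := 2^{1/\alpha}K\log^{1/\alpha}(4p/(\sqrt{\varepsilon}s))$, chosen precisely so that $sb^2 = r$, and split according to whether $\max_i |X_i| < b$. The sub-Weibull tail bound in \Cref{prop:subweibull-basic-prop}(a) combined with a union bound gives $\mathbb{P}(\max_i |X_i| \geq b) \leq 2p\exp\{-(b/K)^\alpha\} = \varepsilon s^2/(8p) \leq \varepsilon/8$, where the hypothesis $s \leq \sqrt{p}$ enters crucially. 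Meanwhile the chosen $a$ yields $pq_a \leq \varepsilon s/(2e)$ with $q_a := \mathbb{P}(|X_i|\geq a)$, so a Chernoff--Hoeffding binomial tail bound on $N := \sum_i \mathbbm{1}_{\{|X_i|\geq a\}}$ gives $\mathbb{P}(N > s) \leq (epq_a/s)^s \leq (\varepsilon/2)^s \leq \varepsilon/2$. On the intersection of the two favourable events, $Z \leq b^2 N \leq sb^2 = r$, so a final union bound closes $\mathbb{P}(Z > r) \leq \varepsilon/8 + \varepsilon/2 < \varepsilon$.

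In (b) the uniform truncation step is no longer viable because the tails are only polynomial. Instead, the plan is to use the pointwise domination $X_i^2 \mathbbm{1}_{\{|X_i|\geq a\}} \leq |X_i|^\alpha/a^{\alpha-2}$, valid since $|X_i|^{\alpha-2}\geq a^{\alpha-2}$ on the indicator's event, and to sum to obtain $Z \leq a^{2-\alpha}T$ where $T := \sum_{i=1}^p |X_i|^\alpha$ has $\mathbb{E}T \leq pK^\alpha$. A single Markov step then yields $\mathbb{P}(Z > r) \leq pK^\alpha/(r\,a^{\alpha-2})$; substituting the prescribed $a$ and $r$, the quantity $r\,a^{\alpha-2}$ simplifies to $\tfrac{\alpha-2}{\alpha}K^\alpha\cdot 2ep/\varepsilon$, so the Markov bound collapses to $\alpha\varepsilon/[2e(\alpha-2)] \leq \varepsilon$ in the regime of $\alpha$ for which this proposition is invoked downstream.

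In (c), the crucial observation---and, in my view, the only non-routine technical point of the entire proposition---is that the simplification performed at the end of (b) is actually \emph{independent of $s$}: substituting the definitions of $a_s$ and $r_s$ yields $r_s\,a_s^{\alpha-2} = \tfrac{\alpha-2}{\alpha}K^\alpha\cdot 2ep/\varepsilon$ for every $s \in [p]$. The same $L^\alpha$-domination from (b) then gives the deterministic inclusion $\{\max_{s \in [p]} Z_s/r_s > 1\} \subseteq \{T > 2epK^\alpha(\alpha-2)/(\alpha\varepsilon)\}$, which sidesteps any union bound over $s$ entirely. A single application of Markov's inequality to $T$ then delivers $\mathbb{P}(\max_{s \in [p]} Z_s/r_s > 1) \leq \alpha\varepsilon/[2e(\alpha-2)] \leq 2\varepsilon$, completing the proof. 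Without this cancellation, a naive union bound over $s \in [p]$ would be off by a factor of $p$; noticing that the $s$-dependence in $r_s$ exactly cancels the $s$-dependence in $a_s^{\alpha-2}$ is what makes the result clean.
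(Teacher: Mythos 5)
Your proofs are correct and take a genuinely different route from the paper in all three parts. In (a) and (b) the paper works on the event $\{|\mathcal{J}_a|\leq s\}$ (where $\mathcal{J}_a:=\{i:|X_i|\geq a\}$), bounds $Z$ by a sum over the top $s$ order statistics, and controls each $|X|_{(p-j+1)}$ against a $j$-decaying threshold, $K\log^{1/\alpha}(4ep/(\varepsilon j))$ in (a) and $K(2ep/(\varepsilon j))^{1/\alpha}$ in (b); you instead truncate uniformly at a single level $b$ in (a), and in (b) you pass directly to $T=\sum_i|X_i|^\alpha$ via the pointwise domination $X_i^2\mathbbm{1}_{\{|X_i|\geq a\}}\leq |X_i|^\alpha/a^{\alpha-2}$ and one Markov step. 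Both routes invoke the same binomial tail bound for the exceedance count and recover the stated $r$ up to constants; yours is shorter. For (c), your observation that $r_s a_s^{\alpha-2}$ is independent of $s$ is a neat way to avoid a union bound, but your claim that ``a naive union bound over $s\in[p]$ would be off by a factor of $p$'' is not accurate: the paper \emph{does} union-bound over $s$, and it works with no loss precisely because the per-$s$ and per-$j$ probabilities are $(\varepsilon/2)^s$ and $(\varepsilon/2)^j$ and therefore sum geometrically to at most $2\varepsilon$.

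One point worth stating precisely rather than waving at downstream usage: your Markov step in (b) gives $\mathbb{P}(Z>r)\leq\alpha\varepsilon/[2e(\alpha-2)]$, which exceeds $\varepsilon$ for $2<\alpha\lesssim 2.45$ even though the statement allows $\alpha\geq 2$. This traces to the statement itself: the prefactor $\frac{\alpha-2}{\alpha}$ degenerates at $\alpha=2$ (making the claim vacuous there), and the paper's own displayed bound $\sum_{j=1}^s K^2(2ep/(\varepsilon j))^{2/\alpha}\leq\frac{\alpha-2}{\alpha}K^2 s(2ep/(\varepsilon s))^{2/\alpha}$ fails already at $s=1$; the integral comparison $1+\int_1^s x^{-2/\alpha}\,dx$ yields the factor $\frac{\alpha}{\alpha-2}$, not $\frac{\alpha-2}{\alpha}$. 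With the corrected prefactor your bound becomes $(\alpha-2)\varepsilon/(2e\alpha)<\varepsilon$ for all $\alpha>2$, so your route is in fact uniform over the intended range once the constant is fixed.
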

\begin{proof}
    We denote the order statistics of $|X_1|,\dotsc, |X_p|$ as $|X|_{(1)} \leq \dotsc \leq |X|_{(p)}$. For $x \geq 0$, we write $q_x := \min_{i \in [p]} \mathbb{P}(|X_i| \geq x)$ and $\mathcal{J}_x := \{i \in [p]: |X_i| \geq x\}$.\\
    (a)  Note that $q_x \leq 2\exp\{-(x/K)^\alpha\}$ by Proposition~\ref{prop:subweibull-basic-prop}(a). Since $s \leq \sqrt{p}$, we observe that
    \[
    \sum_{j=1}^s K^2\log^{2/\alpha} \Bigl(\frac{4ep}{\varepsilon j}\Bigr) \leq \sum_{j=1}^s K^2\log^{2/\alpha} \Bigl(\frac{4ep^2}{\varepsilon s^2}\Bigr) \leq 2^{2/\alpha}K^2s\log^{2/\alpha} \Bigl(\frac{4p}{\sqrt{\varepsilon}s}\Bigr) = r.
    \]
    Then, by a union bound and a binomial tail bound, we have
    \begin{align*}
    \mathbb{P}(Z > r) &\leq \mathbb{P}(|\mathcal{J}_a| > s) +  \mathbb{P}\Biggl(\sum_{j=1}^s \bigl(|X|_{(p-j+1)}^2-1\bigr) > r\Biggr) \\
    &\leq \mathbb{P}(|\mathcal{J}_a| > s) + \sum_{j=1}^s\mathbb{P}\biggl(|X|_{(p-j+1)}> K\log^{1/\alpha} \Bigl(\frac{4ep}{\varepsilon j}\Bigr)\biggr)\\
    &\leq \Bigl(\frac{ep}{s}\Bigr)^s \Biggl\{2\exp\biggl\{- \Bigl( \frac{a}{K} \Bigr) ^\alpha\biggr\}\Biggr\}^s + \sum_{j=1}^s\Bigl(\frac{ep}{j}\Bigr)^j\Biggl\{2\exp\biggl\{- \biggl( \frac{K\log^{1/\alpha} \bigl(\frac{4ep}{\varepsilon j}\bigr)}{K} \biggr) ^\alpha\biggr\}\Biggr\}^j \\
    &\leq (\varepsilon/2)^s + \sum_{j=1}^s (\varepsilon/2)^j \leq \varepsilon. 
    \end{align*}
    (b) Note that $q_x \leq (K/a)^\alpha$ by Chebyshev's inequality. We now observe that
    \[
    \sum_{j=1}^s K^2 \Bigl(\frac{2ep}{\varepsilon j}\Bigr)^{2/\alpha} \leq K^2 \Bigl(\frac{2ep}{\varepsilon}\Bigr)^{2/\alpha} \biggl\{1+\int_{1}^s x^{-2/\alpha} \, dx\biggr\} \leq \frac{\alpha-2}{\alpha}K^2s\Bigl(\frac{2ep}{\varepsilon s}\Bigr)^{2/\alpha} = r
    \]
    The rest then follows from the proof for part (a).\\
    (c) By a union bound and the proof for the previous parts, we have
    \begin{align*}
        \mathbb{P}\bigl(\max_{s \in [p]} Z_s/r_s > 1\bigr) &\leq \Biggl(\sum_{s=1}^p \mathbb{P}(|\mathcal{J}_{a(s)}| > s)\Biggr) + \mathbb{P}\Biggl( \max_{s\in [p]} \frac{\sum_{j=1}^s \bigl(|X|_{(p-j+1)}^2-1\bigr)}{r_s} > 1\Biggr) \\
        &\leq \sum_{s=1}^p (\varepsilon/2)^s + \sum_{j=1}^p\mathbb{P}\biggl(|X|_{(p-j+1)}> K \Bigl(\frac{2ep}{\varepsilon j}\Bigr)^{2/\alpha} \biggr) \\
        &\leq \sum_{s=1}^p (\varepsilon/2)^s + \sum_{j=1}^p (\varepsilon/2)^j \leq 2\varepsilon.
    \end{align*}
\end{proof}

\begin{lemma} \label{lemma:exp_decay_sum}
Let $\gamma > 0$. Then, for all $x \geq (2^\gamma - 1)^{-1/\gamma}$ we have
\[
\sum_{i = 0}^\infty  \exp\bigl\{-(x2^i)^\gamma\bigr\} \leq 2\exp(-x^\gamma).
\]
\end{lemma}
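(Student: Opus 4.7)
The plan is to show that the summands decay geometrically with ratio at most $1/e$, so that the whole sum is controlled by the first term times the constant $e/(e-1) < 2$.

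First, I would set $a_i := \exp\{-(x2^i)^\gamma\}$ and compute the ratio between consecutive terms:
\[
\frac{a_{i+1}}{a_i} \;=\; \exp\bigl\{-(x2^{i+1})^\gamma + (x2^i)^\gamma\bigr\} \;=\; \exp\bigl\{-(x2^i)^\gamma(2^\gamma - 1)\bigr\}.
\]

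Next, I would rewrite the hypothesis $x \geq (2^\gamma - 1)^{-1/\gamma}$ in its equivalent form $x^\gamma(2^\gamma - 1) \geq 1$, and multiply both sides by $2^{i\gamma} \geq 1$ (valid for any $i \geq 0$ and $\gamma > 0$) to obtain $(x2^i)^\gamma(2^\gamma - 1) \geq 2^{i\gamma} \geq 1$ for every $i \geq 0$. This furnishes the uniform bound $a_{i+1}/a_i \leq e^{-1}$. Iterating gives $a_i \leq e^{-i} a_0$, and summing the resulting geometric series yields
\[
\sum_{i=0}^\infty a_i \;\leq\; a_0 \sum_{i=0}^\infty e^{-i} \;=\; \frac{e}{e-1}\, a_0 \;<\; 2 a_0 \;=\; 2\exp(-x^\gamma),
\]
as required.

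There is no substantive obstacle here: the hypothesis is precisely tuned to force geometric decay from the very first term, and the remaining step is the elementary numerical check $e/(e-1) < 2$. A slightly sharper argument using $(x2^i)^\gamma(2^\gamma - 1) \geq 2^{i\gamma}$ would give super-geometric decay, but this is not needed for the stated constant $2$.
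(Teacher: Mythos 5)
Your proof is correct and takes essentially the same route as the paper's: both arguments bound the series by a geometric one whose ratio is controlled by the hypothesis $x^\gamma(2^\gamma-1)\ge 1$, and both finish by summing that geometric series. The only difference is packaging: the paper first uses convexity of $y\mapsto 2^{\gamma y}$ to show $2^{i\gamma}\ge 1+i(2^\gamma-1)$ and then compares the whole sum to $\tilde{x}^{-1}\sum_i (\tilde{x}^{-(2^\gamma-1)})^i$ with $\tilde{x}=\exp(x^\gamma)$, whereas you bound each consecutive ratio $a_{i+1}/a_i=\exp\{-(x2^i)^\gamma(2^\gamma-1)\}\le e^{-1}$ directly (using only $2^{i\gamma}\ge 1$), which is marginally more elementary and, as you note, also yields the slightly sharper constant $e/(e-1)$.
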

\begin{proof}
By the convexity of $y \mapsto 2^{\gamma y}$, we have that $2^{(i+1)\gamma} - 2^{i\gamma} \geq 2^{i\gamma} -2^{(i-1)\gamma}$
and thus
\[
2^{i\gamma} = 1 + \sum_{j=1}^i (2^{j\gamma} - 2^{(j-1)\gamma}) \geq 1 + i(2^\gamma - 1).
\]
for all $i \in \mathbb{Z}^+$. Denote $\tilde{x} := \exp(x^\gamma)$. We hence deduce that when $\tilde{x} > 2^{\frac{1}{2^\gamma -1}}$,
\[
\sum_{i = 0}^\infty  \exp\bigl\{-(x2^i)^\gamma\bigr\} = \sum_{i = 0}^\infty \tilde{x}^{-2^{i\gamma}} \leq \sum_{i = 0}^\infty \tilde{x}^{-1 - i(2^\gamma - 1)}  = \frac{1}{\tilde{x} \bigl(1 - \tilde{x}^{-(2^\gamma - 1)}\bigr)} \leq 2\tilde{x}^{-1}.
\]
\end{proof}

\begin{lemma} \label{Lemma:bounded2kmoments}
    Let $Z_1,\dotsc, Z_n$ be independent mean zero random variables. \\
    (a) Assume that there exists $C > 0$ such that $\mathbb{E}Z_i^{4} \leq C$ for all $i \in [n]$. Then for any $v=(v_1, \dotsc, v_n)^\top \in \mathbb{R}^n$, we have
    \[
    \mathbb{E}\biggl[\Bigl(\sum_{i=1}^n v_iZ_i\Bigr)^4\biggr] \leq 3C\|v\|_2^4.
    \]
    \noindent (b) Assume that there exists $C > 0$ such that $\mathbb{E}\bigl(|Z_i|^{2k}\bigr) \leq C$ for some $k \geq 1$. Then, there exists a constant $C_k > 0$, depending only on $k$ and $C$ such that 
\[
\mathbb{E}\biggl[\Big|\sum_{i=1}^nZ_i/n \Big|^{2k} \biggr] \leq C_k  n^{-k}. 
\]
   
\end{lemma}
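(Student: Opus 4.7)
Both parts are moment bounds of a standard flavour; the plan is to exploit independence and mean-zero to kill most cross-terms.

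For part (a), I would expand the fourth power directly:
\[
\mathbb{E}\biggl[\Bigl(\sum_{i=1}^n v_iZ_i\Bigr)^4\biggr] = \sum_{i,j,k,\ell} v_iv_jv_kv_\ell\,\mathbb{E}[Z_iZ_jZ_kZ_\ell].
\]
By independence and $\mathbb{E}Z_i=0$, any term with an index appearing exactly once vanishes. What survives is the diagonal $i=j=k=\ell$ together with the three pairing patterns $(i=j,k=\ell)$, $(i=k,j=\ell)$, $(i=\ell,j=k)$ with $i\neq k$ etc. This gives
\[
\mathbb{E}\biggl[\Bigl(\sum_{i} v_iZ_i\Bigr)^4\biggr] = \sum_i v_i^4\,\mathbb{E}Z_i^4 + 3\sum_{i\ne k} v_i^2 v_k^2\,\mathbb{E}Z_i^2\,\mathbb{E}Z_k^2.
\]
Using $\mathbb{E}Z_i^2 \leq (\mathbb{E}Z_i^4)^{1/2} \leq \sqrt{C}$ and $\mathbb{E}Z_i^4\leq C$, the right-hand side is bounded by $C\sum_i v_i^4 + 3C\sum_{i\ne k} v_i^2 v_k^2 \leq 3C\sum_{i,k} v_i^2 v_k^2 = 3C\|v\|_2^4$, since $\sum_i v_i^4\leq (\sum_i v_i^2)^2$.

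For part (b), the clean route is to invoke Rosenthal's inequality (see, e.g., the classical bound for sums of independent mean-zero random variables): there exists a constant $R_k>0$ depending only on $k$ such that, writing $S_n=\sum_{i=1}^n Z_i$,
\[
\mathbb{E}|S_n|^{2k} \leq R_k\,\biggl\{\sum_{i=1}^n \mathbb{E}|Z_i|^{2k} + \Bigl(\sum_{i=1}^n \mathbb{E}Z_i^2\Bigr)^k\biggr\}.
\]
The hypothesis $\mathbb{E}|Z_i|^{2k}\leq C$ implies $\mathbb{E}Z_i^2\leq C^{1/k}$ by Jensen, hence
\[
\mathbb{E}|S_n|^{2k} \leq R_k\bigl(nC + n^k C\bigr) \leq 2R_k C\, n^k,
\]
since $k\geq 1$. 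Dividing by $n^{2k}$ gives $\mathbb{E}|S_n/n|^{2k}\leq C_k n^{-k}$ with $C_k:=2R_k C$.

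There is no real obstacle here: part (a) is a direct combinatorial expansion and part (b) is an immediate corollary of Rosenthal. The only mild care needed is the Jensen step in (a) to absorb $\mathbb{E}Z_i^4\geq(\mathbb{E}Z_i^2)^2$ and combine the pure fourth-moment diagonal with the off-diagonal sum into a single $3C\|v\|_2^4$ bound. If one prefers to avoid citing Rosenthal in (b), an inductive proof based on expanding $\mathbb{E}S_n^{2k}$, isolating the contribution of $Z_n$ via the multinomial theorem, and using the independence of $Z_n$ from $S_{n-1}$ also works, with the induction hypothesis controlling the lower moments of $S_{n-1}$; this is slightly longer but entirely routine.
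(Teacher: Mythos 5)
Your part (a) is essentially the same argument as the paper's: expand the fourth power, discard terms where some index appears only once (by independence and mean zero), keep the diagonal and the three pairing patterns, and use Jensen's inequality to bound the cross second moments by $C$. For part (b), your route is correct but genuinely different in its key ingredient. You invoke Rosenthal's inequality for sums of independent mean-zero random variables, which gives $\mathbb{E}|S_n|^{2k}\le R_k\{\sum_i\mathbb{E}|Z_i|^{2k}+(\sum_i\mathbb{E}Z_i^2)^k\}$ directly, whereas the paper first applies the Burkholder--Davis--Gundy inequality to the martingale $S_j=\sum_{i\le j}Z_i$ to reduce to the quadratic variation, $\mathbb{E}|S_n|^{2k}\lesssim\mathbb{E}[(\sum_i Z_i^2)^k]$, and then applies Jensen to this power of an average. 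Both paths give $\mathbb{E}|S_n|^{2k}\lesssim n^k$, and indeed Rosenthal is a standard consequence of BDG for the independent case; your choice is arguably the more tailored tool here, while the paper's choice is the more general one (BDG only needs a martingale structure, not full independence). Either way the conclusion and constants are of the same form, so this is a legitimate alternative proof rather than a gap.
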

\begin{proof}
(a) Since $\mathbb{E}Z_i^0 = 1$ and $\mathbb{E}Z_i^1 = 0$ for all $i\in [n]$, we have
\begin{align*}
    \mathbb{E}\biggl[\Bigl(\sum_{i=1}^n v_iZ_i\Bigr)^4\biggr] &= \sum_{i=1}^n v_i^4 \mathbb{E}Z_i^4 + \sum_{1\leq i < j \leq n} 6v_i^2v_j^2 \mathbb{E}(Z_i^2Z_j^2) \leq C\Bigl(\sum_{i=1}^n v_i^4 + \sum_{1\leq i < j \leq n} 6v_i^2v_j^2\Bigr) \\
    &\leq 3C\Bigl(\sum_{i=1}^n v_i^2\Bigr)^2,
\end{align*} 
where the first inequality follows from Jensen's inequality. \\
\noindent (b) Note that $S_j := \sum_{i=1}^j Z_i$ is a martingale (adapted to the natural filtration) and $[S]_j := \sum_{i=1}^j Z_i^2$ can be viewed as the quadratic variation of this martingale. By Burkholder--Davis--Gundy inequality \cite[e.g.][Theorem 1.1]{beiglbock2015pathwise}, we have for any $k \geq 1$, 
    \[
    \mathbb{E}\bigl[|S_n|^{2k}\bigr] \leq \mathbb{E}\biggl[\Bigl(\max_{j \leq n}|S_j|\Bigr)^{2k}\biggr] \leq C_{k,1}\mathbb{E}\Bigl[\bigl([S]_n\bigr)^k\Bigr],
    \]
    for some constant $C_{k,1} >0$, depending only on $k$. Thus, we have
    \[
    \mathbb{E}\biggl[\Big|\sum_{i=1}^nZ_i/n \Big|^{2k} \biggr] \leq \frac{C_{k,1}}{n^{2k}}\mathbb{E}\biggl[\Big(\sum_{i=1}^n Z_i^2\Big)^{k}\biggr] \leq \frac{C_{k,1}}{n^{k}}\mathbb{E} \bigg[\frac{\sum_{i=1}^n|Z_i|^{2k}}{n}\bigg] \leq \frac{C C_{k,1}}{n^{k}},
    \]
    where we have used Jensen's inequality in the second inequality. 
\end{proof}

    \begin{lemma}\label{lemma:lowmoment-2}
        Let $k \geq 1$ and $V_1,\dotsc,V_L$ be independent random vectors in $\mathbb{R}^p$, each having zero mean and independent coordinates. Assume that there exists $C > 0$ such that $\mathbb{E}\bigl[|V_i(j)|^{2k}\bigr] \leq C$ for all $i \in [L]$ and $j \in [p]$. Denote $\overline{V} := \sum_{i=1}^L V_i/L$. Then for any $ \delta \in (0,1)$, we have 
        \[
        \mathbb{P}\Bigg(\bigg|\sum_{j=1}^p L\bigl(\overline{V}^2(j) - 1/L\bigr)\bigg| > C_k \frac{p^{\frac{1}{2} \vee \frac{1}{k}}}{\delta^{1/k}}\Bigg) \leq \delta
        \]
        for some constant $C_k > 0$, depending only on $C$ and $k$.
    \end{lemma}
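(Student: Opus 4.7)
\textbf{Proof proposal for Lemma~\ref{lemma:lowmoment-2}.} Set $W_j := L\bigl(\overline{V}^2(j)-1/L\bigr)$ for $j \in [p]$. Since the coordinates of each $V_i$ are independent and the vectors $V_1,\dotsc,V_L$ are themselves independent, the random variables $W_1,\dotsc,W_p$ are mutually independent. Moreover, under the (implicit but standard for how the lemma is invoked) unit-variance normalisation $\mathbb{E}[V_i(j)^2]=1$, we have $\mathbb{E}[L\overline{V}^2(j)] = L\cdot\mathrm{Var}\bigl(V_i(j)\bigr)/L = 1$, so each $W_j$ is mean-zero. The plan is therefore to bound the moments of $W_j$ and then aggregate across $j$ via a classical independent-sum moment inequality before applying Markov.

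First, I would bound $\mathbb{E}|W_j|^k$. Writing $L\overline{V}^2(j) = \bigl(L^{-1/2}\sum_{i=1}^L V_i(j)\bigr)^2$ and applying \Cref{Lemma:bounded2kmoments}(b) to the martingale $\sum_{i=1}^L V_i(j)$ yields
\[
\mathbb{E}\bigl[(L\overline{V}^2(j))^k\bigr] = L^k\, \mathbb{E}\bigl[\overline{V}(j)^{2k}\bigr] \leq L^k \cdot C'_k L^{-k} = C'_k,
\]
for some $C'_k$ depending only on $k$ and $C$. The triangle inequality in $L^k$ then gives $\mathbb{E}|W_j|^k \leq C''_k$. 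For $k\geq 2$ I also need a variance bound; here \Cref{Lemma:bounded2kmoments}(a) applied with the uniform fourth-moment bound $\mathbb{E}V_i(j)^4 \leq C^{2/k}$ gives $\mathbb{E}[(L\overline{V}^2(j))^2] \leq 3C^{2/k}$, whence $\mathbb{E}W_j^2 \leq \tilde{C}$ uniformly in $j$.

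Next, I aggregate across $j$. For $1\leq k \leq 2$, the von Bahr–Esseen inequality gives $\mathbb{E}\bigl|\sum_{j=1}^p W_j\bigr|^k \leq 2\sum_{j=1}^p \mathbb{E}|W_j|^k \leq 2p\,C''_k$, so Markov produces $\mathbb{P}\bigl(\bigl|\sum_j W_j\bigr| > x\bigr) \leq 2pC''_k/x^k$, and choosing $x \asymp (p/\delta)^{1/k}$ yields the claim with $p^{1/k}$. For $k\geq 2$, Rosenthal's inequality yields
\[
\mathbb{E}\Bigl|\sum_{j=1}^p W_j\Bigr|^k \leq R_k\Bigl(\sum_{j=1}^p \mathbb{E}|W_j|^k + \Bigl(\sum_{j=1}^p \mathbb{E}W_j^2\Bigr)^{k/2}\Bigr) \leq R_k\bigl(p C''_k + (p\tilde{C})^{k/2}\bigr) \lesssim p^{k/2},
\]
since $p \leq p^{k/2}$ for $k\geq 2$. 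Markov then gives the bound with $p^{1/2}$. Combining both ranges recovers the exponent $p^{(1/2)\vee(1/k)}$ claimed in the statement.

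The computations are largely mechanical moment-inequality manipulations; the only place requiring care is the case split $k\lessgtr 2$ and the centring of $W_j$, which is what forces the separate application of von Bahr–Esseen versus Rosenthal and produces the $p^{(1/2)\vee(1/k)}$ phase transition. I do not anticipate a substantive obstacle beyond tracking constants through these two regimes.
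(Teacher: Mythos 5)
Your proof is correct and delivers the stated bound, but it takes a somewhat different route from the paper in the $1\le k\le 2$ regime. The paper bounds $\mathbb{P}\bigl(\bigl|\sum_j W_j\bigr| > \eta\bigr)$ for $W_j := L\bigl(\overline{V}^2(j)-1/L\bigr)$ by a self-contained truncation argument: it splits on whether some $|W_j|>\eta$, handles the truncated sum $\sum_j W_j\mathbbm{1}\{|W_j|\le\eta\}$ by Chebyshev, and controls the resulting bias $\mathbb{E}[W_1\mathbbm{1}\{|W_1|>\eta\}]$ via H\"older's inequality; no external moment inequality beyond Markov is invoked, but the computation runs two estimates in parallel and then observes the $\eta^{-2k}$ term is dominated. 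You instead cite the von Bahr--Esseen inequality $\mathbb{E}\bigl|\sum_j W_j\bigr|^k\le 2\sum_j\mathbb{E}|W_j|^k$ outright, which is a cleaner one-line aggregation and arrives at the same $p^{1/k}$ rate. In the $k\ge 2$ regime the two arguments are morally the same: the paper re-applies \Cref{Lemma:bounded2kmoments}(b) (proved via Burkholder--Davis--Gundy) at the level of the $W_j$'s, which has the same strength as the Rosenthal bound $\mathbb{E}\bigl|\sum_j W_j\bigr|^k\lesssim \sum_j\mathbb{E}|W_j|^k + \bigl(\sum_j\mathbb{E}W_j^2\bigr)^{k/2}$ that you use; both give $\mathbb{E}\bigl|\sum_j W_j\bigr|^k\lesssim p^{k/2}$. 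Your observation that the lemma implicitly requires $\mathbb{E}V_i(j)^2=1$ so that each $W_j$ is centred is correct, and in fact the paper's own truncation step (which silently identifies $\mathbb{E}[W_1\mathbbm{1}\{|W_1|\le\eta\}]$ with $-\mathbb{E}[W_1\mathbbm{1}\{|W_1|>\eta\}]$) needs the same normalisation; so both proofs rest on the same implicit hypothesis. What the paper's route buys is self-containment in the small-$k$ case; what yours buys is brevity and a cleaner separation of the two regimes, at the cost of citing two classical inequalities rather than deriving the bound from Markov and H\"older alone.
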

    
    \begin{proof}
    We first prove the result for $1 \leq k \leq 2$. Note that for any $\eta > 0$
    \begin{multline}\label{eq:mom-bound-1+epsilon}
        \mathbb{P}\bigg(\bigg|\sum_{j=1}^p L\bigl(\overline{V}^2(j) - 1/L\bigr)\bigg| >\eta \bigg) \leq p \mathbb{P}\Bigl( \bigl|L\bigl(\overline{V}^2(1) - 1/L\bigr)\bigr| > \eta\Bigr) \\ + \mathbb{P}\bigg(\bigg|\sum_{j=1}^p L\bigl(\overline{V}^2(j) - 1/L\bigr)\mathds{1}\Bigl\{\bigl|L\bigl(\overline{V}^2(j) - 1/L\bigr)\bigr| \leq \eta\Bigr\} \bigg|>\eta \bigg).
    \end{multline}
    We control the two terms separately. For the first term, we have
    \begin{equation}\label{eq:bound1}
    \mathbb{P}\Bigl(\bigl|L\bigl(\overline{V}^2(1) - 1/L\bigr)\bigr| > \eta\Bigr) \leq \frac{\mathbb{E}\Bigl[\bigl|L\overline{V}^2(1) - 1\bigr|^{k}\Bigr]}{\eta^{k}} \leq \frac{2^{k-1}\Bigl(\mathbb{E} \bigl|L\overline{V}^2(1)\bigr|^{k} +1\Bigr)}{\eta^{k}} \leq \frac{2^{k-1}(C_{0,k}+1)}{\eta^k},
    \end{equation}
    where the three inequalities follow, respectively, from Markov's inequality, Jensen's inequality and \Cref{Lemma:bounded2kmoments}(b), with $C_{0,k}$ being the constant that depends only on $C$ and $k$ in that lemma. For convenience, we denote $C_{1,k} := 2^{k-1}(C_{0,k}+1)$ hereafter. For the second term in \eqref{eq:mom-bound-1+epsilon}, we have 
    \begin{align}\label{eq:bound2}
        &\mathbb{P}\bigg(\bigg|\sum_{j=1}^p L\bigl(\overline{V}^2(j) - 1/L\bigr)\mathds{1}\Big\{\bigl|L(\overline{V}^2(j) - 1/L)\bigr| \leq \eta\Big\}\bigg| >\eta \bigg) \nonumber \\ 
        &\leq \frac{1}{\eta^2} \Bigg\{ p\mathbb{E}\bigg[\bigl(L\overline{V}^2(1) - 1\bigr)^2\mathds{1}\Big\{\bigl|L(\overline{V}^2(1) - 1/L)\bigr| \leq \eta \Big\}\bigg] \nonumber\\
        &\qquad \quad + p^2 \bigg(\mathbb{E}\bigg[L\bigl(\overline{V}^2(1) - 1/L\bigr)\mathds{1}\Big\{\bigl|L(\overline{V}^2(1) - 1/L)\bigr| > \eta\Big\}\bigg]\bigg)^2\Bigg\} \nonumber \\
        &\leq \frac{p}{\eta^2}\mathbb{E}\Big[\bigr|L\overline{V}^2(1) - 1\bigr|^{k}\eta^{2-k}\Big] + \frac{p^2}{\eta^2}\bigg\{\mathbb{E}\Big[\bigl|L\overline{V}^2(1) - 1\bigr|^{k}\Big]\bigg\}^{2/k} \bigg\{\mathbb{P}\Big(\bigl|L\bigl(\overline{V}^2(1) - 1/L\bigr)\bigr| > \eta \Big)\bigg\}^{2(k-1)/k} \nonumber \\
       & \leq \frac{C_{1,k} p}{\eta^k} + \frac{C_{1,k}^{2/k}p^2}{\eta^2} \bigg(\frac{C_{1,k}}{\eta^k}\bigg)^{2(k-1)/k},
    \end{align}
    where we have used Markov's inequality for the first inequality, H\"older's inequality for the second one and \eqref{eq:bound1} for the last one. Combining \eqref{eq:mom-bound-1+epsilon}, \eqref{eq:bound1} and \eqref{eq:bound2}, we have 
    \[
    \mathbb{P}\bigg(\bigg|\sum_{j=1}^p L\bigl(\overline{V}^2(j) - 1/L\bigr)\bigg| >\eta \bigg) \leq \frac{2C_{1,k}p}{\eta^k} + \frac{C_{1,k}^2p^2}{\eta^{2k}}.
    \]
   Note that if $C_{1,k}p/\eta^k > 1$, the bound above holds trivially. Therefore we obtain
   \[
   \mathbb{P}\bigg(\sum_{j=1}^p L(\overline{V}^2(j) - 1/L) >\eta \bigg) \leq \frac{3C_{1,k}p}{\eta^k}, 
   \]
    for any $\eta>0$, which is equivalent to the claimed bound.

    For $k > 2$, by Markov's inequality,~\eqref{eq:bound1} and \Cref{Lemma:bounded2kmoments}(b), there exists a constant $C_{2,k} > 0$, depending only on $k$ and $C$ such that
    \[
    \mathbb{P}\Bigl(\bigl|L\bigl(\overline{V}^2(1) - 1/L\bigr)\bigr| > \eta\Bigr) \leq \frac{\mathbb{E}\bigg[\Big|\sum_{j=1}^p L\bigl(\overline{V}^2(j) - 1/L\bigr)\Big|^k \bigg]}{\eta^k} \leq \frac{C_{2,k} p^{k/2}}{\eta^k},
    \]
    which proves the desired result.
    
    \end{proof}

\begin{lemma} \label{Lemma:mediandifference}
Let $n \in \mathbb{Z}^+$ and $c \in \mathbb{R}$. Let $a_1, \dotsc, a_n, b_1, \dotsc, b_n$ be $2n$ real numbers. Suppose that $a_i-b_i \leq c$ for all $i \in [n]$. Then 
\[
\mathrm{median}(a_1, \dotsc, a_n) - \mathrm{median}(b_1,\dotsc, b_n) \leq c.
\]
\end{lemma}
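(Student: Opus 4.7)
The plan is to exploit two elementary properties of the median, namely translation invariance and monotonicity with respect to coordinate-wise comparisons, and combine them to obtain the inequality.

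First, I would fix the convention for the median (say, the $\lceil n/2\rceil$-th order statistic, so that the statement makes sense in both the odd and even cases). The key observation is that by assumption we have $a_i \leq b_i + c$ for every $i \in [n]$. Therefore, if I can show that (i) the median is coordinate-wise monotone, i.e., if $x_i \leq y_i$ for all $i \in [n]$ then $\mathrm{median}(x_1,\dotsc,x_n) \leq \mathrm{median}(y_1,\dotsc,y_n)$, and (ii) the median is translation-equivariant, i.e., $\mathrm{median}(b_1+c,\dotsc,b_n+c) = \mathrm{median}(b_1,\dotsc,b_n)+c$, then the conclusion follows immediately by chaining these two facts:
\[
\mathrm{median}(a_1,\dotsc,a_n) \leq \mathrm{median}(b_1+c,\dotsc,b_n+c) = \mathrm{median}(b_1,\dotsc,b_n)+c.
\]

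Property (ii) is essentially by definition: sorting $(b_i+c)_{i=1}^n$ gives the sorted $(b_i)_{i=1}^n$ shifted by $c$, so the $\lceil n/2\rceil$-th order statistic is translated accordingly. Property (i) is the only nontrivial ingredient, and it can be proved by a short order-statistic argument: let $x_{(1)} \leq \cdots \leq x_{(n)}$ and $y_{(1)} \leq \cdots \leq y_{(n)}$ denote the sorted values. One shows that $x_{(k)} \leq y_{(k)}$ for every $k \in [n]$; indeed, if $y_{(k)} < x_{(k)}$, then the set $\{i: y_i \leq y_{(k)}\}$ has cardinality at least $k$, but each such $i$ satisfies $x_i \leq y_i \leq y_{(k)} < x_{(k)}$, contradicting the fact that at most $k-1$ coordinates of $x$ lie strictly below $x_{(k)}$. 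Specialising to $k = \lceil n/2\rceil$ yields the monotonicity of the median.

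I do not expect any real obstacle here; the main thing to get right is simply to fix an unambiguous definition of $\mathrm{median}$ consistent with its use elsewhere in the paper and then verify the two properties above. The whole proof is essentially a couple of lines once monotonicity and translation invariance are stated.
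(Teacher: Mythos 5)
Your proposal is correct and takes essentially the same approach as the paper's own proof: both reduce the claim to an order-statistics comparison. The paper directly establishes $a_{(i)} - b_{(i)} \leq c$ for every $i$ and then notes that the median is a convex combination of the order statistics (which covers, e.g., the even-$n$ averaging convention), while you factor this into monotonicity plus translation equivariance and specialise to the $\lceil n/2 \rceil$-th order statistic; the substance—and the pigeonhole-style argument on sorted values—is the same.
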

\begin{proof}
We sort the two arrays respectively and obtain $a_{(1)} \leq \dotsc \leq a_{(n)}$ and $b_{(1)} \leq \dotsc \leq b_{(n)}$. We show that $a_{(i)}-b_{(i)} \leq c$ for all $i \in [n]$. Indeed, there exists a set $\mathcal{I}_{i} \subseteq [n]$ with $|\mathcal{I}_i| \geq i$ such that
\[
b_{(i)} = \max\{b_{j}: j \in \mathcal{I}_i\} \geq \max\{a_{j}-c: j \in \mathcal{I}_i\} \geq a_{(i)} -c.
\]
The desired results follows by observing that the median is a convex combination of the order statistics.
\end{proof}

\begin{lemma}\label{lemma:weak_mean}
Let $1 \leq \alpha \leq 2$ and $X_1, \dotsc, X_n$ be independent random vectors in $\mathbb{R}^p$ with mean $\mu$. Assume that the distribution of $X_i - \mu$ belongs to $\mathcal{W}_\alpha$ for each $i \in [n]$. Then 
    \[
    \mathbb{P}\left(\Big|\|\overline{X}\|_2 - \|\mu\|_2\Big| \geq u\right) \leq \frac{\pi p^{\frac{\alpha}{2}}}{n^{\alpha-1} u^{\alpha}},
    \]
for any $u >0$.
\end{lemma}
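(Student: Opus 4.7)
The plan is to reduce $|\|\overline{X}\|_2 - \|\mu\|_2|$ to $\|\overline{X} - \mu\|_2$ by the triangle inequality and then apply Markov's inequality at level $\alpha$, so the whole problem becomes bounding $\mathbb{E}\|\overline{X} - \mu\|_2^{\alpha}$. The obstacle is that the weak-moment condition \eqref{eq:weakmoment} is a scalar statement about one-dimensional projections, while $\|\overline{X}-\mu\|_2^{\alpha}$ is a nonlinear function of the full vector; moreover, under $\mathcal{W}_\alpha$ we cannot appeal to a variance, so a direct $L^2$ argument is unavailable.

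The key device I would use is Gaussian linearisation. Let $g$ be a standard Gaussian vector in $\mathbb{R}^p$ independent of everything else, and let $c_\alpha := \mathbb{E}|N(0,1)|^{\alpha}$. For any deterministic $w\in \mathbb{R}^p$, $\langle g,w\rangle \sim N(0,\|w\|_2^{2})$, so $\mathbb{E}_g |\langle g,w\rangle|^{\alpha} = c_\alpha \|w\|_2^{\alpha}$. Setting $W := \overline{X}-\mu$ and applying this pointwise followed by Fubini,
\[
\mathbb{E}\|W\|_2^{\alpha} = c_\alpha^{-1}\,\mathbb{E}_g\,\mathbb{E}_X |\langle g,W\rangle|^{\alpha}.
\]
Conditionally on $g=g_0$, the scalars $\langle g_0, X_i-\mu\rangle$ are independent, mean zero, and, by \eqref{eq:weakmoment} applied to the unit vector $g_0/\|g_0\|_2$, satisfy $\mathbb{E}|\langle g_0, X_i-\mu\rangle|^{\alpha}\leq \|g_0\|_2^{\alpha}$. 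Because $1<\alpha\leq 2$, the von Bahr--Esseen inequality gives $\mathbb{E}|\sum_{i=1}^{t}\langle g_0, X_i-\mu\rangle|^{\alpha}\leq 2t\,\|g_0\|_2^{\alpha}$, hence $\mathbb{E}_X|\langle g_0, W\rangle|^{\alpha}\leq 2t^{1-\alpha}\|g_0\|_2^{\alpha}$.

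Integrating over $g$ and using Jensen's inequality (since $x\mapsto x^{\alpha/2}$ is concave),
\[
\mathbb{E}\|W\|_2^{\alpha}\leq \frac{2}{c_\alpha t^{\alpha-1}}\,\mathbb{E}\|g\|_2^{\alpha}\leq \frac{2}{c_\alpha t^{\alpha-1}}\bigl(\mathbb{E}\|g\|_2^{2}\bigr)^{\alpha/2}=\frac{2\,p^{\alpha/2}}{c_\alpha t^{\alpha-1}}.
\]
A combination with Markov's inequality yields $\mathbb{P}(\|W\|_2\geq u)\leq 2 p^{\alpha/2}/(c_\alpha t^{\alpha-1}u^{\alpha})$, and the triangle inequality transfers this to $|\|\overline{X}\|_2-\|\mu\|_2|$.

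The final step is the clean constant. By Lyapunov's inequality, $\alpha\mapsto \log c_\alpha$ is convex on $(1,2]$, so $c_\alpha\geq \min\{c_1,c_2\}=\sqrt{2/\pi}$; therefore $2/c_\alpha\leq \sqrt{2\pi}<\pi$, and the stated bound follows. The conceptually delicate part is the Gaussian reduction: it is precisely what allows the directional weak-moment hypothesis to interact with aggregation in $p$ coordinates efficiently, producing the $p^{\alpha/2}$ dimensional dependence rather than the loose $p$ that the naive subadditivity bound $(\sum_j a_j^2)^{\alpha/2}\leq \sum_j |a_j|^\alpha$ would yield.
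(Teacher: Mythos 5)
Your proof is correct and is in substance the same as the paper's: the paper invokes Lemmas~4.1 and~4.2 of \citet{cherapanamjeri2022optimal} as black boxes, and you have essentially reconstructed their proofs. Your von Bahr--Esseen step recovers Lemma~4.2 of that paper (the $2/t^{\alpha-1}$ bound on directional $\alpha$-th moments of the average), and your Gaussian-linearisation step recovers its Lemma~4.1 (transferring the directional bound to a bound on $\mathbb{E}\|W\|_2^{\alpha}$). One small slip in your final paragraph: log-convexity of $\alpha\mapsto c_\alpha$ does not by itself give $c_\alpha\ge\min\{c_1,c_2\}$ on $(1,2]$, since a convex function attains its \emph{maximum}, not its minimum, at an endpoint of an interval. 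The inequality $c_\alpha\ge c_1=\sqrt{2/\pi}$ is nonetheless correct because $c_\alpha=2^{\alpha/2}\Gamma((\alpha+1)/2)/\sqrt{\pi}$ attains its unique minimum over $(0,\infty)$ at some $\alpha^*<1$, hence is increasing on $[1,2]$; with that repair your argument is complete and in fact yields the slightly sharper constant $\sqrt{2\pi}<\pi$.
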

\begin{proof}[Proof of \Cref{lemma:weak_mean}]
    By \citet[Lemma~4.2]{cherapanamjeri2022optimal}, we have
    \[
    \mathbb{E}|\langle \overline{X}-\mu,v \rangle|^{\alpha} \leq \frac{2}{n^{\alpha-1}},
    \]
    for any unit vector $v \in \mathbb{R}^p$. This means that $\overline{X}-\mu$ also satisfy \eqref{eq:weakmoment} but with a different constant instead of $1$. Then, we use \citet[Lemma~4.1]{cherapanamjeri2022optimal} to deduce
    \[
    \mathbb{E}\|\overline{X} - \mu\|_2^{\alpha} \leq \frac{\pi}{n^{\alpha-1}}p^{\frac{\alpha}{2}}. 
    \]
    By triangle inequality and Markov's inequality, we have 
    \[
    \mathbb{P}\left(\Big|\|\overline{X}\|_2 - \|\mu\|_2\Big| \geq u\right) \leq \mathbb{P}(\|\overline{X} - \mu\|_2 \geq u) \leq  \frac{\pi p^{\frac{\alpha}{2}}}{n^{\alpha-1} u^{\alpha}},
    \]
    as desired.
\end{proof}

\end{document}